\newcommand{\arxiv}[1]{\href{http://arxiv.org/abs/#1}{\tt arXiv:\nolinkurl{#1}}}
\newcommand{\arXiv}[1]{\href{http://arxiv.org/abs/#1}{\tt arXiv:\nolinkurl{#1}}}
\newcommand{\mathscinet}[1]{\href{http://www.ams.org/mathscinet-getitem?mr=#1}{\tt
#1}}
\newcommand{\googlebooks}[1]{(preview at
\href{http://books.google.com/books?id=#1}{google books})}
\definecolor{medium-blue}{rgb}{0,0,.8}
\definecolor{dark-red}{rgb}{0.7,0.25,0.25}
\definecolor{dark-blue}{rgb}{0.15,0.15,0.55}
\definecolor{medium-blue}{rgb}{0,0,.8}
\definecolor{shaded-blue}{RGB}{98,140,255}
\definecolor{DarkGreen}{RGB}{0,150,0}
\definecolor{rho}{named}{red}
\definecolor{Salmon}{RGB}{255, 144, 144}
\definecolor{pink}{RGB}{255,192,203}
\definecolor{gray1}{RGB}{225,225,225}
\definecolor{gray2}{RGB}{192,192,192}
\definecolor{gray3}{RGB}{128,128,128}
\definecolor{green1}{RGB}{75,200,75}
\DeclareMathOperator{\op}{op}
\tikzset{vertex/.style = {shape=circle,draw,fill=black,inner sep=0pt,minimum size=5pt}}
\tikzset{edge/.style = {->,> = latex', bend right}}
\tikzset{
 super thick/.style={line width=3pt}
}
\tikzset{
    quadruple/.style args={[#1] in [#2] in [#3] in [#4]}{
        #1,preaction={preaction={preaction={draw,#4},draw,#3}, draw,#2}
    }
}
\tikzstyle{shaded}=[fill=gray!25!white]
\tikzstyle{shadedpink}=[left color= white, right color = Salmon]
\tikzstyle{unshaded}=[fill=white]
\tikzstyle{empty box}=[circle, draw, thick, fill=white, opaque, inner sep=2mm]
\tikzstyle{annular}=[scale=.7, inner sep=1mm, baseline]
\tikzstyle{rectangular}=[scale=.75, inner sep=1mm, baseline=-.1cm]
\tikzstyle{mid>}=[decoration={markings, mark=at position 0.5 with {\arrow{>}}}, postaction={decorate}]
\tikzstyle{mid<}=[decoration={markings, mark=at position 0.5 with {\arrow{<}}}, postaction={decorate}]
\tikzstyle{over}=[double, draw=white, super thick, double=]
\tikzstyle{relativecommutantshading}=[fill=blue!20!white]
\tikzstyle{ctwoshading}=[fill=red!15!white]
\tikzstyle{Bshading}=[fill=blue!15!white]
\newcommand{\roundNbox}[6]{
 \draw[rounded corners=5pt,  thick, #1] ($#2+(-#3,-#3)+(-#4,0)$) rectangle ($#2+(#3,#3)+(#5,0)$);
 \coordinate (ZZa) at ($#2+(-#4,0)$);
 \coordinate (ZZb) at ($#2+(#5,0)$);
 \node at ($1/2*(ZZa)+1/2*(ZZb)$) {#6};
}
\newcommand{\nbox}[6]{
 \draw[thick, #1] ($#2+(-#3,-#3)+(-#4,0)$) rectangle ($#2+(#3,#3)+(#5,0)$);
 \coordinate (ZZa) at ($#2+(-#4,0)$);
 \coordinate (ZZb) at ($#2+(#5,0)$);
 \node at ($1/2*(ZZa)+1/2*(ZZb)$) {#6};
}
\newcommand{\dbox}[6]{
\filldraw[white] ($#2+(-#3,-#3)+(-#4,0)$) rectangle ($#2+(#3,#3)+(#5,0)$);
 \draw[thick, #1] ($#2+(-#3,-#3)+(-#4,0)$) -- ($#2+(#3,-#3)+(#5,0)$);
 \draw[thick, #1] ($#2+(#3,-#3)+(#5,0)$) -- ($#2+(#3,#3)+(#5,0)$);
 \draw[thick, #1] ($#2+(-#3,#3)+(-#4,0)$) -- ($#2+(#3,#3)+(#5,0)$);
 \coordinate (ZZa) at ($#2+(-#4,0)$);
 \coordinate (ZZb) at ($#2+(#5,0)$);
 \node at ($1/2*(ZZa)+1/2*(ZZb)$) {#6};
}
\newcommand{\sm}{{\mathbin{-}}}
\newcommand{\+}{{\mathbin{+}}}
\newcommand{\End}{{\operatorname{End}}}
\newcommand{\id}{{\operatorname{id}}}
\newcommand{\alt}{{\operatorname{alt}}}
\newcommand{\ev}{{\operatorname{ev}}}
\newcommand{\coev}{{\operatorname{coev}}}
\newcommand{\Tr}{{\operatorname{Tr}}}
\newcommand{\tr}{{\operatorname{tr}}}
\newcommand{\Hom}{{\operatorname{Hom}}}
\theoremstyle{plain}
\newtheorem{theorem}{Theorem}[subsection]
\newtheorem*{thm*}{Theorem}
\newtheorem{corollary}[theorem]{Corollary}
\newtheorem{lemma}[theorem]{Lemma}
\newtheorem{proposition}[theorem]{Proposition}
\newtheorem*{claim*}{Claim}
\newtheorem{thmx}{Theorem}
\theoremstyle{definition}
\newtheorem{definition}[theorem]{Definition}
\newtheorem{example}[theorem]{Example}
\newtheorem{notation}[theorem]{Notation}
\newtheorem{remark}[theorem]{Remark}
\newtheorem{construction}[theorem]{Construction}
\def\circle#1{\raisebox{.9pt}{\textcircled{\raisebox{-.9pt}{#1}}}}
\title{Standard $\lambda$-lattices, rigid ${\rm C}^*$ tensor categories, and (bi)modules}
\author{Quan Chen}
\date{September 18, 2020}
\begin{document}
\maketitle
\begin{abstract}
In this article, we construct a 2-shaded rigid ${\rm C}^*$ multitensor category with canonical unitary dual functor directly from a standard $\lambda$-lattice. 
We use the notions of traceless Markov towers and lattices to define the notion of module and bimodule over standard $\lambda$-lattice(s), and we explicitly construct the associated module category and bimodule category over the corresponding 2-shaded rigid ${\rm C}^*$ multitensor category.

As an example, we compute the modules and bimodules for Temperley-Lieb-Jones standard $\lambda$-lattices in terms of traceless Markov towers and lattices. 
Translating into the unitary 2-category of bigraded Hilbert spaces, we recover DeCommer-Yamshita's classification of $\mathcal{TLJ}$ modules in terms of 
edge weighted graphs, and a classification of $\mathcal{TLJ}$ bimodules in terms of biunitary connections on square-partite weighted graphs.

As an application, we show that every (infinite depth) subfactor planar algebra embeds into the bipartite graph planar algebra of its principal graph.
\end{abstract}

\section*{Introduction} Since Jones’ landmark article \cite{Jo83}, the modern theory of subfactors has developed deep connections to numerous branches of mathematics, including representation theory, category theory, knot theory, topological quantum field theory, statistical mechanics, conformal field theory, and free probability. The standard invariant of a type $\text{II}_1$ subfactor was first defined as a standard $\lambda$-lattice \cite{Po95}.  Since, it has been reinterpreted as a planar algebra \cite{Jo99} and a Q-system \cite{Lo89}, or unitary Frobenius algebra object, in a rigid ${\rm C}^*$ tensor category \cite{Mu03}.


The following theorem  is a well known folklore result. 
It is for instance mentioned in this form in \cite[Remark 2.1]{AV15}.
A similar result with planar algebras in place of tensor categories was announced in \cite{Li14}.
The folklore proof of this result makes use of Popa's subfactor reconstruction theorem \cite[Thm.~3.1]{Po95}.\footnote{Similarly, for a given standard $\lambda$-lattice, Jones proved in \cite[Thm.~4.2.1]{Jo99} that one can construct a subfactor planar algebra by passing through Popa's subfactor reconstruction theorem \cite[Thm.~3.1]{Po95}. }
One primary motivation of this paper is to give a direct argument without making a detour via subfactors.

\begin{thm*}[Folklore]
There is a bijective correspondence between equivalence classes of the following:
\[\left\{\, \parbox{3.2cm}{\rm Standard $\lambda$-lattices $A=(A_{i,j})_{0\le i\le j}$} \,\right\}\ \, \cong\ \, \left\{\, \parbox{9cm}{\rm Pairs $(\mathcal{A}, X)$ with $\mathcal{A}$ a 2-shaded rigid ${\rm C}^*$ multitensor category with a generator $X$, i.e., $1_\mathcal{A}=1^+\oplus 1^-$, $1^+,1^-$ are simple and $X=1^+\otimes X\otimes 1^-$ 
} \,\right\}\]
Equivalence on the left hand side is unital $*$-isomorphism of standard $\lambda$-lattices; 
equivalence on the right hand side is unitary equivalence 
between their Cauchy completions 
which maps generator to generator.
\end{thm*}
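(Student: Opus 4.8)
The plan is to realise both maps of the bijection by explicit constructions and then check they are mutually inverse and intertwine the two notions of equivalence. \emph{From a pair to a $\lambda$-lattice:} given $(\mathcal{A},X)$, fix a dual $\bar X$ of $X$ (made canonical by the unitary dual functor), and for $0\le i\le j$ let $X_{i,j}$ be the alternating tensor word of length $j-i$ in $X,\bar X$ whose first tensorand has the shading dictated by the parity of $i$ (so $X_{0,0}=1^+$, $X_{1,1}=1^-$, $X_{0,1}=X$, $X_{1,2}=\bar X$, $X_{0,2}=X\otimes\bar X$, and note $X_{i+2,j+2}=X_{i,j}$). Put $A_{i,j}:=\End_{\mathcal{A}}(X_{i,j})$, a finite-dimensional C$^*$-algebra. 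The two families of inclusions $A_{i,j}\hookrightarrow A_{i,j+1}$ ($f\mapsto f\otimes\id$) and $A_{i,j}\hookrightarrow A_{i-1,j}$ ($f\mapsto\id\otimes f$) assemble into a grid; the relevant conditional expectations are the categorical partial traces, $A_{0,0}=A_{1,1}=\mathbb{C}$ because $1^\pm$ are simple, the Markov trace is the (rescaled) categorical trace from the canonical unitary dual functor with modulus $\delta:=\dim(X)$, and the Jones projections $e_i$ are the normalised cup–cap maps built from $\ev,\coev$ of $X,\bar X$. Checking that this satisfies all axioms of a standard $\lambda$-lattice (commuting squares, compatibility of expectations, the Temperley–Lieb relations and Markov property of $\tau$, sphericality) is string-diagram bookkeeping; the one genuinely used hypothesis is that $X$ is a \emph{generator}, which is exactly what keeps the grid the right size.

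\emph{From a $\lambda$-lattice to a pair:} this is the substantive direction and mirrors the now-standard recipe for producing a rigid C$^*$ tensor category from a subfactor planar algebra. From $A=(A_{i,j})$ build the category $\mathcal{C}(A)$ whose objects are projections $p\in A_{i,j}$, carrying the shading $(\pm,\pm)$ recorded by the parities of $i$ and $j$ and hence placed in the appropriate corner $\mathcal{C}(A)_{\pm\pm}$ of a $2$-shaded structure; the tensor product of composable $p\in A_{i,j}$ and $q\in A_{j,k}$ is the projection $p\otimes q\in A_{i,k}$ obtained after stabilising both into a common large algebra, and $\Hom((i,j,p),(i',j',q))$ is the $p,q$ cut-down of the natural intertwiner space extracted from the inclusions together with the rotation/rigidity structure. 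The $*$-operations on the $A_{i,j}$ give a dagger structure, and \emph{faithfulness and positivity of the Markov trace} is precisely what makes the hom-spaces into genuine C$^*$-algebras, i.e.\ gives positive-definite inner products. Duals, together with solutions to the conjugate equations, are manufactured from the Jones projections and the Markov trace, and one verifies the resulting dual functor is unitary and spherical, producing the canonical unitary dual functor. Passing to the additive and idempotent (Cauchy) completion $\mathcal{A}:=\overline{\mathcal{C}(A)}$ yields a semisimple rigid C$^*$ multitensor category whose unit splits as $1^+\oplus 1^-$, with $1^\pm$ the images of the units of $A_{0,0},A_{1,1}$ (simple since these are $\mathbb{C}$); the image $X$ of the unit projection of $A_{0,1}$ is a generator because, by construction, every projection in every $A_{i,j}$ is a subobject of an alternating word in $X,\bar X$.

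\emph{Mutual inverseness and naturality:} running $A\mapsto\mathcal{A}\mapsto A'$, the algebra $A'_{i,j}=\End_{\mathcal{A}}(X_{i,j})$ is, essentially by the definition of hom-spaces in $\mathcal{C}(A)$, unitally $*$-isomorphic to $A_{i,j}$ compatibly with inclusions, trace and Jones projections, so $A'\cong A$ as standard $\lambda$-lattices. Running $(\mathcal{A},X)\mapsto A\mapsto\mathcal{A}'$, the comparison functor $\mathcal{C}(A)\to\mathcal{A}$ sending an $A_{i,j}$-projection $p\le\id_{X_{i,j}}$ to its image subobject is dagger-monoidal and fully faithful (again by the construction of the hom-spaces), and essentially surjective on Cauchy completions precisely because $X$ generates $\mathcal{A}$; hence it extends to a unitary equivalence $\mathcal{A}'\simeq\mathcal{A}$ carrying generator to generator. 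Both constructions are visibly functorial: a unital $*$-isomorphism of $\lambda$-lattices induces a unitary monoidal equivalence of projection categories preserving the canonical generator, and conversely such an equivalence restricts to a $*$-isomorphism of the endomorphism algebras of the generating alternating words; this matches the two equivalence relations in the statement.

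\emph{Main obstacle.} The difficulty is concentrated in the $\lambda$-lattice $\to$ category direction: making the tensor product of projections living in different $A_{i,j}$ well defined and (up to coherent isomorphism) associative and compatible with the shading — this is where the commuting-square axioms and the compatibility of the conditional expectations are genuinely needed — together with establishing rigidity (duals, the conjugate equations, and unitarity/sphericality of the induced dual functor) and, crucially, proving that the sesquilinear forms on hom-spaces coming from the Markov trace are positive-definite so that $\mathcal{C}(A)$ is a bona fide C$^*$-category. Equivalently, one must show the Cauchy completion is a semisimple rigid C$^*$ \emph{multitensor} category with unit splitting into exactly two simples; all of this is where the standing hypotheses defining a \emph{standard} $\lambda$-lattice — positivity of $\tau$ and the precise Markov/Temperley–Lieb relations — are indispensable.
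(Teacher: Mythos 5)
Your overall strategy coincides with the paper's: both directions are realized by explicit constructions (endomorphism algebras of alternating words one way, a category built from the lattice the other way), with no detour through Popa's subfactor reconstruction, and the easy direction $(\mathcal{A},X)\mapsto A$ is handled exactly as in the paper. The divergence, and the problem, is in the hard direction $A\mapsto(\mathcal{A},X)$.

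The genuine gap is your definition of the tensor product. You propose to tensor $p\in A_{i,j}$ with $q\in A_{j,k}$ "after stabilising both into a common large algebra," but the inclusions of the lattice do not implement left tensoring: the inclusion $A_{j,k}\subset A_{i,k}$ regards $q$ as having $i$ through-strands on the left of a \emph{wider box}, which is not the same as $\id_{X_{i,j}}\otimes q$. To place $q$ correctly one needs a $*$-isomorphism $A_{0,k-j}\cong A_{i,k-j+i}$ shifting everything $i$ strands to the right, and constructing this map inside the lattice (without a trace or Pimsner--Popa basis) is the central technical point. The paper does it with the $2$-shift map $S_{i,j}(x)=d^{2j-2i+2}E^l_{i,j+2}(e_{i+1}\cdots e_j x e_{j+1}\cdots e_{i+1})$, proves it is a unital $*$-isomorphism satisfying the shift identity $e^i_{j-i,1}x=S_{i,j}(x)e^i_{j-i,1}$ and the commuting-parallelogram relations with the conditional expectations, and then \emph{defines} $1\otimes y$ via $S^{(n)}$ and $x\otimes 1$ via the elements $e^i_{j,k}$; the shift identity together with the standard condition is precisely what makes $(x\otimes 1)\circ(1\otimes y)=(1\otimes y)\circ(x\otimes 1)$ and hence makes $x\otimes y:=(x\otimes 1)\circ(1\otimes y)$ well defined, functorial and strictly associative. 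Your proposal names this as "the main obstacle" but supplies no mechanism for it, and the same omission infects your hom-spaces between objects of different lengths ("the natural intertwiner space extracted from the inclusions together with the rotation/rigidity structure" is not a definition; the paper identifies $\mathcal{A}_0([m,+]\to[m+2i,+])$ with $A_{0,m+i}$ by Frobenius reciprocity and writes out six explicit composition formulas using multi-step conditional expectations and the $e^i_{j,k}$). A secondary, smaller point: positivity of the hom-space inner products does not need a separately postulated Markov trace; in the paper's framework it comes from faithfulness and positivity of the conditional expectations, whose composite down to $A_{i,i}=\mathbb{C}$ is the trace. Until the shift map (or an equivalent device) is produced and its compatibility with the expectations and Jones projections is verified, the construction of the tensor category from the lattice is not complete.
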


Given $(\mathcal{A},X)$, it is well known that one can obtain a standard $\lambda$-lattice $A$ by
$$
A_{i,j}:=\begin{cases}\id_{X^{\alt\otimes 2k}}\otimes\End\left(X^{\alt\otimes (j-2k)}\right)& i=2k \\ \id_{X^{\alt\otimes (2k+1)}}\otimes\End\left(\overline{X}^{\alt\otimes (j-2k-1)}\right)& i=2k+1\end{cases}
$$
where $\overline{X}$ is a dual of $X$ and 
$$
X^{\alt\otimes n}
:=
\underbrace{X\otimes \overline{X}\otimes X\otimes\cdots}_{\text{$n$ tensorands}}
$$
and similarly for $\overline{X}^{\alt\otimes n}$. 
The inclusion $A_{i,j}\subset A_{i,j+1}$ sends $x$ to $x\otimes \id$, the inclusion $A_{i+1,j}\subset A_{i,j}$ sends $x$ to $x$. 
The Jones projections are defined using the canonical balanced evaluation and coevaluation for $X$.

Going the other way directly is harder.
Using \cite[Def.~3.1]{CHPS18},
we construct a skeletal (when $d>1$) $\rm W^*$ category explicitly from $A$ whose objects are $[n,\pm]$ for $n\geq 0$ and whose hom spaces can be identified with the algebras $A_{i,j}$. 
We endow it with a tensor structure using the 2-shift map in the standard $\lambda$-lattice, which is a trace-preserving $*$-isomorphism $S_{i,j}:A_{i,j}\to A_{i+2,j+2}$ \cite[Cor.~2.8]{Bi97}.
We call this skeletal category a \textbf{planar tensor category},
and we provide a string diagram calculus to perform computations. 
The Cauchy completion of this planar tensor category is the target 2-shaded rigid ${\rm C}^*$ multitensor category.

Given a standard $\lambda$-lattice $A$, an $A$-module is a Markov tower as a standard $A-$module. 
In more detail, let $A=(A_{i,j})_{0\le i\le j<\infty}$ be a standard $\lambda$-lattice with Jones projection $\{e_i\}_{i\ge 1}$ and compatible conditional expectations.
An $A-$module is a \textbf{Markov tower} of finite dimensional von Neumann algebras $(M_n)_{n\geq 0}$ 
such that $A_{0,n}\subset M_n$
together with conditional expectations $E_i:M_i\to M_{i-1}$ implemented by the Jones projections,
which satisfy the appropriate commuting square conditions.
$$
\begin{matrix}
M_0 & \subset & M_1 & \subset & M_2 & \subset & \cdots & \subset & M_n & \subset &\cdots\\
\cup & & \cup & & \cup & & & & \cup\\
A_{0,0} & \subset & A_{0,1} & \subset & A_{0,2} & \subset & \cdots & \subset & A_{0,n} & \subset & \cdots\\
& & \cup & & \cup & & & & \cup\\
& & A_{1,1} & \subset & A_{1,2} & \subset & \cdots & \subset & A_{1,n} & \subset & \cdots 
\end{matrix}
$$
We refer the reader to Definition \ref{def traceless MT} below for the complete definition.

We warn the reader that our definition is slightly different from the original one from \cite[Def.~3.1]{CHPS18}; our tower of algebras $(M_n)_{n\geq 0}$ does not necessarily have a Markov trace.
An important difference in our construction is that we do \textit{not} use the trace, but rather the commuting square of conditional expectations. 
In \S\ref{MT to planar mod cat}, by using this technique, we are able to discuss arbitrary modules over a standard $\lambda$-lattice instead of merely pivotal modules.


We call an $A-$module \textbf{standard} if $[M_i,A_{k,l}]=0$ for $i\le k\le l$.
By similar techniques used in our new proof of the Folklore Theorem 
above, we obtain the following theorem.

\begin{thmx}
\label{thmx:ModuleEquivalence}
There is a bijective correspondence between equivalence classes of the following:
\[\left\{\, \parbox{5.1cm}{\rm 
Traceless Markov towers $M=(M_i)_{i\ge 0}$ with $\dim(M_0)=1$ 
as standard right modules over a standard $\lambda$-lattice $A$} \,\right\}\ \, \cong\ \, \left\{\, \parbox{7cm}{\rm Pairs $(\mathcal{M}, Z)$ with $\mathcal{M}$ an indecomposable semisimple right $\mathcal{A}-$module $\rm C^*$ category together with a choice of simple object $Z= Z\lhd 1^+_\mathcal{A}$} \,\right\}\]
Equivalence on the left hand side is $*$-isomorphism of traceless Markov towers as standard $A-$modules; 
equivalence on the right hand side is unitary $\mathcal{A}-$module equivalence on Cauchy completions which maps the simple base object to simple base object.

Tracial Markov towers as standard $A-$modules correspond to pivotal $\mathcal{A}-$module categories.
\end{thmx}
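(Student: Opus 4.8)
The plan is to mirror the strategy used in the new proof of the Folklore Theorem, but now in the module setting: from a traceless Markov tower $M=(M_i)_{i\ge0}$ as a standard right $A$-module we will build a \emph{planar module category} over the planar tensor category associated to $A$, and then Cauchy complete. First I would fix the planar tensor category $\mathcal{P}_A$ with objects $[n,\pm]$ and hom spaces $A_{i,j}$ together with its 2-shift tensor structure, as in the construction recalled in the excerpt. Then I would define a $\mathcal{P}_A$-module category $\mathcal{Q}_M$ whose objects are again indexed by $n\ge 0$ (the ``levels'' of the tower), with $\Hom_{\mathcal{Q}_M}(m,n)$ built from the relative position of $M_m, M_n$ inside the tower — concretely, using the conditional expectations $E_i$ and the inclusions $A_{0,k}\subset M_k$ to produce a bimodule-like space that carries a right action of $\mathcal{P}_A$ via the standardness condition $[M_i, A_{k,l}]=0$ for $i\le k\le l$. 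The $\ast$-structure comes from the adjoints of the $E_i$, and the compositions are forced by the commuting-square conditions in Definition~\ref{def traceless MT}. One must check this is a semisimple $\rm W^*$ category with finite-dimensional hom spaces and that the $\mathcal{P}_A$-action is unitary; this is where the traceless (rather than tracial) formalism pays off, since we never need a compatible Markov trace on $M$, only the conditional expectations.

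Next I would establish the forward map on objects: Cauchy completing $\mathcal{Q}_M$ yields a semisimple right $\mathcal{A}$-module $\rm C^*$ category $\mathcal{M}$, and the distinguished object $[0]$ (with $\dim M_0=1$ forcing it to be simple) becomes the simple base object $Z$ with $Z\lhd 1^+_\mathcal{A}$; the relation $Z=Z\lhd 1^+_\mathcal{A}$ holds because $M_0$ sits on the unshaded side, matching the $1^+$ summand of $1_\mathcal{A}$. Indecomposability of $\mathcal{M}$ should follow from connectedness of the tower (the inclusions $M_n\subset M_{n+1}$ link all levels), analogously to how factoriality/connectedness is used in the Folklore argument. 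Then I would construct the inverse map: given $(\mathcal{M},Z)$, set
\[
M_n := \End_{\mathcal{M}}\!\left(Z\lhd X^{\alt\otimes n}\right),
\]
with $A_{0,n}\subset M_n$ coming from the unit inclusion $1_\mathcal{A}\hookrightarrow$ (endomorphisms acting on the $X^{\alt\otimes n}$ tensorand), conditional expectations $E_i$ induced by the categorical partial traces / the module evaluation maps, and the standardness condition automatic since $A_{k,l}$ acts only on the last tensorands while $M_i$ for $i\le k$ acts on an initial segment. One checks the commuting-square axioms from the interchange law in the module category, and that this $(M_n)$ is a traceless Markov tower with $\dim M_0 = \dim\End_\mathcal{M}(Z)=1$.

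The two constructions must be shown mutually inverse up to the stated equivalences — $\ast$-isomorphism of towers on one side, unitary $\mathcal{A}$-module equivalence of Cauchy completions preserving the base object on the other — and this is where I expect the main obstacle to lie: carefully identifying $\End_\mathcal{M}(Z\lhd X^{\alt\otimes n})$ with $M_n$ \emph{as filtered $\ast$-algebras with conditional expectations}, i.e.\ matching not just the algebras but the whole tower structure (all the inclusions $A_{i,j}$, all the $E_i$, and the Jones projections) simultaneously and functorially in the equivalence. This bookkeeping, and verifying that an arbitrary unitary module equivalence $\mathcal{M}\simeq\mathcal{M}'$ sending base object to base object descends to a genuine $\ast$-isomorphism of the reconstructed towers (and conversely), is the technical heart; the string diagram calculus for the planar module category should make these checks manageable but lengthy. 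Finally, for the last sentence of the statement, I would observe that the construction produces a Markov trace on $(M_n)$ exactly when the module category $\mathcal{M}$ admits a compatible pivotal (module-trace) structure — the categorical trace on $\mathcal{A}$ restricts to one on $\mathcal{M}$ iff $\mathcal{M}$ is pivotal as an $\mathcal{A}$-module category — so tracial Markov towers correspond precisely to pivotal $\mathcal{A}$-module categories, and this refinement is compatible with both the forward and inverse maps above.
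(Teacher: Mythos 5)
Your proposal is correct and follows essentially the same route as the paper: an intermediate ``planar module category'' with objects $[n]$ and hom spaces given by the $M_n$ (with composition forced by the conditional expectations and Jones projections), Cauchy completion for the forward direction, the inverse via $M_n=\End_{\mathcal{M}}(Z\lhd X^{\alt\otimes n})$ with conditional expectations induced by $\ev$/$\coev$ and standardness from the interchange law, and the tracial/pivotal refinement at the end. The only caveat is that the hardest work in the paper is the explicit verification (via the string-diagram dictionary) that the composition, the module action $f\lhd x:=(f\lhd 1)\circ(1\lhd x)$ using the $2$-shift maps, and their compatibilities are all well defined, which your sketch acknowledges but does not carry out.
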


In \S\ref{Cpt 3}, we discuss bimodules. 
Given two standard $\lambda$-lattices $A$ and $B$, 
we define an $A-B$ bimodule as a \textbf{standard Markov lattice}, which consists of a doubly indexed sequence $M=(M_{i,j})_{i,j\ge 0}$ of finite dimensional von Neumann algebras with two sequences of Jones projections $(e_i)_{i\geq 1}$ and $(f_j)_{j\geq 1}$ where the following conditions hold.
\begin{enumerate}[label=(\alph*),leftmargin=*]

\item $M_{i,j}\subset M_{i,j+1}$ and $M_{i,j}\subset M_{i+1,j}$ are unital inclusions.
\item $M_{-,j}=(M_{i,j},E^{M,l}_{i,j},e_{i+1})_{i\ge 0}$ are Markov towers with the same modulus $d_0$ and $e_i\in M_{i+1,j}$ for all $i$; $M_{i,-}=(M_{i,j},E^{M,r}_{i,j},f_{j+1})_{j\ge 0}$ are Markov towers with the same modulus $d_1$ and $f_j\in M_{i,j+1}$ for all $j$. We call $M$ of modulus $(d_0,d_1)$.
$$
\begin{matrix}
M_{i+1,j} & {\subset} & M_{i+1,j+1}\\
\cup & & \cup \\
M_{i,j} & {\subset} & M_{i,j+1} 
\end{matrix}
$$
\item The commuting square condition:
$$\xymatrix@+0.5pc{
M_{i+1,j} \ar[d]_{E^{M,l}_{i+1,j}} &M_{i+1,j+1}  \ar[l]_{E^{M,r}_{i+1,j+1}}\ar[d]^{E^{M,l}_{i+1,j+1}}\\
M_{i,j} & M_{i,j+1}\ar[l]^{E^{M,r}_{i,j+1}}
}$$
is a commuting square, i.e., $E^{M,r}_{i,j+1}\circ E^{M,l}_{i,j}=E^{M,l}_{i,j+1}\circ E^{M,r}_{i+1,j+1}$.
\end{enumerate}
\vspace{.2cm}

We require $A^{\op}_{i,0}\subset M_{i,0}$ and $B_{0,j}\subset M_{0,j}$ for all $i,j$ with conditional expectations satisfying the appropriate commuting square conditions.  Here, we take the \textit{opposite} $\lambda$-lattice $A^{\op}$ of $A$,
where $A_{i,j}^{\op}$ is the opposite algebra of $A_{i,j}$, so the indices for $A$ and $B$ are transposed.
$$
\begin{matrix}
\textcolor{red}{\cup} & & \textcolor{red}{\cup} & & \cup & & \cup & & \cup & & \cup & & \\
\textcolor{red}{A_{3,1}}  & \textcolor{red}{\subset} & \textcolor{red}{A_{3,0}} & \subset & M_{3,0} & \subset & M_{3,1} & \subset & M_{3,2} & \subset & M_{3,3} & \subset \\
\textcolor{red}{\cup} & & \textcolor{red}{\cup} & & \cup & & \cup & & \cup & & \cup & & \\
\textcolor{red}{A_{2,1}} & \textcolor{red}{\subset} & \textcolor{red}{A_{2,0}} & \subset & M_{2,0} & \subset & M_{2,1} & \subset & M_{2,2} & \subset & M_{2,3} & \subset \\
\textcolor{red}{\cup} & & \textcolor{red}{\cup} & & \cup & & \cup & & \cup & & \cup & & \\
\textcolor{red}{A_{1,1}} & \textcolor{red}{\subset} & \textcolor{red}{A_{1,0}} & \subset & M_{1,0} & \subset & M_{1,1} & \subset & M_{1,2} & \subset & M_{1,3} & \subset \\
 & & \textcolor{red}{\cup} & & \cup & & \cup & & \cup & & \cup & & \\
& & \textcolor{red}{A_{0,0}} & \subset & M_{0,0} & \subset & M_{0,1} & \subset & M_{0,2} & \subset & M_{0,3} & \subset \\
 & & & & \cup & & \cup & & \cup & & \cup & & \\
& & & & \textcolor{blue}{B_{0,0}} & \textcolor{blue}{\subset} & \textcolor{blue}{B_{0,1}} & \textcolor{blue}{\subset} & \textcolor{blue}{B_{0,2}} & \textcolor{blue}{\subset} & \textcolor{blue}{B_{0,3}} & \textcolor{blue}{\subset}\\
 & & & & & & \textcolor{blue}{\cup} & & \textcolor{blue}{\cup} & & \textcolor{blue}{\cup} & & \\
& & & & & & \textcolor{blue}{B_{1,1}} & \textcolor{blue}{\subset} & \textcolor{blue}{B_{1,2}} & \textcolor{blue}{\subset} & \textcolor{blue}{B_{1,3}} & \textcolor{blue}{\subset}\\
\end{matrix}$$
We call an $A-B$ bimodule \textbf{standard} if $[M_{i,j},A_{p,q}]=0$ for $i\le q\le p$; $[M_{i,j},B_{k,l}]=0$, for $j\le k\le l$.
Similar to the proof
of the Folklore Theorem 
and Theorem 
\ref{thmx:ModuleEquivalence} above, we obtain the following theorem.

\begin{thmx}
\label{thmx:BimoduleEquivalence}
There is a bijective correspondence between equivalence classes of the following:
\[\left\{\, \parbox{5.7cm}{\rm 
Traceless Markov lattices $M=(M_{i,j})_{i,j\ge 0}$ 
with $\dim(M_{0,0})=1$
as standard $A-B$ bimodules over standard $\lambda$-lattices $A,B$} \,\right\}\ \, \cong\ \, \left\{\, \parbox{6.5cm}{\rm Pairs $(\mathcal{M}, Z)$ with $\mathcal{M}$ an indecomposable semisimple ${\rm C}^*$  $\mathcal{A}-\mathcal{B}$ bimodule category together with a choice of simple object $Z= 1^+_\mathcal{A}\rhd Z\lhd 1^+_\mathcal{B}$} \,\right\}\]
Equivalence on the left hand side is $*$-isomorphism on the traceless Markov lattice as a standard $A-B$ bimodule;  equivalence on the right hand side is unitary $\mathcal{A}-\mathcal{B}$ bimodule equivalence between their Cauchy completions which maps the simple base object to simple base object.

Tracial Markov lattices as standard $A-B$ bimodules correspond to pivotal $\mathcal{A}-\mathcal{B}$ bimodule categories.
\end{thmx}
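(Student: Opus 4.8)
The plan is to run the same two-step machine used for the Folklore Theorem and Theorem~\ref{thmx:ModuleEquivalence}, now with two independent shading parameters: one governing the left $\mathcal{A}$-action, one the right $\mathcal{B}$-action.

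\emph{From a Markov lattice to a bimodule category.} Given standard $\lambda$-lattices $A,B$ and a traceless Markov lattice $M=(M_{i,j})_{i,j\ge 0}$ with $\dim(M_{0,0})=1$, I would build a \textbf{planar bimodule category} $\mathcal{PM}$ directly, imitating the planar tensor category construction: its objects are symbols $[i,j]$ (decorated by a shading on each side) for $i,j\ge 0$, and the hom space $\Hom_{\mathcal{PM}}([i,j],[k,l])$ is identified with a corner of $M_{\max(i,k),\max(j,l)}$ cut by the relevant Jones projections, exactly as hom spaces in the planar tensor category were identified with the $A_{i,j}$ and those in the planar module category with the $M_n$. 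The left $\mathcal{PA}$-action raises the first index by $2$ via the left $2$-shift $*$-isomorphism $M_{i,j}\to M_{i+2,j}$ supplied by the Markov tower $M_{-,j}$ in axiom (b), and symmetrically the right $\mathcal{PB}$-action uses the right $2$-shift $M_{i,j}\to M_{i,j+2}$; composition, tensor product of morphisms and the $*$-structure are encoded by a two-coloured string calculus, and the inclusions $A^{\op}_{i,0}\subset M_{i,0}$, $B_{0,j}\subset M_{0,j}$ exhibit $\mathcal{PA}$ and $\mathcal{PB}$ as acting on the two sides. Taking the Cauchy (idempotent-plus-direct-sum) completion $\mathcal{M}:=\overline{\mathcal{PM}}$ yields an indecomposable semisimple ${\rm C}^*$ $\mathcal{A}$-$\mathcal{B}$ bimodule category; indecomposability together with $\dim\End_{\mathcal{M}}(Z)=1$ for the base object $Z:=[0,0]$ follow from $\dim(M_{0,0})=1$ and the connectedness built into the Markov-tower inclusions.

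\emph{From a bimodule category to a Markov lattice.} Conversely, given $(\mathcal{M},Z)$ with $Z=1^+_\mathcal{A}\rhd Z\lhd 1^+_\mathcal{B}$, let $X$ and $Y$ be the generators of $\mathcal{A}$ and $\mathcal{B}$ and set
\[
M_{i,j}:=\End_{\mathcal{M}}\!\left(X^{\alt\otimes i}\rhd Z\lhd Y^{\alt\otimes j}\right),
\]
with unital inclusions given by $-\otimes\id$ on the appropriate side, with $E^{M,l}$ and $E^{M,r}$ the conditional expectations built from the canonical balanced $\ev$ and $\coev$ for $X$ and for $Y$ (available because $\mathcal{A}$ and $\mathcal{B}$ come with their canonical unitary dual functors), and with Jones projections the corresponding normalized (co)evaluation idempotents. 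Axiom (b) holds because restricting attention to the $\mathcal{A}$-action (resp. the $\mathcal{B}$-action) alone reduces to the module-category situation already handled; axiom (c), the commuting-square condition, is precisely the interchange law $(\id\rhd\ev_Y)(\ev_X\lhd\id)=(\ev_X\lhd\id)(\id\rhd\ev_Y)$ read off in $\mathcal{M}$; and standardness, $[M_{i,j},A_{p,q}]=0$ and $[M_{i,j},B_{k,l}]=0$, is the statement that endomorphisms supported on far-apart tensorands commute. The two constructions are then checked to be mutually inverse: applying the second to $\mathcal{M}=\overline{\mathcal{PM}}$ returns $M$ on the nose, since the identification of hom spaces with the $M_{i,j}$ was built into $\mathcal{PM}$; conversely the objects $X^{\alt\otimes i}\rhd Z\lhd Y^{\alt\otimes j}$ generate $\mathcal{M}$ under Cauchy completion because $Z$ together with $1^{\pm}_\mathcal{A}$ and $1^{\pm}_\mathcal{B}$ do, so $\overline{\mathcal{PM}}\simeq\mathcal{M}$ as ${\rm C}^*$ $\mathcal{A}$-$\mathcal{B}$ bimodule categories carrying $[0,0]\mapsto Z$. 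A unital $*$-isomorphism of Markov lattices respecting the $A$-$B$ structure induces a strict isomorphism of the planar bimodule categories and hence a unitary bimodule equivalence on completions sending base object to base object, while a unitary bimodule equivalence in the other direction restricts to a $*$-isomorphism of the extracted lattices; and a Markov lattice carries compatible Markov traces exactly when the induced bimodule category is pivotal, since the Markov trace on each $M_{i,j}$ is the one induced by the spherical categorical trace, which gives the last sentence.

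\emph{Main obstacle.} The technical heart is verifying the bimodule-category coherence for $\mathcal{PM}$: that the left and right module-associativity constraints, assembled from the left and right $2$-shift isomorphisms, satisfy the mixed pentagon/hexagon identities. This is exactly the point where axiom (c) is indispensable and where the two-coloured string calculus must be shown to be well-defined and compatible with both sides; once this is in place, every remaining verification is a routine, if lengthy, upgrade of the one-sided arguments already used for the Folklore Theorem and Theorem~\ref{thmx:ModuleEquivalence}.
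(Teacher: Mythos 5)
Your proposal is correct and follows essentially the same route as the paper: build a planar $\mathcal{A}_0$--$\mathcal{B}_0$ bimodule category from $M$ with objects $[i,j]$, actions given by the shift maps and Jones projections, and Cauchy complete; conversely set $M_{i,j}=\End(X^{\alt\otimes i}\rhd Z\lhd Y^{\alt\otimes j})$, with the commuting square condition ensuring well-definedness of composition and the standard condition forcing the two actions to commute. The only cosmetic difference is your identification of hom spaces with corners of $M_{\max(i,k),\max(j,l)}$ rather than with the endomorphism algebra at the averaged indices via Frobenius reciprocity, which is an equivalent bookkeeping choice.
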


\paragraph{Examples}
As a natural corollary from Theorem \ref{thmx:ModuleEquivalence}, a Markov tower corresponds to a Temperley-Lieb-Jones($\mathcal{TLJ}$) module category. 
This result generalizes the pivotal module case from \cite[Thm.~A.]{CHPS18}.
To translate our classification into that of \cite{DY15} which uses fair and balanced graphs, 
we obtain an elegant graphical version of a Markov tower using a $\rm W^*$ 2-subcategory $\mathcal{C}(\Lambda,\omega)$ of bigraded Hilbert spaces \textsf{BigHilb} which is built from a fair and balanced graph $(\Lambda,\omega)$.
Our approach is inspired by Ocneanu's path algebras \cite{Oc88}\cite{EK98}\cite[\S5.4]{JS97}.
The following diagram shows how these notions are related to each other in \S\ref{Cpt 4}:
\[
\begin{tikzpicture}
\draw[dashed] (-2.8,-.7) rectangle (2.8,.3);
\node at (0,0) {indecomposable semisimple ${\rm C}^*$};
\node at (0,-.45) {$\mathcal{TLJ}(d)-$module category $\mathcal{M}$};

\draw[dashed] (7.8,-.7) rectangle (12.2,.3);
\node at (10,0) {2-subcategory $\mathcal{C}(K,\ev_K)$};
\node at (10,-.45) {of \textsf{BigHilb}};

\draw[dashed] (-1.8,1.3) rectangle (1.8,2.3);
\node at (0,2) {Markov tower};
\node at (0,1.55) {$M$ with modulus $d$};

\draw[dashed] (8,1.3) rectangle (12,2.3);
\node at (10,2) {balanced $d$-fair};
\node at (10,1.55) {bipartite graph $(\Lambda,\omega)$};

\begin{scope}[>=latex]
\draw[<->] (3.3,-.2) -- (7.5,-.2);

\draw[->] (2.3,1.8) -- (7.5,1.8);

\draw[<->] (0,.4) -- (0,1.2);

\draw[<->] (10,.4) -- (10,1.2);

\draw[->] (7.5,.4) -- (2.3,1.2);
\end{scope}

\node at (5.4,-.6) {\S 4.7};

\node at (5,2.1) {\S 4.6};

\node at (-.4,.8) {\S 2};

\node at (10.5,.8) {\S 4.3};

\node at (5.4,1) {\S 4.4};
\end{tikzpicture}
\]

As an application, in the unitary pivotal/tracial setting, we obtain 
the embedding theorem for (infinite depth) subfactor planar algebras (cf.~\cite{MW10}).

\begin{thmx}
Every (infinite depth) subfactor planar algebra embeds in
any bipartite graph planar algebra of its fusion graph with respect to a module category.
In particular, it embeds in the bipartite graph planar algebra of its (dual) principal graph.
\end{thmx}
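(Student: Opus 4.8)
The plan is to pass entirely to the categorical side and exhibit the embedding as the map induced by the \emph{action functor} of a module category. By the Folklore Theorem a subfactor planar algebra $P$ is the same data as a pair $(\mathcal{A},X)$ with $\mathcal{A}$ a connected $2$-shaded rigid ${\rm C}^*$ tensor category carrying its canonical unitary (here spherical) dual functor and $X$ a generator, under which $P_{n,\pm}\cong\End_\mathcal{A}(X^{\alt\otimes n})$ for the appropriate starting shading. Fix an indecomposable semisimple pivotal/tracial right $\mathcal{A}$-module ${\rm C}^*$ category $\mathcal{M}$ (for example the one determined by a choice of simple $Z=Z\lhd 1^+_\mathcal{A}$ as in Theorem~\ref{thmx:ModuleEquivalence}); its fusion graph is the balanced $d$-fair bipartite graph $(\Lambda,\omega)$ of \S4.6, whose even (resp.\ odd) vertices index the simples of $\mathcal{M}^+$ (resp.\ $\mathcal{M}^-$), whose edge multiplicities are $\dim\Hom_\mathcal{M}(w,a\lhd X)$, and whose weights $\omega$ are the normalised pivotal dimensions. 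Under the Folklore Theorem the bipartite graph planar algebra $\mathcal{G}(\Lambda,\omega)$ corresponds to a pair $(\mathcal{C},X_\Lambda)$ with $\mathcal{C}$ a genuinely \emph{multitensor} $2$-shaded rigid ${\rm C}^*$ category — concretely the $2$-subcategory $\mathcal{C}(\Lambda,\omega)\subseteq\textsf{BigHilb}$ of \S4.3 — and $X_\Lambda$ its fundamental $1$-morphism; the unit $1^+_{\mathcal{C}}$ is the direct sum of the simple units indexed by the even vertices, and $\mathcal{G}(\Lambda,\omega)_{n,\pm}=\End_{\mathcal{C}}(X_\Lambda^{\alt\otimes n})\cong\bigoplus_{a}\End_\mathcal{M}(a\lhd X^{\alt\otimes n})$, a finite dimensional algebra for each $n$ even if $\Lambda$ is infinite.

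Next I would write down the embedding. A right $\mathcal{A}$-module structure on $\mathcal{M}$ is encoded by a unitary pivotal $2$-functor $\Phi\colon\mathcal{A}\to\mathcal{C}(\Lambda,\omega)$, the \emph{action functor}, given on objects by $\Phi(Y)_{a,w}=\Hom_\mathcal{M}(w,a\lhd Y)$ and on morphisms by $\Phi(f)_{a,w}=\Hom_\mathcal{M}(w,\id_a\lhd f)$; one checks directly that $\Phi(1^\pm_\mathcal{A})$ is the corresponding unit of $\mathcal{C}(\Lambda,\omega)$ and that $\Phi(X)=X_\Lambda$, so $\Phi$ takes generator to generator. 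Since every planar tangle is built from tensor product, composition, adjoint, and the balanced (co)evaluations of the generator, applying $\Phi$ shows that the induced maps $\End_\mathcal{A}(X^{\alt\otimes n})\to\End_{\mathcal{C}(\Lambda,\omega)}(X_\Lambda^{\alt\otimes n})$ assemble into a morphism of planar algebras $\Phi_\ast\colon P\to\mathcal{G}(\Lambda,\omega)$; I would run this verification in the string diagram calculus of the planar tensor category. The one delicate point is that $\Phi$ must intertwine the \emph{pivotal} data — equivalently, it must send the Jones projections of $P$ to the combinatorial Jones projections of $\mathcal{G}(\Lambda,\omega)$ built from $\omega$ — and this is precisely the ``balanced $d$-fair'' compatibility between the Markov trace on the tower $M$ and the weights $\omega$ set up in \S4.6.

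For injectivity, fix a simple $a\in\mathcal{M}$ and observe that $\id_a\lhd(-)\colon\End_\mathcal{A}(X^{\alt\otimes n})\to\End_\mathcal{M}(a\lhd X^{\alt\otimes n})$ is a unital $\ast$-homomorphism of finite dimensional ${\rm C}^*$-algebras, hence injective as soon as it annihilates no minimal central projection. The central projection of $\End_\mathcal{A}(X^{\alt\otimes n})$ onto the $c$-isotypic component (for $c$ a simple summand of $X^{\alt\otimes n}$) is sent by $\id_a\lhd(-)$ to the projection onto the $(a\lhd c)$-isotypic component of $a\lhd X^{\alt\otimes n}$, which is nonzero exactly when $a\lhd c\neq 0$. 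But $a\lhd c\neq 0$ for every nonzero object $c$ of the matching shading: from $1^+\le c\otimes\overline{c}$ we get $a=a\lhd 1^+\le a\lhd(c\otimes\overline{c})=(a\lhd c)\lhd\overline{c}$, which forces $a\lhd c\neq 0$. Therefore each $\id_a\lhd(-)$, and so $\Phi_\ast$ on every box space $P_{n,\pm}$, is injective, and $\Phi_\ast\colon P\hookrightarrow\mathcal{G}(\Lambda,\omega)$ is an embedding of planar algebras.

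Specialising $\mathcal{M}$ to the (automatically indecomposable and tracial) module category whose fusion graph is the principal graph — respectively the dual principal graph — of $P$ then gives the final sentence. I expect the main obstacle to lie in the second paragraph: reconciling Jones's purely combinatorial description of $\mathcal{G}(\Lambda,\omega)$ (loops on $\Lambda$ with the weight-$\omega$ cups and caps) with the $\textsf{BigHilb}$/module-category description, and keeping track of shadings and composition conventions so that $\Phi$ is a bona fide $2$-functor preserving the pivotal structure. By contrast the infinite-depth case presents no extra analytic difficulty over the finite-depth one, since each box space $\mathcal{G}(\Lambda,\omega)_{n,\pm}$ remains finite dimensional and no metric completion of the graph is needed.
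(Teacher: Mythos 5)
Your proposal is correct and follows essentially the same route as the paper: pass to the pair $(\mathcal{A},X)$, use the action functor $\mathcal{A}\to\End^\dagger_0(\mathcal{M},F)\simeq\mathcal{C}(\Lambda,\omega)$ as the embedding, and identify the planar algebra of $\mathcal{C}(\Lambda,\omega)$ with the bipartite graph planar algebra of the fusion graph. The only difference is that the paper outsources both the faithfulness of the action functor and the combinatorial identification of $\mathcal{G}(\Lambda,\omega)$ to citations ([GMPPS18], [Bu10]), whereas you supply the injectivity directly via the rigidity argument $a\le (a\lhd c)\lhd\overline{c}$, which is a fine (and standard) substitute.
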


By Theorem \ref{thmx:BimoduleEquivalence} above, a Markov lattice corresponds to a $\mathcal{TLJ}-\mathcal{TLJ}$ bimodule category. 
By work-in-progress of Penneys-Peters-Snyder,
pivotal $\mathcal{TLJ}-\mathcal{TLJ}$ bimodule categories correspond to Ocneanu's biunitary connections on associative square-partite graphs with vertex weightings.
For the non-pivotal case, the weighting on the square-partite graph is the edge-weighting and we obtain the non-pivotal analog of a biunitary connection.
To translate between these classifications,
we use the well-known fact that a commuting square of finite dimensional von Neumann algebras gives a biunitary connection \cite{JS97}. 
We then introduce a graphical version of a Markov lattice using a $\rm W^*$ 2-subcategory $\mathcal{C}(\Phi)$ of \textsf{BigHilb} obtained from a biunitary connection $\Phi$. 
It turns out that the biunitary connection $\Phi$ corresponds to the bimodule associator of the bimodule category. 
The following diagram shows how these notions are related to each other in \S\ref{Cpt 5}:
\[\begin{tikzpicture}
\draw[dashed] (-3.8,-.7) rectangle (3.8,.3);
\node at (0,0) {indecomposable semisimple ${\rm C}^*$};
\node at (0,-.45) {$\mathcal{TLJ}(d_0)-\mathcal{TLJ}(d_1)$ bimodule category $\mathcal{M}$};

\draw[dashed] (8.2,-.7) rectangle (11.8,.3);
\node at (10,0) {2-subcategory $\mathcal{C}(\Phi)$};
\node at (10,-.45) {of \textsf{BigHilb}};

\draw[dashed] (-2,1.4) rectangle (2,2.4);
\node at (0,2.1) {Markov lattice $M$};
\node at (0,1.65) {with modulus $(d_0,d_1)$};

\draw[dashed] (7.5,1.15) rectangle (12.5,2.6);
\node at (10,2.3) {balanced $(d_0,d_1)$-fair};
\node at (10,1.85) {square-partite graph $(\Lambda,\omega)$};
\node at (10,1.4) {with biunitary connection $\Phi$};

\begin{scope}[>=latex]
\draw[<->] (4.2,-.2) -- (7.7,-.2);

\draw[->] (2.5,1.8) -- (7,1.8);

\draw[<->] (0,.4) -- (0,1.3);

\draw[<->] (10,.4) -- (10,1.05);

\draw[->] (7.7,.4) -- (2.5,1.2);
\end{scope}

\node at (5.95,-.6) {\S 5.5};

\node at (4.75,2.1) {\S 5.4};

\node at (-.4,.85) {\S 3};

\node at (10.5,.725) {\S 5.2};

\node at (5.45,1) {\S 5.3};
\end{tikzpicture}\]

\paragraph{Acknowledgements} I thank David Penneys and Corey Jones for providing this project and some necessary techniques, including the idea of the 2-shift map in a standard $\lambda$-lattice, unitary dual functors and biunitary connections for the pivotal case. I thank Peter Huston for clarifying Lemma 1.5.4, and also providing good suggestion in the writing. I want to thank Jamie Vicary for clarifying the graphical calculus for 2-categories during the  Summer Research Program on Quantum Symmetries at Ohio State University, 2019. The author is supported by the Mathematics Department in Ohio State University as Graduate Teaching Associate and David Penneys' NSF DMS grant 1654159. 

\vspace{1cm}
\tableofcontents

\vspace{1cm}
\section{Standard $\lambda$-lattices and tensor category}
\subsection{Traceless Markov tower and its properties}
\begin{definition}
Let $A\subset B$ be a unital inclusion of finite von Neumann algebras. A \textbf{conditional expectation} $E:M\to N$ is a positive linear map satisfying the following conditions:
\begin{compactenum}[(a)]
\item $E(x)=x$ for all $x\in A$,
\item $E(axb)=aE(x)b$ for all $a,b\in A$, $x\in B$.
\end{compactenum}
\end{definition}

\begin{definition}
Let $C$ be a unital ${\rm C}^*$-algebra. We call a linear functional $\tr:C\to \mathbb{C}$ a \textbf{trace} if it satisfies the following conditions:
\begin{compactenum}[(a)]
\item(tracial) $\tr(xy)=\tr(yx)$, for all $x,y\in C$.
\item(positive) $\tr(x^*x)\ge 0$, for all $x\in C$.
\item(faithful) $\tr(x^*x)=0$ if and only if $x=0$.
\end{compactenum}
In addition, we call $\tr$ \textbf{unital} if $\tr(1)=1$. 
\end{definition}

\begin{definition}\label{def traceless MT}
A \textbf{traceless Markov tower} $M = (M_n, E_n, e_{n+1})_{n\ge 0}$ consists of a sequence $(M_n)_{n\geq 0}$ of finite dimensional von Neumann algebras, such that $M_n$ is unitally included in $M_{n+1}$. 
For each $n$, there is a faithful normal conditional expectation $E_n:M_n\to M_{n-1}$ together with a sequence of \textbf{Jones projections} $e_n \in M_{n+1}$ for all $n\ge 1$, such that:
\begin{compactenum}[({M}1)]
\item The projections $(e_n)$ satisfy the Temperley-Lieb-Jones relations:
\begin{compactenum}[({TLJ}1)]
\item $e_n^2=e_n=e_n^*$ for all $n$.
\item $[e_i,e_j]=0$ for $|i-j|>1$.
\item There is a fixed constant called the \textbf{modulus} $d>0$ such that $e_ne_{n\pm 1}e_n=d^{-2}e_n$ for all $n$.
\end{compactenum}
\item For all $x\in M_n$, $e_nxe_n=E_n(x)e_n$.
\item $E_{n+1}(e_n)=d^{-2}\cdot 1$ for all $n\ge 1$.
\item (pull down) $M_{n+1}e_n=M_ne_n$ for all $n\ge 1$.
\end{compactenum}
\end{definition}

In the following, all Markov towers are  \textbf{traceless} unless stated otherwise.

\begin{proposition}\label{Markov tower properties} Some properties of a traceless Markov tower include: 
\begin{compactenum}[\rm(1)]
\item $[x,e_k]=0$, for $x\in M_n$, $k\ge n+1$.
\item The map $M_n\ni x\mapsto xe_n\in M_{n+1}$ is injective.
\item For $x\in M_{n+1}$, $d^2 E_{n+1}(xe_n)$ is the unique element $y\in M_n$ such that $xe_n=ye_n$.
\item Property \rm{(3)} is equivalent to \rm{(M3)}.
\item If $x\in M_n$ and $[x,e_n]=0$, then $x\in M_{n-1}$. Together with (1), we have $M_{n-1}=M_n\cap \{e_n\}'$.
\item $e_nM_{n+1}e_n=M_{n-1}e_n$.
\end{compactenum}
\end{proposition}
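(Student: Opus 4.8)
The plan is to establish the six items in the listed order, since each follows from a short manipulation of the defining axioms (M1)--(M4) together with faithfulness of the conditional expectations, and items (5)--(6) simply reuse the earlier ones. For (1) I would first handle the case $k=n+1$: since $x\in M_n\subseteq M_{n+1}$, axiom (M2) at level $n+1$ together with $E_{n+1}(x)=x$ gives $e_{n+1}xe_{n+1}=xe_{n+1}$, and applying the same identity to $x^*$ and taking adjoints gives $e_{n+1}xe_{n+1}=e_{n+1}x$; hence $[x,e_{n+1}]=0$. For a general $k\ge n+1$ one has $M_n\subseteq M_{k-1}$ and repeats the argument with $n$ replaced by $k-1$. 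For (2), the $M_n$-bimodularity of $E_{n+1}:M_{n+1}\to M_n$ and (M3) give $E_{n+1}(xe_n)=xE_{n+1}(e_n)=d^{-2}x$ for $x\in M_n$, so $xe_n=0$ forces $x=0$.

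For (3), the pull-down axiom (M4) provides some $y\in M_n$ with $xe_n=ye_n$; applying $E_{n+1}$ and using $y\in M_n$ and (M3) yields $E_{n+1}(xe_n)=d^{-2}y$, which simultaneously identifies $y=d^2E_{n+1}(xe_n)$ and proves its uniqueness, since every candidate $y$ must satisfy the same equation (so (2) is not even needed here). Item (4) then has one direction already contained in the computation for (3), which used (M4) and (M3); for the converse, assuming (3) and feeding in $x=1\in M_{n+1}$, the element $y=1\in M_n$ satisfies $1\cdot e_n=1\cdot e_n$, so the uniqueness clause of (3) forces $1=d^2E_{n+1}(e_n)$, which is (M3).

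For (5), if $x\in M_n$ commutes with $e_n$ then $xe_n=e_nxe_n=E_n(x)e_n$ by (M2), so $(x-E_n(x))e_n=0$ and (2) gives $x=E_n(x)\in M_{n-1}$; combined with (1), which gives $M_{n-1}\subseteq\{e_n\}'$ on taking $k=n$, this is exactly $M_{n-1}=M_n\cap\{e_n\}'$. For (6), the inclusion $e_nM_{n+1}e_n\subseteq M_{n-1}e_n$ follows by writing $ae_n=be_n$ with $b\in M_n$ via (M4) and then $e_nae_n=e_nbe_n=E_n(b)e_n$ via (M2); the reverse inclusion follows because for $z\in M_{n-1}\subseteq M_n$, (M2) gives $ze_n=E_n(z)e_n=e_nze_n\in e_nM_{n+1}e_n$.

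The computations are all elementary; the only point needing genuine care is item (4), where one must check that the implication (3)$\Rightarrow$(M3) is not circular — it is not, since it invokes only the uniqueness half of (3) at the single element $x=1$ — while the reverse implication is permitted to use the pull-down axiom (M4), which is common to both formulations of the definition.
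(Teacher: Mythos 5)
Your proof is correct and follows essentially the same route as the paper's for all six items. The only divergence is in item (4), where the paper substitutes $x=e_n$ and first re-derives injectivity of $x\mapsto xe_n$ from faithfulness of $E_{n+1}$ before concluding, whereas you substitute $x=1$ and invoke the uniqueness clause of (3) directly — a slightly cleaner variant of the same idea.
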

\begin{proof} 
\item[(1)] For $x\in M_n$ and $k\ge n+1$, $E_k(x)=x,E_k(x^*)=x^*$, then 
$$xe_k = E_k(x)e_k=e_kxe_k=(e_kx^*e_k)^*= (E_k(x^*)e_k)^*=(x^*e_k)^* =e_kx.$$

\item[(2)] If $x\in M_n$ and $xe_n=0$, then by (M3),
$$0=E_{n+1}(xe_n)=xE_{n+1}(e_n)=d^{-2}x.$$
Thus, $x\mapsto xe_n$ is injective.

\item[(3)] By (M4) and (2), the existence and uniqueness hold. Then by (M3),
$$E_{n+1}(xe_n)=E_{n+1}(ye_n)=yE_{n+1}(e_n)=d^{-2}y,$$
so $y=d^2E_{n+1}(xe_n)$.

\item[(4)] First, let's prove that in this setting, $M_n\in x\mapsto xe_n\in M_{n+1}$ is injective. If $xe_n=0$, then 
$$0=d^2E_{n+1}(xe_n)=d^2 x E_{n+1}(e_n).$$
Note that $E_{n+1}$ is faithful and $E_{n+1}(e_n)\ne 0$, so $x=0$. 

Let $x=e_n$, then we have $d^2E_{n+1}(e_n)e_n=e_n$. Since $d^2E_{n+1}(e_n)$ and $1\in M_n$, we have $d^2E_{n+1}(e_n)=1$ by \rm{(2)}.

\item[(5)] Since $xe_n=e_nx$,
$$E_n(x)e_n=e_nxe_n=xe_ne_n=xe_n.$$
Then by (2), $E_n(x)=x$, which implies $x\in M_{n-1}$.

\item[(6)] By (M2) and (M4). 
\end{proof}

We will explore more properties of traceless Markov towers in \S\ref{Cpt 4}.

\begin{remark}
If there is a faithful normal trace on $\bigcup_{n=0}^\infty M_n$ and $E_n$ is the canonical faithful normal trace-preserving conditional expectation for  $n=1,2,\cdots$, then $M$ is called a \textbf{tracial Markov tower}. Thus, tracial Markov towers defined in \cite{CHPS18} are traceless Markov towers. 
\end{remark}

\begin{example}[Markov tower without a trace]\label{example traceless MT}

Let $d>0$ such that $d^2>4$. There is a unique $\lambda\in (0,\frac{1}{2})$ such that $d^{-2}=\lambda(1-\lambda)$. Then $d\lambda+d(1-\lambda)=d$ and $\frac{1}{d\lambda}+\frac{1}{d(1-\lambda)}=d$. Let $e_{ij}$ denote the matrix units of $M_2(\mathbb{C})$, $i,j=1,2$, and $1=e_{11}+e_{22}\in M_2(\mathbb{C})$.

Define $E_\lambda:M_2(\mathbb{C})\to \mathbb{C}$ by $E_\lambda(e_{11})=\lambda$, $E_\lambda(e_{22})=1-\lambda$ and $E_\lambda(e_{12})=E_\lambda(e_{21})=0$. It is clear that $E_\lambda$ is a normal faithful conditional expectation and not tracial. 

Define $e_\lambda\in M_2(\mathbb{C})\otimes M_2(\mathbb{C})$ by
$$e_\lambda =(1-\lambda)e_{11}\otimes e_{11}+\lambda e_{22}\otimes e_{22}+\sqrt{\lambda(1-\lambda)}(e_{12}\otimes e_{12}+e_{21}\otimes e_{21}), $$
and one can check that:
\begin{compactenum}[(a)]
\item $e_\lambda$ is a projection.
\item $E_\lambda(e_\lambda)=d^{-2}(e_{11}+e_{22})=d^{-2}\cdot 1$.
\item $(e_\lambda\otimes 1)(1\otimes e_{1-\lambda})(e_\lambda\otimes 1)=d^{-2}(e_\lambda\otimes 1)$ and $(e_{1-\lambda}\otimes 1)(1\otimes e_\lambda)(e_{1-\lambda}\otimes 1)=d^{-2}(e_{1-\lambda}\otimes 1)$.
\end{compactenum}
\vspace{.5cm}

Define $\id: M_2(\mathbb{C})\to M_2(\mathbb{C})$ to be the identity map.  Let $M_n:=M_2(\mathbb{C})^{\otimes n}$. The inclusion $M_n\subset M_{n+1}$ maps $x$ to $x\otimes \id$. Jones projection $e_{2n+1}=1^{\otimes 2n}\otimes e_{1-\lambda}\in M_{2n+2}$ and $e_{2n+2}=1^{\otimes 2n+1}\otimes e_\lambda\in M_{2n+3}$, $n=0,1,2,\cdots$ The conditional expectation is defined as follows:
$$E_{2n+1}=\id^{\otimes 2n+1}\otimes E_\lambda\qquad E_{2n+2}=\id^{\otimes 2n+2}\otimes E_{1-\lambda}.$$
Now we build a Markov tower with modulus $d$ and without a trace:
\[\xymatrix@+1.5pc{
1 & \ar[l]_{E_\lambda} M_2(\mathbb{C}) & \ar[l]_{\id\otimes E_{1-\lambda}} M_2(\mathbb{C})^{\otimes 2} & \ar[l]_{\id^{\otimes 2}\otimes E_{\lambda}} M_2(\mathbb{C})^{\otimes 3} & \ar[l]_{\id^{\otimes 3}\otimes E_{1-\lambda}} M_2(\mathbb{C})^{\otimes 4} & \ar[l] \cdots
}\]
\end{example}

\subsection{Standard $\lambda$-lattice and its properties}
\begin{definition}[{\cite{Po95}}]\label{standard lambda lattice}
Let $A=(A_{i,j})_{0\le i\le j<\infty}$ be a system of finite dimensional ${\rm C}^*$ algebras with $A_{i,i}=\mathbb{C}$ with unital inclusions $A_{i,j}\subset A_{k,l}$, for $k\le i,j\le l$. 

$$\begin{matrix}
A_{0,0} & \subset & A_{0,1} & \subset & A_{0,2} & \subset & A_{0,3} & \subset & A_{0,4}  & \subset & \cdots\\
& & \cup & & \cup & & \cup & & \cup\\
& & A_{1,1} & \subset & A_{1,2} & \subset & A_{1,3} & \subset & A_{1,4}  & \subset & \cdots\\
& & & & \cup & & \cup & & \cup \\
& & & & A_{2,2} & \subset & A_{2,3} & \subset & A_{2,4}  & \subset & \cdots\\
& & & & & & \cup & & \cup \\
& & & & & & A_{3,3} & \subset & A_{3,4}  & \subset & \cdots \\
& & & & & & & & \cup\\
& & & & & & & & A_{4,4}  & \subset & \cdots\\
& & & & & & & & & & \ddots
\end{matrix}$$

Let $E^r_{i,j}:A_{i,j}\to A_{i,j-1}$ be the (horizontal) faithful normal conditional expectation, $j=1,2\cdots, i=0,\cdots,j-1$ and $E^l_{i,j}:A_{i,j}\to A_{i+1,j}$ be the (vertical) faithful normal conditional expectation $i=0,1,\cdots, j=i+1,i+2,\cdots$. We also require that

\begin{compactenum}[(a)]
\item (commuting square condition)
$$\xymatrix@+0.5pc{
A_{i,j} \ar[d]_{E^l_{i,j}} &A_{i,j+1}  \ar[l]_{E^r_{i,j+1}}\ar[d]^{E^l_{i,j+1}}\\
A_{i+1,j} & A_{i+1,j+1}\ar[l]^{E^r_{i+1,j+1}}
}$$
is a commuting square, i.e., $E^l_{i,j}\circ E^r_{i,j+1}=E^r_{i+1,j+1}\circ E^l_{i,j+1}$.\\

\item (existence of Jones $\lambda$-projections)\\
There exists a sequence of Jones projections $\{e_i\}_{i\ge 1}$ in $\bigcup_n A_{0,n}$ such that 
\begin{compactenum}[({b}1)]
\item $e_j\in A_{i-1,k}$, for $1\le i\le j+1\le k$.
\item The projections satisfy the Temperley-Lieb-Jones relations:
\begin{compactenum}[({TLJ}1)]
\item $e_i^2=e_i=e_i^*$ for all $i$.
\item $e_ie_j=e_je_i$ for $|i-j|>1$.
\item There is a fixed constant $d>0$ called the modulus such that $e_ie_{i\pm 1}e_i=d^{-2}e_i$ for all $i$.
\end{compactenum}
\item $e_jxe_j=E^r_{i,j}(x)e_j$, for $x\in A_{i,j},i+1\le j$.
\item $e_ixe_i=E^l_{i,j}(x)e_i$, for $x\in A_{i,j},  i+1\le j$.
\end{compactenum}
\vspace{10pt}
\item (Markov conditions)
\begin{compactenum}[({c}1)]
\item $\dim A_{i,j}=\dim A_{i,j+1}e_{j}=\dim A_{i+1,j+1}$, for $i\le j$.
\item $E^r_{i,j+1}(e_j)=E^l_{j-1,k}(e_j)=d^{-2} 1$, for $j\ge i+1,k\ge j+1$.
\end{compactenum}
\end{compactenum}
Then $A=(A_{i,j})_{0\le i\le j<\infty}$ is called a \textbf{ $\lambda$-lattice} of commuting squares. If there is a faithful normal trace $\tr$ on $\bigcup_{n=0}^\infty A_{0,n}$ and $E^r_{i,j},E^l_{i,j}$ are the canonical faithful normal trace-preserving conditional expectation, then $A$ is called a \textbf{tracial $\lambda$-lattice}.
\end{definition}

\begin{definition}[{\cite{Po95}}]
A $\lambda$-lattice $(A_{i,j})_{0\le i\le j}$ is called a \textbf{standard $\lambda$-lattice} if $[A_{i,j},A_{k,l}]=0$ for $i\le j\le k\le l$. This condition is called the \textbf{standard condition}.
\end{definition}

\begin{remark}
In the definition of (standard) $\lambda$-lattice, we may not require a trace and the conditional expectations are trace-preserving. In fact, the reader can construct an example of (standard) $\lambda$-lattice without a trace from Example \ref{example traceless MT} easily. We will not further discuss the traceless standard $\lambda$-lattices, though the following statements do NOT require the trace at all!
\end{remark}

\begin{remark} \label{lambda lattice properties}
Each row $A_i=(A_{i,j})_{j\ge i}$ is a Markov tower, $i=0,1,2,\cdots$; each column $A_j=(A_{i,j})_{i=j}^0$ is a Markov tower, $j=1,2,\cdots$. From Proposition \ref{Markov tower properties}, we have 
\begin{compactenum}[(1)]
\item If $x\in A_{i,j}$, $[x,e_k]=0$ for $k\ge j+1$; $[x,e_l]=0$ for $1\le l\le i-1$. 
\item The map $A_{i,j}\ni x\mapsto xe_j\in A_{i,j+1}$ is injective; the map $A_{i,j}\ni x\mapsto xe_i\in A_{i-1,j}$ is injective.
\item The Markov condition is equivalent to the pull-down condition:
\begin{compactenum}[({c}1{)'}]
\item $d^2 E^r_{i,j+1}(x e_j)e_j=x e_j$, for $x\in A_{i,j+1}$, $j\ge i\ge 0$.
\item $d^2 E^l_{i-1,j}(x e_i)e_i=x e_i$, for $x\in A_{i-1,j}$, $j\ge i\ge 1$.
\end{compactenum}
\end{compactenum}
\end{remark}

The following property was proved in \cite[Prop.~1.4]{Po95} by using the trace, here we provide another proof without it.

\begin{proposition}
Let 
$$\begin{matrix}
A_{0,0} & \subset & A_{0,1} & \subset & A_{0,2} & \subset & A_{0,3}   & \subset & \cdots\\
& & \cup & & \cup & & \cup \\
& & A_{1,1} & \subset & A_{1,2} & \subset & A_{1,3} &  \subset & \cdots
\end{matrix}$$
be a $\lambda$-sequence of commuting squares, and define $A_{i,j}:=A_{i-1,j}\cap\{e_{i-1}\}'= A_{1,j}\cap \{e_1,\cdots,e_{i-1}\}'$, $2\le i\le j$. Then $(A_{i,j})_{0\le i\le j<\infty}$ is a $\lambda$-lattice of commuting squares.
\end{proposition}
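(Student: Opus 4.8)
The plan is to build the full $\lambda$-lattice $(A_{i,j})_{0\le i\le j}$ out of the given $\lambda$-sequence of two rows by iterating the relative commutant construction $A_{i,j}:=A_{i-1,j}\cap\{e_{i-1}\}'$, and then to verify each axiom of Definition \ref{standard lambda lattice} row by row, reducing everything to the two hypothesised rows plus the Temperley--Lieb--Jones relations and the Markov tower properties of Proposition \ref{Markov tower properties}. First I would record the basic structural facts: by Proposition \ref{Markov tower properties}(5), $A_{i,j}=A_{i-1,j}\cap\{e_{i-1}\}'$ agrees with the ``higher relative commutant'' description $A_{1,j}\cap\{e_1,\dots,e_{i-1}\}'$, and the inclusions $A_{i,j}\subset A_{i,j+1}$ and $A_{i,j}\subset A_{i-1,j}$ are immediate (the latter by definition; the former because $A_{i-1,j}\subset A_{i-1,j+1}$ and $e_{i-1}$ is fixed along a row, so commuting with $e_{i-1}$ is inherited). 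I would also check $A_{i,i}=\mathbb{C}$: this follows since $A_{i-1,i}$ is a $2$-term piece of a Markov tower with $A_{i-1,i}\cap\{e_{i-1}\}'=A_{i-1,i-1}=\mathbb{C}$ by an inductive use of Proposition \ref{Markov tower properties}(5) applied to the column Markov towers (the $\dim A_{i-1,i}=\dim A_{i-1,i-1}\cdot(\text{something})$ bookkeeping comes from the Markov condition (c1)).

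Next I would construct the conditional expectations for the new algebras. The vertical expectation $E^l_{i-1,j}:A_{i-1,j}\to A_{i,j}$ should be defined (up to normalisation) by $x\mapsto d^2\,E^{\text{tower}}(xe_{i-1})$ where $E^{\text{tower}}$ is the column-tower expectation onto $A_{i-1,j}$ compressed appropriately; more precisely, restricting the given vertical expectations and using the pull-down identity from Remark \ref{lambda lattice properties}(3). One then checks it lands in $A_{i,j}$ using that $e_{i-1}xe_{i-1}=E^l_{i-1,j}(x)e_{i-1}$ forces the image to commute with $e_{i-1}$, and that it is a genuine conditional expectation (the bimodule property over $A_{i,j}$ is inherited). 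The horizontal expectation $E^r_{i,j}:A_{i,j}\to A_{i,j-1}$ is just the restriction of $E^r_{i-1,j}$, and one must check it preserves the relative commutant: if $x\in A_{i-1,j}$ commutes with $e_{i-1}$, then since $i-1\le j-1$ one has $e_{i-1}\in A_{i-2,j-1}\subset A_{i-1,j}$... here one needs $e_{i-1}$ to lie in the smaller algebra and to be fixed by $E^r_{i-1,j}$, which holds by (b1) together with the fact that $E^r$ fixes everything in $A_{i-1,j-1}$; hence $E^r_{i-1,j}(x)$ still commutes with $e_{i-1}$.

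With the expectations in hand, the commuting square condition (a) for the square with corners $A_{i,j},A_{i,j+1},A_{i+1,j},A_{i+1,j+1}$ follows because all four expectations are restrictions/compressions of the expectations of the ambient $\lambda$-sequence, where the commuting square already holds, and restriction of a commuting square to a compatible sub-square is again a commuting square — but this needs a short argument that the relative commutants are ``compatible'' with the ambient expectations (i.e.\ $E^l$ maps $A_{i,j}$ into $A_{i+1,j}$, not just into $A_{i+1,j}$'s ambient algebra), which is exactly what the previous step established. For the Jones projections (b): the sequence $\{e_i\}$ is already given and lies in $\bigcup_n A_{0,n}$; condition (b1), $e_j\in A_{i-1,k}$ for $1\le i\le j+1\le k$, must be checked for the new rows, and this is where the relative-commutant definition bites — one needs $e_j$ to commute with $e_1,\dots,e_{i-2}$, which is precisely TLJ2 since those indices differ from $j$ by more than $1$ when $i-1\le j-1$. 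The Markov conditions (c1),(c2) for the new rows then follow from the corresponding properties of the column Markov towers via Proposition \ref{Markov tower properties}, particularly the dimension count in (c1) which is the statement that $x\mapsto xe_{i-1}$ is injective with image of the right size.

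\textbf{Main obstacle.} I expect the crux to be verifying (b1) and the dimension part of the Markov condition (c1) for the iterated relative commutants — i.e.\ showing that $A_{i,j}$ defined as an intersection of relative commutants is ``big enough'' (has the predicted dimension $\dim A_{i,j}=\dim A_{i+1,j+1}=\dim A_{i,j+1}e_j$) rather than merely being a subalgebra. Establishing the lower bound on $\dim A_{i,j}$ without the trace is the delicate point: the trace-based proof in \cite[Prop.~1.4]{Po95} uses the Markov trace to produce a basis/conditional-expectation splitting, and replacing it requires carefully exploiting the pull-down identities in Remark \ref{lambda lattice properties}(3) together with injectivity of the maps $x\mapsto xe_j$ and the identity $e_jM_{j+1}e_j=M_{j-1}e_j$ from Proposition \ref{Markov tower properties}(6) to show that compression by $e_{i-1}$ identifies $A_{i-1,j}\cap\{e_{i-1}\}'$ with a full corner. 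Everything else is bookkeeping: routine checks that restrictions of conditional expectations remain conditional expectations, that commuting squares restrict to commuting squares, and that the TLJ relations for the shared $\{e_i\}$ are unchanged.
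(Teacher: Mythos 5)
Your proposal follows essentially the same route as the paper's proof: iterate the relative commutant $A_{i,j}:=A_{i-1,j}\cap\{e_{i-1}\}'$, define the vertical expectation $E^l_{i-1,j}(x)$ as the unique $y\in A_{i,j}$ with $ye_{i-1}=e_{i-1}xe_{i-1}$ (existence and uniqueness from Proposition \ref{Markov tower properties}(5),(6) and (2)), take $E^r_{i,j}$ to be the restriction of $E^r_{i-1,j}$, and verify the commuting-square, Jones-projection and Markov conditions from the TLJ relations. The one calibration remark worth making is that the paper dispatches the dimension count in a single line ($\dim A_{i-1,j}\cap\{e_{i-1}\}'=\dim A_{i-1,j}\cap\{e_{j-1}\}'=\dim A_{i-1,j-1}$) and instead spends most of its effort on what you classify as routine bookkeeping, namely proving inductively and without a trace that $E^l_{i-1,j}$ is a positive faithful conditional expectation, e.g.\ positivity via the operator inequality $E^l_{i-1,j}(x)^*E^l_{i-1,j}(x)e_{i-1}\le E^l_{i-1,j}(x^*x)e_{i-1}$ combined with the inductive hypothesis on $E^l_{i-2,j}$.
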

\begin{proof}
We construct $A_{i,j}$ and conditional expectation $E^l_{i-1,j}:A_{i-1,j}\to A_{i,j}$ by induction on $i$, and show that Jones projections $\{e_{i+1},\cdots,e_{j-1}\}\subset A_{i,j}$ for $i+2\le j$. Suppose $A_{i-1,j}$ is constructed (or given) and $\{e_i,\cdots,e_{j-1}\}\subset A_{i-1,j}$, We define $A_{i,j}:= A_{i-1,j}\cap \{e_{i-1}\}'$. Then clearly, $\{e_{i+1},\cdots,e_{j-1}\}\subset A_{i,j}$.  

According to Proposition \ref{Markov tower properties}(5) and (6), for each $x\in A_{i-1,j}\subset A_{i-2,j}$, there exists a $y\in A_{i,j}$ such that 
$$ye_{i-1}=e_{i-1}xe_{i-1}.$$
By Proposition \ref{Markov tower properties}(2), $A_{i-1,j}\ni y\mapsto ye_{i-1}\in A_{i-2,j}$ is injective, so $y$ is unique for each given $x$. This technique is often used in this chapter. We define $E^l_{i-1,j}(x):=y$. Now we show that $E^l_{i-1,j}$ is a faithful normal conditional expectation:
\begin{compactenum}[(a)]
\item It is clear that $E^l_{i-1,j}$ is linear,  and $E^l_{i-1,j}(1)=1$. The ultraweak continuity/normality follows from the definition.
\item $E^l_{i-1,j}(x^*)=E^l_{i-1,j}(x)^*$: 
$$E^l_{i-1,j}(x)^*e_{i-1}=(e_{i-1}E^l_{i-1,j}(x))^*=(e_{i-1}xe_{i-1})^*=e_{i-1}x^*e_{i-1}=E^l_{i-1,j}(x^*)e_{i-1}.$$
\item $E^l_{i-1,j}(axb)=aE^l_{i-1,j}(x)b$ for $a,b\in A_{i,j}$: Note that $[a,e_{i-1}]=[b,e_{i-1}]=0$, then $$E^l_{i-1,j}(axb)e_{i-1}=e_{i-1}axbe_{i-1}=ae_{i-1}xe_{i-1}b=aE^l_{i-1,j}(x)e_{i-1}b=aE^l_{i-1,j}(x)be_{i-1}.$$
\item $E^l_{i-1,j}(x^*x)\ge E^l_{i-1,j}(x)^*E^l_{i-1,j}(x)$, which follows that $E^l_{i-1,j}$ is positive: $$E^l_{i-1,j}(x)^*E^l_{i-1,j}(x)e_{i-1}=E^l_{i-1,j}(x)^*e_{i-1}xe_{i-1}=e_{i-1}x^*e_{i-1}xe_{i-1}\le e_{i-1}x^*xe_{i-1}=E^l_{i-1,j}(x^*x)e_{i-1},$$
so $E^l_{i-1,j}(x^*x)\ge E^l_{i-1,j}(x)^*E^l_{i-1,j}(x)$ by applying the inductive hypothesis that $E^l_{i-2,j}$ is a positive conditional expectation and $E^l_{i-2,j}(e_{i-1})=d^{-2}\cdot 1$.
\item $E^l_{i-1,j}(x^*x)=0$ if and only if $x=0$, i.e., $E^l_{i-1,j}$ is faithful:
$$0=E^l_{i-1,j}(x^*x)e_{i-1}=e_{i-1}x^*xe_{i-1}=(xe_{i-1})^*(xe_{i-1}),$$
which follows that $xe_{i-1}=0$. Note that $A_{i-1,j}\ni x\mapsto xe_{i-1}\in A_{i-2,j}$ is an injection, so $x=0$.
\end{compactenum}

Then define $E^r_{i,j+1}:A_{i,j+1}\to A_{i,j}$ as the restriction of $E^r_{i-1,j+1}$ on $A_{i,j+1}$, which is also a conditional expectation. 

Now we prove the commuting square condition $E^l_{i-1,j}\circ E^r_{i-1,j+1}=E^r_{i,j+1}\circ E^l_{i-1,j+1}$: for $x\in A_{i-1,j+1}$,
\begin{align*}
    E^l_{i-1,j}(E^r_{i-1,j+1}(x))e_{i-1} & = e_{i-1}E^r_{i-1,j+1}(x)e_{i-1}\\
    E^r_{i,j+1}(E^l_{i-1,j+1}(x))e_{i-1} & = E^r_{i-1,j+1}(E^l_{i-1,j+1}(x))e_{i-1} = E^r_{i-1,j+1}(E^l_{i-1,j+1}(x)e_{i-1})  \\
    & = E^r_{i-1,j+1}(e_{i-1}xe_{i-1})= e_{i-1}E^r_{i-1,j+1}(x)e_{i-1}.
\end{align*}

Finally, we prove the Markov condition:
\begin{compactenum}[(a)]
\item $\dim A_{i,j} = \dim A_{i-1,j}\cap \{e_{i-1}\}' = \dim A_{i-1,j}\cap \{e_{j-1}\}' =\dim A_{i-1,j-1}$.
\item $E^r_{i,j+1}(e_j)=E^r_{i-1,j+1}(e_j)=d^{-2}1$.
\item $E^l_{i-1,j}(e_i)e_{i-1}=e_{i-1}e_ie_{i-1}=d^{-2}e_{i-1}$, so $E^l_{i-1,j}(e_i)=d^{-2}\cdot 1$.
\end{compactenum} 
\end{proof}
\begin{corollary}
Let $(A_{i,j})_{i\le j,i=0,1}$ be a $\lambda$-sequence of commuting squares. If $A_{i,j}:=\{e_1,\cdots,e_{i-1}\}'\cap A_{i,j}$, for all $2\le i\le j$, then $(A_{i,j})_{0\le i\le j}$ is a standard $\lambda$-lattice if and only if $(A_{i,j})_{i\le j,i=0,1}$ satisfies
\begin{align*}
    [A_{0,1},A_{1,j}]&=0,\ \forall 1\le j\\
    [A_{0,i},A_{i,j}]&=0,\ \forall 2\le i\le j.
\end{align*}
\end{corollary}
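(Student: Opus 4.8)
The plan is to lean entirely on the preceding Proposition, which already establishes that $(A_{i,j})_{0\le i\le j}$ is a $\lambda$-lattice of commuting squares without any standardness hypothesis. Consequently the only content of the Corollary is the equivalence between the \emph{standard condition} $[A_{a,b},A_{c,d}]=0$ for all $a\le b\le c\le d$ and the two displayed commutation relations. The forward implication is then a one-line specialization: taking $(a,b,c,d)=(0,1,1,j)$ yields $[A_{0,1},A_{1,j}]=0$ for all $1\le j$, and taking $(a,b,c,d)=(0,i,i,j)$ with $2\le i\le j$ yields $[A_{0,i},A_{i,j}]=0$.

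For the converse I would first observe that, since $A_{0,0}=A_{1,1}=\mathbb{C}$, the two hypotheses amalgamate into a single ``diagonal'' statement $[A_{0,i},A_{i,j}]=0$ for all $0\le i\le j$. I would then reduce the full standard condition to this diagonal case. Given $a\le b\le c\le d$, the lattice inclusions give $A_{a,b}\subseteq A_{0,b}$, so it suffices to show $[A_{0,b},A_{c,d}]=0$; when $b=0$ this is trivial since $A_{0,0}=\mathbb{C}$. When $b\ge 1$ I would invoke the explicit description $A_{m,d}=\{e_1,\dots,e_{m-1}\}'\cap A_{1,d}$ from the preceding Proposition: because $b-1\le c-1$ we have $\{e_1,\dots,e_{c-1}\}'\subseteq\{e_1,\dots,e_{b-1}\}'$, hence $A_{c,d}\subseteq A_{b,d}$. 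Therefore every element of $A_{0,b}$ commutes with every element of $A_{c,d}\subseteq A_{b,d}$ by the diagonal case applied with $i=b$, $j=d$, which is exactly $[A_{0,b},A_{c,d}]=0$.

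The only points needing care are bookkeeping at the boundary indices $b=0,1$ (handled by $A_{0,0}=A_{1,1}=\mathbb{C}$ and by reading $\{e_1,\dots,e_0\}$ as the empty set, so that $A_{1,d}$ is literally $A_{1,d}$) and confirming that the relative-commutant definition $A_{i,j}=\{e_1,\dots,e_{i-1}\}'\cap A_{1,j}$ used here coincides with the one built inductively in the preceding Proposition, which it does via the identity $A_{i-1,j}\cap\{e_{i-1}\}'=A_{1,j}\cap\{e_1,\dots,e_{i-1}\}'$ recorded there. I do not anticipate any genuine obstacle: the argument is essentially the monotonicity observation that the row algebras $A_{c,d}$ shrink as $c$ grows, combined with the already-proven $\lambda$-lattice structure.
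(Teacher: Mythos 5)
Your proof is correct. The paper states this corollary without proof (it is presented as an immediate consequence of the preceding Proposition), and your argument — reducing the full standard condition $[A_{a,b},A_{c,d}]=0$ to the diagonal cases via the monotonicity $A_{a,b}\subseteq A_{0,b}$ and $A_{c,d}=A_{1,d}\cap\{e_1,\dots,e_{c-1}\}'\subseteq A_{b,d}$ — is exactly the intended one-line justification, with the boundary cases $b=0,1$ handled correctly.
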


Now we define the opposite standard $\lambda$-lattice, which will be used in Definition \ref{def ML std bimod}.

\begin{definition}\label{Def:opposite Aop}
$A^{\text{op}}=(A_{i,j})_{0\le j\le i}$ is the \textbf{opposite} of $\lambda$-lattice $A$ if $A^{\text{op}}_{j,i}=A_{i,j}$ as opposite algebras, $E^{\text{op},l}_{j,i}=E^r_{i,j}$, $E^{\text{op},r}_{j,i}=E^l_{i,j}$ for $i\le j$.
\end{definition}

\begin{example}\label{TLJ lambda lattice}
The Temperley-Lieb-Jones algebra TLJ$(d)$ forms a standard $\lambda$-lattice with the modulus $d$ by letting $A_{i,i}=A_{i,i+1}=\mathbb{C}$ and $A_{i,j}=\langle e_{i+1},\cdots,e_{j-1}\rangle$ for $j-i\ge 2$, which is called a Temperley-Lieb-Jones standard $\lambda$-lattice.
\end{example}

\begin{example}[{\cite{Po95}}]
If $A_0\subset A_1$ is a unital inclusion of type $\text{II}_1$ subfactors with finite index and $A_0\subset A_1\subset A_2\subset A_3\subset\cdots$ is the Jones tower, then $A_{i,j}:=A_i'\cap A_j$ forms a standard $\lambda$-lattice, which is called the standard invariant of $A_0\subset A_1$. 
\end{example}

\subsection{The 2-shift map}
In this section, we discuss an important type of $*$-isomorphism in a standard $\lambda$-lattice, so-called the 2-shift map \cite{Bi97}. Here we provide the definition by using the conditional expectations and Jones projections instead of trace and Pimsner-Popa basis.

For $i,k\ge 0$, define the following element of $A_{l,i+2k}$, $l+1\le i+2k$: $$e^i_k := d^{k(k-1)}(e_{k+i}e_{k+i-1}\cdots e_{i+1})(e_{k+i+1}e_{k+i}\cdots e_{n-k+2})\cdots(e_{2k+i-1}e_{2k+i-2}\cdots e_{k+i}).$$

For $i,j,k\ge 0$, define the following element of $A_{l,i+j+2k}$, $l+1\le i+j+2k$, 
$$e^i_{j,k}=d^{jk}e^i_ke^{i+1}_k\cdots e^{i+j}_k.$$

Clearly, $e_n=e^{n-1}_1=e^{n-1}_{0,1}$, $e^i_k=e^i_{0,k}$, $(e^i_k)^2=(e^i_k)^*=e^i_k$ and $e^i_{j,k}(e^i_{j,k})^*=e^i_{0,k},\ (e^i_{j,k})^*e^i_{j,k}=e^{i+j}_{0,k}$.

\begin{definition} [Multi-step condition expectation]\label{Def:multistep conditional expectation}
Define the $k$-step horizontal conditional expectation as $E^{r,k}_{i,j}=E^r_{i,j+1-k}\circ E^r_{i,j+2-k}\circ\cdots\circ E^r_{i,j}:A_{i,j}\to A_{i,j-k}$ for $k\le j-i$ and we have $E^{r,1}_{i,j}=E^r_{i,j}$; the $k$-step vertical conditional expectation as
$E^{l,k}_{i,j}=E^l_{i+k-1,j}\circ E^r_{i+k-2,j}\circ\cdots\circ E^l_{i,j}:A_{i,j}\to A_{i+k}$ for $k\le j-i$ and we have $E^{l,1}_{i,j}=E^l_{i,j}$. 

In particular, the trace is made by the composition of conditional expectations, i.e., $E^{l,i-j+k}_{i,j-k}\circ E^{r,k}_{i,j}=\tr=E^{r,j-i-t}_{i+t,j}\circ E^{l,t}_{i,j}$, for $0\le k\le j-i$, $0\le t\le j-i$
\end{definition}

\begin{definition}[$2$-shift map]
Define the \textbf{$2$-shift} map $S_{i,j}:A_{i,j}\to A_{i+2,j+2}$, $i\le j$ by $$S_{i,j}(x) := d^{2j-2i+2} E^l_{i,j+2}(e_{i+1}e_{i+2}\cdots e_jxe_{j+1}e_j\cdots e_{i+1}).$$
\end{definition}

\begin{proposition} The followings are the properties of the 2-shift map. \label{2-shift map prop}
\begin{compactenum}[\rm(1)]
\item $S_{i,j}$ is well defined, i.e., $S_{i,j}(x)\in A_{i+2,j+2}$ for $x\in A_{i,j}$.
\item $S_{i,j}$ is a unital $*$-isomorphism.
\item $($commuting parallelogram$)$ $S_{i,j-1}\circ E^r_{i,j}=E^r_{i+2,j+2}\circ S_{i,j}$ and $S_{i+1,j}\circ E^l_{i,j}=E^l_{i+2,j+2}\circ S_{i,j}$.
\item $S_{i,j+1}(x)=S_{i,j}(x)$ for $x\in A_{i,j}$ and $S_{i-1,j}(x)=S_{i,j}(x)$ for $x\in A_{i,j}$. 
\item $($shift$)$ $e_{i+1}e_{i+2}\cdots e_{j+1}x=S_{i,j}(x)e_{i+1}e_{i+2}\cdots e_{j+1}$ for $x\in A_{i,j}$. Taking adjoints, $xe_{j+1}e_j\cdots e_{i+1}=e_{j+1}e_j\cdots e_{i+1}S_{i,j}(x)$. In other word, $e^i_{j-i,1}x=S_{i,j}(x)e^i_{j-i,1}$.
\item $S_{i,j}$ is trace-preserving.
\item $S_{i,j}(e_k)=e_{k+2}$, where $i+1\le k\le j-1$.
\end{compactenum}
\end{proposition}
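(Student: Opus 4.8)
The plan is to establish the seven properties roughly in the order listed, since later items build on earlier ones; the workhorse throughout will be the "shift" identity (5), so I would actually prove (5) first (in the form of a lemma about how products of Jones projections intertwine inclusions), and then read off the rest. Concretely, set $g_{i,j} := e_{i+1}e_{i+2}\cdots e_{j+1} \in A_{i,j+2}$ (a "staircase" word, well-defined by (b1) of Definition \ref{standard lambda lattice}), and note $g_{i,j}g_{i,j}^* = d^{-2(j-i+1)}$-times a projection; in fact the normalization constant $d^{2j-2i+2}$ in the definition of $S_{i,j}$ is exactly chosen so that $d^{2j-2i+2}E^l_{i,j+2}(g_{i,j}\,x\,g_{i,j}^*)$ is the unique $y$ with $y\,g_{i,j}=g_{i,j}\,x$, using the pull-down/Markov conditions repeatedly (Remark \ref{lambda lattice properties}(3) and Proposition \ref{Markov tower properties}(3)). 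So the first real step is: for $x\in A_{i,j}$ there is a unique $y\in A_{i+2,j+2}$ with $g_{i,j}x = y\,g_{i,j}$, and $S_{i,j}(x)$ is that $y$. That simultaneously gives well-definedness (1), the shift formula (5), and sets up everything else.

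From there, (2) is formal: $x\mapsto y$ is multiplicative because $g_{i,j}(xx') = y\,g_{i,j}x' = y\,y'\,g_{i,j}$; it is unital since $g_{i,j}\cdot 1 = 1\cdot g_{i,j}$; it is $*$-preserving by taking adjoints in $g_{i,j}x = y g_{i,j}$ (note $g_{i,j}^* = e_{j+1}e_j\cdots e_{i+1}$) and invoking uniqueness; and it is injective because the map $x\mapsto g_{i,j}x$ is injective (iterate Proposition \ref{Markov tower properties}(2) / Remark \ref{lambda lattice properties}(2) along the staircase), hence $y=0\Rightarrow g_{i,j}x=0\Rightarrow x=0$. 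Surjectivity onto $A_{i+2,j+2}$ follows by a dimension count via the Markov condition (c1), which forces $\dim A_{i,j} = \dim A_{i+2,j+2}$, together with injectivity — or, more hands-on, by checking the word $g_{i,j}^* z\, g_{i,j}$ lands in $A_{i,j}$ for $z\in A_{i+2,j+2}$ and provides the inverse. Property (7) is immediate from (5): for $i+1\le k\le j-1$, the projection $e_k$ commutes past most of the staircase and $g_{i,j}e_k = e_{k+2}g_{i,j}$ is a direct Temperley–Lieb computation using (TLJ2)--(TLJ3), so $S_{i,j}(e_k)=e_{k+2}$.

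Property (4) (stability under enlarging $j$ or shrinking $i$) I would get by comparing the defining identities: $g_{i,j+1} = g_{i,j}e_{j+2}$, and for $x\in A_{i,j}$ one has $[x,e_{j+2}]=0$, so $g_{i,j+1}x = g_{i,j}x\,e_{j+2} = S_{i,j}(x)g_{i,j}e_{j+2} = S_{i,j}(x)g_{i,j+1}$, whence $S_{i,j+1}(x)=S_{i,j}(x)$ by uniqueness; the $i$-statement is analogous using $g_{i-1,j} = e_i\, g_{i,j}$ and $[x,e_i]=0$. For (3), the commuting-parallelogram relations, I would again argue by uniqueness: apply $E^r_{i+2,j+2}$ to $g_{i,j}x = S_{i,j}(x)g_{i,j}$ and push the conditional expectation through using the $A_{i,j}$-bimodularity of $E^r$ together with the commuting square axiom (a) and the Markov relation $E^r(e_{j+1})=d^{-2}$, matching it against the defining identity for $S_{i,j-1}(E^r_{i,j}(x))$; symmetrically for $E^l$. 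Finally (6), trace-preservation, drops out of (3): since by Definition \ref{Def:multistep conditional expectation} the trace on each $A_{i,j}$ is a composite of the $E^r$'s and $E^l$'s down to $A_{i,i}=\mathbb{C}$, and each of those intertwines with the shift by (3) (and $S$ restricted to $A_{i+2,i+2}=\mathbb C$ is the identity), the two traces agree; one just has to bookkeep the index shifts so the composites line up.

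The main obstacle I anticipate is the bookkeeping in the first step — proving cleanly that $d^{2j-2i+2}E^l_{i,j+2}(g_{i,j}x\,g_{i,j}^*)$ is the unique $y\in A_{i+2,j+2}$ with $y g_{i,j} = g_{i,j}x$, because this requires peeling the staircase $g_{i,j}$ one Jones projection at a time, alternating horizontal and vertical pull-downs and keeping the normalization constants and the changing index ranges straight; everything after that is either a short diagram chase or an appeal to the uniqueness statement. A secondary subtlety is making sure at each stage that the intermediate "partial shift" elements actually lie in the algebra one needs (so that the next conditional expectation is even defined), which is where property (b1) of the $\lambda$-lattice axioms and Proposition \ref{Markov tower properties}(1) get used silently.
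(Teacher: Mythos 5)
Your strategy is essentially the paper's, reorganized rather than replaced: the paper proves (1), (2), (3), (5), (7) one at a time by direct computation with the explicit formula, each time reducing to an identity after right-multiplying by $e_{i+1}$ and invoking injectivity of $y\mapsto ye_{i+1}$ on $A_{i+1,j+2}$, whereas you front-load the work into the single intertwining characterization $g_{i,j}x=S_{i,j}(x)g_{i,j}$ and then derive (2), (4), (7) formally from uniqueness. The underlying computations (staircase pull-downs) are identical, and your packaging is arguably cleaner — in particular getting (7) from $g_{i,j}e_k=e_{k+2}g_{i,j}$ is shorter than the paper's direct calculation, and your (4) does not route through (3) as the paper's does. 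One caveat on your first step: the intertwining relation does not by itself force $y\in A_{i+2,j+2}$. Uniqueness of $y$ holds in $A_{i+1,j+2}$ (where $y\mapsto ye_{i+1}$ is injective), but right multiplication by $e_{i+1}$ is not injective on $A_{i,j+2}$, so membership of the explicit formula in $A_{i+2,j+2}=A_{i+1,j+2}\cap\{e_{i+1}\}'$ must still be verified separately; this is exactly the paper's item (1), done by checking $E^l_{i+1,j+2}(S_{i,j}(x))e_{i+1}=S_{i,j}(x)e_{i+1}$ via pull-down.

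There is one genuine error: in the second half of (4) you invoke $[x,e_i]=0$ for $x\in A_{i,j}$, which is false in general — indeed $A_{i+1,j}=A_{i,j}\cap\{e_i\}'$, so $e_i$ commutes only with the smaller algebra. The fix is to commute $e_i$ past the \emph{image} instead: from $g_{i-1,j}x=e_i g_{i,j}x=e_i S_{i,j}(x)g_{i,j}$, use that $e_i\in A_{i-1,i+1}$ and $S_{i,j}(x)\in A_{i+2,j+2}\subset A_{i+1,j+2}$ commute by the standard condition, giving $g_{i-1,j}x=S_{i,j}(x)g_{i-1,j}$, and then apply uniqueness. A smaller point of the same flavor: for $*$-preservation, taking adjoints of $g_{i,j}x=S_{i,j}(x)g_{i,j}$ yields the right-handed relation $x^*g_{i,j}^*=g_{i,j}^*S_{i,j}(x)^*$, not the defining relation for $x^*$; to close the loop, multiply on the left by $g_{i,j}$, use $g_{i,j}g_{i,j}^*=d^{-2(j-i)}e_{i+1}$ and that $e_{i+1}$ commutes with $A_{i+2,j+2}$, and finish with injectivity of $y\mapsto ye_{i+1}$. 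With these repairs the argument goes through.
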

\begin{proof} 
\item[(1)] Note that $S_{i,j}(x)\in A_{i+1,j+2}$, we shall show that $E^l_{i+1,j+2}(S_{i,j}(x))=S_{i,j}(x)$. Since $E^l_{i+1,j+2}(S_{i,j}(x))-S_{i,j}(x)\in A_{i+1,j+2}$ and the map $A_{i+1,j+2}\ni y\mapsto ye_{i+1}\in A_{i,j+2}$ is injective, we shall show that $E^l_{i+1,j+2}(S_{i,j}(x))e_{i+1}=S_{i,j}(x)e_{i+1}$.
\begin{align*}
    E^l_{i+1,j+2}(S_{i,j}(x))e_{i+1} & = e_{i+1}S_{i,j}(x)e_{i+1} \\
    & = d^{2j-2i+2} e_{i+1}E^l_{i,j+2}(e_{i+1}e_{i+2}\cdots e_jxe_{j+1}e_j\cdots e_{i+1})e_{i+1} \\
    & = d^{2j-2i}e_{i+1}(e_{i+1}e_{i+2}\cdots e_jxe_{j+1}e_j\cdots e_{i+1}) \tag{\text{pull down}}\\
    & = d^{2j-2i} e_{i+1}e_{i+2}\cdots e_jxe_{j+1}e_j\cdots e_{i+1}\\
    & = d^{2j-2i+2} E^l_{i,j+2}(e_{i+1}e_{i+2}\cdots e_jxe_{j+1}e_j\cdots e_{i+1})e_{i+1} \tag{\text{pull down}}\\
    & = S_{i,j}(x)e_{i+1}.
\end{align*}
\item[(2)] For $x\in A_{i,j}$, we have $[x,e_{j+1}]=0$. First, we show that $S_{i,j}$ is a homomorphism, i.e., $S_{i,j}(xy)=S_{i,j}(x)S_{i,j}(y)$ for $x,y\in A_{i,j}$. Note that the map $A_{i+2,j+2}\subset A_{i+1,j+2}\ni y\mapsto ye_{i+1}\in A_{i,j+2}$ is injective, we shall prove that $S_{i,j}(xy)e_{i+1}=S_{i,j}(x)S_{i,j}(y)e_{i+1}$.
\begin{align*}
    S_{i,j}(x)S_{i,j}(y)e_{i+1} & = d^{2j-2i+2} S_{i,j}(x)E^l_{i,j+2}(e_{i+1}e_{i+2}\cdots e_jye_{j+1}e_j\cdots e_{i+1})e_{i+1}\\
    & = d^{2j-2i} S_{i,j}(x)e_{i+1}e_{i+2}\cdots e_jye_{j+1}e_j\cdots e_{i+1}\tag{\text{pull down}}\\
    & = d^{2j-2i}\cdot d^{2j-2i}(e_{i+1}e_{i+2}\cdots e_jxe_{j+1}e_j\cdots e_{i+1})(e_{i+1}e_{i+2}\cdots e_jye_{j+1}e_j\cdots e_{i+1})\tag{\text{pull down}}\\
    & = d^{2j-2i+2} e_{i+1}e_{i+2}\cdots e_jxe_{j+1}e_jye_{j+1}e_j\cdots e_{i+1} \tag{$e_{k}e_{k\pm 1}e_{k}=d^{-2}e_{k}$}\\
    & = d^{2j-2i+2} e_{i+1}e_{i+2}\cdots e_jxe_{j+1}e_je_{j+1}ye_j\cdots e_{i+1} \tag{$[y,e_{j+1}]=0$}\\
    & = d^{2j-2i} e_{i+1}e_{i+2}\cdots e_jxe_{j+1}ye_j\cdots e_{i+1}\\
    & = d^{2j-2i} e_{i+1}e_{i+2}\cdots e_jxye_{j+1}e_j\cdots e_{i+1}\\
    & = d^{2j-2i+2} E^l_{i,j+2}(e_{i+1}e_{i+2}\cdots e_jxye_{j+1}e_j\cdots e_{i+1})e_{i+1} \tag{\text{pull down}}\\
    & = S_{i,j}(xy)e_{i+1}.
\end{align*}

Next, $S_{i,j}$ is a $*$-homomorphism. Note that $E^l_{i,j+2}$ is a $*$-homomorphism, we have
\begin{align*}
    S_{i,j}(x^*) & = d^{2j-2i+2} E^l_{i,j+2}(e_{i+1}e_{i+2}\cdots e_jx^*e_{j+1}e_j\cdots e_{i+1}) \\
    & = d^{2j-2i+2} E^l_{i,j+2}((e_{i+1}e_{i+2}\cdots e_jxe_{j+1}e_j\cdots e_{i+1})^*)\\
    & = d^{2j-2i+2} E^{l,*}_{i,j+2}(e_{i+1}e_{i+2}\cdots e_jxe_{j+1}e_j\cdots e_{i+1})\\
    & = S^*_{i,j}(x).
\end{align*}

When $x=1$, 
\begin{align*}
    e_{i+1}e_{i+2}\cdots e_je_{j+1}e_j\cdots e_{i+1} &= d^{-2}e_{i+1}e_{i+2}\cdots e_{j-1}e_{j}e_{j-1}\cdots e_{i+1} \\
    & = \cdots =d^{2(i-j+2)}e_{i+1}e_{i+2}e_{i+1} =d^{2(i-j)}e_{i+1}.
\end{align*}
Thus, $S_{i,j}(1)=d^{2}E^l_{i,j+2}(e_{i+1})=1$, i.e., $S_{i,j}$ is unital.\\

In order to prove that $S_{i,j}$ is an isomorphism, we shall show $S_{i,j}$ is injective and surjective.

If $S_{i,j}(x)=0$, then 
\begin{align*}
   0 &= S_{i,j}(x)e_{i+1}=d^{2j-2i} e_{i+1}e_{i+2}\cdots e_jxe_{j+1}e_j\cdots e_{i+1}\\
   &= d^{2j-2i}(e_{i+1}e_{i+2}\cdots e_j)xe_{j+1}(e_{i+1}e_{i+2}\cdots e_j)^*,
\end{align*}
which follows that $xe_{j+1}=0$. Since the map $A_{i,j}\ni y\mapsto ye_{j+1}\in A_{i,j+1}$ is injective, we have $x=0$.

Note that $\dim A_{i,j}=\dim A_{i+1,j+1}=\dim A_{i+2,j+2}<\infty$, so the injectivity implies the surjectivity. Thus, $S_{i,j}$ is a unital $*$-isomorphism.
\item[(3)] For $x\in A_{i,j}$, $E^r_{i,j}(x)\in A_{i,j-1}$ and $[E^r_{i,j}(x),e_j]=0$,
\begin{align*}
    S_{i,j-1}\circ E^r_{i,j}(x) & = d^{2j-2i} E^l_{i,j+1}(e_{i+1}e_{i+2}\cdots e_jE^r_{i,j}(x)e_{j+1}e_j\cdots e_{i+1})\\
    & = d^{2j-2i} E^l_{i,j+1}(e_{i+1}e_{i+2}\cdots E^r_{i,j}(x)e_je_{j+1}e_j\cdots e_{i+1})\\
    & = d^{2j-2i+2} E^l_{i,j+1}(e_{i+1}e_{i+2}\cdots E^r_{i,j}(x)e_j\cdots e_{i+1})\\
    & = d^{2j-2i+2} E^l_{i,j+1}(e_{i+1}e_{i+2}\cdots e_jxe_j\cdots e_{i+1}),\\
    E^r_{i+2,j+2}\circ S_{i,j}(x) & = E^r_{i+2,j+2}\circ E^l_{i+1,j+2}\circ S_{i,j}(x)\\
    & = E^l_{i+1,j+1}\circ E^r_{i+1,j+2}\circ S_{i,j}(x) \tag{commuting square} \\
    & = d^{2j-2i}E^l_{i+1,j+1}\circ E^r_{i+1,j+2}\circ E^l_{i,j+2}(e_{i+1}e_{i+2}\cdots e_jxe_{j+1}e_j\cdots e_{i+1})\\
    & = d^{2j-2i} E^l_{i+1,j+1}\circ E^l_{i,j+1}\circ E^r_{i,j+2}(e_{i+1}e_{i+2}\cdots e_jxe_{j+1}e_j\cdots e_{i+1})\\
    & = d^{2j-2i}E^l_{i+1,j+1}\circ E^l_{i,j+1}(e_{i+1}e_{i+2}\cdots e_jxE^r_{i,j+2}(e_{j+1})e_j\cdots e_{i+1})\\
    & = d^{2j-2i+2}E^l_{i+1,j+1}\circ E^l_{i,j+1}(e_{i+1}e_{i+2}\cdots e_jxe_j\cdots e_{i+1}).\\
    & = E^l_{i+1,j+1}(S_{i,j-1}\circ E^r_{i,j}(x))\tag{since $S_{i,j-1}\circ E^r_{i,j}(x)\in A_{i+2,j+1}$}\\
    & = S_{i,j-1}\circ E^r_{i,j}(x).
\end{align*}
Thus, $S_{i,j-1}\circ E^r_{i,j}=E^r_{i+2,j+2}\circ S_{i,j}$.\\

Note that $\{e_{i+1},\cdots, e_{j-1}\}\subset A_{i,j}$, we have
\begin{align*}
    E^l_{i,j+2}(e_kxe_n)=e_k E^l_{i,j+2}(x)e_n\qquad\text{for all }k,n\in \{i+1,\cdots,j-1\}.\tag{$*$}
\end{align*}

In order to prove that $S_{i+1,j}\circ E^l_{i,j}=E^l_{i+2}\circ S_{i,j}$, by Remark \ref{lambda lattice properties} (2), we shall show that $S_{i+1,j}\circ E^l_{i,j}(x)e_{i+2}=E^l_{i+2,j+2}\circ S_{i,j}(x)e_{i+2}$ for all $x\in A_{i,j}$.
\begin{align*}
    S_{i+1,j}\circ E^l_{i,j}(x)e_{i+2} & = d^{2j-2i} E^l_{i+1,j+2}(e_{i+2}\cdots e_j E^l_{i,j}(x)e_{j+1}\cdots e_{i+2})e_{i+2}\\
    & = d^{2j-2i-2}e_{i+2}\cdots e_j E^l_{i,j}(x)e_{j+1}\cdots e_{i+2}, \tag{pull down}\\
    E^l_{i+2,j+2}\circ S_{i,j}(x)e_{i+2} & = d^{2j-2i+2} E^l_{i+2,j+2}(E^l_{i,j+2}(e_{i+1}\cdots e_j x e_{j+1}\cdots e_{i+1}))e_{i+2}\\
    & = d^{2j-2i+2} e_{i+2}E^l_{i,j+2}(e_{i+1}e_{i+2}\cdots e_j x e_{j+1}\cdots e_{i+2}e_{i+1})e_{i+2} \tag{by ($*$)}\\
    & =d^{2j-2i+2} E^l_{i,j+2}(e_{i+2}e_{i+1}e_{i+2}\cdots e_j x e_{j+1}\cdots e_{i+2}e_{i+1}e_{i+2})\\
    & =d^{2j-2i-2} E^l_{i,j+2}(e_{i+2}\cdots e_j x e_{j+1}\cdots e_{i+2})\\
    & = d^{2j-2i-2} e_{i+2}e_{i+1}\cdots e_j E^l_{i,j+2}(x) e_{j+1}\cdots e_{i+1}e_{i+2} \tag{by ($*$)}\\
    & = d^{2j-2i-2} e_{i+2}e_{i+1}\cdots e_j E^l_{i,j}(x) e_{j+1}\cdots e_{i+1}e_{i+2} \tag{commuting square}\\
    & = S_{i+1,j}\circ E^l_{i,j}(x)e_{i+2}.
\end{align*}
Thus, $S_{i+1,j}\circ E^l_{i,j}=E^l_{i+2}\circ S_{i,j}$. 

\item[(4)] This is a particular case of (3) by the property of conditional expectation.

\item[(5)] For $x\in A_{i,j}$, $[x,e_{j+1}]=0$, 
\begin{align*}
    S_{i,j}(x)e_{i+1}e_{i+2}\cdots e_{j+1} & = d^{2j-2i+2} E^l_{i,j+2}(e_{i+1}e_{i+2}\cdots e_jxe_{j+1}e_j\cdots e_{i+2}e_{i+1})e_{i+1}e_{i+2}\cdots e_{j+1} \\
    & = d^{2j-2i}(e_{i+1}e_{i+2}\cdots e_jxe_{j+1}e_j\cdots e_{i+2}e_{i+1})e_{i+2}\cdots e_{j+1} \tag{pull down}\\
    & = d^{2j-2i-2}(e_{i+1}e_{i+2}\cdots e_jx)\cdot e_{j+1}\cdots e_{i+2}\cdots e_{j+1}\\
    & = \cdots\\
    & = e_{i+1}e_{i+2}\cdots e_jxe_{j+1}\\
    & = e_{i+1}e_{i+2}\cdots e_je_{j+1}x.
\end{align*}

\item[(6)] By (3) and Definition \ref{Def:multistep conditional expectation}.

\item[(7)] Note that the map $A_{i+2,j+2}\subset A_{i+1,j+2}\ni y\mapsto ye_{i+1}\in A_{i,j+2}$ is injective, we shall prove that $S_{i,j}(e_k)e_{i+1}=e_{k+2}e_{i+1}$. For $i+1\le k\le j-1$,
\begin{align*}
    S_{i,j}(e_k)e_{i+1} & = d^{2j-2i+2} E^l_{i,j+2}(e_{i+1}e_{i+2}\cdots e_je_ke_{j+1}e_j\cdots e_{i+1})e_{i+1}\\
    & = d^{2j-2i} e_{i+1}e_{i+2}\cdots e_je_ke_{j+1}e_j\cdots e_{i+1} \tag{\text{pull down}}\\
    & = d^{2j-2i} e_{i+1}\cdots e_{k-1}e_ke_{k+1}e_k e_{k+2}\cdots e_je_{j+1}e_j\cdots e_{k+2}e_{k+1}\cdots e_{i+1} \tag{$[e_i,e_j]=0$ for $|i-j|\ge 2$}\\
    & = d^{2j-2i} e_{i+1}\cdots e_{k-1}(e_ke_{k+1}e_k) (e_{k+2}\cdots e_je_{j+1}e_j\cdots e_{k+2})e_{k+1}\cdots e_{i+1}\\
    & = d^{2k-2i} e_{i+1}\cdots e_{k-1}e_k e_{k+2}e_{k+1}e_k\cdots e_{i+1} \tag{$e_te_{t\pm1}e_t=d^{-2}e_t$}\\
    & = d^{2k-2i} e_{k+2} e_{i+1}\cdots e_{k-1}e_k e_{k+1}e_k\cdots e_{i+1}\\
    & = e_{k+2}e_{i+1} \tag{$e_te_{t\pm1}e_t=d^{-2}e_t$}
\end{align*}
\end{proof}

\begin{definition} [$2n$-shift map]
Define $S^{(n)}_{i,j}:A_{i,j}\to A_{i+2n,j+2n}$ by $$S^{(n)}_{i,j}=S_{i+2(n-1),j+2(n-1)}\circ S^{(n-1)}_{i,j}=S_{i+2(n-1),j+2(n-1)}\circ S_{i+2(n-2),j+2(n-2)}\circ \cdots \circ S_{i,j}$$
to be the \textbf{$2n$-shift map}. 
\end{definition}

\begin{proposition}\label{2n-shift map prop} The followings are the properties of the $2n$-shift map.
\begin{compactenum}[\rm (1)]
\item $S^{(n)}_{i,j}$ is a unital $*$-isomorphism.
\item $($commuting parallelogram$)$ $S^{(n)}_{i,j-1}\circ E^{r,k}_{i,j}=E^{r,k}_{i+2n,j+2n}\circ S^{(n)}_{i,j}$ and $S^{(n)}_{i+1,j}\circ E^{l,k}_{i,j}=E^{l,k}_{i+2n,j+2n}\circ S^{(n)}_{i,j}$.
\item $S^{(n)}_{i,j+k}(x)=S^{(n)}_{i,j}(x)$ for $x\in A_{i,j}$ and $S^{(n)}_{i-k,j}(x)=S^{(n)}_{i,j}(x)$ for $x\in A_{i,j}$.
\item $($shift$)$  For $x\in A_{i,j}$, $e^i_{j-i,n}x=S^{(n)}_{i,j}(x)e^i_{j-i,n}$. By taking adjoint, $xe^{i,*}_{j-i,n}=e^{i,*}_{j-i,n}S^{(n)}_{i,j}(x)$.
\item $S^{(n)}_{i,j}$ is trace-preserving.
\end{compactenum}
\end{proposition}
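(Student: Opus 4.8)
The plan is a single induction on $n$, with base case $n=1$ supplied verbatim by Proposition~\ref{2-shift map prop}, and inductive step driven by the factorizations
\[
S^{(n)}_{i,j}=S_{i+2(n-1),\,j+2(n-1)}\circ S^{(n-1)}_{i,j}=S^{(n-1)}_{i+2,\,j+2}\circ S_{i,j};
\]
within each $n$ I would establish the items in the order (1), (3), (2), (5), (4), since the later ones call on the earlier ones at the same level. Item (1) is immediate: a composite of unital $*$-isomorphisms is one. For (3), first iterate Proposition~\ref{2-shift map prop}(4) to get $S_{i,j+k}(x)=S_{i,j}(x)$ and $S_{i-k,j}(x)=S_{i,j}(x)$ for $x\in A_{i,j}$; then for $x\in A_{i,j}\subseteq A_{i,j+k}$,
\[
S^{(n)}_{i,j+k}(x)=S_{i+2(n-1),\,j+k+2(n-1)}\bigl(S^{(n-1)}_{i,j}(x)\bigr)=S_{i+2(n-1),\,j+2(n-1)}\bigl(S^{(n-1)}_{i,j}(x)\bigr)=S^{(n)}_{i,j}(x),
\]
using (3) at level $n-1$ for the first equality and the single-shift statement above for the second; the first index is symmetric.

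For (2), I would first upgrade the commuting parallelogram of Proposition~\ref{2-shift map prop}(3) to its $k$-step form $S_{i,j-k}\circ E^{r,k}_{i,j}=E^{r,k}_{i+2,j+2}\circ S_{i,j}$ (and the vertical analogue) by induction on $k$, writing $E^{r,k}_{i,j}=E^{r,k-1}_{i,j-1}\circ E^{r}_{i,j}$ and combining the $(k-1)$-case with Proposition~\ref{2-shift map prop}(3)--(4). Then, using (3) to read $S^{(n)}_{i,j-1}$ on the image of $E^{r,k}_{i,j}$ (which lies in $A_{i,j-k}$) as $S^{(n)}_{i,j-k}$, the induction on $n$ goes
\[
S^{(n)}_{i,j-k}\circ E^{r,k}_{i,j}=S_{i+2(n-1),\,j-k+2(n-1)}\circ E^{r,k}_{i+2(n-1),\,j+2(n-1)}\circ S^{(n-1)}_{i,j}=E^{r,k}_{i+2n,\,j+2n}\circ S^{(n)}_{i,j},
\]
where the first equality unfolds $S^{(n)}$ and applies the inductive hypothesis for (2), and the second is the $k$-step single-shift identity; the vertical case is symmetric. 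Item (5) then falls out of (2): since $A_{i,i}=\mathbb{C}$, the trace on $A_{i,j}$ equals the full horizontal composite $E^{r,j-i}_{i,j}$ (Definition~\ref{Def:multistep conditional expectation}), so
\[
\tr_{A_{i+2n,j+2n}}\circ S^{(n)}_{i,j}=E^{r,j-i}_{i+2n,j+2n}\circ S^{(n)}_{i,j}=S^{(n)}_{i,i}\circ E^{r,j-i}_{i,j}=\tr_{A_{i,j}},
\]
because $S^{(n)}_{i,i}$ is a unital map $\mathbb{C}\to\mathbb{C}$, hence the identity.

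The substance is in (4). I would induct on $n$ via $S^{(n)}_{i,j}=S_{i+2(n-1),\,j+2(n-1)}\circ S^{(n-1)}_{i,j}$: for $x\in A_{i,j}$ the inductive hypothesis gives $e^i_{j-i,n-1}\,x=S^{(n-1)}_{i,j}(x)\,e^i_{j-i,n-1}$, and since $S^{(n-1)}_{i,j}(x)\in A_{i+2(n-1),\,j+2(n-1)}$ (a level of the same width $(j+2(n-1))-(i+2(n-1))=j-i$), Proposition~\ref{2-shift map prop}(5) there gives $e^{i+2(n-1)}_{j-i,1}\,S^{(n-1)}_{i,j}(x)=S^{(n)}_{i,j}(x)\,e^{i+2(n-1)}_{j-i,1}$; multiplying the two,
\[
e^{i+2(n-1)}_{j-i,1}\,e^i_{j-i,n-1}\,x=S^{(n)}_{i,j}(x)\,e^{i+2(n-1)}_{j-i,1}\,e^i_{j-i,n-1}.
\]
What remains — and what I expect to be the main obstacle — is the purely diagrammatic identity $e^i_{j-i,n}=(\text{nonzero scalar})\cdot e^{i+2(n-1)}_{j-i,1}\,e^i_{j-i,n-1}$, obtained by regrouping the defining product of Jones projections for $e^i_{j-i,n}$ into single-shift elements $e^{\,\cdot}_{\,\cdot,1}$ using only the Temperley--Lieb--Jones relations (TLJ1)--(TLJ3); this bookkeeping is the one genuinely fiddly point of the proposition. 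Granting it, the display collapses to $e^i_{j-i,n}\,x=S^{(n)}_{i,j}(x)\,e^i_{j-i,n}$, and taking adjoints — using that $S^{(n)}_{i,j}$ is $*$-preserving (item (1)) and that $(e^i_{j,k})^{*}e^i_{j,k}=e^{i+j}_{0,k}$ — yields the companion formula $x\,e^{i,*}_{j-i,n}=e^{i,*}_{j-i,n}\,S^{(n)}_{i,j}(x)$, completing the induction.
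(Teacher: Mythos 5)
Your treatment of items (1), (2), (3), and (5) is fine and in fact fills in details the paper leaves to the reader (the paper simply asserts these follow from Proposition \ref{2-shift map prop}); the order of deduction and the use of $E^{r,j-i}_{i,j}=\tr$ for (5) match the intended argument.

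The gap is in (4), and it is not the "bookkeeping" you describe. The identity you need,
\[
e^i_{j-i,n}=(\text{nonzero scalar})\cdot e^{i+2(n-1)}_{j-i,1}\,e^i_{j-i,n-1},
\]
is false. Two Temperley--Lieb diagrams are proportional only if they induce the same pairing of boundary points, and these two do not: $e^i_{j-i,n}$ has \emph{nested} cups (resp.\ caps) of depth $n$ at the bottom (resp.\ top), whereas the product of single-shift elements produces $n$ \emph{side-by-side} cups and caps. (Already for $i=0$, $j-i=1$, $n=2$: the left side pairs bottom points $2\text{--}5,\,3\text{--}4$, the right side pairs $2\text{--}3,\,4\text{--}5$.) No application of (TLJ1)--(TLJ3) can repair this, since the relations preserve the diagram up to loops. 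The correct statement, which is the content of Lemmas \ref{lemma 1} and \ref{lemma 2} in the paper, is
\[
e^i_{j-i,n}=a^i_{j-i,n}\,\bigl(e^{i+2(n-1)}_{j-i,1}\cdots e^{i}_{j-i,1}\bigr)\,b^i_{j-i,n}
\]
with genuinely non-scalar elements $a^i_{j-i,n}\in A_{i,i+2n}$ and $b^i_{j-i,n}\in A_{j,j+2n}$ that renest the caps and cups. Once you have this, your intermediate identity (iterating Proposition \ref{2-shift map prop}(5)) does finish the proof, but only after invoking the \emph{standard condition}: since $S^{(n)}_{i,j}(x)\in A_{i+2n,j+2n}$ and $a^i_{j-i,n}\in A_{i,i+2n}$ one has $[S^{(n)}_{i,j}(x),a^i_{j-i,n}]=0$, and since $x\in A_{i,j}$ and $b^i_{j-i,n}\in A_{j,j+2n}$ one has $[x,b^i_{j-i,n}]=0$, so $a$ and $b$ can be slid past $S^{(n)}_{i,j}(x)$ and $x$ respectively. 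Your proposal never invokes standardness, and it is essential here; this is the one place in the proposition where the hypothesis that the $\lambda$-lattice is standard (and not merely a $\lambda$-lattice) is used.
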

\begin{proof} (1),(2),(3),(5) follow from Proposition \ref{2-shift map prop}. 

\noindent(4) First, we show that $e^{i+2(n-1)}_{j-i,1}e^{i+2(n-2)}_{j-i,1}\cdots e^{i}_{j-i,1}x=S^n_{i,j}(x)e^{i+2(n-1)}_{j-i,1}e^{i+2(n-2)}_{j-i,1}\cdots e^{i}_{j-i,1}$ for $x\in A_{i,j}$.

\begin{align*}
    S^n_{i,j}(x)e^{i+2(n-1)}_{j-i,1}e^{i+2(n-2)}_{j-i,1}\cdots e^{i}_{j-i,1} & = S_{i+2(n-1),j+2(n-1)}(S^{(n-1)}_{i,j}(x))e^{i+2(n-1)}_{j-i,1}e^{i+2(n-2)}_{j-i,1}\cdots e^{i}_{j-i,1}\\
    & = e^{i+2(n-1)}_{j-i,1}S^{(n-1)}_{i,j}(x)e^{i+2(n-2)}_{j-i,1}\cdots e^{i}_{j-i,1}\\
    & = \cdots\\
    & =  e^{i+2(n-1)}_{j-i,1}e^{i+2(n-2)}_{j-i,1}\cdots e^{i}_{j-i,1}x.
\end{align*}

Second, $e^i_{j-i,n}=a^i_{j-i,n}e^{i+2(n-1)}_{j-i,1}e^{i+2(n-2)}_{j-i,1}\cdots e^{i}_{j-i,1}b^i_{j-i,n}$ with $a^i_{j-i,n}\in A_{i,i+2n}$ and $b^i_{j-i,n}\in A_{j,j+2n}$, which will be showed below in Lemma \ref{lemma 1} and \ref{lemma 2}. Then by the standard condition, since $x\in A_{i,j}$ and $S^{(n)}(x)\in A_{i+2n,j+2n}$, we have $[S^{(n)}_{i,j}(x),a^i_{j-i,n}]=0$ and $[x,b^i_{j-i,n}]=0$, which follows that 
\begin{align*}
   S^{(n)}_{i,j}(x)e^i_{j-i,n} & = S^{(n)}_{i,j}(x)a^i_{j-i,n}e^{i+2(n-1)}_{j-i,1}e^{i+2(n-2)}_{j-i,1}\cdots e^{i}_{j-i,1} b^i_{j-i,n}\\
   & = a^i_{j-i,n}S^{(n)}_{i,j}(x)e^{i+2(n-1)}_{j-i,1}e^{i+2(n-2)}_{j-i,1}\cdots e^{i}_{j-i,1} b^i_{j-i,n}\\
   & = a^i_{j-i,n}e^{i+2(n-1)}_{j-i,1}e^{i+2(n-2)}_{j-i,1}\cdots e^{i}_{j-i,1}x b^i_{j-i,n}\\
   & = a^i_{j-i,n}e^{i+2(n-1)}_{j-i,1}e^{i+2(n-2)}_{j-i,1}\cdots e^{i}_{j-i,1} b^i_{j-i,n}x\\
   & = e^i_{j-i,n}x.
\end{align*}
\end{proof}

\subsection{String diagram explanation}\label{TLJ diagram explanation}
In this section, we use Temperley-Lieb-Jones (TLJ) string diagram to explain the elements in $A_{i,j}$, horizontal (right) and vertical (left) conditional expectations, the Jones projections, $2n$-shift maps and their properties. 

In the following sections, we will use these diagrams to do the algebraic computation and readers may interpret these diagrams directly into the algebraic computations by looking at the dictionary here. \\

\begin{compactenum}[({$\lambda$}1)]
\item Element $x\in A_{i,j}$. $A_{i,j}$ is a (rectangular) box space with $j$ shaded/unshaded strands where the left $i$ strands are straight strands and together with a $j-i$ box space. We set the left part of left most strand to be always unshaded; the shading on the left part of the $j-i$ box space depends on the parity of $i$: 
\[

\]

\noindent List of the formulas used in above equalities:

\noindent \circle1: top uses ($\lambda$8) and bottom uses Jones projection property; \hfill \circle2: uses ($\lambda$9);

\noindent \circle3: middle uses ($\lambda$8) and bottom uses Jones projection property; \hfill \circle4: uses ($\lambda$9);

\noindent \circle5: uses ($\lambda$7); \hfill \circle6: uses ($\lambda$9);

\noindent \circle7: uses ($\lambda$8). 
\end{proof}

\subsection{From standard $\lambda$-lattice to pivotal planar tensor category} \label{std lattice to planar cat}
\subsubsection{Planar tensor category}\label{planar tensor category}
\begin{definition}
A planar tensor category $\mathcal{A}_0$ has the following properties.
\begin{compactenum}[\rm (a)]
\item $\mathcal{A}_0$ is a 2-shaded category with objects $[n,+],[n,-]$, $n\in\mathbb{Z}_{\ge 0}$, where $1^+:=[0,+],1^-:=[0,-]$ are simple and the tensor unit $1_{\mathcal{A}_0}=1^+\oplus 1^-$, which means $\mathcal{A}_0$ is 2-shaded. 
\item $\mathcal{A}_0$ is a strict tensor category. The tensor product of objects are
$$\begin{tabu} to 0.8\textwidth { | X[c] | X[c] | X[c] | X[c] | X[c] | }
 \hline
  $[m,?]\otimes [n,?]$ & $[2i,+]$ & $[2i+1,+]$ & $[2i,-]$ & $[2i+1,-]$ \\
 \hline
 $[n,+]$  & $[2i+n,+]$  & 0 & 0 & $[2i+1+n,-]$  \\
\hline
$[n,-]$  & 0 & $[2i+1+n,+]$ & $[2i+n,-]$ & 0  \\
\hline
\end{tabu}$$
There is an involution $\overline{(\cdot)}$ such that $\overline{[2i,\pm]}=[2i,\pm]$, $\overline{[2i+1,+]}=[2i+1,-]$ and $\overline{\overline{(\cdot)}}=\id$.
\item Only $\mathcal{A}_0([m,+]\to [m\pm 2i,+])$ and $\mathcal{A}_0([m,-]\to[m\pm 2i,-])$ are non-empty, $m,i\in\mathbb{Z}_{\ge 0}$, and $\mathcal{A}_0([m,+]\to[m,+])$ and $\mathcal{A}_0([m,-]\to[m,-])$ are finite-dimensional ${\rm C}^*$-algebras. The tensor product of morphisms should match the shading types. 

\item $\mathcal{A}_0$ is a dagger category. There is a dagger structure $\dagger$ such that $[n,+]^\dagger=[n,+],[n,-]^\dagger=[n,-]$ and a anti-linear map $\mathcal{A}_0([m,?]\to [n,?])\to \mathcal{A}_0([n,?]\to [m,?])$ with $\dagger^2=\id$ such that morphism $(x\circ y)^\dagger= y^\dagger\circ x^\dagger$ and $(x\otimes y)^\dagger = x^\dagger\otimes y^\dagger$. In fact, $\mathcal{A}_0$ is a $\rm C^*$ category, see \cite[\S3.4]{CHPS18}.

\item $\mathcal{A}_0$ is rigid. For $X\in \mathcal{A}_0$, there exist
\begin{compactenum}[(1)]
\item $\ev_{X}:\overline{X}\otimes X\to 1^?$, where $?=+$ if $X$ is unshaded on the right, i.e., $X=1^+\otimes X$, $?=-$ if $X$ is shaded on the right, i.e., $X=1^-\otimes X$;
\item $\coev_{X}:1^?\to X\otimes\overline{X}$, where $?=+$ if $X$ is unshaded on the left, $?=-$ if $X$ is shaded on the left.
\end{compactenum}
such that 
\begin{compactenum}[$\bullet$]
\item $(\id_X\otimes \ev_X)\circ (\coev_X\otimes \id_X)=\id_X$
\item $(\ev_X\otimes\id_{\overline{X}})\circ (\id_{\overline{X}}\otimes \coev_X)=\id_{\overline{X}}$.
\item $\ev_{\overline{X}}:=(\coev_X)^\dagger$ and $\coev_{\overline{X}}=(\coev_X)^\dagger$.
\end{compactenum}
In other word, $\overline{(\cdot)}$ is a unitary dual functor, which will be discussed in \S\ref{unitary dual functor}.
\end{compactenum} 
\end{definition}

\begin{definition}\label{pivotal planar tensor category}
We call a planar tensor category $\mathcal{A}_0$ \textbf{pivotal}, if the left trace $\Tr_L$ and right trace $\Tr_R$ defined as follows are faithful normal tracial and equal. For $X=[2k+1,+]$ and $f\in \mathcal{A}_0(X\to X)$, since $\overline{[2k+1,+]}=[2k+1,-]$, we define
\begin{align*}
    \ev_{X}\circ (\id_{\overline{X}}\otimes f)\circ \ev_{X}^\dagger &=: \Tr_L(f)\id_{1^+}\\
    \coev_{X}^\dagger\circ (f\otimes \id_{\overline{X}})\circ \coev_{X} &=: \Tr_R(f)\id_{1^-}
\end{align*}
We require that $\Tr_R(f)=\Tr_L(f)$. Similar for other three cases $[2k,+],[2k,-]$ and $[2k+1,-]$.

And there exists a $d>0$ such that $\ev_{\overline{[n,?]}}\circ \coev_{[n,?]}=d^{2n}\cdot 1^?$, $?=+,-$.
\end{definition}

\begin{remark}
The traces $\Tr_L,\Tr_R$ are defined in the sense of Definition \ref{Def:spherical trace}.
\end{remark}

\begin{definition}\label{TLJ(d) category}
The 2-shaded Temperley-Lieb-Jones multitensor category $\mathcal{TLJ}(d)$ is a planar tensor category with the endomorphism spaces being 2-shaded Temperley-Lieb-Jones algebras with modulus $d$, namely, $\End([n,+])$ is a 2-shaded Temperley-Lieb-Jones algebra with $n$ points on one side and unshaded on the left; $\End([n,-])$ is a 2-shaded Temperley-Lieb algebra with $n$ points on one side and shaded on the left.
\end{definition}

\begin{remark} \label{main idea remark endo,domain,range}
The morphisms in $\mathcal{A}_0$ are determined by its representation in endomorphism and its domain and range.

There is a canonical isomorphism $\phi:\mathcal{A}_0([m,+],[m+ 2i,+])\to \mathcal{A}_0([m+i,?]\to [m+i,?])$ by Frobenius reciprocity, where $?=+$ if $i$ is even and $?=-$ if $i$ is odd.

\[\phi:
\begin{tikzpicture}[baseline=-.1cm]
\draw (0,-.7) -- (0,0);
\draw (-.1,0) -- (-.1,.7);
\draw (.1,0) -- (.1,.7);

\nbox{unshaded}{(0,0)}{.3}{0}{0}{$x$};
\node at (0,-.9) {\tiny{$m$}};
\node at (-.2,.9) {\tiny{$m\+ i$}};
\node at (.2,.5) {\tiny{$i$}};
\end{tikzpicture}
\mapsto
\begin{tikzpicture}[baseline=-.1cm]
\draw (0,-.7) -- (0,0);
\draw (-.1,0) -- (-.1,.7);
\draw (.1,0) -- (.1,.5);
\draw (.1,.5) arc (180:0:.15);
\draw (.4,-.7) -- (.4,.5);

\nbox{unshaded}{(0,0)}{.3}{0}{0}{$x$};
\node at (0,-.9) {\tiny{$m$}};
\node at (-.1,.9) {\tiny{$m\+ i$}};
\node at (.4,-.9) {\tiny{$i$}};
\end{tikzpicture}
\qquad\qquad \phi^{-1}:\ 
\begin{tikzpicture}[baseline=-.1cm]
\draw (0,0) -- (0,.7);
\draw (-.1,-.7) -- (-.1,0);
\draw (.1,-.7) -- (.1,0);
\nbox{unshaded}{(0,0)}{.3}{0}{0}{$x$};
\node at (0,.9) {\tiny{$m\+i$}};
\node at (-.1,-.9) {\tiny{$m$}};
\node at (.1,-.9) {\tiny{$i$}};
\end{tikzpicture}
\mapsto
\begin{tikzpicture}[baseline=-.1cm]
\draw (0,0) -- (0,.7);
\draw (-.1,-.7) -- (-.1,0);
\draw (.1,-.5) -- (.1,0);
\draw (.1,-.5) arc (-180:0:.15);
\draw (.4,-.5) -- (.4,.7);
\nbox{unshaded}{(0,0)}{.3}{0}{0}{$x$};
\node at (0,.9) {\tiny{$m\+i$}};
\node at (-.1,-.9) {\tiny{$m$}};
\node at (.4,.9) {\tiny{$i$}};
\end{tikzpicture}
\]

For morphism $x\in \mathcal{A}([m,?]\to [n,?])$, we can write a triple $(\phi(x);[m,?],[n,?])$ to represent $x$, where $\phi(x)\in \End([\frac{m+n}{2},?])$, which is called the \textbf{endomorphism representation part} of $x$. In the following context, we simply write $x$ instead of $\phi(x)$ in the triple $(x;[m,?],[n,?])$. 
\end{remark}

\subsubsection{From standard $\lambda$-lattice to pivotal planar tensor category}
\label{triple represent morphism}
We regard the elements in algebra $A_{i,j}$ as endomorphisms in the category
and the idea in Remark \ref{main idea remark endo,domain,range} gives us the way to represent the morphism by using its corresponding endomorphism, source and target,
then we can construct a pivotal planar tensor category from a given standard $\lambda$-lattice.

\begin{definition}\label{Def:std lambda lattice to planar tensor category}
Let $A=(A_{i,j})_{0\le i\le j}$ be a standard $\lambda$-lattice. We define a planar tensor category $\mathcal{A}_0$ from $A$ as follows.
\begin{compactenum}[(a)]
\item The objects of $\mathcal{A}_0$ are the symbols $[n,+],[n,-]$ for $n\in\mathbb{Z}_{\ge 0}$.
\item Given $n\ge 0$, define $\mathcal{A}_0([n,+]\to [n,+]):= A_{0,n}$ and $\mathcal{A}_0([n,-]\to [n,-]):= A_{1,n+1}$. Define $1:=[0,+]\oplus [0,-]$.
\item The identity morphism in $\mathcal{A}_0([n,+]\to [n,+])$ is $1_{A_{0,n}}$ and in $\mathcal{A}_0([n,-]\to [n,-])$ is $1_{A_{1,n+1}}$.
\item For $(x;[n,+],[n+2k,+])$ (or $(x;[n+2k,+],[n,+])$), we define the dagger structure as $(x;[n,+],[n+2k,+])^\dagger := (x^*;[n+2k,+],[n,+])$, where $x,x^*\in A_{0,n+k}$; for $(x;[n,-],[n+2k,-])$ (or $(x;[n+2k,-],[n,-])$), we define $(x;[n,-],[n+2k,-])^\dagger := (x^*;[n+2k,-],[n,-])$, where $x,x^*\in A_{1,n+k+1}$.
\item We define composition in six cases.
\begin{compactenum}[{(C}1{)}]
\item $(y;[n+2i,+],[n+2i+2j,+])\circ (x;[n,+],[n+2i,+])= (d^i E^{r,i}_{0,n+2i+j}(yxe^n_{j,i});[n,+],[n+2i+2j,+])$, where $x\in A_{0,n+i},y\in A_{0,n+2i+j}$ and $d^i E^{r,i}_{0,n+2i+j}(yxe^n_{j,i})\in A_{0,n+i+j}$. 
\item $(y;[n+2i+2j,+],[n+2i,+])\circ (x;[n,+],[n+2i+2j,+])= (d^i E^{r,i+j}_{0,n+2i+j}(yxe^{n,*}_{j,i});[n,+],[n+2i,+])$, where $x\in A_{0,n+i+j},y\in A_{0,n+2i+j}$ and $d^i E^{r,i+j}_{0,n+2i+j}(yxe^{n,*}_{j,i})\in A_{0,n+i}$. 
\item $(y;[n,+],[n+2i+2j,+])\circ (x;[n+2i,+],[n,+])= (d^i ye^{n,*}_{j,i}x;[n+2i,+],[n+2i+2j,+])$, where $x\in A_{0,n+i},y\in A_{0,n+i+j}$ and $d^i ye^{n,*}_{j,i}x\in A_{0,n+2i+j}$. 
\item $(y;[n+2i,-],[n+2i+2j,-])\circ (x;[n,-],[n+2i,-])= (d^i E^{r,i}_{1,n+2i+j+1}(yxe^{n+1}_{j,i});[n,+],[n+2i+2j,+])$, where $x\in A_{1,n+i+1},y\in A_{1,n+2i+j+1}$ and $d^i E^{r,i}_{1,n+2i+j+1}(yxe^{n+1}_{j,i})\in A_{1,n+i+j+1}$. 
\item $(y;[n+2i+2j,-],[n+2i,-])\circ (x;[n,-],[n+2i+2j,-])= (d^i E^{r,i+j}_{1,n+2i+j+1}(yxe^{n+1,*}_{j,i});[n,-],[n+2i,-])$, where $x\in A_{1,n+i+j+1},y\in A_{1,n+2i+j+1}$ and $d^i E^{r,i+j}_{1,n+2i+j+1}(yxe^{n+1,*}_{j,i})\in A_{1,n+i+1}$. 
\item $(y;[n,-],[n+2i+2j,-])\circ (x;[n+2i,-],[n,-])= (d^i ye^{n+1,*}_{j,i}x;[n+2i,-],[n+2i+2j,-])$, where $x\in A_{1,n+i+1},y\in A_{1,n+i+j+1}$ and $d^i ye^{n+1,*}_{j,i}x\in A_{1,n+2i+j+1}$. 
\end{compactenum}

If $x\in \mathcal{A}_0([n+2i,-]\to [n,-])$ and $y\in \mathcal{A}_0([n,-]\to [n+2i+2j,-])$, we define
$$y\circ x:= d^i ye^{n+1,*}_{j,i}x\in A_{1,n+2i+j+1}=\mathcal{A}_0([n+2i,-]\to [n+2i+2j,-]).$$
We define the composition $x^\dagger\circ y^\dagger:= (y\circ x)^\dagger$, which defines composition
$$\mathcal{A}_0([n+2i+2j,-]\to [n,-])\otimes \mathcal{A}_0([n,-]\to [n+2i,-])\to \mathcal{A}_0([n+2i+2j,-]\to [n+2i,-]).$$

\end{compactenum}
\end{definition}
It has been proved in \cite[\S3.4]{CHPS18} that the composition and dagger structure are well defined as Markov tower, and $\mathcal{A}_0$ is a ${\rm C}^*$ category.\\

Before we define the tensor product of morphisms, we use the string diagrams to explain the composition. The box space in the following diagram is always the endomorphism representation of corresponding morphism.

\begin{center}
\begin{tabular}{ c c c c c }
\begin{tikzpicture}[baseline=0.5cm]
\draw (0.4,-1) -- (0.4,2);
\draw (0.6,0) -- (0.6,2);
\draw (1,0) -- (1,2);  
\draw (1.4,0) -- (1.4,2);

\draw (1,-1) arc (180:0:.2cm);
\draw (.6,0) arc (-180:0:.2cm);
\draw (.6,-1)  .. controls ++(90:.35cm) and ++(270:.35cm) .. (1.4,0);

\draw[orange,thick] (1.7,-1) -- (1.7,2);
\draw[orange,thick] (1.4,2) arc (180:0:.15cm);
\draw[orange,thick] (1.4,-1) arc (-180:0:.15cm);

\nbox{unshaded}{(.9,1.5)}{.3}{0.4}{0.4}{$y$};
\nbox{unshaded}{(.5,.5)}{.3}{0}{0}{$x$};
  
\draw[dashed] (0.9,1) -- (2,1);
\draw[dashed] (0,0) -- (2,0);
\draw[dashed] (0,-1) rectangle (2,2);
\node at (0.4,2.2) {\tiny{$n$}};
\node at (0.8,2.2) {\tiny{$i\+j$}};
\node at (.7,1) {\tiny{$i$}};
\node at (.4,-1.2) {\tiny{$n$}};
\node at (.3,1) {\tiny{$n$}};
\node at (.6,-.2) {\tiny{$i$}};
\node at (1,-1.2) {\tiny{$i$}};
\node at (.6,-1.2) {\tiny{$j$}};
\end{tikzpicture} 
& &
\begin{tikzpicture}[baseline=0.5cm]
\draw (0.4,-1) -- (0.4,2);
\draw (0.6,0) -- (0.6,2);
\draw (1,0) -- (1,2);  
\draw (1.4,0) -- (1.4,2);

\draw (.6,-1) arc (180:0:.2cm);
\draw (1,0) arc (-180:0:.2cm);
\draw (1.4,-1)  .. controls ++(90:.35cm) and ++(270:.35cm) .. (0.6,0);

\draw[orange,thick] (1.7,-1) -- (1.7,2);
\draw[orange,thick] (1.4,2) arc (180:0:.15cm);
\draw[orange,thick] (1.4,-1) arc (-180:0:.15cm);
 
\draw[orange,thick] (1.8,-1) -- (1.8,2);
\draw[orange,thick] (1,2) arc (180:0:.4cm);
\draw[orange,thick] (1,-1) arc (-180:0:.4cm);

\nbox{unshaded}{(.9,1.5)}{.3}{0.4}{0.4}{$y$};
\nbox{unshaded}{(.7,.5)}{.3}{0.2}{0.2}{$x$};
  
\draw[dashed] (1,1) -- (2,1);
\draw[dashed] (0,0) -- (2,0);
\draw[dashed] (0,-1) rectangle (2,2);
\node at (0.4,2.2) {\tiny{$n$}};
\node at (0.6,2.2) {\tiny{$i$}};
\node at (.8,1) {\tiny{$j\+ i$}};
\node at (.4,-1.2) {\tiny{$n$}};
\node at (.3,1) {\tiny{$n$}};
\node at (.6,-.2) {\tiny{$j$}};
\node at (1.4,-.2) {\tiny{$i$}};
\node at (.6,-1.2) {\tiny{$i$}};
\end{tikzpicture}
& &
\begin{tikzpicture}[baseline=0.5cm]
\draw (0.4,-1) -- (0.4,2);
\draw (0.6,0) -- (0.6,-1);
\draw (0.6,1) -- (0.6,2);
\draw (1,1) -- (1,2);  
\draw (1,0) -- (1,-1);  
\draw (1.4,0) -- (1.4,-1);
\draw (1.4,2) -- (1.4,1);

\draw (.6,0) arc (180:0:.2cm);
\draw (1,1) arc (-180:0:.2cm);
\draw (1.4,0) .. controls ++(90:.35cm) and ++(270:.35cm) .. (.6,1);

\nbox{unshaded}{(.7,1.5)}{.3}{0.2}{0.2}{$y$};
\nbox{unshaded}{(.5,-.5)}{.3}{0}{0}{$x$};
  
\draw[dashed] (0.6,1) -- (1.6,1);
\draw[dashed] (0,0) -- (1.6,0);
\draw[dashed] (0,-1) rectangle (1.6,2);
 
\node at (0.4,2.2) {\tiny{$n$}};
\node at (0.8,2.2) {\tiny{$i\+j$}};
\node at (1.4,-1.2) {\tiny{$j$}};
\node at (.4,-1.2) {\tiny{$n$}};
\node at (.3,1) {\tiny{$n$}};
\node at (1.4,2.2) {\tiny{$i$}};
\node at (1,-1.2) {\tiny{$i$}};
\node at (.6,-1.2) {\tiny{$i$}};
\end{tikzpicture}\\
(C1) & & (C2) & & (C3)
\end{tabular}
\end{center}
The string diagram of case (C4) comes from the string diagram of case (C1) by adding a straight strand on the leftmost of the diagram and change the shading. In the same way, we obtain (C5) from (C2) and (C6) from (C3).\\

Now we define the tensor product of morphisms.

\begin{definition} \label{x otimes 1, 1 otimes y} $x\otimes 1$ and $1\otimes y$, $x,y\in \Hom(\mathcal{A}_0)$: 

First, we define $x\otimes 1$ as 
\begin{center}
\small
\begin{tabular}{ |c|c| } 
 \hline
 $x$ & $x\otimes 1_j$  \\
 \hline
  $(x;[m,+], [m+2i,+]),\ i\le j$ & $(xe^m_{j-i,i};[m+j,+],[m+2i+j,+])$  \\
\hline
  $(x;[m,+], [m+2i,+]),\ i>j$ & $(xe^{m,*}_{i-j,j};[m+j,+],[m+2i+j,+])$  \\
\hline
  $(x;[m,-], [m+2i,-]),\ i\le j$ & $(xe^{m+1}_{j-i,i};[m+j,-],[m+2i+j,-])$  \\
\hline
  $(x;[m,-], [m+2i,-]),\ i> j$  & $(xe^{m+1,*}_{i-j,j};[m+j,-],[m+2i+j,-])$  \\
\hline
\end{tabular}
\end{center}

Because of the shading, we define $1\otimes y$ as:
\begin{center}
\small
\begin{tabular}{ |c|c|c| } 
 \hline
 $y$ & $1_{2i}\otimes y$ & $1_{2i+1}\otimes y$ \\ 
 \hline
 $(y;[n,+], [n\pm 2j,+])$ & $(S^{(i)}_{0,n\pm j}(y);[n+2i,+],[n+2i\pm 2j,+])$ & 0\\
 \hline
 $(y;[n,-], [n\pm 2j,-])$ & 0 & $(S^{(i)}_{1,n+1\pm j}(y);[n+2i,-],[n+2i\pm 2j,-])$ \\
 \hline
\end{tabular}

\normalsize
\begin{tabular}{ c c c }
\begin{tikzpicture}[baseline=1cm]
\draw (0.4,0) -- (0.4,2);
\draw (0.6,1) -- (0.6,2);
\draw (1,1) -- (1,2);
\draw (1.4,1) -- (1.4,2);
\nbox{unshaded}{(.5,1.5)}{.3}{0.0}{0.0}{$x$};

\draw (.6,1) arc (-180:0:.2cm);
\draw (1,0) arc (180:0:.2cm);
\draw (.6,0)  .. controls ++(90:.35cm) and ++(270:.35cm) .. (1.4,1);
 
\draw[dashed] (0,1) -- (1.6,1);
\draw[dashed] (0,0) rectangle (1.6,2);
\node at (0.4,-.2) {\tiny{$n$}};
\node at (1.4,2.2) {\tiny{$j\sm i$}};
\node at (1,-.2) {\tiny{$i$}};
\node at (.5,2.2) {\tiny{$n\+i$}};
\node at (1,2.2) {\tiny{$i$}};
\end{tikzpicture}
& &
\begin{tikzpicture}[baseline=1cm]
\draw (0.4,0) -- (0.4,2);
\draw (0.6,1) -- (0.6,1.5);
\draw (1,1) -- (1,2);
\draw (1.4,1) -- (1.4,2);
\nbox{unshaded}{(.7,1.5)}{.3}{0.2}{0.2}{$x$};

\draw (1,1) arc (-180:0:.2cm);
\draw (.6,0) arc (180:0:.2cm);
\draw (1.4,0)  .. controls ++(90:.35cm) and ++(270:.35cm) .. (.6,1);
 
\draw[dashed] (0,1) -- (1.6,1);
\draw[dashed] (0,0) rectangle (1.6,2);
\node at (0.4,-.2) {\tiny{$n$}};
\node at (0.6,-.2) {\tiny{$j$}};
\node at (1.4,-.2) {\tiny{$i\sm j$}};
\node at (.4,2.2) {\tiny{$n$}};
\node at (1,2.2) {\tiny{$i$}};
\node at (1.4,2.2) {\tiny{$j$}};
\end{tikzpicture}\\
$i\le j$ & & $i>j$ 
\end{tabular}
\qquad
\begin{tabular}{ c c c }
\begin{tikzpicture}
\draw[cyan,thick] (-.5,-.7) -- (-.5,.7);
 \draw (0,-.7) -- (0,.7);
 \roundNbox{unshaded}{(0,0)}{.3}{0.0}{0.0}{$y$};
 \node at (0,-.9) {\tiny{$n$}};
 \node at (-.5,-.9) {\tiny{$2i$}};
\end{tikzpicture}
& &
\begin{tikzpicture}
\fill[shaded] (-.5,-.7) rectangle (0,.7);
\draw[cyan,thick] (-.8,-.7) -- (-.8,.7);
\draw (-.5,-.7) -- (-.5,.7);
 \draw (0,-.7) -- (0,.7);
 \roundNbox{unshaded}{(0,0)}{.3}{0.0}{0.0}{$y$};
 \node at (0,-.9) {\tiny{$n\sm 1$}};
 \node at (-.8,-.9) {\tiny{$2i$}};
 \node at (-.5,-.9) {\tiny{$1$}};
\end{tikzpicture}
\end{tabular}
\end{center}
\end{definition}

\begin{proposition}\label{tensor product def prop}
For $x,y\in \Hom(\mathcal{A}_0)$, $(x\otimes 1)\circ (1\otimes y)=(1\otimes y)\circ (x\otimes 1)$. 
\end{proposition}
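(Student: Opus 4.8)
The plan is to verify the interchange law $(x\otimes 1)\circ(1\otimes y)=(1\otimes y)\circ(x\otimes 1)$ by translating both sides into the string-diagram calculus of \S\ref{TLJ diagram explanation} and then showing the two resulting diagrams are equal as elements of the appropriate algebra $A_{p,q}$. Since $x\otimes 1$ and $1\otimes y$ are defined in Definition \ref{x otimes 1, 1 otimes y} in terms of the elements $e^{i}_{j,k}$ and the $2n$-shift maps $S^{(n)}$, the heart of the matter is the commutation relation between a ``horizontal'' cap-cup pattern (coming from $x\otimes 1$, which appends the strands $e^m_{j-i,i}$ or $e^{m,*}_{i-j,j}$ on the right) and a ``vertical'' $2n$-shift (coming from $1\otimes y$, which prepends $2i$ or $2i+1$ straight strands on the left and applies $S^{(i)}$). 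Geometrically these two operations act on disjoint regions of the diagram — one on the rightmost tensorands, the other on the leftmost — so pictorially the equality is almost visually obvious; the work is in making the bookkeeping of shadings, object labels, and the parity-dependent cases precise.

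First I would reduce to the case where both $x$ and $y$ are on the unshaded side, say $x\colon[m,+]\to[m+2i,+]$ and $y\colon[n,+]\to[n+2j,+]$ (or $[n,+]\to[n-2j,+]$), noting that the shaded cases and the mixed cases are handled verbatim by adding a straight strand on the left and swapping shadings, exactly as the paper does for (C4)--(C6). Then I would split into the subcases dictated by Definition \ref{x otimes 1, 1 otimes y}: $i\le j'$ versus $i> j'$ for the horizontal part, and $1\otimes y$ being a $2i$-shift on an unshaded object (the $2i+1$ case gives $0$ on both sides since the shadings do not match, so the identity is trivially $0=0$). For each surviving subcase, compute $(x\otimes 1)\circ(1\otimes y)$ by first applying $1\otimes y$, which by the string-diagram dictionary item for the $2n$-shift map produces $y$ with $2i$ cyan (shift) strands on its left, and then composing with $x\otimes 1$, which glues on the cap-cup pattern $e^m_{\bullet,\bullet}$ on the far right.

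The key step is then to slide: I would use Proposition \ref{2-shift map prop}(5)/\ref{2n-shift map prop}(4), the shift identity $e^i_{j-i,n}x=S^{(n)}_{i,j}(x)e^i_{j-i,n}$, together with the fact (standard condition, item ($\lambda$13)) that elements living in separated index ranges commute, to move the horizontal cap-cup pattern past the vertical shift strands. Concretely, the shift strands touch only the leftmost $2i$ positions while the cap-cups $e^m_{\bullet,\bullet}$ touch only positions $\ge m$, so they occupy disjoint strand-ranges and commute by the standard condition after using $2n$-shift maps don't move the shift strands. What remains after this commutation is exactly the diagram for $(1\otimes y)\circ(x\otimes 1)$, computed the other way around; one checks the object labels $[p,?]$, $[q,?]$ on source and target agree, which is a routine index computation (both equal $[m+n+2i+j,+]$ up to the sign on $j$).

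The main obstacle I anticipate is \emph{not} any deep identity but the combinatorial case analysis: there are genuinely four sub-configurations ($i\le j'$ vs.\ $i>j'$, crossed with whether $1\otimes y$ uses $S^{(i)}$ or $S^{(i)}_{1,\cdot}$, i.e.\ the two shadings), each of which requires drawing the correct diagram with the correct strand counts and verifying that the cap-cup region and the shift region are strand-disjoint so that ($\lambda$13) applies. A secondary subtlety is that when $i>j'$ the horizontal part uses the \emph{adjoint} pattern $e^{m,*}_{i-j',j'}$, so one must also invoke $\bigl(e^i_{j,k}\bigr)^* e^i_{j,k}=e^{i+j}_{0,k}$ and the analog of Lemma \ref{lemma 3} to simplify the resulting composite; but again the geometric separation of the two regions makes the simplification routine once the picture is drawn. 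I expect the cleanest writeup to present one representative subcase in full diagrammatic detail and dispatch the others by ``the same argument, mutatis mutandis.''
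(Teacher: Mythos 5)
Your overall plan coincides with the paper's: fix a representative unshaded case, split into the subcases $i\le j$ and $i>j$, compute both composites diagrammatically, and reduce each to a common normal form using the shift property and the cap--cup lemmas, dispatching the remaining shadings by prepending a strand. To that extent the proposal is on the right track, and your identification of the tools (the shift identity $e^i_{j-i,n}x=S^{(n)}_{i,j}(x)e^i_{j-i,n}$, the adjoint relations for $e^{i,*}_{j,k}$, and Lemma \ref{lemma 3}) matches what the paper actually uses.

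However, the justification you present as the key step --- that the cap--cup pattern from $x\otimes 1$ and the shift strands from $1\otimes y$ ``occupy disjoint strand-ranges and commute by the standard condition'' --- does not hold up, and it is where the real content of the proposition lives. Composition in $\mathcal{A}_0$ is not juxtaposition of diagrams: by (C1)--(C3) it inserts the element $e^{m+n}_{\bullet,\bullet}$ and applies a multi-step conditional expectation $E^{r,\bullet}$, and both of these thread through the positions $\ge m$ where the shifted $y$ \emph{and} the cap--cup pattern of $x\otimes 1$ simultaneously live. In $(x\otimes 1)\circ(1\otimes y)$ one contracts over $j$ strands and in $(1\otimes y)\circ(x\otimes 1)$ over $i$ strands, so the two sides are not related by a mere commutation of supported-apart elements; one needs the conditional-expectation absorption identities ($\lambda$10), ($\lambda$11), ($\lambda$15), ($\lambda$16) together with Lemmas \ref{lemma 2}--\ref{lemma 4} to collapse both to the same normal form, and this is several nontrivial diagram moves per subcase rather than an application of the standard condition (which, incidentally, is item ($\lambda$11), not ($\lambda$13); the latter is the $2$-shift map). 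If you replace the ``disjoint regions'' argument by the explicit chain of equalities --- pull the $e^{m+n}_{\bullet,\bullet}$ caps through $y$ via the shift property, absorb the resulting loops with Lemma \ref{lemma 3}, and commute the conditional expectations past the boxes they do not touch --- you recover the paper's proof.
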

\begin{proof} Here, we check the case
$(x;[m,+],[m+2i,+])$ and $(y;[n,+],[n+2j,+])$, where $2\mid m$ (or $(y;[n,-],[n+2j,-])$ if $2\nmid m$) and $n+j\ge i$. We shall prove that 
\begin{align*}
     & ((x;[m,+],[m+2i,+])\otimes (1;[n+2j,+],[n+2j,+]))\circ ((1;[m,+],[m,+])\otimes (y;[n,+],[n+2j,+])) \\
    =& ((1;[m+2i,+],[m+2i,+])\otimes (y;[n,+],[n+2j,+]))\circ ((x;[m,+],[m+2i,+])\otimes (1;[n,+];[n,+]))
\end{align*}

First, they both in $\mathcal{A}_0([m+n,+]\to [m+n+2i+2j,+])$. 

The right hand side:

$((1;[m+2i,+],[m+2i,+])\otimes (y;[n,+],[n+2j,+]))\circ ((x;[m,+],[m+2i,+])\otimes (1;[n,+];[n,+]))$: 
\[

\end{align*}

\noindent List of the formulas used in above equalities: 

\noindent \circle1: uses Lemma \ref{lemma 3}; \hfill \circle2: uses ($\lambda$10);

\noindent \circle3: uses ($\lambda$16) and Lemma \ref{lemma 2}; \hfill \circle4: uses ($\lambda$11) and ($\lambda$15);

\noindent \circle5: Jones projection property.\\

Therefore, $(x\otimes 1)\circ (1\otimes y)=(1\otimes y)\circ (x\otimes 1)$ in this case. The remaining cases are left to the reader. 
\end{proof}

\begin{definition}[tensor product of morphisms]
Define $x\otimes y :=(x\otimes 1)\circ (1\otimes y)$.\label{definion x otimes y}
\end{definition}

We need to prove that the tensor product defined above is functorial and associative.

\begin{proposition}\label{strictness prop}
Tensor product is associative and strict, i.e., 
for $x,y,z\in \Hom(\mathcal{A}_0)$, $(x\otimes y)\otimes z= x\otimes (y\otimes z)$.
\end{proposition}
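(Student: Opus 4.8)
The plan is to reduce strictness to a short list of coherence identities among the one‑sided operations $x\mapsto x\otimes 1_j$ and $y\mapsto 1_i\otimes y$ of Definition \ref{x otimes 1, 1 otimes y}, verify those identities by the string‑diagram calculus of \S\ref{TLJ diagram explanation} (equivalently, the algebra of composite Jones projections $e^i_{j,k}$ and $2n$‑shift maps), and then assemble $(x\otimes y)\otimes z=x\otimes(y\otimes z)$ formally using the interchange law of Proposition \ref{tensor product def prop}. Throughout one may assume the tensor products in question are nonzero, since otherwise both sides vanish; the shadings and parities are then forced and consistent.

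First I would establish the following building blocks. (i) Both $-\otimes 1_j$ and $1_i\otimes -$ are functorial: $(f\circ g)\otimes 1_j=(f\otimes 1_j)\circ(g\otimes 1_j)$, $1_i\otimes(f\circ g)=(1_i\otimes f)\circ(1_i\otimes g)$, identities preserved. (ii) $x\otimes 1_0=x$ and $1_0\otimes y=y$, since $e^m_{0,0}=1$ and $S^{(0)}=\id$. (iii) $(x\otimes 1_j)\otimes 1_k=x\otimes 1_{j+k}$. (iv) $1_i\otimes(1_j\otimes y)=1_{i+j}\otimes y$. (v) $1_i\otimes(y\otimes 1_j)=(1_i\otimes y)\otimes 1_j$. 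Here (iv) is immediate from the definition of the $2n$‑shift, $S^{(i)}\circ S^{(j)}=S^{(i+j)}$, handling the two rows of the table of Definition \ref{x otimes 1, 1 otimes y} separately. For the $1_i\otimes-$ half of (i), note that $1_{2i}\otimes-$ is the map $S^{(i)}$, which by Proposition \ref{2-shift map prop}(2),(7) and Proposition \ref{2n-shift map prop} is a unital $*$‑isomorphism commuting with the horizontal conditional expectations and sending $e_k\mapsto e_{k+2}$, hence carrying each composite Jones projection $2i$ steps up; therefore it transports each composition formula (C1)--(C6) of Definition \ref{Def:std lambda lattice to planar tensor category} to itself (the shaded case $1_{2i+1}\otimes-$ is the same after adjoining a strand). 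The remaining claims --- the $-\otimes 1_j$ half of (i), together with (iii) and (v) --- are genuine isotopies of Temperley--Lieb--Jones diagrams, proved by feeding Lemmas \ref{lemma 1}--\ref{lemma 4} and the pull‑down relation into the definitions; for (v) one uses the commuting parallelogram $S^{(i)}\circ E^{r,k}=E^{r,k}\circ S^{(i)}$ and the shift property $e^i_{j,k}\,x=S^{(k)}(x)\,e^i_{j,k}$ of Proposition \ref{2n-shift map prop}(2),(4).

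With these in hand, and using $x\otimes y=(x\otimes 1)\circ(1\otimes y)=(1\otimes y)\circ(x\otimes 1)$ freely (Proposition \ref{tensor product def prop}, Definition \ref{definion x otimes y}), associativity is a direct computation. Writing $m,n,p$ (resp.\ $m',n',p'$) for the widths of the sources (resp.\ targets) of $x,y,z$:
\begin{align*}
x\otimes(y\otimes z)
&=(x\otimes 1_{n'+p'})\circ\bigl(1_m\otimes((y\otimes 1_{p'})\circ(1_n\otimes z))\bigr)\\
&=(x\otimes 1_{n'+p'})\circ\bigl(1_m\otimes(y\otimes 1_{p'})\bigr)\circ\bigl(1_m\otimes(1_n\otimes z)\bigr)\\
&=\bigl((x\otimes 1_{n'})\otimes 1_{p'}\bigr)\circ\bigl((1_m\otimes y)\otimes 1_{p'}\bigr)\circ(1_{m+n}\otimes z)\\
&=\bigl((x\otimes y)\otimes 1_{p'}\bigr)\circ(1_{m+n}\otimes z)=(x\otimes y)\otimes z,
\end{align*}
where the second equality is functoriality of $1_m\otimes-$, the third applies (v), (iv), (iii), the fourth applies functoriality of $-\otimes 1_{p'}$ and the definition of $\otimes$, and the last is the definition of $\otimes$. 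Strict unitality ($1_0\otimes y=y=y\otimes 1_0$, and $1_{\mathcal{A}_0}=1^+\oplus 1^-$ acting as the unit on each summand) follows from (ii) and the tensor‑product table of Definition \ref{planar tensor category}; the statement for objects is the instance $x=y=z=1$.

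The conceptual content is slight --- after the dictionary of \S\ref{TLJ diagram explanation} every step above is an isotopy of shaded TLJ diagrams --- but the main obstacle is the bookkeeping in Step 1, parts (i) for $-\otimes 1_j$, (iii), and (v): each must be run through the dichotomy $i\le j$ versus $i>j$ built into Definition \ref{x otimes 1, 1 otimes y} and through both shadings, and in every branch one must track the triple $(-;[\,\cdot\,,?],[\,\cdot\,,?])$ and pin down exactly which multi‑step conditional expectation and which composite Jones projection occurs. I would therefore organize the write‑up around the string‑diagram calculus, proving one representative shading and regime of each building block in full detail --- exactly as in the proof of Proposition \ref{tensor product def prop} --- and indicating that the remaining cases follow by the same moves.
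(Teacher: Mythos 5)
Your argument is correct, but it is organized quite differently from the paper's. The paper proves associativity by brute force: it fixes one representative configuration of shadings and inequalities, draws the endomorphism-representation diagrams of $(x\otimes y)\otimes z$ and $x\otimes(y\otimes z)$ in full, reduces both to the same diagram using Lemma \ref{lemma 4}, the dictionary items ($\lambda$10)--($\lambda$11), and the Jones projection relations, and leaves the remaining cases to the reader. You instead factor the statement through coherence identities for the one-sided operations $-\otimes 1_j$ and $1_i\otimes -$ (your (i)--(v)) and then deduce associativity purely formally from these together with the interchange law of Proposition \ref{tensor product def prop}; the final four-line computation is valid, and your observations that (iv) follows from $S^{(i)}\circ S^{(j)}=S^{(i+j)}$ and that (v) follows from the shift map being a $*$-homomorphism with $S(e_k)=e_{k+2}$ are both correct. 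Two remarks. First, your route buys modularity but does not eliminate the diagrammatic work: items (iii) and the $-\otimes 1_j$ half of (i) are identities among the composite projections $e^m_{j,i}$, $e^{m,*}_{j,i}$ that still require the same case split ($i\le j$ versus $i>j$, both shadings) and the same Lemmas \ref{lemma 1}--\ref{lemma 4} that the paper deploys inside its single big computation — so the total verification burden is comparable, just distributed differently. Second, you invoke functoriality of $-\otimes 1$ and $1\otimes-$, which the paper only states afterwards as Proposition \ref{circ tensor 1}; this is not circular, since that proposition is proved by a direct diagram computation independent of associativity, but in a write-up you would need to reorder the statements or prove (i) first.
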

\begin{proof}Here,we check the case $(x;[m,+],[m+2i,+])$, $(y;[n,+],[n+2j,+])$ and $(z;[l,-],[l+2k,-])$, where $2\mid m,2\nmid n$ and $n+j\ge i$, $l+k\ge i+j$. Then $(x\otimes y)\otimes z$, $x\otimes (y\otimes z)\in \mathcal{A}_0([m+n+l,+]\to [m+n+l+2i+2j+2k,+])$. 

By Proposition \ref{tensor product def prop}, the endomorphism representation parts of $x\otimes y$ and $y\otimes z$ are defined in this way:  
\[
\begin{tikzpicture}[baseline=1.9cm]
\draw (0.4,0) -- (0.4,3);
\draw[cyan,thick] (.6,2) -- (.6,3);
\draw[cyan,thick] (1,2) -- (1,3);
\draw (.6,1) -- (.6,2);
\draw (1,1) -- (1,2);
\draw (1.4,2.5) -- (1.4,3);
\draw (1.6,2.5) -- (1.6,3);
\draw (1.5,1) -- (1.5,2.5);
\roundNbox{unshaded}{(1.5,2.5)}{.3}{0}{0}{$y$};

\draw (.6,1) arc (-180:0:.2cm);
\draw (.6,0)  .. controls ++(90:.35cm) and ++(270:.35cm) .. (1.5,1);
\draw (1.1,0) arc (180:0:.2cm);
\nbox{unshaded}{(0.5,1.5)}{.3}{0.0}{0.0}{$x$};

\draw[orange,thick] (1.9,0) -- (1.9,3);
\draw[orange,thick] (1.6,3) arc (180:0:.15cm);
\draw[orange,thick] (1.5,0) arc (-180:0:.2cm);

\draw[dashed] (0,2) -- (2.1,2);
\draw[dashed] (0,1) -- (2.1,1);
\draw[dashed] (0,0) rectangle (2.1,3);
 
\node at (.4,3.2) {\tiny{$m$}};
\node at (.6,3.2) {\tiny{$i$}};
\node at (1,3.2) {\tiny{$i$}};
\node at (1.5,3.2) {\tiny{$n\+ j\sm i$}};
 
\node at (.4,-.2) {\tiny{$m$}};
\node at (1.5,0.6) {\tiny{$n\+ j$}};
\node at (1.1,-.2) {\tiny{$i$}};
\end{tikzpicture}
\qquad\qquad
\begin{tikzpicture}[baseline=1.9cm]
\draw (0.4,0) -- (0.4,3);
\draw[cyan,thick] (.6,2) -- (.6,3);
\draw[cyan,thick] (1,2) -- (1,3);
\draw (.6,1) -- (.6,2);
\draw (1,1) -- (1,2);
\draw (1.2,1) -- (1.2,3);
\draw (.6,0)  .. controls ++(90:.35cm) and ++(270:.35cm) .. (1.2,1);
\draw (1.6,2.5) -- (1.6,3);
\draw (1.8,2.5) -- (1.8,3);
\draw (1.7,1) -- (1.7,2.5);
\roundNbox{unshaded}{(1.7,2.5)}{.3}{0}{0}{$z$};
\draw (.6,1) arc (-180:0:.2cm);
\draw (.8,0)  .. controls ++(90:.35cm) and ++(270:.35cm) .. (1.7,1);
\draw (1.3,0) arc (180:0:.2cm);
\nbox{unshaded}{(0.5,1.5)}{.3}{0.0}{0.0}{$y$};
\draw[orange,thick] (2.1,0) -- (2.1,3);
\draw[orange,thick] (1.8,3) arc (180:0:.15cm);
\draw[orange,thick] (1.7,0) arc (-180:0:.2cm);
\draw[dashed] (0,2) -- (2.3,2);
\draw[dashed] (0,1) -- (2.3,1);
\draw[dashed] (0,0) rectangle (2.3,3);
\node at (.4,3.2) {\tiny{$n$}};
\node at (.6,3.2) {\tiny{$j$}};
\node at (1,3.2) {\tiny{$j$}};
\node at (1.7,3.2) {\tiny{$l\+ k\sm j$}};
\node at (.4,-.2) {\tiny{$n$}};
\node at (.6,-.2) {\tiny{$1$}};
\node at (1.7,0.6) {\tiny{$l\+ k$}};
\node at (1.3,-.2) {\tiny{$j$}};
\end{tikzpicture}
\]

Then $(x\otimes y)\otimes z$:

\[\hspace*{-1.2cm}
\begin{tikzpicture}[baseline=2.9cm]
\draw (.4,0) -- (.4,5);
\draw (.6,2) -- (.6,3);
\draw (1,2) -- (1,3);
\draw[cyan,thick] (.6,3) -- (.6,5);
\draw[cyan,thick] (1,3) -- (1,5);
\draw[cyan,thick] (1.4,4) -- (1.4,5);
\draw (1.4,2) -- (1.4,4);
\draw (1.6,2) -- (1.6,4);
\roundNbox{unshaded}{(1.5,3.5)}{.3}{0}{0}{$y$};
\draw (.6,2) arc (-180:0:.2cm);
\draw (.6,1)  .. controls ++(90:.35cm) and ++(270:.35cm) .. (1.4,2);
\draw (.8,1)  .. controls ++(90:.35cm) and ++(270:.35cm) .. (1.6,2);
\draw (1.1,1) arc (180:0:.2cm);
\nbox{unshaded}{(0.5,2.5)}{.3}{0.0}{0.0}{$x$};
\draw[orange,thick] (1.9,1) -- (1.9,4);
\draw[orange,thick] (1.6,4) arc (180:0:.15cm);
\draw[orange,thick] (1.5,1) arc (-180:0:.2cm);
\draw (.6,0) -- (.6,1);
\draw (1.1,1) arc (-180:0: .5cm and .3cm);
\draw (.8,1) arc (-180:0: .75cm and .4cm);
\draw (2.1,1) -- (2.1,4);
\draw (2.3,1) -- (2.3,4);
\draw[cyan,thick] (2.1,4) -- (2.1,5);
\draw[cyan,thick] (2.3,4) -- (2.3,5);
\draw (2.5,1) -- (2.5,5);
\draw (1.8,0)  .. controls ++(90:.35cm) and ++(270:.35cm) .. (2.5,1);
\draw (3,1) -- (3,4.5);
\draw (2.9,4.5) -- (2.9,5);
\draw (3.1,4.5) -- (3.1,5);
\roundNbox{unshaded}{(3,4.5)}{.3}{0}{0}{$z$};
\draw (2.1,0)  .. controls ++(90:.35cm) and ++(270:.35cm) .. (3,1);
\draw (2.6,0) arc (180:0:.2cm);
\draw[orange,thick] (3.4,0) -- (3.4,5);
\draw[orange,thick] (3.1,5) arc (180:0:.15cm);
\draw[orange,thick] (3,0) arc (-180:0:.2cm);
\draw[dashed] (0,4) -- (3.6,4);
\draw[dashed] (0,3) -- (3.6,3);
\draw[dashed] (0,2) -- (3.6,2);
\draw[dashed] (0,1) -- (3.6,1);
\draw[dashed] (0,0) rectangle (3.6,5);
\node at (.4,-.2) {\tiny{$m$}};
\node at (.6,-.2) {\tiny{$n$}};
\node at (1.8,-.2) {\tiny{$1$}};
\node at (2.1,-.2) {\tiny{$l\+k$}};
\node at (2.6,-.2) {\tiny{$i\+j$}};
\node at (.4,5.2) {\tiny{$m$}};
\node at (.6,5.2) {\tiny{$i$}};
\node at (1,5.2) {\tiny{$i$}};
\node at (1.4,5.2) {\tiny{$n$}};
\node at (2.1,5.2) {\tiny{$j$}};
\node at (2.3,5.2) {\tiny{$j$}};
\node at (3.2,5.2) {\tiny{$l\+ k\sm i\sm j$}};
\end{tikzpicture}
\stackrel{\circle1}{=}d^{-i-j}\ 
\begin{tikzpicture}[baseline=2.9cm]
\draw (.4,-1) -- (.4,5);
\draw (.6,2) -- (.6,3);
\draw (1,2) -- (1,3);
\draw[cyan,thick] (.6,3) -- (.6,5);
\draw[cyan,thick] (1,3) -- (1,5);
\draw[cyan,thick] (1.4,4) -- (1.4,5);
\draw (1.4,2) -- (1.4,4);
\draw (1.6,2) -- (1.6,4);
\roundNbox{unshaded}{(1.5,3.5)}{.3}{0}{0}{$y$};
\draw (.6,2) arc (-180:0:.2cm);
\draw (.6,1)  .. controls ++(90:.35cm) and ++(270:.35cm) .. (1.4,2);
\draw (.8,1)  .. controls ++(90:.35cm) and ++(270:.35cm) .. (1.6,2);
\draw (1.1,1) arc (180:0:.2cm);
\nbox{unshaded}{(0.5,2.5)}{.3}{0.0}{0.0}{$x$};
\draw[orange,thick] (1.9,1) -- (1.9,4);
\draw[orange,thick] (1.6,4) arc (180:0:.15cm);
\draw[orange,thick] (1.5,1) arc (-180:0:.2cm);
\draw (.6,-1) -- (.6,1);
\draw (1.1,1) arc (-180:0: .5cm and .3cm);
\draw (.8,1) arc (-180:0: .75cm and .4cm);
\draw (2.1,1) -- (2.1,4);
\draw (2.3,1) -- (2.3,4);
\draw[cyan,thick] (2.1,4) -- (2.1,5);
\draw[cyan,thick] (2.3,4) -- (2.3,5);
\draw (2.5,0) -- (2.5,5);
\draw (1.8,-1)  .. controls ++(90:.35cm) and ++(270:.35cm) .. (2.5,0);
\draw (3,0) -- (3,4.5);
\draw (2.9,4.5) -- (2.9,5);
\draw (3.1,4.5) -- (3.1,5);
\roundNbox{unshaded}{(3,4.5)}{.3}{0}{0}{$z$};
\draw (2.1,-1)  .. controls ++(90:.35cm) and ++(270:.35cm) .. (3,0);
\draw (2.6,-1) arc (180:0:.2cm);
\draw (1,0) arc (180:-180:.2cm);
\draw (1.6,0) arc (180:-180:.2cm);
\draw[orange,thick] (3.4,-1) -- (3.4,5);
\draw[orange,thick] (3.1,5) arc (180:0:.15cm);
\draw[orange,thick] (3,-1) arc (-180:0:.2cm);
\draw[dashed] (0,4) -- (3.6,4);
\draw[dashed] (0,3) -- (3.6,3);
\draw[dashed] (0,2) -- (3.6,2);
\draw[dashed] (0,1) -- (3.6,1);
\draw[dashed] (0,0) -- (3.6,0);
\draw[dashed] (0,-1) rectangle (3.6,5);
\node at (.4,-1.2) {\tiny{$m$}};
\node at (.6,-1.2) {\tiny{$n$}};
\node at (1.8,-1.2) {\tiny{$1$}};
\node at (2.1,-1.2) {\tiny{$l\+k$}};
\node at (2.6,-1.2) {\tiny{$i\+j$}};
\node at (.4,5.2) {\tiny{$m$}};
\node at (.6,5.2) {\tiny{$i$}};
\node at (1,5.2) {\tiny{$i$}};
\node at (1.4,5.2) {\tiny{$n$}};
\node at (2.1,5.2) {\tiny{$j$}};
\node at (2.3,5.2) {\tiny{$j$}};
\node at (3.2,5.2) {\tiny{$l\+ k\sm i\sm j$}};
\node at (1,.3) {\tiny{$i$}};
\node at (1.6,.3) {\tiny{$j$}};
\end{tikzpicture}
\stackrel{\circle2}{=}d^{-i-j}\
\begin{tikzpicture}[baseline=2.9cm]
\draw (.4,0) -- (.4,5);
\draw (.6,2) -- (.6,3);
\draw (1,2) -- (1,3);
\draw[cyan,thick] (.6,3) -- (.6,5);
\draw[cyan,thick] (1,3) -- (1,5);
\draw (1.4,2) -- (1.4,4);
\draw (1.6,2) -- (1.6,4);
\draw[cyan,thick] (1.4,4) -- (1.4,5);
\draw[cyan,thick] (1.6,4) -- (1.6,5);
\roundNbox{unshaded}{(1.5,3.5)}{.3}{0}{0}{$y$};
\draw (.6,2) arc (-180:0:.2cm);
\draw (.6,1)  .. controls ++(90:.35cm) and ++(270:.35cm) .. (1.4,2);
\draw (1.6,2) arc (-180:0:.2cm);
\nbox{unshaded}{(0.5,2.5)}{.3}{0.0}{0.0}{$x$};
\draw (.6,0) -- (.6,1);
\draw (2,2) -- (2,4);
\draw[cyan,thick] (2,4) -- (2,5);
\draw (2.2,1) -- (2.2,5);
\draw (1.5,0)  .. controls ++(90:.35cm) and ++(270:.35cm) .. (2.2,1);
\draw (2.7,1) -- (2.7,4.5);
\draw (2.6,4.5) -- (2.6,5);
\draw (2.8,4.5) -- (2.8,5);
\roundNbox{unshaded}{(2.7,4.5)}{.3}{0}{0}{$z$};
\draw (1.8,0)  .. controls ++(90:.35cm) and ++(270:.35cm) .. (2.7,1);
\draw (2.3,0) arc (180:0:.2cm);
\draw (1,1) arc (180:-180:.2cm);
\draw (1.6,1) arc (180:-180:.2cm);
\draw[orange,thick] (3.1,0) -- (3.1,5);
\draw[orange,thick] (2.8,5) arc (180:0:.15cm);
\draw[orange,thick] (2.7,0) arc (-180:0:.2cm);
\draw[dashed] (0,4) -- (3.3,4);
\draw[dashed] (0,3) -- (3.3,3);
\draw[dashed] (0,2) -- (3.3,2);
\draw[dashed] (0,1) -- (3.3,1);
\draw[dashed] (0,0) rectangle (3.3,5);
\node at (.4,-.2) {\tiny{$m$}};
\node at (.6,-.2) {\tiny{$n$}};
\node at (1.5,-.2) {\tiny{$1$}};
\node at (1.8,-.2) {\tiny{$l\+k$}};
\node at (2.3,-.2) {\tiny{$i\+j$}};
\node at (.4,5.2) {\tiny{$m$}};
\node at (.6,5.2) {\tiny{$i$}};
\node at (1,5.2) {\tiny{$i$}};
\node at (1.4,5.2) {\tiny{$n$}};
\node at (1.6,5.2) {\tiny{$j$}};
\node at (2,5.2) {\tiny{$j$}};
\node at (2.9,5.2) {\tiny{$l\+k\sm i\sm j$}};
\node at (1,1.3) {\tiny{$i$}};
\node at (1.6,1.3) {\tiny{$j$}};
\end{tikzpicture}
\stackrel{\circle3}{=}\
\begin{tikzpicture}[baseline=2.9cm]
\draw (.4,1) -- (.4,5);
\draw (.6,2) -- (.6,3);
\draw (1,2) -- (1,3);
\draw[cyan,thick] (.6,3) -- (.6,5);
\draw[cyan,thick] (1,3) -- (1,5);
\draw (1.4,2) -- (1.4,4);
\draw (1.6,2) -- (1.6,4);
\draw[cyan,thick] (1.4,4) -- (1.4,5);
\draw[cyan,thick] (1.6,4) -- (1.6,5);
\roundNbox{unshaded}{(1.5,3.5)}{.3}{0}{0}{$y$};
\draw (.6,2) arc (-180:0:.2cm);
\draw (.6,1)  .. controls ++(90:.35cm) and ++(270:.35cm) .. (1.4,2);
\draw (1.6,2) arc (-180:0:.2cm);
\nbox{unshaded}{(0.5,2.5)}{.3}{0.0}{0.0}{$x$};
\draw (2,2) -- (2,4);
\draw[cyan,thick] (2,4) -- (2,5);
\draw (2.2,2) -- (2.2,5);
\draw (1.5,1)  .. controls ++(90:.35cm) and ++(270:.35cm) .. (2.2,2);
\draw (2.7,2) -- (2.7,4.5);
\draw (2.6,4.5) -- (2.6,5);
\draw (2.8,4.5) -- (2.8,5);
\roundNbox{unshaded}{(2.7,4.5)}{.3}{0}{0}{$z$};
\draw (1.8,1)  .. controls ++(90:.35cm) and ++(270:.35cm) .. (2.7,2);
\draw (2.3,1) arc (180:0:.2cm);
\draw[orange,thick] (3.1,1) -- (3.1,5);
\draw[orange,thick] (2.8,5) arc (180:0:.15cm);
\draw[orange,thick] (2.7,1) arc (-180:0:.2cm);
\draw[dashed] (0,4) -- (3.3,4);
\draw[dashed] (0,3) -- (3.3,3);
\draw[dashed] (0,2) -- (3.3,2);
\draw[dashed] (0,1) rectangle (3.3,5);
\node at (.4,.8) {\tiny{$m$}};
\node at (.6,.8) {\tiny{$n$}};
\node at (1.5,.8) {\tiny{$1$}};
\node at (1.8,.8) {\tiny{$l\+k$}};
\node at (2.3,.8) {\tiny{$i\+j$}};
\node at (.4,5.2) {\tiny{$m$}};
\node at (.6,5.2) {\tiny{$i$}};
\node at (1,5.2) {\tiny{$i$}};
\node at (1.4,5.2) {\tiny{$n$}};
\node at (1.6,5.2) {\tiny{$j$}};
\node at (2,5.2) {\tiny{$j$}};
\node at (2.9,5.2) {\tiny{$l\+k\sm i\sm j$}};
\end{tikzpicture}
\]

\noindent List of the formulas used in above equalities:

\noindent \circle1: Jones projection property;\hfill \circle2: uses Lemma \ref{lemma 4};

\noindent \circle3: Jones projection property.\\

And $x\otimes (y\otimes z)$:
\[
\begin{tikzpicture}[baseline=2.9cm]
\draw (.4,0) -- (.4,5);
\draw (.6,1) -- (.6,3);
\draw (1,1) -- (1,3);
\draw[cyan,thick] (.6,3) -- (.6,5);
\draw[cyan,thick] (1,3) -- (1,5);
\nbox{unshaded}{(0.5,1.5)}{.3}{0.0}{0.0}{$x$};
\draw (.6,1) arc (-180:0:.2cm);
\draw (1.4,1) -- (1.4,4);
\draw (1.6,3) -- (1.6,4);
\draw (2,3) -- (2,5);
\draw[cyan,thick] (1.4,4) -- (1.4,5);
\draw[cyan,thick] (1.6,4) -- (1.6,5);
\draw[cyan,thick] (2,4) -- (2,5);
\draw (2.5,4.5) -- (2.5,5);
\draw (2.7,4.5) -- (2.7,5);
\draw (2.9,4.5) -- (2.9,5);
\draw (2.7,3) -- (2.7,4.5);
\draw (2.2,3) -- (2.2,5);
\roundNbox{unshaded}{(1.5,3.5)}{.3}{0}{0}{$y$};
\roundNbox{unshaded}{(2.7,4.5)}{.3}{0}{0}{$z$};
\draw (1.6,3) arc (-180:0:.2cm);
\draw (1.6,2)  .. controls ++(90:.35cm) and ++(270:.35cm) .. (2.2,3);
\draw (1.8,2)  .. controls ++(90:.35cm) and ++(270:.35cm) .. (2.7,3);
\draw (2.3,2) arc (180:0:.2cm);
\draw (1.6,1) -- (1.6,2);
\draw (1.8,1) -- (1.8,2);
\draw (2.3,1) -- (2.3,2);
\draw (.6,0)  .. controls ++(90:.35cm) and ++(270:.35cm) .. (1.4,1);
\draw (.8,0)  .. controls ++(90:.35cm) and ++(270:.35cm) .. (1.6,1);
\draw (1,0)  .. controls ++(90:.35cm) and ++(270:.35cm) .. (1.8,1);
\draw (1.5,0)  .. controls ++(90:.35cm) and ++(270:.35cm) .. (2.3,1);
\draw (2.1,0) arc (180:0:.2cm);
\draw[orange,thick] (3.1,2) -- (3.1,5);
\draw[orange,thick] (2.9,5) arc (180:0:.1cm);
\draw[orange,thick] (2.7,2) arc (-180:0:.2cm);
\draw[orange,thick] (3.3,0) -- (3.3,5);
\draw[orange,thick] (2.7,5) arc (180:0:.3cm);
\draw[orange,thick] (2.5,0) arc (-180:0:.4cm);
\draw[dashed] (0,4) -- (3.5,4);
\draw[dashed] (0,3) -- (3.5,3);
\draw[dashed] (0,2) -- (3.5,2);
\draw[dashed] (0,1) -- (3.5,1);
\draw[dashed] (0,0) rectangle (3.5,5);
\node at (.4,-.2) {\tiny{$m$}};
\node at (.6,.3) {\tiny{$n$}};
\node at (1,-.2) {\tiny{$l\+k$}};
\node at (1.5,-.2) {\tiny{$j$}};
\node at (2.1,-.2) {\tiny{$i$}};
\node at (1.6,2.3) {\tiny{$1$}};
\node at (.4,5.2) {\tiny{$m$}};
\node at (.6,5.2) {\tiny{$i$}};
\node at (1,5.2) {\tiny{$i$}};
\node at (1.4,5.2) {\tiny{$n$}};
\node at (1.6,5.2) {\tiny{$j$}};
\node at (2,5.2) {\tiny{$j$}};
\node at (2.7,5.2) {\tiny{$l\+k\sm i\sm j$}};
\end{tikzpicture}
\stackrel{\circle1}{=}\
\begin{tikzpicture}[baseline=3.9cm]
\draw (.4,0) -- (.4,6);
\draw (.6,1) -- (.6,4);
\draw (1,1) -- (1,4);
\draw[cyan,thick] (.6,4) -- (.6,6);
\draw[cyan,thick] (1,4) -- (1,6);
\nbox{unshaded}{(0.5,3.5)}{.3}{0.0}{0.0}{$x$};
\draw (.6,1) arc (-180:0:.2cm);
\draw (1.4,1) -- (1.4,5);
\draw (1.6,3) -- (1.6,5);
\draw (2,3) -- (2,5);
\draw[cyan,thick] (1.4,5) -- (1.4,6);
\draw[cyan,thick] (1.6,5) -- (1.6,6);
\draw[cyan,thick] (2,5) -- (2,6);
\draw (2.2,3) -- (2.2,6);
\draw (2.5,5.5) -- (2.5,6);
\draw (2.7,4.5) -- (2.7,6);
\draw (2.9,5.5) -- (2.9,6);
\draw (2.7,3) -- (2.7,4.5);
\roundNbox{unshaded}{(1.5,4.5)}{.3}{0}{0}{$y$};
\roundNbox{unshaded}{(2.7,5.5)}{.3}{0}{0}{$z$};
\draw (1.6,3) arc (-180:0:.2cm);
\draw (1.6,2)  .. controls ++(90:.35cm) and ++(270:.35cm) .. (2.2,3);
\draw (1.8,2)  .. controls ++(90:.35cm) and ++(270:.35cm) .. (2.7,3);
\draw (2.3,2) arc (180:0:.2cm);
\draw (1.6,1) -- (1.6,2);
\draw (1.8,1) -- (1.8,2);
\draw (2.3,1) -- (2.3,2);
\draw (.6,0)  .. controls ++(90:.35cm) and ++(270:.35cm) .. (1.4,1);
\draw (.8,0)  .. controls ++(90:.35cm) and ++(270:.35cm) .. (1.6,1);
\draw (1,0)  .. controls ++(90:.35cm) and ++(270:.35cm) .. (1.8,1);
\draw (1.5,0)  .. controls ++(90:.35cm) and ++(270:.35cm) .. (2.3,1);
\draw (2.1,0) arc (180:0:.2cm);
\draw[orange,thick] (3.1,2) -- (3.1,6);
\draw[orange,thick] (2.9,6) arc (180:0:.1cm);
\draw[orange,thick] (2.7,2) arc (-180:0:.2cm);
\draw[orange,thick] (3.3,0) -- (3.3,6);
\draw[orange,thick] (2.7,6) arc (180:0:.3cm);
\draw[orange,thick] (2.5,0) arc (-180:0:.4cm);
\draw[dashed] (0,5) -- (3.5,5);
\draw[dashed] (0,4) -- (3.5,4);
\draw[dashed] (0,3) -- (3.5,3);
\draw[dashed] (0,2) -- (3.5,2);
\draw[dashed] (0,1) -- (3.5,1);
\draw[dashed] (0,0) rectangle (3.5,6);
\node at (1.6,2.3) {\tiny{$1$}};
\node at (.4,-.2) {\tiny{$m$}};
\node at (.6,.3) {\tiny{$n$}};
\node at (1,-.2) {\tiny{$l\+k$}};
\node at (1.5,-.2) {\tiny{$j$}};
\node at (2.1,-.2) {\tiny{$i$}};
\node at (.4,6.2) {\tiny{$m$}};
\node at (.6,6.2) {\tiny{$i$}};
\node at (1,6.2) {\tiny{$i$}};
\node at (1.4,6.2) {\tiny{$n$}};
\node at (1.6,6.2) {\tiny{$j$}};
\node at (2,6.2) {\tiny{$j$}};
\node at (2.7,6.2) {\tiny{$l\+k\sm i\sm j$}};
\end{tikzpicture}
\stackrel{\circle2}{=}\
\begin{tikzpicture}[baseline=3.9cm]
\draw (.4,0) -- (.4,6);
\draw (.6,1) -- (.6,4);
\draw (1,1) -- (1,4);
\draw[cyan,thick] (.6,4) -- (.6,6);
\draw[cyan,thick] (1,4) -- (1,6);
\nbox{unshaded}{(0.5,3.5)}{.3}{0.0}{0.0}{$x$};
\draw (.6,1) arc (-180:0:.2cm);
\draw (1.4,1) -- (1.4,5);
\draw (1.6,3) -- (1.6,5);
\draw (2,3) -- (2,5);
\draw[cyan,thick] (1.4,5) -- (1.4,6);
\draw[cyan,thick] (1.6,5) -- (1.6,6);
\draw[cyan,thick] (2,5) -- (2,6);
\draw (2.2,3) -- (2.2,6);
\draw (2.5,5.5) -- (2.5,6);
\draw (2.7,4.5) -- (2.7,6);
\draw (2.9,5.5) -- (2.9,6);
\draw (2.7,3) -- (2.7,4.5);
\roundNbox{unshaded}{(1.5,4.5)}{.3}{0}{0}{$y$};
\roundNbox{unshaded}{(2.7,5.5)}{.3}{0}{0}{$z$};
\draw (1.6,3) arc (-180:0:.2cm);
\draw (1.6,2)  .. controls ++(90:.35cm) and ++(270:.35cm) .. (2.2,3);
\draw (1.8,2)  .. controls ++(90:.35cm) and ++(270:.35cm) .. (2.7,3);
\draw (2.3,2) arc (180:0:.2cm);
\draw (1.6,1) -- (1.6,2);
\draw (1.8,1) -- (1.8,2);
\draw (2.3,1) -- (2.3,2);
\draw (.6,0)  .. controls ++(90:.35cm) and ++(270:.35cm) .. (1.4,1);
\draw (.8,0)  .. controls ++(90:.35cm) and ++(270:.35cm) .. (1.6,1);
\draw (1,0)  .. controls ++(90:.35cm) and ++(270:.35cm) .. (1.8,1);
\draw (1.5,0)  .. controls ++(90:.35cm) and ++(270:.35cm) .. (2.3,1);
\draw (2.1,0) arc (180:0:.2cm);
\draw (2.7,0) -- (2.7,2);
\draw[orange,thick] (3.1,0) -- (3.1,6);
\draw[orange,thick] (2.9,6) arc (180:0:.1cm);
\draw[orange,thick] (2.7,0) arc (-180:0:.2cm);
\draw[orange,thick] (3.3,0) -- (3.3,6);
\draw[orange,thick] (2.7,6) arc (180:0:.3cm);
\draw[orange,thick] (2.5,0) arc (-180:0:.4cm);
\draw[dashed] (0,5) -- (3.5,5);
\draw[dashed] (0,4) -- (3.5,4);
\draw[dashed] (0,3) -- (3.5,3);
\draw[dashed] (0,2) -- (3.5,2);
\draw[dashed] (0,1) -- (3.5,1);
\draw[dashed] (0,0) rectangle (3.5,6);
\node at (.4,-.2) {\tiny{$m$}};
\node at (.6,.3) {\tiny{$n$}};
\node at (1,-.2) {\tiny{$l\+k$}};
\node at (1.5,-.2) {\tiny{$j$}};
\node at (2.1,-.2) {\tiny{$i$}};
\node at (1.6,2.3) {\tiny{$1$}};
\node at (.4,6.2) {\tiny{$m$}};
\node at (.6,6.2) {\tiny{$i$}};
\node at (1,6.2) {\tiny{$i$}};
\node at (1.4,6.2) {\tiny{$n$}};
\node at (1.6,6.2) {\tiny{$j$}};
\node at (2,6.2) {\tiny{$j$}};
\node at (2.7,6.2) {\tiny{$l\+k\sm i\sm j$}};
\end{tikzpicture}
\stackrel{\circle3}{=}\
\begin{tikzpicture}[baseline=2.9cm]
\draw (.4,1) -- (.4,5);
\draw (.6,2) -- (.6,3);
\draw (1,2) -- (1,3);
\draw[cyan,thick] (.6,3) -- (.6,5);
\draw[cyan,thick] (1,3) -- (1,5);
\draw (1.4,2) -- (1.4,4);
\draw (1.6,2) -- (1.6,4);
\draw[cyan,thick] (1.4,4) -- (1.4,5);
\draw[cyan,thick] (1.6,4) -- (1.6,5);
\roundNbox{unshaded}{(1.5,3.5)}{.3}{0}{0}{$y$};
\draw (.6,2) arc (-180:0:.2cm);
\draw (.6,1)  .. controls ++(90:.35cm) and ++(270:.35cm) .. (1.4,2);
\draw (1.6,2) arc (-180:0:.2cm);
\nbox{unshaded}{(0.5,2.5)}{.3}{0.0}{0.0}{$x$};
\draw (2,2) -- (2,4);
\draw[cyan,thick] (2,4) -- (2,5);
\draw (2.2,2) -- (2.2,5);
\draw (1.5,1)  .. controls ++(90:.35cm) and ++(270:.35cm) .. (2.2,2);
\draw (2.7,2) -- (2.7,4.5);
\draw (2.6,4.5) -- (2.6,5);
\draw (2.8,4.5) -- (2.8,5);
\roundNbox{unshaded}{(2.7,4.5)}{.3}{0}{0}{$z$};
\draw (1.8,1)  .. controls ++(90:.35cm) and ++(270:.35cm) .. (2.7,2);
\draw (2.3,1) arc (180:0:.2cm);
\draw[orange,thick] (3.1,1) -- (3.1,5);
\draw[orange,thick] (2.8,5) arc (180:0:.15cm);
\draw[orange,thick] (2.7,1) arc (-180:0:.2cm);
\draw[dashed] (0,4) -- (3.3,4);
\draw[dashed] (0,3) -- (3.3,3);
\draw[dashed] (0,2) -- (3.3,2);
\draw[dashed] (0,1) rectangle (3.3,5);
\node at (.4,.8) {\tiny{$m$}};
\node at (.6,.8) {\tiny{$n$}};
\node at (1.5,.8) {\tiny{$1$}};
\node at (1.8,.8) {\tiny{$l\+k$}};
\node at (2.3,.8) {\tiny{$i\+j$}};
\node at (.4,5.2) {\tiny{$m$}};
\node at (.6,5.2) {\tiny{$i$}};
\node at (1,5.2) {\tiny{$i$}};
\node at (1.4,5.2) {\tiny{$n$}};
\node at (1.6,5.2) {\tiny{$j$}};
\node at (2,5.2) {\tiny{$j$}};
\node at (2.9,5.2) {\tiny{$l\+k\sm i\sm j$}};
\end{tikzpicture}
\]

\noindent List of the formulas used in above equalities: 

\noindent \circle1: uses ($\lambda$11);\hfill \circle2: uses ($\lambda$10); 

\noindent \circle3: Jones projection property.\\

Therefore, $(x\otimes y)\otimes z=x\otimes (y\otimes z)$ in this case. Readers can check the rest of the cases by using the string diagram dictionary and the lemmas. \end{proof}

\begin{proposition}\label{circ tensor 1}
For $x,y\in \Hom(\mathcal{A}_0)$, $(x\circ y)\otimes 1=(x\otimes 1)\circ (y\otimes 1)$ and $1\otimes (x\circ y)=(1\otimes x)\circ (1\otimes y)$.
\end{proposition}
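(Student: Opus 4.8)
The statement asserts that the two one-sided tensoring operations $(-)\otimes 1$ and $1\otimes(-)$ are each functorial with respect to composition. Since $x\otimes y$ was \emph{defined} as $(x\otimes 1)\circ(1\otimes y)$, these are exactly the compatibilities needed (together with associativity, already proved in Proposition \ref{strictness prop}) to conclude that $\otimes$ is a bifunctor. I would prove the two identities separately, each by breaking into cases according to whether the two composable morphisms increase or decrease the number of strands, and in each case reducing, via the string-diagram dictionary of \S\ref{TLJ diagram explanation}, to an algebraic identity in the $A_{i,j}$.

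\textbf{First identity, $(x\circ y)\otimes 1 = (x\otimes 1)\circ(y\otimes 1)$.} Write $y\in\mathcal{A}_0([m,+]\to[m+2i,+])$ and $x\in\mathcal{A}_0([m+2i,+]\to[m+2i+2j,+])$ (the shaded cases and the cases where one or both maps decrease strand number are analogous, using the $\dagger$-structure from Definition \ref{Def:std lambda lattice to planar tensor category}(d) to transport the argument). By Definition \ref{x otimes 1, 1 otimes y}, tensoring on the right by $1_k$ replaces the endomorphism-representation part $z$ by $z\,e^{\bullet}_{\bullet,\bullet}$ (or $z\,e^{\bullet,*}_{\bullet,\bullet}$ when the added strands must cap off against existing ones). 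The composition formulas (C1)--(C3) of Definition \ref{Def:std lambda lattice to planar tensor category} are built from the same $e^i_{j,k}$ elements and multi-step conditional expectations; so both sides of the identity are, after applying these formulas, a single element of some $A_{0,n}$, and the claim becomes: the $e$-decorated composition of the $e$-decorated factors equals the $e$-decoration of the $e$-decorated composition. I would verify this in string diagrams: draw both sides, and observe that they differ only by an isotopy together with applications of the pull-down relation (M4)/(c1)$'$, the relation $e_ne_{n\pm1}e_n=d^{-2}e_n$, and the commuting-square property of the conditional expectations — exactly the toolkit already used in Propositions \ref{2-shift map prop} and \ref{strictness prop} and in Lemma \ref{lemma 3}. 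The key simplification is that adding a block of $k$ straight strands on the right commutes past the cap-cup pattern defining composition, because those caps and cups live on strands disjoint from the new block (when $i\le k$, i.e.\ in the $e^m_{k-i,i}$ regime) or overlap it in a way controlled by Lemma \ref{lemma 2}/\ref{lemma 3} (when $i>k$).

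\textbf{Second identity, $1\otimes(x\circ y) = (1\otimes x)\circ(1\otimes y)$.} Here $1_{2\ell}\otimes z = S^{(\ell)}(z)$ by Definition \ref{x otimes 1, 1 otimes y}, and the right-hand side of the identity becomes $S^{(\ell)}(x)\circ S^{(\ell)}(y)$ with composition computed by (C1)--(C3). So the identity reduces to: the $2\ell$-shift map $S^{(\ell)}$ commutes with the composition operation of the category. This is essentially a formal consequence of the shift property of $S^{(\ell)}$, namely $e^i_{j-i,\ell}\,z = S^{(\ell)}(z)\,e^i_{j-i,\ell}$ from Proposition \ref{2n-shift map prop}(4), together with $S^{(\ell)}(e_k)=e_{k+2\ell}$ (Proposition \ref{2-shift map prop}(7) iterated) and the commuting-parallelogram relations of Proposition \ref{2n-shift map prop}(2) which let $S^{(\ell)}$ pass through the multi-step conditional expectations appearing in (C1), (C2), (C4), (C5). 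Concretely, one applies $S^{(\ell)}$ to the formula defining $x\circ y$, moves it inside $E^{r,\bullet}$ using (2), moves it past $e^{n}_{j,i}$ (or $e^{n,*}_{j,i}$) turning it into $e^{n+2\ell}_{j,i}$ using (4) and (7), and recognizes the result as the composition formula applied to $S^{(\ell)}(x)$ and $S^{(\ell)}(y)$; the odd case $1_{2\ell+1}\otimes z$ uses the shaded versions $S^{(\ell)}_{1,-}$ in the same way. The $1\otimes z=0$ entries in the table are vacuous.

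\textbf{Main obstacle.} The conceptual content is light — everything is an instance of lemmas already in hand — but the bookkeeping is heavy: there are six composition cases in Definition \ref{Def:std lambda lattice to planar tensor category}, crossed with the $i\le k$ versus $i>k$ dichotomy for $(-)\otimes 1$, crossed with even/odd parity for $1\otimes(-)$, and one must track shadings and index shifts carefully so that the $e^{\bullet}_{\bullet,\bullet}$ decorations line up. I expect the genuinely delicate step to be the $(x\circ y)\otimes 1$ identity in the mixed regime where one of $x,y$ raises and the composition involves an $e^{n,*}$ cap while the added strands sit between the cap and the box: there the isotopy is not immediate and one needs Lemma \ref{lemma 3} (or \ref{lemma 4}) to slide the new strands into place, exactly as in the $i>j$ branch of the proof of Proposition \ref{tensor product def prop}. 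Once that pattern is established, the remaining cases follow by the same three or four diagram moves, and I would present one representative case in full with the others left to the reader, in the style of the preceding proofs.
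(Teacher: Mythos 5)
Your proposal is correct and follows essentially the same route as the paper: the first identity is checked diagrammatically in a representative raising/raising case using the Jones projection relations together with Lemmas \ref{lemma 3} and \ref{lemma 4} (exactly the delicate step you flag), with the remaining cases left to the reader, and the second identity is reduced to the established properties of the $2n$-shift map. The only difference is one of emphasis: the paper compresses the second identity into the remark that it "only uses the fact that the shift map is a $*$-homomorphism," whereas you correctly spell out that one also invokes $S^{(n)}(e_k)=e_{k+2n}$ and the commuting-parallelogram relations to pass $S^{(n)}$ through the multi-step conditional expectations.
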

\begin{proof} By our construction, $1\otimes (x\circ y)=(1\otimes x)\circ (1\otimes y)$ only uses the fact that the shift map is a $*$-homomorphism.

As for $(x\circ y)\otimes 1=(x\otimes 1)\circ (y\otimes 1)$, we check the case
$(x;[m,+],[m+2i,+])$ and $(y;[m+2i],[m+2i+2j,+])$, where $n\ge i+j$. Then $(x\circ y)\otimes 1_n$, $(x\otimes 1_n)\circ (y\otimes 1_n)\in \mathcal{A}_0([m+n,+]\to [m+n+2i+2j,+])$. Next, let us compare their endomorphism representation parts.

$(x\circ y)\otimes 1_n$:
\begin{align*}
\begin{tikzpicture}[baseline=2.9cm]
\draw (.4,0) -- (.4,4);
\draw (.6,2) -- (.6,4);
\draw (1,2) -- (1,4);
\draw (1.2,2) -- (1.2,4);
\draw (1.7,1) -- (1.7,4);
\draw (1.9,1) -- (1.9,4);
\draw (2.2,1) -- (2.2,4);
\nbox{unshaded}{(.5,2.5)}{.3}{0}{0}{$y$};
\nbox{unshaded}{(0.8,3.5)}{.3}{0.3}{0.3}{$x$};
\draw (.6,2) arc (-180:0:.2cm);
\draw (.6,1)  .. controls ++(90:.35cm) and ++(270:.35cm) .. (1.2,2);
\draw (.8,1) arc (180:0:.2cm);
\draw (.8,1) arc (-180:0: .45cm and .3cm);
\draw (.6,1) arc (-180:0: .65cm and .4cm);
\draw (1.8,0) arc (180:0:.2cm);
\draw (1.2,0)  .. controls ++(90:.35cm) and ++(270:.35cm) .. (2.2,1);
\draw[orange,thick] (1.5,1) -- (1.5,4);
\draw[orange,thick] (1.2,4) arc (180:0:.15cm);
\draw[orange,thick] (1.2,1) arc (-180:0:.15cm);
\draw[dashed] (0,3) -- (2.4,3);
\draw[dashed] (0,2) -- (2.4,2);
\draw[dashed] (0,1) -- (2.4,1);
\draw[dashed] (0,0) rectangle (2.4,4);
\node at (.4,-.2) {\tiny{$m$}};
\node at (1.2,-.2) {\tiny{$n\sm i\sm j$}};
\node at (2.2,-.2) {\tiny{$i\+ j$}};
\node at (.6,1.7) {\tiny{$i$}};
\node at (.7,3) {\tiny{$i$}};
\node at (.4,4.2) {\tiny{$m$}};
\node at (.6,4.2) {\tiny{$i$}};
\node at (1,4.2) {\tiny{$j$}};
\node at (1.2,4.2) {\tiny{$i$}};
\node at (1.7,4.2) {\tiny{$i$}};
\node at (1.9,4.2) {\tiny{$j$}};
\end{tikzpicture}
\stackrel{\circle1}{=}d^{-i-j}\
\begin{tikzpicture}[baseline=2.9cm]
\draw (.4,-1) -- (.4,4);
\draw (.6,2) -- (.6,4);
\draw (1,2) -- (1,4);
\draw (1.2,2) -- (1.2,4);
\draw (1.7,1) -- (1.7,4);
\draw (1.9,1) -- (1.9,4);
\draw (2.2,0) -- (2.2,4);
\nbox{unshaded}{(.5,2.5)}{.3}{0}{0}{$y$};
\nbox{unshaded}{(0.8,3.5)}{.3}{0.3}{0.3}{$x$};
\draw (.6,2) arc (-180:0:.2cm);
\draw (.6,1)  .. controls ++(90:.35cm) and ++(270:.35cm) .. (1.2,2);
\draw (.8,1) arc (180:0:.2cm);
\draw (.8,1) arc (-180:0: .45cm and .3cm);
\draw (.6,1) arc (-180:0: .65cm and .4cm);
\draw (.6,0) arc (180:-180:.2cm);
\draw (1.2,0) arc (180:-180:.2cm);
\draw (1.8,-1) arc (180:0:.2cm);
\draw (1.2,-1)  .. controls ++(90:.35cm) and ++(270:.35cm) .. (2.2,0);
\draw[orange,thick] (1.5,1) -- (1.5,4);
\draw[orange,thick] (1.2,4) arc (180:0:.15cm);
\draw[orange,thick] (1.2,1) arc (-180:0:.15cm);
\draw[dashed] (0,3) -- (2.4,3);
\draw[dashed] (0,2) -- (2.4,2);
\draw[dashed] (0,1) -- (2.4,1);
\draw[dashed] (0,0) -- (2.4,0);
\draw[dashed] (0,-1) rectangle (2.4,4);
\node at (.6,1.7) {\tiny{$i$}};
\node at (.7,3) {\tiny{$i$}};
\node at (.6,.3) {\tiny{$i$}};
\node at (1.2,.3) {\tiny{$j$}};
\node at (.4,4.2) {\tiny{$m$}};
\node at (.6,4.2) {\tiny{$i$}};
\node at (1,4.2) {\tiny{$j$}};
\node at (1.2,4.2) {\tiny{$i$}};
\node at (1.7,4.2) {\tiny{$i$}};
\node at (1.9,4.2) {\tiny{$j$}};
\node at (.4,-1.2) {\tiny{$m$}};
\node at (1.2,-1.2) {\tiny{$n\sm i\sm j$}};
\node at (2.2,-1.2) {\tiny{$i\+ j$}};
\end{tikzpicture}
\stackrel{\circle2}{=}d^{-i-j}\
\begin{tikzpicture}[baseline=2.9cm]
\draw (.4,0) -- (.4,4);
\draw (.6,2) -- (.6,4);
\draw (1,2) -- (1,4);
\draw (1.2,2) -- (1.2,4);
\draw (1.6,2) -- (1.6,4);
\draw (2,1) -- (2,4);
\nbox{unshaded}{(.5,2.5)}{.3}{0}{0}{$y$};
\nbox{unshaded}{(0.8,3.5)}{.3}{0.3}{0.3}{$x$};
\draw (.6,2) arc (-180:0:.2cm);
\draw (1.2,2) arc (-180:0:.2cm);
\draw (.6,1) arc (-180:180:.2cm);
\draw (1.2,1) arc (-180:180:.2cm);
\draw (1,0)  .. controls ++(90:.35cm) and ++(270:.35cm) .. (2,1);
\draw (1.6,0) arc (180:0:.2cm);
\draw[dashed] (0,3) -- (2.4,3);
\draw[dashed] (0,2) -- (2.4,2);
\draw[dashed] (0,1) -- (2.4,1);
\draw[dashed] (0,0) rectangle (2.4,4);
\node at (.4,-.2) {\tiny{$m$}};
\node at (1,-.2) {\tiny{$n\sm i\sm j$}};
\node at (2,-.2) {\tiny{$i\+ j$}};
\node at (.4,4.2) {\tiny{$m$}};
\node at (.6,4.2) {\tiny{$i$}};
\node at (1,4.2) {\tiny{$j$}};
\node at (1.2,4.2) {\tiny{$i$}};
\node at (1.6,4.2) {\tiny{$j$}};
\node at (.6,1.7) {\tiny{$i$}};
\node at (.7,3) {\tiny{$i$}};
\node at (.6,1.3) {\tiny{$i$}};
\node at (1.2,1.3) {\tiny{$j$}};
\end{tikzpicture}
\ \stackrel{\circle3}{=}\
\begin{tikzpicture}[baseline=2.9cm]
\draw (.4,1) -- (.4,4);
\draw (.6,2) -- (.6,4);
\draw (1,2) -- (1,4);
\draw (1.2,2) -- (1.2,4);
\draw (1.6,2) -- (1.6,4);
\draw (2,2) -- (2,4);
\nbox{unshaded}{(.5,2.5)}{.3}{0}{0}{$y$};
\nbox{unshaded}{(0.8,3.5)}{.3}{0.3}{0.3}{$x$};
\draw (.6,2) arc (-180:0:.2cm);
\draw (1.2,2) arc (-180:0:.2cm);
\draw (1,1)  .. controls ++(90:.35cm) and ++(270:.35cm) .. (2,2);
\draw (1.6,1) arc (180:0:.2cm);
\draw[dashed] (0,3) -- (2.4,3);
\draw[dashed] (0,2) -- (2.4,2);
\draw[dashed] (0,1) rectangle (2.4,4);
\node at (.4,.8) {\tiny{$m$}};
\node at (1,.8) {\tiny{$n\sm i\sm j$}};
\node at (2,.8) {\tiny{$i\+ j$}};
\node at (.4,4.2) {\tiny{$m$}};
\node at (.6,4.2) {\tiny{$i$}};
\node at (1,4.2) {\tiny{$j$}};
\node at (1.2,4.2) {\tiny{$i$}};
\node at (1.6,4.2) {\tiny{$j$}};
\node at (.6,1.7) {\tiny{$i$}};
\node at (.7,3) {\tiny{$i$}};
\end{tikzpicture}
\end{align*}

\noindent List of the formulas used in above equalities:

\noindent \circle1: Jones projection property;\hfill \circle2: uses Lemma \ref{lemma 4};

\noindent \circle3: Jones projection property.\\

$(x\otimes 1_n)\circ (y\otimes 1_n)$:
\begin{align*}
\begin{tikzpicture}[baseline=3.9cm]
\draw (.4,-1) -- (.4,5);
\draw (.6,1) -- (.6,5);
\draw (1,1) -- (1,5);
\draw (1.2,4) -- (1.2,5);
\draw (1.6,4) -- (1.6,5);
\draw (2.2,4) -- (2.2,5);
\draw (1.8,4) -- (1.8,5);
\draw (1.2,4) arc (-180:0:.2cm);
\draw (1.2,3)  .. controls ++(90:.35cm) and ++(270:.35cm) .. (1.8,4);
\draw (1.5,3)  .. controls ++(90:.35cm) and ++(270:.35cm) .. (2.2,4);
\draw (1.8,3) arc (180:0:.2cm);
\nbox{unshaded}{(.5,1.5)}{.3}{0}{0}{$y$};
\nbox{unshaded}{(0.8,4.5)}{.3}{0.3}{0.3}{$x$};
\draw (.6,1) arc (-180:0:.2cm);
\draw (.6,0)  .. controls ++(90:.35cm) and ++(270:.35cm) .. (1.2,1);
\draw (.8,0)  .. controls ++(90:.35cm) and ++(270:.35cm) .. (1.5,1);
\draw (1,0) arc (180:0:.2cm);
\draw (2.2,0) -- (2.2,3);
\draw (1.8,0) -- (1.8,2);
\draw (1.5,1) -- (1.5,2);
\draw (1.2,1) -- (1.2,3);
\draw (1.8,-1) arc (180:0:.2cm);
\draw (1.4,-1)  .. controls ++(90:.35cm) and ++(270:.35cm) .. (2.2,0);
\draw (1.4,0) arc (-180:0:.2cm);
\draw (.6,-1) -- (.6,0);
\draw (.8,-1) -- (.8,0);
\draw (1,-1) -- (1,0);
\draw[orange,thick] (2.5,-1) -- (2.5,5);
\draw[orange,thick] (2.2,5) arc (180:0:.15cm);
\draw[orange,thick] (2.2,-1) arc (-180:0:.15cm);
\draw[dashed] (0,4) -- (2.7,4);
\draw[dashed] (0,3) -- (2.7,3);
\draw[dashed] (0,2) -- (2.7,2);
\draw[dashed] (0,1) -- (2.7,1);
\draw[dashed] (0,0) -- (2.7,0);
\draw[dashed] (0,-1) rectangle (2.7,5);
\node at (.4,-1.2) {\tiny{$m$}};
\node at (.6,-1.2) {\tiny{$\uparrow$}};
\node at (.6,-1.4) {\tiny{$n\sm i\sm j$}};
\node at (.8,-1.2) {\tiny{$j$}};
\node at (1,-1.2) {\tiny{$i$}};
\node at (1.4,-1.2) {\tiny{$j$}};
\node at (1.8,-1.2) {\tiny{$i$}};
\node at (.4,5.2) {\tiny{$m$}};
\node at (.6,5.2) {\tiny{$i$}};
\node at (1,5.2) {\tiny{$i$}};
\node at (1.2,5.2) {\tiny{$j$}};
\node at (1.6,5.2) {\tiny{$j$}};
\node at (1.8,5.2) {\tiny{$\downarrow$}};
\node at (1.8,5.4) {\tiny{$n\sm i\sm j$}};
\node at (.6,.7) {\tiny{$i$}};
\node at (.7,2.2) {\tiny{$i$}};
\end{tikzpicture}
\ \stackrel{\circle1}{=}\
\begin{tikzpicture}[baseline=3.9cm]
\draw (.4,-1) -- (.4,5);
\draw (.6,1) -- (.6,5);
\draw (1,1) -- (1,5);
\draw (1.2,3) -- (1.2,5);
\draw (1.6,3) -- (1.6,5);
\draw (2.2,3) -- (2.2,5);
\draw (1.8,3) -- (1.8,5);
\draw (1.2,3) arc (-180:0:.2cm);
\draw (1.2,2)  .. controls ++(90:.35cm) and ++(270:.35cm) .. (1.8,3);
\draw (1.5,2)  .. controls ++(90:.35cm) and ++(270:.35cm) .. (2.2,3);
\draw (1.8,2) arc (180:0:.2cm);
\nbox{unshaded}{(.5,3.5)}{.3}{0}{0}{$y$};
\nbox{unshaded}{(0.8,4.5)}{.3}{0.3}{0.3}{$x$};
\draw (.6,1) arc (-180:0:.2cm);
\draw (.6,0)  .. controls ++(90:.35cm) and ++(270:.35cm) .. (1.2,1);
\draw (.8,0)  .. controls ++(90:.35cm) and ++(270:.35cm) .. (1.5,1);
\draw (1,0) arc (180:0:.2cm);
\draw (2.2,0) -- (2.2,2);
\draw (1.8,0) -- (1.8,1);
\draw (1.5,1) -- (1.5,1);
\draw (1.2,1) -- (1.2,2);
\draw (1.8,-1) arc (180:0:.2cm);
\draw (1.4,-1)  .. controls ++(90:.35cm) and ++(270:.35cm) .. (2.2,0);
\draw (1.4,0) arc (-180:0:.2cm);
\draw (.6,-1) -- (.6,0);
\draw (.8,-1) -- (.8,0);
\draw (1,-1) -- (1,0);
\draw[orange,thick] (2.5,-1) -- (2.5,5);
\draw[orange,thick] (2.2,5) arc (180:0:.15cm);
\draw[orange,thick] (2.2,-1) arc (-180:0:.15cm);
\draw[dashed] (0,4) -- (2.7,4);
\draw[dashed] (0,3) -- (2.7,3);
\draw[dashed] (0,2) -- (2.7,2);
\draw[dashed] (0,1) -- (2.7,1);
\draw[dashed] (0,0) -- (2.7,0);
\draw[dashed] (0,-1) rectangle (2.7,5);
\node at (.4,-1.2) {\tiny{$m$}};
\node at (.6,-1.2) {\tiny{$\uparrow$}};
\node at (.6,-1.4) {\tiny{$n\sm i\sm j$}};
\node at (.8,-1.2) {\tiny{$j$}};
\node at (1,-1.2) {\tiny{$i$}};
\node at (1.4,-1.2) {\tiny{$j$}};
\node at (1.8,-1.2) {\tiny{$i$}};
\node at (.4,5.2) {\tiny{$m$}};
\node at (.6,5.2) {\tiny{$i$}};
\node at (1,5.2) {\tiny{$i$}};
\node at (1.2,5.2) {\tiny{$j$}};
\node at (1.6,5.2) {\tiny{$j$}};
\node at (1.8,5.2) {\tiny{$\downarrow$}};
\node at (1.8,5.4) {\tiny{$n\sm i\sm j$}};
\node at (.6,.7) {\tiny{$i$}};
\node at (.7,4) {\tiny{$i$}};
\end{tikzpicture}
\ \stackrel{\circle2}{=}\
\begin{tikzpicture}[baseline=3.9cm]
\draw (.4,-1) -- (.4,5);
\draw (.6,1) -- (.6,5);
\draw (1,1) -- (1,5);
\draw (1.2,3) -- (1.2,5);
\draw (1.6,3) -- (1.6,5);
\draw (1.8,3) -- (1.8,5);
\draw (1.2,3) arc (-180:0:.2cm);
\draw (1.2,2)  .. controls ++(90:.35cm) and ++(270:.35cm) .. (1.8,3);
\draw (1.5,2)  .. controls ++(90:.35cm) and ++(270:.35cm) .. (2.2,3);
\draw (1.8,2) arc (180:0:.2cm);
\nbox{unshaded}{(.5,3.5)}{.3}{0}{0}{$y$};
\nbox{unshaded}{(0.8,4.5)}{.3}{0.3}{0.3}{$x$};
\draw (.6,1) arc (-180:0:.2cm);
\draw (.6,0)  .. controls ++(90:.35cm) and ++(270:.35cm) .. (1.2,1);
\draw (.8,0)  .. controls ++(90:.35cm) and ++(270:.35cm) .. (1.5,1);
\draw (1,0) arc (180:0:.2cm);
\draw (2.2,0) -- (2.2,2);
\draw (1.8,0) -- (1.8,1);
\draw (1.5,1) -- (1.5,1);
\draw (1.2,1) -- (1.2,2);
\draw (1.8,-1) arc (180:0:.2cm);
\draw (1.4,-1)  .. controls ++(90:.35cm) and ++(270:.35cm) .. (2.2,0);
\draw (1.4,0) arc (-180:0:.2cm);
\draw (.6,-1) -- (.6,0);
\draw (.8,-1) -- (.8,0);
\draw (1,-1) -- (1,0);
\draw[orange,thick] (2.5,-1) -- (2.5,3);
\draw[orange,thick] (2.2,3) arc (180:0:.15cm);
\draw[orange,thick] (2.2,-1) arc (-180:0:.15cm);
\draw[dashed] (0,4) -- (2.7,4);
\draw[dashed] (0,3) -- (2.7,3);
\draw[dashed] (0,2) -- (2.7,2);
\draw[dashed] (0,1) -- (2.7,1);
\draw[dashed] (0,0) -- (2.7,0);
\draw[dashed] (0,-1) rectangle (2.7,5);
\node at (.4,-1.2) {\tiny{$m$}};
\node at (.6,-1.2) {\tiny{$\uparrow$}};
\node at (.6,-1.4) {\tiny{$n\sm i\sm j$}};
\node at (.8,-1.2) {\tiny{$j$}};
\node at (1,-1.2) {\tiny{$i$}};
\node at (1.4,-1.2) {\tiny{$j$}};
\node at (1.8,-1.2) {\tiny{$i$}};
\node at (.4,5.2) {\tiny{$m$}};
\node at (.6,5.2) {\tiny{$i$}};
\node at (1,5.2) {\tiny{$i$}};
\node at (1.2,5.2) {\tiny{$j$}};
\node at (1.6,5.2) {\tiny{$j$}};
\node at (1.8,5.2) {\tiny{$\downarrow$}};
\node at (1.8,5.4) {\tiny{$n\sm i\sm j$}};
\node at (.6,.7) {\tiny{$i$}};
\node at (.7,4) {\tiny{$i$}};
\end{tikzpicture}
\ \stackrel{\circle3}{=}\
\begin{tikzpicture}[baseline=2.9cm]
\draw (.4,1) -- (.4,4);
\draw (.6,2) -- (.6,4);
\draw (1,2) -- (1,4);
\draw (1.2,2) -- (1.2,4);
\draw (1.6,2) -- (1.6,4);
\draw (2,2) -- (2,4);
\nbox{unshaded}{(.5,2.5)}{.3}{0}{0}{$y$};
\nbox{unshaded}{(0.8,3.5)}{.3}{0.3}{0.3}{$x$};
\draw (.6,2) arc (-180:0:.2cm);
\draw (1.2,2) arc (-180:0:.2cm);
\draw (1,1)  .. controls ++(90:.35cm) and ++(270:.35cm) .. (2,2);
\draw (1.6,1) arc (180:0:.2cm);
\draw[dashed] (0,3) -- (2.4,3);
\draw[dashed] (0,2) -- (2.4,2);
\draw[dashed] (0,1) rectangle (2.4,4);
\node at (.4,.8) {\tiny{$m$}};
\node at (1,.8) {\tiny{$n\sm i\sm j$}};
\node at (2,.8) {\tiny{$i\+ j$}};
\node at (.4,4.2) {\tiny{$m$}};
\node at (.6,4.2) {\tiny{$i$}};
\node at (1,4.2) {\tiny{$j$}};
\node at (1.2,4.2) {\tiny{$i$}};
\node at (1.6,4.2) {\tiny{$j$}};
\node at (.6,1.7) {\tiny{$i$}};
\node at (.7,3) {\tiny{$i$}};
\end{tikzpicture}
\end{align*}

where only the straight strands are allowed in the blank.  \\

\noindent List of the formulas used in above equalities: 

\noindent \circle1: uses ($\lambda$11);\hfill \circle2: uses ($\lambda$10); 

\noindent \circle3: uses Lemma \ref{lemma 3} and Jones projection property.\\

Therefore, $(x\circ y)\otimes 1=(x\otimes 1)\circ (y\otimes 1)$ in this case. 
Readers can check the rest of the cases by using the string diagram dictionary and the lemmas. 
\end{proof}

\begin{proposition} 
The tensor product is functorial.
For $x,y,z,w\in \Hom(\mathcal{A}_0)$, $(x\circ y)\otimes (z\circ w)=(x\otimes z)\circ (y\otimes w)$.
\end{proposition}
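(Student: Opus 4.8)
The plan is to deduce functoriality of $\otimes$ purely formally from three facts already in hand: the definition $x\otimes y:=(x\otimes 1)\circ(1\otimes y)$ (Definition~\ref{definion x otimes y}), the compatibility of $\circ$ with the two whiskering operations, namely $(x\circ y)\otimes 1=(x\otimes 1)\circ(y\otimes 1)$ and $1\otimes(x\circ y)=(1\otimes x)\circ(1\otimes y)$ (Proposition~\ref{circ tensor 1}), and the interchange relation $(a\otimes 1)\circ(1\otimes b)=(1\otimes b)\circ(a\otimes 1)$ (Proposition~\ref{tensor product def prop}). This is the standard argument that a strict tensor product assembled from two ``partial functors'' plus an interchange law is automatically a bifunctor; the only thing that demands attention is bookkeeping of the strand count of each identity that occurs.

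First I would fix shapes: write $y\in\mathcal{A}_0([m,?]\to[m',?])$, $x\in\mathcal{A}_0([m',?]\to[m'',?])$, $w\in\mathcal{A}_0([l,\star]\to[l',\star])$, $z\in\mathcal{A}_0([l',\star]\to[l'',\star])$, so that $x\circ y$ and $z\circ w$ are defined. If the shadings make any of the relevant tensor products vanish then both sides of the claimed identity equal $0$, since $\circ$ kills $0$, so I may assume otherwise; then the source/target conventions of Definitions~\ref{x otimes 1, 1 otimes y} and~\ref{definion x otimes y} pin down the strand count of every identity below. Unfolding the definition of $\otimes$ gives
\[
(x\circ y)\otimes(z\circ w)=\big((x\circ y)\otimes 1_{l''}\big)\circ\big(1_{m}\otimes(z\circ w)\big),
\]
and then applying Proposition~\ref{circ tensor 1} to each of the two factors this becomes
\[
(x\circ y)\otimes(z\circ w)=(x\otimes 1_{l''})\circ(y\otimes 1_{l''})\circ(1_{m}\otimes z)\circ(1_{m}\otimes w).
\]
On the other side, expanding $x\otimes z$ and $y\otimes w$ directly yields
\[
(x\otimes z)\circ(y\otimes w)=(x\otimes 1_{l''})\circ(1_{m'}\otimes z)\circ(y\otimes 1_{l'})\circ(1_{m}\otimes w).
\]

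Now I would compare the two four-fold composites: their outermost factors $x\otimes 1_{l''}$ and their innermost factors $1_{m}\otimes w$ already coincide, so the whole statement reduces to the single identity
\[
(y\otimes 1_{l''})\circ(1_{m}\otimes z)=(1_{m'}\otimes z)\circ(y\otimes 1_{l'}),
\]
which is precisely Proposition~\ref{tensor product def prop} applied to the pair $(y,z)$ in place of $(x,y)$: for $a\colon S\to S'$ and $b\colon T\to T'$ one has $(a\otimes 1_{T'})\circ(1_{S}\otimes b)=(1_{S'}\otimes b)\circ(a\otimes 1_{T})$, and here $S=[m,?]$, $S'=[m',?]$, $T=[l',\star]$, $T'=[l'',\star]$. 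Hence $(x\circ y)\otimes(z\circ w)=(x\otimes z)\circ(y\otimes w)$. As in the preceding propositions, the remaining shading/parity cases change only the decoration of the strings and whether a shift map $S^{(n)}_{i,j}$ or a word $e^{i}_{j,k}$ in the Jones projections is used inside the whiskerings, not this formal manipulation, since Propositions~\ref{circ tensor 1} and~\ref{tensor product def prop} hold for all morphisms. The one genuine sticking point is the identity strand-count bookkeeping — checking that $(x\circ y)\otimes 1$ and $x\otimes z$ are both whiskered on the right by $1_{l''}$ while $1\otimes(z\circ w)$ and $y\otimes w$ are both whiskered on the left by $1_{m}$, which is forced once the shapes above are fixed — and I do not expect any deeper obstacle.
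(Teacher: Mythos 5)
Your proof is correct and is essentially identical to the paper's: both unfold $(x\circ y)\otimes(z\circ w)$ via the definition $a\otimes b=(a\otimes 1)\circ(1\otimes b)$, split each whiskering with Proposition~\ref{circ tensor 1}, and then swap the middle factors $(y\otimes 1)\circ(1\otimes z)=(1\otimes z)\circ(y\otimes 1)$ using Proposition~\ref{tensor product def prop} before regrouping. The only difference is that you track the strand counts of the identity morphisms explicitly, which the paper suppresses; this is a harmless (and accurate) elaboration, not a different argument.
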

\begin{proof}
Based on Proposition \ref{tensor product def prop} and Proposition \ref{circ tensor 1}, we have
\begin{align*}
    (x\circ y)\otimes (z\circ w) & = ((x\circ y)\otimes 1)\circ (1\otimes (z\circ w)) \\
    & = ((x\otimes 1)\circ (y\otimes 1))\circ ((1\otimes z)\circ (1\otimes w))\\
    & = (x\otimes 1)\circ ((y\otimes 1)\circ (1\otimes z))\circ (1\otimes w)\\
    & = (x\otimes 1)\circ ((1\otimes z)\circ (y\otimes 1))\circ (1\otimes w)\\
    & = ((x\otimes 1)\circ (1\otimes z))\circ ((y\otimes 1)\circ (1\otimes w))\\
    & = (x\otimes z)\circ (y\otimes w).
\end{align*}
\end{proof}

Therefore, the tensor product in Definition \ref{definion x otimes y} is well-defined.\\

Next, we show that $\mathcal{A}_0$ has a pivotal structure.

\begin{definition}[ev and coev]
Note that $[n,\pm]\otimes \overline{[n,\pm]}=[2n;\pm]$; 
$\overline{[n,+]}\otimes [n,+]=[2n,+]$ if $2\mid n$ and $[2n,-]$ if $2\nmid n$; $\overline{[n,-]}\otimes [n,-]=[2n,-]$ if $2\mid n$ and $[2n,+]$ if $2\nmid n$.

Define 
\begin{align*}
    \coev_{[n,+]}:1^+\to [2n,+]=[n,+]\otimes \overline{[n,+]} \quad &\text{ as }\quad \coev_{[n,+]}=(d^n;[0,+],[2n,+])\\
    \ev_{[n,+]}:\overline{[n,+]}\otimes [n,+]=[2n,?]\to 1^? \quad &\text{ as }\quad \ev_{[n,+]}=(d^n;[2n,?],[0,?]),\ ?=+,\text{ if }2\mid n\\
    \coev_{[n,-]}:1^-\to [2n,-]=[n,-]\otimes \overline{[n,-]} \quad &\text{ as }\quad \coev_{[n,-]}=(d^n;[0,-],[2n,-])\\
    \ev_{[n,-]}:\overline{[n,-]}\otimes [n,-]=[2n,?]\to 1^? \quad &\text{ as }\quad \ev_{[n,-]}=(d^n;[2n,?],[0,?]),\ ?=-,\text{ if }2\mid n
\end{align*}
\end{definition}

\begin{proposition}
$\mathcal{A}_0$ is rigid.
\end{proposition}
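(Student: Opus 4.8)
The plan is to verify, for every object $X=[n,\pm]$ of $\mathcal{A}_0$, the three conditions in the definition of rigidity (\S\ref{planar tensor category}): the two zig‑zag identities $(\id_X\otimes\ev_X)\circ(\coev_X\otimes\id_X)=\id_X$ and $(\ev_X\otimes\id_{\overline X})\circ(\id_{\overline X}\otimes\coev_X)=\id_{\overline X}$, and the adjointness relations $\ev_{\overline X}=(\coev_X)^\dagger$, $\coev_{\overline X}=(\ev_X)^\dagger$ exhibiting $\overline{(\cdot)}$ as a unitary dual functor. It suffices to treat the four parity/shading cases $[2k,+],[2k+1,+],[2k,-],[2k+1,-]$, and the two shaded cases reduce to the unshaded ones via the left‑shading swap, which on endomorphism algebras is exactly the identification $\mathcal{A}_0([m,-]\to[m,-])=A_{1,m+1}$ versus $\mathcal{A}_0([m,+]\to[m,+])=A_{0,m}$ built into Definition \ref{Def:std lambda lattice to planar tensor category}; so in practice only the two unshaded cases carry content, and within those the even case has $\overline X=X$ while the odd case uses $\overline{[2k+1,+]}=[2k+1,-]$.

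First I would dispatch the adjointness relations. When $n$ is even, $\overline{[n,\pm]}=[n,\pm]$ and the assertion is just that $(\cdot)^\dagger$ interchanges the source and target of a triple while fixing the self‑adjoint endomorphism representation $d^n\cdot 1$ — which is precisely how $\ev_{[n,\pm]}$ was obtained from $\coev_{[n,\pm]}$ in the definition above. When $n$ is odd, these equalities are read as the definitions of $\ev$ and $\coev$ on the remaining ($-$, resp.\ $+$) objects, and one only has to check that they are self‑consistent under a second application, which follows from $\overline{\overline X}=X$ and $\dagger^2=\id$. As a by‑product one gets the balanced normalization needed next: composing $\ev_{\overline{[n,+]}}=(d^n\cdot1;[2n,?],[0,?])$ with $\coev_{[n,+]}=(d^n\cdot1;[0,+],[2n,+])$ falls under composition case (C2) of Definition \ref{Def:std lambda lattice to planar tensor category} with parameters forcing the decoration $e^{0}_{n,0}$ to equal $1$, so the result is $E^{r,n}_{0,n}(d^{2n}\cdot1)=d^{2n}\cdot1$ by unitality of the multi‑step conditional expectation (Definition \ref{Def:multistep conditional expectation}), i.e.\ $d^{2n}\cdot\id_{1^?}$.

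The substance is the two snake identities. For $(\id_X\otimes\ev_X)\circ(\coev_X\otimes\id_X)=\id_X$ I would expand $\coev_X\otimes\id_X$ and $\id_X\otimes\ev_X$ using Definition \ref{definion x otimes y} together with the $x\otimes1$ and $1\otimes y$ tables of Definition \ref{x otimes 1, 1 otimes y}, redraw the composite with the string‑diagram dictionary of \S\ref{TLJ diagram explanation}, and recognize it as the ``kinked identity strand'': the endomorphism box of $\id_X$ threaded by $n$ nested cup–cap turnbacks. Straightening those turnbacks is exactly the iterated computation $e_{i+1}e_{i+2}\cdots e_{j}e_{j+1}e_{j}\cdots e_{i+1}=d^{2(i-j)}e_{i+1}$ carried out inside the proof of Proposition \ref{2-shift map prop}(2); it produces precisely the factor $d^{-2n}$ that cancels the $d^{n}\cdot d^{n}$ carried by $\ev$ and $\coev$, leaving $\id_X$. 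The intermediate regroupings of Jones projections and multi‑step conditional expectations are handled by the pull‑down relation ($\lambda$12), the relations ($\lambda$7)--($\lambda$11), and Lemmas \ref{lemma 1}--\ref{lemma 3}, in the same style as the proofs of Propositions \ref{tensor product def prop}, \ref{strictness prop}, and \ref{circ tensor 1}. The second identity $(\ev_X\otimes\id_{\overline X})\circ(\id_{\overline X}\otimes\coev_X)=\id_{\overline X}$ is the mirror‑image computation with the two turnbacks interchanged; for odd $n$ it additionally uses $\overline{[2k+1,+]}=[2k+1,-]$ to keep the shadings compatible on the nose.

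The step I expect to be the real obstacle is not any single identity but the bookkeeping: expanding the nested tensor products into triples, selecting the correct case among (C1)--(C6) at each composition, and tracking the powers of $d$ so that the straightening delivers exactly $d^{-2n}$ and nothing more. As with the earlier propositions, the cleanest route is to perform the computation diagrammatically, invoke ($\lambda$1)--($\lambda$16) and Lemmas \ref{lemma 1}--\ref{lemma 4} as needed, and leave the remaining shading cases to the reader once the two unshaded cases are settled.
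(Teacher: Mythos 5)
Your proposal is correct and follows essentially the same route as the paper: expand $\coev_{[n,+]}\otimes\id_{[n,+]}=(d^ne^0_{0,n};[n,+],[3n,+])$ and $\id_{[n,+]}\otimes\ev_{[n,+]}=(d^n;[3n,+],[n,+])$ via Definition \ref{x otimes 1, 1 otimes y}, compose by case (C2), and reduce $d^{2n}E^{r,n}_{0,3n}(e^0_{0,n})$ to $1$ by the Jones-projection/conditional-expectation relations — which is exactly your "straightening the turnbacks" step, with the paper likewise treating $\ev_{\overline X}=(\coev_X)^\dagger$ as definitional and leaving the remaining shading cases to the reader.
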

\begin{proof} 
First we prove that 
$$(\id_{[n,+]}\otimes \ev_{[n,+]})\circ (\coev_{[n,+]}\otimes \id_{[n,+]})=\id_{[n,+]}.$$

Note that $\id_{[n,+]}\otimes \ev_{[n,+]} = (S^{(n)}(d^n);[2n+n,+],[0+n,+])=(d^n;[3n,+],[n,+])$ and $\coev_{[n,+]}\otimes \id_{[n,+]} = (d^n e^0_{(n-n),n};[0+n,+],[2n+n,+])= (d^n e^0_{0,n};[n,+],[3n,+])$.

Then by the composition case (C2), where $i=0,j=n$,
\begin{align*}
    (\id_{[n,+]}\otimes \ev_{[n,+]})\circ (\coev_{[n,+]}\otimes \id_{[n,+]}) &= (d^n;[3n,+],[n,+])\circ (d^ne^0_{0,n};[n,+],[3n,+])\\
    & = (d^0E^{r,0+n}_{0,n+2n}(d^{2n}e^0_{0,n}e^{n,*}_{j,i});[n,+],[n+2i,+])\\
    & = (d^{2n}E^{r,n}_{0,3n}(e^0_{0,n});[n,+],[n,+]) \\
    & = (1;[n,+],[n,+]) = \id_{[n,+]}.
\end{align*}

The other three cases are left to the reader. Therefore, $\mathcal{A}_0$ is rigid. 
\end{proof}

\begin{proposition}
$\mathcal{A}_0$ has a pivotal structure. \label{A_0 is pivotal from A}
\end{proposition}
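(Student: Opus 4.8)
\noindent\textit{Proof proposal.}
The plan is to exhibit the pivotal structure on $\mathcal{A}_0$ as the canonical dual functor $\overline{(\cdot)}$ equipped with the balanced $\ev$ and $\coev$ introduced above. The snake (zig-zag) identities, and the relations $\ev_{\overline X}=(\coev_X)^\dagger=\coev_{\overline X}$, are already in place from the proof that $\mathcal{A}_0$ is rigid, so by Definition \ref{pivotal planar tensor category} two things remain: (i) the ``quantum dimension'' identity $\ev_{\overline{[n,?]}}\circ\coev_{[n,?]}=d^{2n}\cdot 1^?$ for $?=+,-$; and (ii) that the left and right categorical traces $\Tr_L,\Tr_R$ attached to this data are faithful, normal, tracial, and equal.

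For (i) I would compute directly. Since $\coev_{[n,+]}=(d^n;[0,+],[2n,+])$ we have $\ev_{\overline{[n,+]}}=(\coev_{[n,+]})^\dagger=(d^n;[2n,+],[0,+])$, and the composite $1^+\to[2n,+]\to 1^+$ falls under composition case (C2) of Definition \ref{Def:std lambda lattice to planar tensor category} with parameters $n\mapsto 0$, $i=0$, $j=n$. Because $e^0_{n,0}=1$, the outcome is $E^{r,n}_{0,n}(d^{2n}\cdot 1)=d^{2n}\cdot 1$, using $E^{r,\bullet}(1)=1$; equivalently, the nested system of $n$ cups collapses under the iterated horizontal conditional expectation by the Markov condition (c2) of Definition \ref{standard lambda lattice}. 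The shaded object $[n,-]$ is identical after replacing $A_{0,n}$ by $A_{1,n+1}$.

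The substance is (ii). Fix $f\in\End([n,+])=A_{0,n}$ (the shaded case runs verbatim with $A_{1,n+1}$). Using the string-diagram dictionary of \S\ref{TLJ diagram explanation}, $\Tr_R(f)$ is the picture in which the $n$ strands of $f$ are capped off on the right by $\ev$ on top and $\ev^\dagger$ on the bottom; unwinding the scalars this is exactly a power of $d$ times the iterated horizontal conditional expectation $E^{r,n}_{0,n}\colon A_{0,n}\to A_{0,0}=\mathbb{C}$, i.e.\ (up to the standard normalization) the canonical trace $\tr$ of the $\lambda$-lattice, which by Definition \ref{Def:multistep conditional expectation} is built precisely as a composite of $E^l$'s and $E^r$'s. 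For $\Tr_L(f)$ the strands close on the left; here one first rewrites $\id_{\overline{[n,?]}}\otimes f$ via the shift formula $1_{2n}\otimes f=S^{(n)}_{0,n}(f)$ from Definition \ref{x otimes 1, 1 otimes y} so that there is room to wrap strands on the left, and then the trace-preserving property of the $2n$-shift map (Proposition \ref{2n-shift map prop}(5), resting on Proposition \ref{2-shift map prop}(6)) together with the commuting-parallelogram identities (Proposition \ref{2n-shift map prop}(2)) turns the left closure into a power of $d$ times the iterated vertical conditional expectation --- again the same $\tr$. Hence $\Tr_L(f)=\Tr_R(f)=\tr(f)$; the genuine content of this equality is that the two composites $E^{l,\bullet}\circ E^{r,\bullet}$ and $E^{r,\bullet}\circ E^{l,\bullet}$ appearing in Definition \ref{Def:multistep conditional expectation} agree, which follows by iterating the commuting-square condition (a) of Definition \ref{standard lambda lattice}, and it is exactly here that the standard condition is used. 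Finally, $\tr$ is faithful and normal as a composite of faithful normal conditional expectations, and it is tracial because the canonical trace of a standard $\lambda$-lattice is tracial (Popa); this last point can be re-derived internally from the commuting-square, Markov and standard conditions if a self-contained argument is preferred.

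I expect the main obstacle to be the bookkeeping in step (ii): tracking the shadings, the insertions of $2n$-shift maps, and the exact powers of $d$ so that the left and right closures both land on the normalized $\tr$. Conceptually nothing deep is happening --- the only load-bearing inputs are the commuting-square condition applied repeatedly and the compatibility of the shift map with both conditional expectations --- but making the diagrammatic manipulations airtight, especially in the odd-index/shaded cases, is where the care is required. \qed
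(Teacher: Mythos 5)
Your proposal is correct and follows essentially the same route as the paper: reduce $\Tr_R$ to the iterated right conditional expectation via composition case (C2), reduce $\Tr_L$ to the same canonical trace by inserting the $2n$-shift map and invoking its trace-preservation together with the commuting-parallelogram/commuting-square identities, and verify the loop value $d^{2n}\cdot 1^?$ by a direct (C2) computation. The only quibble is your attribution at the end: the agreement of the two multi-step composites $E^{l,\bullet}\circ E^{r,\bullet}=E^{r,\bullet}\circ E^{l,\bullet}$ rests on the commuting-square condition alone (as you yourself note), while the standard condition enters earlier, through the shift-map properties of Proposition \ref{2n-shift map prop}, not at that step.
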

\begin{proof} 
First, we prove that the right trace $\Tr_R$ is a normal faithful trace. 
Let $X=[n,+]$. 
Given $(f;[n,+],[n,+])$, $f\otimes \id_{\overline{[n,+]}}=(f;[2n,+],[2n,+])$, then 
\begin{align*}
    \Tr_R(f) & =\coev^\dagger_{[n,+]}\circ (f\otimes \id_{\overline{{[n,+]}}})\circ \coev_{[n,+]}\\
    & = (d^n;[2n,+],[0,+])\circ (f;[2n,+],[2n,+])\circ (d^n;[0,+],[2n,+])\\
    & = (d^n;[2n,+],[0,+])\circ (d^nE^{r,n}_{0,2n}(f\cdot d^ne^0_{0,n});[0,+],[2n,+])\\
    & = (d^n;[2n,+],[0,+])\circ (f;[0,+],[2n,+])\\
    & = (d^0E^{r,n}_{0,n}(fe^{0,*}_{n,0});[0,+];[0,+])\\
    & = (\tr(f);[0,+],[0,+]).
\end{align*}
The third equality uses (C1), where $n=0,i=n,j=0$; the forth equality uses ($\lambda10$); 
the fifth equality uses (C2), where $n=i=0,j=n$.

The case $X=[n,-]$ is left to the reader.

Next, we prove that the left trace $\Tr_L$ is a normal faithful trace. 
Let $X=[2n,+]$. Given $(f;[2n,+],[2n,+])$, $\id_{\overline{[2n,+]}}\otimes f=(S^{(n)}_{0,2n}(f);[4n,+],[4n,+])$, then 
\begin{align*}
    \Tr_L(f) & = \ev_{[2n,+]}\circ (\id_{\overline{[2n,+]}}\otimes f)\circ \ev_{[2n,+]}^\dagger \\
    & = (d^{2n};[4n,+],[0,+])\circ (S^{(n)}_{0,2n}(f);[4n,+],[4n,+])\circ (d^{2n};[0,+],[4n,+])\\
    & = (d^{2n};[4n,+],[0,+])\circ (d^{2n}E^{r,2n}_{0,4n}(S^{(n)}_{0,2n}(f)\cdot d^{2n}e^0_{0,2n});[0,+],[4n,+])\\
    & = (d^{4n}\cdot d^0 E^{r,2n}_{0,2n}(E^{r,2n}_{0,4n}(S^{(n)}_{0,2n}(f)e^0_{0,2n}) e^{0,*}_{0,2n});[0,+],[0,+])\\
    & = (\tr(f);[0,+],[0,+]).
\end{align*}

The last equality: since $e^{0,*}_{0,2n}=1$ and $E^{r,2n}_{0,2n}\circ E^{r,2n}_{0,4n}=\tr=E^{r,2n}_{2n,4n}\circ E^{l,2n}_{0,4n}$, $S^{(n)}_{0,2n}(f)\in A_{2n,4n}$ and $S^{(n)}_{0,2n}$ is trace-preserving,
then 
\begin{align*}
    d^{4n}\cdot d^0 E^{r,2n}_{0,2n}(E^{r,2n}_{0,4n}(S^{(n)}_{0,2n}(f)e^0_{0,2n}) e^{0,*}_{0,2n}) &=d^{4n}\tr(S^{(n)}_{0,2n}(f)e^0_{0,2n})\\
    & = d^{4n} E^{r,2n}_{2n,4n} \circ E^{l,2n}_{0,4n}(S^{(n)}_{0,2n}(f)e^0_{0,2n}) \tag{by ($\lambda$10)}\\
    & = E^{r,2n}_{2n,4n}(S^{(n)}_{0,2n}(f)) \tag{by Prop \ref{2n-shift map prop}(2)}\\
    & = E^{r,2n}_{0,2n}(f) =\tr(f).
\end{align*}

The cases $X=[2n+1,+],[n,-]$ are left to the reader.

Therefore, $\Tr_R=\Tr_L$ are the trace, so $\mathcal{A}_0$ has a pivotal structure.  

Moreover, by the composition case (C2), where $i=n=0, j=n$,
\begin{align*}
    \ev_{[n,+]}\circ \coev_{[n,+]} & = (d^n;[2n,+],[0,+])\circ (d^n;[0,+],[2n,+]) \\
    & = (d^0E^{r,n}_{0,2n}(d^{2n}e^{0,*}_{n,0});[0,+],[0,+]) \\
    & = (d^{2n};[0,+],[0,+]) = d^{2n}\cdot 1^+.
\end{align*}

Similarly, $\ev_{\overline{[n,-]}}\circ \coev_{[n,-]}=d^{2n} \cdot 1^-$. 
\end{proof}

Combining above propositions, $\mathcal{A}_0$ constructed from a standard $\lambda$-lattice is a pivotal planar tensor category.

\subsection{From 2-shaded rigid ${\rm C}^*$ multitensor category to standard $\lambda$-lattice}

In this section, we show the relation between the 2-shaded rigid $\rm C^*$ multitensor category and planar tensor category, and give the construction from the category to standard $\lambda$-lattice.

\subsubsection{Rigid ${\rm C}^*$ multitensor category}\label{unitary dual functor}
In this subsection, we are going to review the unitary dual functors in a rigid ${\rm C}^*$ (multi)tensor category $\mathcal{C}$\cite{Pe18}.

\begin{definition}
Recall that every object $c\in \mathcal{C}$ is \textbf{dualizable}, i.e., there is an object $\overline{c}\in\mathcal{C}$ together with morphisms $\ev_c\in \mathcal{C}(\overline{c}\otimes c\to 1_{\mathcal{C}})$ and $\coev_c\in\mathcal{C}(1_{\mathcal{C}}\to c\otimes \overline{c})$ satisfying the zig-zag condition:
\begin{align*}
    (\id_c\otimes \ev_c)\circ (\coev_c\otimes \id_c)=\id_c\\
    (\ev_c\otimes \id_{\overline{c}})\circ (\id_{\overline{c}}\otimes \coev_c)=\id_{\overline{c}}.
\end{align*}
We also require that every object $c\in \mathcal{C}$ admits a predual object $\underline{c}$ such that $\overline{(\underline{c})}\cong c$.
\end{definition}

\begin{definition}\label{Def:bar action}
A choice of dual for every object in $\mathcal{C}$ assembles into a
\textbf{dual functor} $\overline{(\cdot)}:\mathcal{C}\to \mathcal{C}^{\text{mop}}$, which is a tensor functor with a canonical tensorator $\nu_{a,b}$. 
To be precise, for a morphism $f\in \mathcal{C}(a\to b)$, define
\begin{align*}
    \overline{f}:=(\ev_b\otimes \id_{\overline{a}})\circ (\id_{\overline{b}}\otimes f\otimes \id_{\overline{a}})\circ (\id_{\overline{b}}\otimes \coev_a):\overline{b}\to \overline{a}.
\end{align*}
\[\overline{f}:=\ 
\begin{tikzpicture}[baseline=-.1cm]
\draw (0,-.5) -- (0,.5);
\draw (0,.5) arc (0:180:.3);
\draw (0,-.5) arc (-180:0:.3);
\draw (-.6,-.8) -- (-.6,.5);
\draw (.6,-.5) -- (.6,.8);

\nbox{unshaded}{(0,0)}{.3}{0.0}{0.0}{$f$};

\node at (-.6,-1) {$\overline{b}$};
\node at (.6,1) {$\overline{a}$};
\node at (.2,.5) {$b$};
\node at (-.2,-.5) {$a$};
\end{tikzpicture}
\]
The tensorator $\nu_{a,b}:\overline{a}\otimes \overline{b}\to \overline{b\otimes a}$ is defined as
$$\nu_{a,b}:=(\ev_a\otimes \id_{\overline{b\otimes a}})\circ(\id_{\overline{a}}\otimes\ev_b\otimes \id_a\otimes \id_{\overline{b\otimes a}})\circ (\id_{\overline{a}\otimes \overline{b}}\otimes \coev_{b\otimes a}).$$
Note that $\nu$ is completely determined by $\ev$ and $\coev$.
\end{definition}

\begin{proposition}\label{equiv btw dual functors}
Any two dual functors $\overline{(\cdot)}_1$ and $\overline{(\cdot)}_2$ are equivalent up to a unique natural isomorphism. Define $\zeta:\overline{(\cdot)}_2\to \overline{(\cdot)}_1$ as follows: for $c\in\mathcal{C}$,
$$\zeta_c:=(\ev_c^2\otimes \id_{\overline{c}_1})\circ (\id_{\overline{c}_2}\otimes \coev_c^1).$$
\[\zeta_c = \begin{tikzpicture}[baseline=-.1cm]
\draw[red] (0,0) arc (0:180:.5);
\draw (0,0) arc (-180:0:.5);
\draw[red] (-1,0) -- (-1,-.5);
\draw (1,0) -- (1,.5);

\node at (.5,-.7) {$\coev^1_c$};
\node at (-.5,.8) {$\ev^2_c$};
\node at (-.2,0) {$c$};
\node at (-1,-.7) {$\overline{c}_2$};
\node at (1,.7) {$\overline{c}_1$};
\end{tikzpicture}\]
Then we have $\zeta(\overline{f}_2)=\zeta_a\circ \overline{f}_2\circ \zeta_b^{-1}=\overline{\zeta(f)}_1$ for all $f\in \mathcal{C}(a\to b)$. 
\end{proposition}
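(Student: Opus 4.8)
The plan is to verify the two claimed equalities $\zeta_a\circ\overline{f}_2\circ\zeta_b^{-1}=\overline{\zeta(f)}_1$ by unwinding the definitions of $\zeta$, of the two dual functors $\overline{(\cdot)}_1,\overline{(\cdot)}_2$, and of the bar-action on morphisms from Definition~\ref{Def:bar action}, and then using the zig-zag relations together with the standard interchange law for string diagrams. Since everything in sight is defined purely in terms of the various $\ev$ and $\coev$ morphisms, this is in principle a diagram chase; the work is to organize it so it does not become unmanageable.

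First I would record the ingredients explicitly: $\zeta_c=(\ev^2_c\otimes\id_{\overline c_1})\circ(\id_{\overline c_2}\otimes\coev^1_c)$ and, dually, its inverse $\zeta_c^{-1}=(\ev^1_c\otimes\id_{\overline c_2})\circ(\id_{\overline c_1}\otimes\coev^2_c)$ — the fact that these are mutually inverse is itself a short zig-zag computation, which I would dispatch first (or cite, since Proposition~\ref{equiv btw dual functors} asserts $\zeta$ is a natural isomorphism and one may take this as given). Next I would draw the composite $\zeta_a\circ\overline{f}_2\circ\zeta_b^{-1}:\overline b_1\to\overline a_1$ as a single string diagram. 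The middle piece $\overline{f}_2=(\ev^2_b\otimes\id_{\overline a_2})\circ(\id_{\overline b_2}\otimes f\otimes\id_{\overline a_2})\circ(\id_{\overline b_2}\otimes\coev^2_a)$ has a cup labeled $\coev^2_a$ on the right and a cap labeled $\ev^2_b$ on the left; precomposing with $\zeta_b^{-1}$ produces a $\coev^2_b$ adjacent to the $\ev^2_b$ cap, and postcomposing with $\zeta_a$ produces an $\ev^2_a$ adjacent to the $\coev^2_a$ cup. The two superscript-$2$ zig-zags $(\ev^2_b\otimes\id)\circ(\id\otimes\coev^2_b)$ and $(\ev^2_a\otimes\id)\circ(\id\otimes\coev^2_a)$ then straighten out by the zig-zag identity for the dual functor $\overline{(\cdot)}_2$, leaving exactly $(\ev^1_b\otimes\id_{\overline a_1})\circ(\id_{\overline b_1}\otimes f\otimes\id_{\overline a_1})\circ(\id_{\overline b_1}\otimes\coev^1_a)=\overline{f}_1$.

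But that is not yet the target: the target is $\overline{\zeta(f)}_1$, not $\overline{f}_1$. So the real content is the intertwining identity $\zeta_a\circ\overline f_2=\overline f_1\circ\zeta_b$ — i.e. naturality of $\zeta$ in a strong, morphism-level sense — combined with the observation that $\overline{\zeta(f)}_1=\overline f_1$ when $\zeta(f)$ is interpreted correctly. Here I would be careful: $\zeta$ is a natural transformation between the two \emph{functors}, so by definition $\zeta_a\circ\overline f_2=\overline f_1\circ\zeta_b$ is automatic once we know $\zeta$ is natural; the displayed formula for $\zeta_c$ must be checked to be natural, which is the one genuine diagram chase — slide $f$ past the cup $\coev^1_c$ using that $\coev^1$ is a coevaluation (so $\coev^1_b$ followed by $\id\otimes\overline f_1$ equals $\overline f_1$-conjugated $\coev^1_a$... more precisely use $(\id_b\otimes\overline f_1)\circ\coev^1... $), then absorb into the cap. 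Once naturality is in hand, $\zeta(\overline f_2):=\zeta_a\circ\overline f_2\circ\zeta_b^{-1}=\overline f_1\circ\zeta_b\circ\zeta_b^{-1}=\overline f_1$, and separately $\overline{\zeta(f)}_1=\overline f_1$ because applying the functor $\overline{(\cdot)}_1$ and then transporting along the identity natural iso on objects in the relevant hom-space gives back $\overline f_1$ — so both sides equal $\overline f_1$ and we are done.

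The main obstacle I anticipate is bookkeeping the mop-structure: $\overline{(\cdot)}$ lands in $\mathcal{C}^{\mathrm{mop}}$, so tensor order is reversed and one must be scrupulous about which tensorand sits where when composing $\zeta_a\circ\overline f_2\circ\zeta_b^{-1}$, and about the direction of the zig-zag relations (there are two, one for each triangle identity, and the superscript-$1$ versus superscript-$2$ cups/caps must be matched to the correct one). A secondary subtlety is that the statement conflates "$\zeta(\overline f_2)$" (conjugation of a morphism) with "$\overline{\zeta(f)}_1$" (the $\overline{(\cdot)}_1$-image of a morphism), and making the equality literally true requires pinning down that $\zeta(f)=f$ on the nose for an endomorphism-type morphism in the appropriate hom-space, or else reading $\overline{\zeta(f)}_1$ as $\zeta_a\circ\overline f_1\circ\zeta_b^{-1}$ under the identification — I would state this identification carefully at the outset so the final equality is not merely formal. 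Modulo these care points, no step is deep; the whole argument is three or four applications of the zig-zag identities and the interchange law.
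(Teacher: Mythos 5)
The paper does not actually prove this proposition: it appears in the review subsection \S\ref{unitary dual functor}, which is cited to \cite{Pe18}, so there is no in-paper argument to compare against; this is the standard ``uniqueness of duals'' fact. Your proposal is essentially correct and is the standard route. The computation you sketch in your second paragraph does go through: writing out $\zeta_a\circ\overline{f}_2\circ\zeta_b^{-1}$, the $\coev^2_b$ coming from $\zeta_b^{-1}$ cancels against the $\ev^2_b$ inside $\overline{f}_2$, and the $\coev^2_a$ inside $\overline{f}_2$ cancels against the $\ev^2_a$ coming from $\zeta_a$, leaving $(\ev^1_b\otimes\id_{\overline{a}_1})\circ(\id_{\overline{b}_1}\otimes f\otimes\id_{\overline{a}_1})\circ(\id_{\overline{b}_1}\otimes\coev^1_a)=\overline{f}_1$. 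One bookkeeping correction: the cancelling pairs are not adjacent in the configuration $(\ev^2\otimes\id)\circ(\id\otimes\coev^2)$ as you wrote them; after interchange the shared strand between each cup--cap pair is the \emph{dual} strand, and the identity actually invoked is the other triangle identity $(\id_c\otimes\ev^2_c)\circ(\coev^2_c\otimes\id_c)=\id_c$ applied along the $c$-strand that threads from $\coev^1$ (resp.\ into $\ev^1$). You flag this risk yourself, but in a written-up proof you would need to get it right. The inverse formula $\zeta_c^{-1}=(\ev^1_c\otimes\id_{\overline{c}_2})\circ(\id_{\overline{c}_1}\otimes\coev^2_c)$ is likewise verified by noting that $\zeta_c$ satisfies $\ev^1_c\circ(\zeta_c\otimes\id_c)=\ev^2_c$ and then applying a zig-zag for $\overline{(\cdot)}_2$; you should not ``cite the proposition'' for this, as that would be circular.

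Two further points. First, your third paragraph largely re-derives what the second already established: the equation $\zeta_a\circ\overline{f}_2=\overline{f}_1\circ\zeta_b$ \emph{is} the naturality you worry about, and $\overline{\zeta(f)}_1$ is just sloppy notation for $\overline{f}_1$ (with $\zeta$ acting trivially on $\mathcal{C}$ itself); your resolution is correct but the discussion is circular in places and should be compressed to a single sentence fixing the reading of the notation. Second, the proposition asserts the natural isomorphism is \emph{unique}, and your proposal never addresses this. Uniqueness is false for bare natural isomorphisms (one may rescale $\zeta_c$ by a scalar on each isomorphism class); it holds only among natural isomorphisms compatible with the duality data (equivalently, monoidal ones, via the tensorators $\nu$ of Definition \ref{Def:bar action}), where any such $\zeta'$ is forced to equal $\zeta$ by a zig-zag. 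A complete proof should state and verify this compatibility condition and derive uniqueness from it.
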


\begin{definition}\cite{EGNO15} A \textbf{pivotal structure} on a rigid monoidal category $\mathcal{C}$ is a pair $(\overline{(\cdot)},\varphi)$, where $\overline{(\cdot)}$ is a dual functor and $\varphi:\id\Rightarrow \overline{\overline{(\cdot)}}$ is a monoidal natural isomorphism. To be precise, for all $a,b\in \mathcal{C}$, the following diagram commutes:
$$\xymatrix{
a\otimes b \ar[rr]^{\varphi_a\otimes \varphi_b}\ar[d]_{\varphi_{a\otimes b}} && \overline{\overline{a\otimes b}} \ar[d]^{\overline{\nu_{\overline{b},\overline{a}}}}\\
\overline{\overline{a}}\otimes \overline{\overline{b}} \ar[rr]_{\nu_{\overline{a},\overline{b}}} && \overline{\overline{b}\otimes \overline{a}} 
}$$

\end{definition}

\begin{definition}[Pivotal trace] Let $1_{\mathcal{C}}=\bigoplus_{i=1}^r 1_i$ be a decomposition into simples. For $c\in \mathcal{C}$ and $f\in\mathcal{C}(c\to c)$, define the left/right \textbf{pivotal traces} $\tr_L^\varphi$ and $\tr_R^\varphi:\mathcal{C}(c\to c)\to \mathcal{C}(1_{\mathcal{C}}\to 1_{\mathcal{C}})\cong M_r(\mathbb{C})$ by
\begin{align*}
    \tr^\varphi_L(f) &:= \ev_c\circ (\id_{\overline{c}}\otimes f)\circ (\id_{\overline{c}}\otimes \varphi_c^{-1})\circ \coev_{\overline{c}}\\
    \tr^\varphi_R(f) &:= \ev_{\overline{c}}\circ (\varphi_c\otimes \id_{\overline{c}})\circ (f\otimes \id_{\overline{c}})\circ \coev_{c}.
\end{align*}
\[\tr^\varphi_L(f)=\begin{tikzpicture}[baseline=-.1cm]
\draw (0,-1) -- (0,1);
\draw (-.6,-1) -- (-.6,1);
\draw (0,1) arc (0:180:.3);
\draw (0,-1) arc (0:-180:.3);

\nbox{unshaded}{(0,.5)}{.3}{0.0}{0.0}{$f$};
\nbox{unshaded}{(0,-.5)}{.3}{0.0}{0.0}{\tiny{$\varphi_c^{-1}$}};

\node at (.2,0) {$c$};
\node at (.2,-1) {$\overline{\overline{c}}$};
\node at (.2,1) {$c$};
\node at (-.8,0) {$\overline{c}$};
\end{tikzpicture} \qquad \qquad
\tr^\varphi_R(f)=\begin{tikzpicture}[baseline=-.1cm]
\draw (0,-1) -- (0,1);
\draw (.6,-1) -- (.6,1);
\draw (0,1) arc (180:0:.3);
\draw (0,-1) arc (-180:0:.3);

\nbox{unshaded}{(0,-.5)}{.3}{0.0}{0.0}{$f$};
\nbox{unshaded}{(0,.5)}{.3}{0.0}{0.0}{$\varphi_c$};

\node at (-.2,0) {$c$};
\node at (-.2,1.05) {$\overline{\overline{c}}$};
\node at (-.2,-1) {$c$};
\node at (.8,0) {$\overline{c}$};
\end{tikzpicture}
\]
The traces are tracial and non-degenerate.
\end{definition}

\begin{definition}\label{Def:spherical trace}
Let $p_i\in\mathcal{C}(1_{\mathcal{C}}\to 1_{\mathcal{C}})$ be the projection onto $1_i$, $i=1,2,\cdots,r$. We define the $M_r(\mathbb{C})$-valued traces $\Tr^\varphi_L$ and $\Tr^\varphi_R$ by the formulas:
\begin{align*}
    (\Tr^\varphi_L(f))_{i,j}\id_{1_j} &:= \tr^\varphi_L(p_i\otimes f\otimes p_j)\\
    (\Tr^\varphi_R(f))_{i,j}\id_{1_i} &:= \tr^\varphi_R(p_i\otimes f\otimes p_j).
\end{align*}
Note that $\Tr^\varphi_L$ and $\Tr^\varphi_R$ are tracial, and $\Tr^\varphi_L(\overline{f})=\Tr^\varphi_R(f)^T$ for all $f\in\mathcal{C}(c\to c)$.

We call the pivotal structure $(\overline{(\cdot)},\varphi)$ \textbf{spherical},
if $\Tr^\varphi_L(f)=\Tr^\varphi_R(f)$, for all $c\in \mathcal{C},\ f\in\mathcal{C}(c\to c)$.
\end{definition}

\begin{definition}\label{def dimension}
For each $c\in C$, define $\text{Dim}^\varphi_L,\text{Dim}^\varphi_R\in M_r(\mathbb{C})$ by
$$\text{Dim}^\varphi_L(c):=\Tr^\varphi_L(\id_c)\qquad\qquad \text{Dim}^\varphi_R(c):=\Tr^\varphi_R(\id_c).$$

If $c$ is simple, then $\text{Dim}^\varphi_L(c),\text{Dim}^\varphi_R(c)$ have only one non-zero entry, which we denote $\dim^\varphi_L(c),\dim^\varphi_R(c)$ respectively.

If the pivotal structure $(\overline{(\cdot)},\varphi)$ is spherical, $\text{Dim}^\varphi_L(c)=\text{Dim}^\varphi_R(c):=\text{Dim}(c)$ for all object $c$. 
\end{definition}

\begin{definition}
A \textbf{dagger structure} on a $\mathbb{C}$-linear category is a collection of anti-linear maps $\dagger:\mathcal{C}(c\to d)\to \mathcal{C}(d\to c)$ for all $c,d\in\mathcal{C}$ such that $(f\circ g)^\dagger = g^\dagger \circ f^\dagger$ and $(f^\dagger)^\dagger = f$. A morphism $f:\mathcal{C}(a\to b)$ is called unitary if $f^\dagger =f^{-1}$.

A dagger (multi)tensor category is a (multi)tensor category equipped with a dagger structure so that $(f\otimes g)^\dagger=f^\dagger\otimes g^\dagger$ for all morphisms $f,g$, and all associator and unitors are unitary.
\end{definition}

\begin{definition}
A functor between dagger categories $F:\mathcal{C}\to\mathcal{D}$ is called a \textbf{dagger functor} if $F(f^\dagger)=F(f)^\dagger$ for all $f\in\Hom(\mathcal{C})$.
\end{definition}

\begin{definition}[Rigid ${\rm C}^*$ (multi)tensor category]
A ${\rm C}^*$ category is a dagger category which is Cauchy complete and each endomorphism algebra is a ${\rm C}^*$-algebra, where the dagger structure is compatible with the $*$-structure.

A ${\rm C}^*$ (multi)tensor category is a dagger (multi)tensor category whose underlying dagger category is ${\rm C}^*$.

A rigid ${\rm C}^*$ (multi)tensor category is a ${\rm C}^*$ (multi)tensor category equipped with a dual functor. It is known that a rigid ${\rm C}^*$ multitensor category is Cauchy complete if and only if it is semisimple \cite{LR96}.
\end{definition}

\begin{proposition}[Unitary dual functor] Fix a dual functor $\overline{(\cdot)}$ on a rigid ${\rm C}^*$ (multi)tensor category $\mathcal{C}$, the followings are equivalent:
\begin{compactenum}[(1)]
\item $\overline{(\cdot)}$ is a \textbf{unitary dual functor}, i.e., for all $a,b\in\mathcal{C},\ f\in \mathcal{C}(a\to b)$, the tensorator $\nu_{a,b}$ is unitary and $\overline{f}^\dagger= \overline{f^\dagger}$.
\item Defining $\varphi_c:=(\coev_c^\dagger\otimes \id_{\overline{\overline{c}}})\circ (\id_c\otimes \coev_{\overline{c}})$ is a pivotal structure $\varphi:\id\Rightarrow \overline{\overline{(\cdot)}}$.
\end{compactenum}
\end{proposition}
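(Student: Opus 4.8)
The statement is a standard characterization of unitary dual functors, and the natural approach is to prove the two implications $(1)\Rightarrow(2)$ and $(2)\Rightarrow(1)$ by direct string-diagram manipulation, using only the zig-zag identities, the definition of $\overline{(\cdot)}$ on morphisms, the definition of the tensorator $\nu_{a,b}$, and the $\rm C^*$-structure. First I would fix notation: write $\varphi_c := (\coev_c^\dagger\otimes \id_{\overline{\overline c}})\circ(\id_c\otimes \coev_{\overline c})$, note it is always a natural transformation $\id\Rightarrow\overline{\overline{(\cdot)}}$ (naturality is a routine consequence of the functoriality of $\overline{\overline{(\cdot)}}$ and the zig-zag relations, independent of any unitarity), and record that $\varphi_c$ is always invertible with inverse expressible via $\coev_{\overline c}^\dagger$ and $\coev_c$. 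The content of the proposition is therefore not the existence of $\varphi$ but the equivalence between (a) $\nu$ being unitary together with $\overline{(\cdot)}$ being a dagger functor, and (b) $\varphi$ being \emph{monoidal}, i.e. satisfying the hexagon in the definition of pivotal structure.

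For $(1)\Rightarrow(2)$: assuming $\nu_{a,b}$ unitary and $\overline{f^\dagger}=\overline f^\dagger$ for all $f$, I would first observe that the dagger condition on $\overline{(\cdot)}$ forces $\varphi_c$ to be unitary (this is a short diagram: $\varphi_c^\dagger$ built from $\coev_c$ and $\coev_{\overline c}^\dagger$ composes with $\varphi_c$ to the identity using zig-zags), and then verify the monoidality hexagon. The cleanest route is to use Proposition \ref{equiv btw dual functors}: any two dual functors differ by a canonical natural isomorphism $\zeta$, and one can compute $\varphi$ in terms of how the chosen duals for $a\otimes b$ relate to $\overline b\otimes\overline a$ via $\nu$; unitarity of $\nu$ then makes the two legs of the hexagon agree after cancelling zig-zags. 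Concretely I would expand both $\overline{\nu_{\overline b,\overline a}}\circ\varphi_{a\otimes b}$ and $\nu_{\overline a,\overline b}\circ(\varphi_a\otimes\varphi_b)$ as string diagrams on $a\otimes b\to\overline{\overline b}\otimes\overline{\overline a}$, slide the $f$-free caps and cups past each other, and use $\nu^\dagger\nu=\id$ to collapse the difference.

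For $(2)\Rightarrow(1)$: assuming $\varphi$ is a pivotal structure, I would run the computation in reverse. Monoidality of $\varphi$ is exactly the relation needed to show $\nu_{a,b}$ is an isometry (hence unitary, since in a rigid $\rm C^*$ category $\nu$ is already invertible with a two-sided inverse coming from the dual functor being a tensor functor), and the identity $\varphi_c^\dagger\varphi_c = \id$ — which follows because a pivotal structure in a $\rm C^*$ category built from these formulas is automatically unitary, or can be imposed/checked directly — gives $\overline{f^\dagger}=\overline f^\dagger$ after writing out $\overline f$ via its diagram and taking daggers. Here one uses that $\overline f^\dagger$ is obtained from the diagram for $\overline f$ by reflecting, and that $\varphi$ being a \emph{monoidal natural isomorphism} intertwines this reflection with $\overline{f^\dagger}$; the bookkeeping is pure zig-zag calculus.

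The main obstacle I anticipate is purely organizational rather than conceptual: keeping the diagrammatic computation of the monoidality hexagon honest, since it involves four objects' worth of evaluations and coevaluations and the non-obvious interplay between $\nu_{a,b}$ (which reverses tensor order) and $\varphi$. I would handle this by reducing everything to the universal statement in Proposition \ref{equiv btw dual functors} — comparing the given dual functor with its own double dual — so that the hexagon becomes a statement about the canonical comparison isomorphism $\zeta$, where cancellation is forced by uniqueness. A secondary subtlety is confirming that $\varphi_c$ as defined is genuinely invertible and that its inverse has the expected diagrammatic form; this I would dispatch early, before either implication, since both directions rely on it. No step requires anything beyond the zig-zag identities, functoriality of $\overline{(\cdot)}$, and the definitions already recalled in the excerpt, so I expect the proof to be a sequence of diagram equalities with the hexagon verification as its longest single step.
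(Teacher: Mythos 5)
The paper does not actually prove this proposition; it defers entirely to \cite{Se11} and \cite[Prop.~3.9]{Pe18}, so the only question is whether your outline would work on its own. It would not as written: the claim on which your whole architecture rests --- that $\varphi_c$ is ``always a natural transformation $\id\Rightarrow \overline{\overline{(\cdot)}}$ \dots independent of any unitarity'' --- is false. Writing out the naturality square, $\varphi_b\circ f=\overline{\overline{f}}\circ\varphi_a$ is equivalent (via the uniqueness of the comparison isomorphism between the two right duals $a$ and $\overline{\overline{a}}$ of $\overline{a}$, the first coming from the dagger data $(\coev_a^\dagger,\ev_a^\dagger)$) to the identity $f=(\coev_a^\dagger\otimes\id_b)\circ(\id_a\otimes\overline{f}\otimes\id_b)\circ(\id_a\otimes\ev_b^\dagger)$, which in turn is equivalent to $\overline{f^\dagger}=\overline{f}^\dagger$. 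The zig-zag identities cannot collapse this on their own, because they cancel $\ev$ against $\coev$, never $\coev^\dagger$ against $\coev$. So naturality of $\varphi$ is exactly the dagger-compatibility half of (1), while monoidality is the $\nu$-unitarity half; by declaring naturality free you have nowhere to use $\overline{f^\dagger}=\overline{f}^\dagger$ in $(1)\Rightarrow(2)$, and nowhere to recover it in $(2)\Rightarrow(1)$.

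This misallocation is why your $(2)\Rightarrow(1)$ step has to lean on the unsupported assertion that $\varphi_c^\dagger\circ\varphi_c=\id$ ``follows because a pivotal structure in a ${\rm C}^*$ category built from these formulas is automatically unitary.'' That is not a consequence of the definition of pivotal structure used here (a monoidal natural isomorphism), and even granting unitarity of each $\varphi_c$, that alone does not yield $\overline{f^\dagger}=\overline{f}^\dagger$ --- one needs the naturality square for that, which is precisely what you assumed was automatic. The repair is to restore the correct dictionary and prove two separate diagrammatic lemmas: (naturality of $\varphi$) $\iff$ ($\overline{f^\dagger}=\overline{f}^\dagger$ for all $f$), and (monoidality of $\varphi$) $\iff$ ($\nu_{a,b}$ unitary for all $a,b$); this is essentially the argument of \cite[Prop.~3.9]{Pe18}. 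Your preliminary observations that $\varphi_c$ is always invertible with inverse of the expected form, and your plan for the monoidality hexagon via the relation between $\coev_{a\otimes b}$ and the $\nu$-twisted product of $\coev_a$ and $\coev_b$, are sound and can be kept.
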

\begin{proof}
\cite{Se11}, see also  \cite[Prop.~3.9]{Pe18}.
\end{proof}

\begin{definition}
Two unitary dual functors are called \textbf{unitary equivalent}, if the canonical natural transformation $\zeta$ from Proposition \ref{equiv btw dual functors} is unitary, i.e., $\zeta_c$ is unitary for all $c\in\mathcal{C}$.
\end{definition}

\begin{proposition}
For a unitary dual functor $\overline{(\cdot)}$, the left/right pivotal traces have alternate formulas:
\begin{align*}
    \tr^\varphi_L(f) &= \ev_c\circ (\id_{\overline{c}}\otimes f)\circ \ev_c^\dagger\\
    \tr^\varphi_R(f) &= \coev_c^\dagger\circ (f\otimes \id_{\overline{c}})\circ \coev_c.
\end{align*}
\end{proposition}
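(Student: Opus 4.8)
The plan is to reduce the proposition to two elementary identities about evaluations and coevaluations, and then verify those by the graphical calculus. By the preceding proposition, since $\overline{(\cdot)}$ is a unitary dual functor we may take $\varphi_c = (\coev_c^\dagger \otimes \id_{\overline{\overline c}}) \circ (\id_c \otimes \coev_{\overline c})$. Substituting this into the definitions of $\tr^\varphi_L$ and $\tr^\varphi_R$, the claim will follow once I establish
\[
(\id_{\overline c} \otimes \varphi_c^{-1}) \circ \coev_{\overline c} = \ev_c^\dagger
\qquad\text{and}\qquad
\ev_{\overline c} \circ (\varphi_c \otimes \id_{\overline c}) = \coev_c^\dagger ,
\]
since plugging the first into $\tr^\varphi_L(f) = \ev_c \circ (\id_{\overline c} \otimes f) \circ (\id_{\overline c} \otimes \varphi_c^{-1}) \circ \coev_{\overline c}$ gives $\ev_c \circ (\id_{\overline c}\otimes f)\circ \ev_c^\dagger$, and plugging the second into $\tr^\varphi_R(f) = \ev_{\overline c} \circ (\varphi_c \otimes \id_{\overline c}) \circ (f \otimes \id_{\overline c}) \circ \coev_c$ gives $\coev_c^\dagger \circ (f\otimes \id_{\overline c})\circ \coev_c$.

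For the first identity I would instead prove the equivalent statement $(\id_{\overline c}\otimes \varphi_c)\circ \ev_c^\dagger = \coev_{\overline c}$. Expanding $\varphi_c$ and moving the tensor factors past one another by the interchange law, the left side rewrites as $(\id_{\overline c} \otimes \coev_c^\dagger \otimes \id_{\overline{\overline c}}) \circ (\ev_c^\dagger \otimes \coev_{\overline c})$; graphically this is a single $\overline c$-strand that threads through $\ev_c^\dagger$, then $\coev_c^\dagger$, then into $\coev_{\overline c}$. The $\ev_c^\dagger$-then-$\coev_c^\dagger$ portion is exactly the dagger of the zig-zag identity $(\ev_c \otimes \id_{\overline c})\circ (\id_{\overline c}\otimes \coev_c) = \id_{\overline c}$, hence equals $\id_{\overline c}$, and what remains is precisely $\coev_{\overline c}$. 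The second identity is handled the same way: expand $\varphi_c$ in $\ev_{\overline c}\circ(\varphi_c\otimes \id_{\overline c})$, rearrange by interchange into $(\coev_c^\dagger \otimes \ev_{\overline c}) \circ (\id_c \otimes \coev_{\overline c} \otimes \id_{\overline c})$, and observe that the $\coev_{\overline c}$-then-$\ev_{\overline c}$ part is the zig-zag identity $(\id_{\overline c}\otimes \ev_{\overline c})\circ (\coev_{\overline c}\otimes \id_{\overline c}) = \id_{\overline c}$ for the dual pair $(\overline c, \overline{\overline c})$, collapsing the whole expression to $\coev_c^\dagger$.

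I do not expect a genuine obstacle: once the formula for $\varphi_c$ is in hand, this is pure bookkeeping in the graphical calculus of a rigid ${\rm C}^*$ tensor category, the only nontrivial inputs being the four zig-zag identities for the pairs $(c,\overline c)$ and $(\overline c,\overline{\overline c})$ together with the fact that $\dagger$ fixes identities. The single point needing care is not conflating the two different duality data $(\ev_c,\coev_c)$ and $(\ev_{\overline c},\coev_{\overline c})$, nor the isomorphism $\varphi_c\colon c\to \overline{\overline c}$ relating them, since $\overline{\overline c}$ need not equal $c$ on the nose. This statement is essentially \cite{Se11} (see also \cite[Prop.~3.9]{Pe18}), and I would cite it accordingly.
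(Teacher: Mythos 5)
Your proof is correct. The paper actually states this proposition without any proof at all (it is a standard fact, implicitly deferred to \cite{Se11} and \cite[Prop.~3.9]{Pe18}, which are cited for the preceding proposition), so there is no argument in the paper to compare against; your write-up supplies the standard verification. The reduction to the two identities $(\id_{\overline c}\otimes\varphi_c^{-1})\circ\coev_{\overline c}=\ev_c^\dagger$ and $\ev_{\overline c}\circ(\varphi_c\otimes\id_{\overline c})=\coev_c^\dagger$ is exactly right, and your derivations of these — expanding $\varphi_c=(\coev_c^\dagger\otimes\id_{\overline{\overline c}})\circ(\id_c\otimes\coev_{\overline c})$, applying the interchange law, and then invoking the dagger of the zig-zag for $(c,\overline c)$ in the first case and the zig-zag for $(\overline c,\overline{\overline c})$ in the second — check out. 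You are also right to flag the one point of care, namely keeping the two duality pairs $(\ev_c,\coev_c)$ and $(\ev_{\overline c},\coev_{\overline c})$ distinct and not identifying $\overline{\overline c}$ with $c$; that is where a sloppy version of this argument would go wrong.
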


\begin{theorem}[{\cite{BDH14} \cite[Prop.~3.24]{Pe18}}]

For a rigid ${\rm C}^*$ (multi)tensor category $\mathcal{C}$, there exists a unique unitary dual functor whose induced pivotal structure is spherical up to unitary equivalence. In other words, the pivotal structure can be trivial, so that $\ev_{\overline{c}}=\coev_c^\dagger$ and $\coev_{\overline{c}}=\ev_c^\dagger$ for all $c\in\mathcal{C}$. 
\end{theorem}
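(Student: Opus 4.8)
The statement to prove is the theorem attributed to \cite{BDH14} and \cite[Prop.~3.24]{Pe18}: every rigid ${\rm C}^*$ (multi)tensor category admits a unique (up to unitary equivalence) unitary dual functor whose induced pivotal structure is spherical, so that the pivotal structure may be normalized to the identity and $\ev_{\overline c}=\coev_c^\dagger$, $\coev_{\overline c}=\ev_c^\dagger$ for all $c$.

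The plan is to reduce everything to a statement about endomorphisms of simples and then to an averaging/uniqueness argument on the groupoid of unitary dual functors. First I would fix an arbitrary unitary dual functor $\overline{(\cdot)}$ on $\mathcal{C}$ (this exists by the standard solution-to-conjugate-equations argument in a ${\rm C}^*$ category; one can always rescale a conjugate so that $\ev$ and $\coev$ are related by $\dagger$ up to the pivotal structure, cf.\ the equivalence in the ``Unitary dual functor'' proposition above). Then by the preceding Proposition its induced $\varphi_c:=(\coev_c^\dagger\otimes\id_{\overline{\overline c}})\circ(\id_c\otimes\coev_{\overline c})$ is a pivotal structure, and the associated traces $\Tr^\varphi_L,\Tr^\varphi_R$ are tracial and non-degenerate with $\Tr^\varphi_L(\overline f)=\Tr^\varphi_R(f)^T$. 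The goal is to modify $\overline{(\cdot)}$ within its unitary-equivalence class so that $\Tr^\varphi_L=\Tr^\varphi_R$. A unitary equivalence of dual functors is governed, via Proposition \ref{equiv btw dual functors}, by the natural transformation $\zeta$; concretely, two unitary dual functors differ by rescaling each $\coev_c$ (equivalently $\ev_c$) on simples by a positive scalar $\mu(c)>0$, subject to the monoidality constraint $\mu(c\otimes d)=\mu(c)\mu(d)$ on the fusion ring and $\mu(\overline c)=\mu(c)^{-1}$. Under such a rescaling, $\dim^\varphi_L(c)\mapsto \mu(c)^{-2}\dim^\varphi_L(c)$ and $\dim^\varphi_R(c)\mapsto \mu(c)^2\dim^\varphi_R(c)$ (the left and right quantum dimensions scale oppositely because $\ev$ appears ``on one side'' and $\coev$ ``on the other'' in the two trace formulas from the ``alternate formulas'' Proposition).

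The key steps, in order: (1) show $\dim^\varphi_L(c)\,\dim^\varphi_R(c)$ is a unitary-invariant of the simple $c$, positive real, and multiplicative under $\otimes$ — this is the Frobenius--Perron/quantum-dimension argument, using that $\Tr^\varphi_L$ and $\Tr^\varphi_R$ are faithful positive traces, so $\dim^\varphi_L(c),\dim^\varphi_R(c)>0$, and that $c\mapsto\dim^\varphi_L(c)\dim^\varphi_R(c)$ defines a positive character of the fusion ring that agrees with the Frobenius--Perron dimension (it dominates every other such character by positivity, hence equals $\mathrm{FPdim}$). (2) Define $\mu(c):=\bigl(\dim^\varphi_L(c)/\dim^\varphi_R(c)\bigr)^{1/4}$ on simples; check $\mu(\overline c)=\mu(c)^{-1}$ (using $\dim^\varphi_L(\overline c)=\dim^\varphi_R(c)$, which follows from $\Tr^\varphi_L(\overline f)=\Tr^\varphi_R(f)^T$ applied to $f=\id_c$) and $\mu(c\otimes d)=\mu(c)\mu(d)$ (using multiplicativity of both $\dim^\varphi_L$ and $\dim^\varphi_R$ on simples appearing in $c\otimes d$ — here one invokes that quantum dimension is additive over direct sums and multiplicative over tensor products, for both traces). (3) Use $\mu$ to rescale $\coev_c$ (hence $\ev_c$), extend by naturality and the tensorator to all of $\mathcal{C}$ via the Cauchy/semisimple structure, and verify the rescaled dual functor is still unitary (rescaling by positive scalars preserves unitarity of $\nu$ and the identity $\overline f^\dagger=\overline{f^\dagger}$). (4) For the rescaled functor, $\dim^\varphi_L(c)=\mu(c)^{-2}\dim^\varphi_L(c)_{\mathrm{old}}=\sqrt{\dim^\varphi_L\dim^\varphi_R}=\dim^\varphi_R(c)$ on every simple, hence $\Tr^\varphi_L=\Tr^\varphi_R$ on all of $\mathcal{C}$ by linearity and semisimplicity — the pivotal structure is spherical. (5) For the final ``trivial pivotal structure'' statement: once spherical, one shows $\varphi_c=\id$ can be arranged by absorbing $\varphi$ into the choice of $\coev_{\overline c}$; concretely, redefine $\coev_{\overline c}:=\ev_c^\dagger$ and $\ev_{\overline c}:=\coev_c^\dagger$, and check via the zig-zag and the sphericality that this is a consistent choice of dual data for which $\varphi_c=\id$. (6) Uniqueness: if $\overline{(\cdot)}_1,\overline{(\cdot)}_2$ are two spherical unitary dual functors, then the rescaling scalars $\mu$ relating them must satisfy $\mu(c)^{-2}\dim_L=\dim_L$ and $\mu(c)^2\dim_R=\dim_R$, forcing $\mu(c)^2=1$, hence $\mu\equiv 1$ (positivity), so $\zeta$ is unitary — they are unitarily equivalent.

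The main obstacle is step (1)–(2): proving that the left and right quantum dimensions are \emph{genuinely positive reals} with controlled multiplicative behavior, and that their ratio $\dim^\varphi_L(c)/\dim^\varphi_R(c)$ is multiplicative so that the corrective scalar $\mu$ is a well-defined character of the fusion ring satisfying $\mu(\overline c)=\mu(c)^{-1}$. The positivity uses in an essential way the ${\rm C}^*$ structure (faithful positive traces on endomorphism algebras, so the pivotal traces are faithful and positive — this is where ``${\rm C}^*$'' rather than merely ``rigid'' is used), and the multiplicativity of the ratio requires a careful bookkeeping of how $\ev$ and $\coev$ enter the two trace formulas under tensoring, i.e.\ that the tensorator $\nu$ is unitary so that no extra scalars are introduced when one computes $\dim^\varphi_{L/R}(c\otimes d)$ in terms of $\dim^\varphi_{L/R}(c)$ and $\dim^\varphi_{L/R}(d)$. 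I would treat this via the standard graphical calculus (bending strands, using the zig-zag identities and unitarity of $\nu$), exactly as in \cite{BDH14} and \cite[\S3]{Pe18}, and simply cite those references for the detailed diagrammatics; the rest of the argument (steps 3–6) is then formal.
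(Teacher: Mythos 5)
The paper does not actually prove this theorem; it is imported verbatim from \cite{BDH14} and \cite[Prop.~3.24]{Pe18}, so there is no in-paper argument to compare against. Your proposal is therefore best judged on its own terms, and it takes the ``classification/correction'' route (start from an arbitrary unitary dual functor and twist it by a character built from the dimension defect) rather than the direct route of \cite{BDH14}, which constructs the spherical unitary dual functor outright by choosing, for each simple $c$, a standard solution $(R_c,\overline{R}_c)$ of the conjugate equations normalized so that $\|R_c\|=\|\overline{R}_c\|$ and extending balancedly over direct sums; sphericality on simples is then immediate from $\tr_L(\id_c)=\|R_c\|^2=\|\overline{R}_c\|^2=\tr_R(\id_c)$, and uniqueness follows from the fact that the unitary $\zeta$ relating two such functors has $\zeta_c^\dagger\zeta_c$ a positive scalar on simples forced to be $1$ by sphericality. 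Your step (6) is essentially this uniqueness argument and is fine.

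The genuine gap is in steps (1)--(3). For your corrective scalar $\mu(c)=\bigl(\dim^\varphi_L(c)/\dim^\varphi_R(c)\bigr)^{1/4}$ to produce a new \emph{unitary dual functor}, it must be graded-multiplicative: whenever a simple $c$ is a summand of $a\otimes b$ one needs $\mu(c)=\mu(a)\mu(b)$ (i.e., $\mu$ must factor through the universal grading groupoid). You justify this by ``multiplicativity of both $\dim^\varphi_L$ and $\dim^\varphi_R$,'' but that only gives the character identities $\dim(a\otimes b)=\dim(a)\dim(b)=\sum_c N_{ab}^c\dim(c)$; the ratio of two characters of the fusion ring is not pointwise graded-multiplicative in general, so this does not follow. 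In the unitary setting the graded-multiplicativity of $\dim_L/\dim_R$ is true, but its proof (in \cite[\S3]{Pe18}) is obtained \emph{from} the classification of unitary dual functors relative to the balanced one --- i.e., it presupposes the existence half of the theorem you are trying to prove, so your argument as structured is circular unless you supply an independent proof of this multiplicativity. Since you explicitly defer exactly this point to the cited references, the proposal does not constitute a self-contained proof; if you want the correction strategy to work, you need to first establish (e.g., via polar decomposition of the comparison natural transformation $\zeta$ against standard solutions chosen simple-by-simple) that the defect is a groupoid character --- at which point the direct construction of \cite{BDH14} is shorter. The remaining steps (3)--(5), including absorbing $\varphi$ so that $\ev_{\overline{c}}=\coev_c^\dagger$ and $\coev_{\overline{c}}=\ev_c^\dagger$, are routine and correct.
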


\subsubsection{2-shaded rigid ${\rm C}^*$ multitensor category with a choice of generator and planar tensor category}\label{cat A to A_0}
Let $\mathcal{A}$ be a 2-shaded rigid ${\rm C}^*$ multitensor category together with $1=1^+\oplus 1^-$, where $1^+,1^-$ are simple, and a generator $X=1^+\otimes X\otimes 1^-$. Here, the generating means for any simple object $P$, it is a direct summand of $X^{\alt\otimes n}$ or $\overline{X}^{\alt\otimes n}$ (defined below) for some $n\in\mathbb{Z}_{\ge 0}$.

Let $\overline{(\cdot)}$ be a unitary dual functor that induced a spherical pivotal structure $\varphi$. Note that only $(+,-)$ entry of $\text{Dim}(X)$ is non-zero and we denote this number as $d_X$ to be the modulus of category $\mathcal{C}$.

\begin{construction}
We construct a planar tensor category $\mathcal{A}_0$ from $(\mathcal{A},X)$. By MacLane's coherence theorem, $\mathcal{A}$ is unitary equivalent to a strict tensor category with the above properties and the dual functor is strict, WLOG, we also denote it as $\mathcal{A}$. Construct the pivotal planar tensor category $\mathcal{A}_0$ as follows:
\begin{compactenum}[(a)]
\item Objects: Define $[0,+]:=1^+$, $[0,-]:=1^-$, and
$$[n,+]:=[n-1,+]\otimes X^?=\underbrace{(\cdots(X\otimes \overline{X})\otimes X)\otimes \cdots)\otimes X^?}_{n \text{ tensorands}}=:X^{\alt\otimes n},$$ 
where $X^?=\overline{X}$ if $n$ is even and $X$ if $n$ is odd, and 
$$[n,-]:=[n-1,-]\otimes X^?=\underbrace{(\cdots(\overline{X}\otimes X)\otimes \overline{X})\otimes \cdots)\otimes X^?}_{n \text{ tensorands}}=:\overline{X}^{\alt\otimes n},$$
where $X^?=X$ if $n$ is even and $\overline{X}$ if $n$ is odd, for $n\in\mathbb{Z}_{\ge 0}$. 
\item Morphisms: $\mathcal{A}_0$ is the full subcategory of $\mathcal{A}$ with above objects.
\item Duality: The dual functor is unitary as a dual functor on the subcategory, which also induces a spherical pivotal structure on the subcategory. 
\end{compactenum}
\end{construction}

Given $\mathcal{A}_0$ to be a pivotal planar tensor category, then its Cauchy completion $\widehat{\mathcal{A}_0}$ is a Cauchy completed 2-shaded rigid ${\rm C}^*$ multitensor category with a generator $[1,+]$ and a canonical unitary dual functor $\overline{(\cdot)}_1$.

\begin{proposition}
Suppose $\mathcal{A}_0$ is a pivotal planar tensor category constructed from $(\mathcal{A},X)$, then there is a unitary equivalence between $(\widehat{\mathcal{A}_0},[1,+])$ and the Cauchy completion of $(\mathcal{A},X)$ with respect to their unitary dual functors.
\end{proposition}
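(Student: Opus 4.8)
The plan is to realize the desired equivalence as the Cauchy completion of the inclusion $\mathcal{A}_0\hookrightarrow\mathcal{A}$ and then to reconcile the two dual functors by invoking uniqueness of the spherical unitary dual functor. Working (as in the Construction) with the strictification of $\mathcal{A}$, which is unitarily tensor equivalent to the original $(\mathcal{A},X)$ compatibly with generator and dual functor, the key structural point is that $\mathcal{A}_0$ is by definition the \emph{full} dagger subcategory of $\mathcal{A}$ spanned by the objects $[n,\pm]$, and this collection is closed under tensor product and under the chosen unitary dual functor $\overline{(\cdot)}$ of $\mathcal{A}$; this closure is exactly why the tensor and rigid structures on $\mathcal{A}_0$ are the restrictions of those on $\mathcal{A}$. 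Hence the inclusion $\iota\colon\mathcal{A}_0\hookrightarrow\mathcal{A}$ is a fully faithful, strict, dagger tensor functor that strictly intertwines the dual functors and sends the generator $[1,+]$ to $X$.

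Next I would pass to Cauchy completions. Since Cauchy completion is functorial on dagger categories, $\iota$ extends to a dagger tensor functor $\widehat\iota\colon\widehat{\mathcal{A}_0}\to\widehat{\mathcal{A}}$, and $\widehat\iota$ is fully faithful because $\iota$ is (the hom-spaces and subobjects in $\widehat{\mathcal{A}_0}$ are computed from those of $\mathcal{A}_0$, which agree with those of $\mathcal{A}$). Since $\mathcal{A}$ is a rigid ${\rm C}^*$ multitensor category it is semisimple \cite{LR96}, hence already Cauchy complete, so $\widehat{\mathcal{A}}\simeq\mathcal{A}$ is precisely the Cauchy completion of $(\mathcal{A},X)$ in the statement. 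For essential surjectivity I would use the generating hypothesis: every object of $\mathcal{A}$ is a finite direct sum of simples, and each simple summand $P$ is a direct summand of some $X^{\alt\otimes n}$ or $\overline{X}^{\alt\otimes n}=\iota([n,\pm])$, hence lies in the essential image of $\widehat\iota$. Therefore $\widehat\iota$ is a fully faithful, essentially surjective dagger tensor functor, i.e.\ a unitary tensor equivalence carrying $[1,+]$ to $X$.

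Finally I would match the unitary dual functors. The dual functor $\overline{(\cdot)}_1$ on $\widehat{\mathcal{A}_0}$ is the canonical extension of the dual functor of $\mathcal{A}_0$, which is the restriction of the spherical unitary dual functor of $\mathcal{A}$; in particular $\mathcal{A}_0$, hence $\widehat{\mathcal{A}_0}$, is pivotal with equal left and right traces (Definition \ref{pivotal planar tensor category}, Proposition \ref{A_0 is pivotal from A}), so $\overline{(\cdot)}_1$ induces a spherical pivotal structure. Transporting the unitary dual functor of $\mathcal{A}$ along the equivalence $\widehat\iota$ likewise gives a spherical unitary dual functor on $\widehat{\mathcal{A}_0}$. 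By uniqueness of the spherical unitary dual functor up to unitary equivalence (\cite{BDH14}, \cite[Prop.~3.24]{Pe18}) these two are related by the canonical natural isomorphism $\zeta$ of Proposition \ref{equiv btw dual functors}, and composing $\widehat\iota$ with the tensor natural isomorphism supplied by $\zeta$ upgrades it to an equivalence that intertwines the unitary dual functors, which is the claim.

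I expect the main difficulty to be bookkeeping rather than anything conceptual: one must verify that the ``canonical'' tensorator, dagger structure, and dual functor $\overline{(\cdot)}_1$ on $\widehat{\mathcal{A}_0}$ really are the extensions induced by $\iota$, so that the uniqueness theorem can be applied coherently, and one must check that the natural isomorphism coming from $\zeta$ is compatible with the tensorators $\nu$ of the two dual functors — the point at which Proposition \ref{equiv btw dual functors} does the substantive work.
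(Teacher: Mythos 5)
The paper states this proposition without any proof at all — it is presented as an immediate consequence of the Construction in \S1.6.2 (where $\mathcal{A}_0$ is defined as the full dagger subcategory of the strictified $\mathcal{A}$ on the objects $X^{\alt\otimes n}$, $\overline{X}^{\alt\otimes n}$) — so there is nothing in the text to compare against. Your argument is correct and is exactly the fleshing-out one would expect: full faithfulness of $\widehat{\iota}$ from fullness of the inclusion, essential surjectivity onto the Cauchy completion of $\mathcal{A}$ from the generating hypothesis on $X$, and reconciliation of $\overline{(\cdot)}_1$ with the restricted dual functor via uniqueness of the spherical unitary dual functor up to the canonical unitary $\zeta$ of Proposition \ref{equiv btw dual functors}. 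The only cosmetic point is your appeal to \cite{LR96} for semisimplicity of $\mathcal{A}$: the paper cites it for ``Cauchy complete iff semisimple,'' but since the paper's definition of a ${\rm C}^*$ category already builds in Cauchy completeness this changes nothing, and in any case your essential-surjectivity argument lands in $\widehat{\mathcal{A}}$ regardless.
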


\begin{remark}
Suppose $\mathcal{A},\mathcal{B}$ are two 2-shaded rigid $\rm C^*$ multitensor categories with generator $X$ and $Y$ respectively and $\mathcal{A}_0,\mathcal{B}_0$ are corresponding pivotal planar tensor categories. Then $\mathcal{A}_0$ and $\mathcal{B}_0$ are unitary equivalent if and only if the Cauchy completions of $\mathcal{A}$ and $\mathcal{B}$ are unitary equivalent which maps generator to generator.
\end{remark}

\begin{remark}
The planar tensor category $\mathcal{A}_0$ is not Cauchy complete, i.e., additive complete and idempotent complete.
In fact, as for skeletalness, strictness and Cauchy complete, most tensor categories can require at most two of them. Vec$(G)$ is an exception. 
\end{remark}

\subsubsection{From planar tensor category to standard $\lambda$-lattice}\label{A_0 to std A}
\begin{construction}
Let $\mathcal{A}_0$ be a pivotal planar tensor category with modulus $d$. Define  $A_{0,j}=\End([j,+])$, $A_{1,j}=\id_{[1,+]}\otimes \End([j-1,-])$, $j\in\mathbb{Z}_{\ge 0}$, so that $A_{0,0}=A_{1,1}=\mathbb{C}$. In general, for $i\le j$, define
$$A_{i,j}=\begin{cases}\id_{[i,+]}\otimes \End([j-i,+])& 2\mid i\\
\id_{[i,+]}\otimes\End([j-i,-])& 2\nmid i.
\end{cases}$$
Then we check $A=(A_{i,j})_{i,j\ge 0}$ to be a standard $\lambda$-lattice.
\begin{compactenum}[(a)]
\item The vertical inclusion $A_{i+1,j}\subset A_{i,j}$ is clear. The right inclusion: the right inclusion send $x\in A_{i,j}$ to $x\otimes \id_{[1,?]}\in A_{i,j+1}$, where $?=+$ if $2\mid j$ and $?=-$ if $2\nmid j$.
\item Horizontal conditional expectation: Define $E^r_{i,j}:A_{i,j}\to A_{i,j-1}$ by 
\begin{align*}
    E^r_{i,2k}(x)&=d^{-1}(\id_{[2k-1,+]}\otimes \ev_{\overline{[1,+]}})\circ (x\otimes [1,+])\circ (\id_{[2k-1,+]}\otimes \coev_{[1,+]})\\
    E^r_{i,2k+1}(x)&=d^{-1}(\id_{[2k,+]}\otimes \ev_{\overline{[1,-]}})\circ (x\otimes [1,-])\circ (\id_{[2k,+]}\otimes \coev_{[1,-]}).
\end{align*}
\item Vertical conditional expectation: Define $E^l_{i,j}:A_{i,j}\to A_{i+1,j}$ by
\begin{align*}
    E^l_{2k,j} &= d^{-1}(\id_{[2k+2,+]}\otimes \ev_{\overline{[1,+]}})\circ (\id_{[2,+]}\otimes x)\circ (\id_{[2k+2,+]}\otimes \coev_{[1,+]})\\
    E^l_{2k+1,j} &= d^{-1}(\id_{[2k+3,+]}\otimes \ev_{\overline{[1,-]}})\circ (\id_{[2,+]}\otimes x)\circ (\id_{[2k+3,+]}\otimes \coev_{[1,-]}).
\end{align*}
\item Jones projection: the $n$-th Jones projection is defined as 
\begin{align*}
    e_{2k+1} &= d^{-1}\cdot\id_{[2k,+]}\otimes (\coev_{[1,+]}\circ \ev_{\overline{[1,+]}})\in A_{i,2k+2}\\
    e_{2k+2} &= d^{-1}\cdot\id_{[2k+1,+]}\otimes (\coev_{[1,-]}\circ \ev_{\overline{[1,-]}})\in A_{i,2k+3}.
\end{align*}
\end{compactenum}
The check that $A=(A_{i,j})_{j\ge i\ge 0}$ satisfies Definition \ref{standard lambda lattice}(a), (b), (c) and standard condition is left to the reader. In particular, $e_ne_{n\pm 1}e_n=d^{-2}e_n$, $E^r_{i,j+1}(e_j)=E^l_{j-1,k}(e_j)=d^{-2} 1$.

Note that the dual functor is unitary and we divide the loop parameter, the composition of these conditional expectations is actually a unital trace on $A$.
\end{construction}

\begin{remark}
The idea of drawing the string diagram explanation in \S\ref{TLJ diagram explanation} comes from here.
\end{remark}

In this section, the class of unitary equivalent pairs $(\mathcal{A},X)$ with $\mathcal{A}$ a 2-shaded rigid ${\rm C}^*$ multitensor category and $X$ a generator induces the class of isomorphic pivotal planar tensor categories; in \S\ref{std lattice to planar cat}, the class of isomorphic pivotal planar tensor categories is one to one corresponding to the class of isomorphic standard $\lambda$-lattices.

Combining above discussion, we can deduce the equivalence between standard $\lambda$-lattice $A$ and pair 2-shaded rigid ${\rm C}^*$ multitensor category with a generator $(\mathcal{A},X)$. 

\begin{theorem} There is a bijective correspondence between equivalence classes of the following:
\[\left\{\, \parbox{3.2cm}{\rm Standard $\lambda$-lattices $A=(A_{i,j})_{0\le i\le j}$} \,\right\}\ \, \cong\ \, \left\{\, \parbox{9cm}{\rm Pairs $(\mathcal{A}, X)$ with $\mathcal{A}$ a 2-shaded rigid ${\rm C}^*$ multitensor category with a generator $X$, i.e., $1_\mathcal{A}=1^+\oplus 1^-$, $1^+,1^-$ are simple and $X=1^+\otimes X\otimes 1^-$ 
} \,\right\}\]
Equivalence on the left hand side is unital $*$-isomorphism of standard $\lambda$-lattices; 
equivalence on the right hand side is unitary equivalence 
between their Cauchy completions 
which maps generator to generator.
\end{theorem}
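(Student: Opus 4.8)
The plan is to assemble the bijection from the two directions already constructed in \S\ref{std lattice to planar cat} and \S\ref{cat A to A_0}--\S\ref{A_0 to std A}, using the skeletal pivotal planar tensor category as an intermediate object. Explicitly, given a standard $\lambda$-lattice $A$, Definition \ref{Def:std lambda lattice to planar tensor category} together with Propositions \ref{tensor product def prop}--\ref{A_0 is pivotal from A} produces a pivotal planar tensor category $\mathcal{A}_0(A)$, and its Cauchy completion $\widehat{\mathcal{A}_0(A)}$ is a 2-shaded rigid ${\rm C}^*$ multitensor category with distinguished generator $X=[1,+]$ and a canonical unitary (spherical) dual functor; this is the map $A\mapsto (\widehat{\mathcal{A}_0(A)},[1,+])$. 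Conversely, given $(\mathcal{A},X)$, the Construction in \S\ref{cat A to A_0} strictifies $\mathcal{A}$ and carves out the skeletal pivotal planar tensor category $\mathcal{A}_0(\mathcal{A},X)$ on the objects $X^{\alt\otimes n}$, $\overline{X}^{\alt\otimes n}$, and then the Construction in \S\ref{A_0 to std A} reads off the standard $\lambda$-lattice $A_{i,j}=\id_{[i,+]}\otimes\End([j-i,\pm])$; this is the map $(\mathcal{A},X)\mapsto A(\mathcal{A},X)$. I would state the theorem as the assertion that these two assignments are mutually inverse on equivalence classes.

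The argument then splits into two round-trip checks. First, starting from a standard $\lambda$-lattice $A$: applying the \S\ref{A_0 to std A} construction to $\mathcal{A}_0(A)$ recovers, on the nose, the algebras $A_{i,j}$ as endomorphism spaces, the horizontal/vertical conditional expectations (which in $\mathcal{A}_0(A)$ are built from $\ev,\coev$ exactly as in the string-diagram dictionary \S\ref{TLJ diagram explanation}), and the Jones projections $e_n=d^{-1}\id\otimes(\coev\circ\ev)$; so $A(\mathcal{A}_0(A))=A$ via a unital $*$-isomorphism, essentially by unwinding definitions. Second, starting from $(\mathcal{A},X)$: I would show that the planar tensor category $\mathcal{A}_0(A(\mathcal{A},X))$ built from the $\lambda$-lattice is isomorphic, as a pivotal planar tensor category, to the skeletal category $\mathcal{A}_0(\mathcal{A},X)\subset\mathcal{A}$ we started with — both have the same objects $[n,\pm]$, the same hom spaces $\Hom([m,+]\to[m+2i,+])\cong\End([m+i,\pm])$ via Frobenius reciprocity (Remark \ref{main idea remark endo,domain,range}), and one checks composition and tensor product match by comparing the formulas (C1)--(C6) and Definition \ref{x otimes 1, 1 otimes y} with the genuine categorical composition/tensor in $\mathcal{A}$; the key point is that the $2n$-shift map $S^{(n)}$ in $A$ corresponds precisely to $\id_{\overline{X}^{\alt\otimes 2n}}\otimes(-)$ in $\mathcal{A}$, which is why Proposition \ref{2n-shift map prop}(4) (the standard condition applied to $e^i_{j-i,n}=a\cdot(\text{product of }e)\cdot b$) was set up as it was. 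Taking Cauchy completions and invoking the Proposition of \S\ref{cat A to A_0} that $(\widehat{\mathcal{A}_0(\mathcal{A},X)},[1,+])$ is unitarily equivalent to the Cauchy completion of $(\mathcal{A},X)$ closes that side.

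Finally I would verify that both assignments are well-defined and faithful on equivalence classes: a unital $*$-isomorphism $A\to B$ of standard $\lambda$-lattices clearly induces a strict isomorphism $\mathcal{A}_0(A)\to\mathcal{A}_0(B)$ of pivotal planar tensor categories preserving $[1,+]$ (since every structure map of $\mathcal{A}_0$ is defined purely from the algebras $A_{i,j}$, the conditional expectations, and the Jones projections), hence a unitary equivalence of Cauchy completions mapping generator to generator; conversely a unitary equivalence $\widehat{\mathcal{A}}\simeq\widehat{\mathcal{B}}$ sending $X$ to $Y$ restricts to a unitary equivalence of the skeletal subcategories on the alternating tensor powers, compatible with the canonical spherical dual functors, and therefore induces the identification of the $\lambda$-lattices $A(\mathcal{A},X)\cong A(\mathcal{B},Y)$ by a unital $*$-isomorphism. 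The main obstacle I anticipate is the naturality/uniqueness bookkeeping around the dual functor: the $\lambda$-lattice only sees the spherical unitary dual functor, which is unique only up to unitary equivalence (the cited theorem of \cite{BDH14} and \cite[Prop.~3.24]{Pe18}), so one must check that the whole correspondence is insensitive to that choice — concretely that changing the unitary dual functor by the canonical $\zeta$ of Proposition \ref{equiv btw dual functors} changes $\ev,\coev$ in a way that only conjugates the resulting $\lambda$-lattice data by a unitary, hence lands in the same $*$-isomorphism class. Everything else is a matter of carefully matching the string-diagram dictionary in \S\ref{TLJ diagram explanation} against genuine categorical string diagrams, which the lemmas of \S\ref{TLJ diagram explanation} and Propositions \ref{2-shift map prop}--\ref{2n-shift map prop} have already reduced to routine checks.
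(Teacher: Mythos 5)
Your proposal is correct and follows essentially the same route as the paper: the paper also factors the bijection through the skeletal pivotal planar tensor category, using the construction of \S\ref{std lattice to planar cat} in one direction and those of \S\ref{cat A to A_0}--\S\ref{A_0 to std A} in the other, and then combines the two correspondences. Your additional remarks on the round-trip checks and on insensitivity to the choice of unitary dual functor are exactly the bookkeeping the paper leaves implicit.
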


\section{Markov towers as standard right module over standard $\lambda$-lattice and module categories}\label{Cpt 2}

Now we move to the module case. One motivation that regards a Markov tower as a right module over a standard $\lambda$-lattice is to answer the question in \cite[Rmk.~3.34]{CHPS18}. 

\subsection{Markov tower as a standard right module over standard $\lambda$-lattice}
\begin{definition}\label{Def:Markov tower as standard module}
$$\begin{matrix}
M_0 & \subset & M_1 & \subset & M_2 & \subset & \cdots & \subset & M_n & \subset &\cdots\\
\cup & & \cup & & \cup & & & & \cup\\
A_{0,0} & \subset & A_{0,1} & \subset & A_{0,2} & \subset & \cdots & \subset & A_{0,n} & \subset & \cdots\\
& & \cup & & \cup & & & & \cup\\
& & A_{1,1} & \subset & A_{1,2} & \subset & \cdots & \subset & A_{1,n} & \subset & \cdots 
\end{matrix}$$
Let $A=(A_{i,j})_{0\le i\le j<\infty}$ be a standard $\lambda$-lattice with Jones projection $\{e_i\}_{i\ge 1}$ and compatible conditional expectations. Let $M=(M_n, e_n)_{n\ge 0}$ be a Markov tower with conditional expectation $E_i:M_i\to M_{i-1}$, $i\ge 1$. ($M$ and $A$ share the same Jones projections) We call a Markov tower $M$ a \textbf{standard right $A-$module}, if it satisfies the following three conditions.
\begin{compactenum}[(a)]
\item $A_{0,i}\subset M_i$ is a unital inclusion, $i=0,1,2,\cdots$.
\item $E_i|_{A_{0,i}}=E^r_{0,i}$, $i=1,2,\cdots$.
\item (standard condition) $[M_i,A_{k,l}]=0$ for $i\le k\le l$.
\end{compactenum}
\end{definition}

In the rest of this Chapter, we only consider the Markov tower with $\dim (M_0)=1$ unless stated.

\subsection{String diagram explanation}\label{TLJ diagram explanation, module}
We now introduce the diagrammatic explanation of the element, conditional expectation, Jones projection and their relations in a Markov tower with the same spirit in \S\ref{TLJ diagram explanation}.
\begin{compactenum}[({MT}1)]
\item Element $x\in M_n$:
\[
\begin{tikzpicture}[baseline=-.1cm]
\draw (.5,-.7) -- (.5,.7);
\draw[thick,blue] (.2,-.7) -- (.2,.7);
\dbox{unshaded}{(.4,0)}{.3}{.1}{0}{$x$};
\node at (.5,-.9) {\tiny{$n$}};
\end{tikzpicture}
:=
\begin{tikzpicture}[baseline=-.1cm]
\filldraw[gray1] (.5,-.7) rectangle (.8,.7);
\filldraw[gray1] (1.1,-.7) rectangle (1.4,.7);
\draw (1.4,-.7) -- (1.4,.7);
\draw (1.1,-.7) -- (1.1,.7);
\draw (.8,-.7) -- (.8,.7);
\draw (.5,-.7) -- (.5,.7);
\draw[thick,blue] (.2,-.7) -- (.2,.7);
\dbox{unshaded}{(.4,0)}{.3}{.1}{.9}{$x$};
\node at (.8,-.9) {\tiny{$n$}};
\end{tikzpicture}
\]

\item Vertical inclusion $x\in A_{0,n}\subset M_n$:
\[
\begin{tikzpicture}[baseline=-.1cm]
\draw (.7,-.7) -- (.7,.7);
\dbox{unshaded,dashed}{(.4,0)}{.35}{.05}{.3}{};
\draw[thick,blue] (.2,-.7) -- (.2,.7);
\nbox{unshaded}{(.7,0)}{.3}{0}{0}{$x$};

\node at (.7,-.9) {\tiny{$n$}};
\end{tikzpicture}
\]
\item Horizontal inclusion $x\in M_n\subset M_{n+1}$:
\[
\begin{tikzpicture}[baseline=-.1cm]
\draw (.5,-.7) -- (.5,.7);
\draw[thick,blue] (.2,-.7) -- (.2,.7);
\dbox{unshaded,dashed}{(.4,0)}{.35}{.05}{.3}{};
\draw (.8,-.7) -- (.8,.7);
\dbox{unshaded}{(.3,0)}{.3}{0}{0.05}{$x$};
\node at (.5,-.9) {\tiny{$n$}};
\node at (.8,-.9) {\tiny{$1$}};
\end{tikzpicture}
\]
\item Jones projections:
\[ e_{2i+1}= d^{-1}\ 
\begin{tikzpicture}[baseline=.4cm]
\filldraw[shaded] (.8,1) arc (-180:0:.3cm);
\filldraw[shaded] (.8,0) arc (180:0:.3cm);

\draw[blue,thick] (0.2,0) -- (0.2,1);
\draw (.5,0) -- (.5,1);

\draw[dashed] (0,0) rectangle (1.7,1);
\node at (.5,-.2) {\tiny{$2i$}};
\end{tikzpicture}\in M_{2i+2}
\qquad
e_{2i+2} = d^{-1}\ 
\begin{tikzpicture}[baseline=.4cm]
\fill[shaded] (0.5,0) rectangle (1.7,1);
\filldraw[unshaded] (.8,1) arc (-180:0:.3cm);
 \filldraw[unshaded] (.8,0) arc (180:0:.3cm);

  \draw[blue,thick] (0.2,0) -- (0.2,1);
 \draw (.5,0) -- (.5,1);

\draw[dashed] (0,0) rectangle (1.7,1);
 \node at (.5,-.2) {\tiny{$2i\+ 1$}};
\end{tikzpicture}\in M_{2i+3}
\]
\item Conditional expectation $E_n:M_n\to M_{n-1}$ and  $E_n|_{A_{0,n}}=E^r_{0,n}$:
\[E_n(x)= d^{-1}\ 
\begin{tikzpicture}[baseline=-.1cm]
\draw (.5,-.7) -- (.5,.7);
\draw (.8,-.5) -- (.8,.5);
\draw[thick,blue] (.2,-.7) -- (.2,.7);
\dbox{unshaded}{(.4,0)}{.3}{.05}{.3}{$x$};

\draw[orange,thick] (1.1,-.5) -- (1.1,.5);
\draw[orange,thick] (.8,.5) arc (180:0:.15cm);
\draw[orange,thick] (.8,-.5) arc (-180:0:.15cm);

\node at (.5,-.9) {\tiny{$n\sm1$}};
\node at (.8,.8) {\tiny{$1$}};
\end{tikzpicture}
\ ,x\in M_n
\qquad
E_n(x)=E^r_{0,n}(x)= d^{-1}
\begin{tikzpicture}[baseline=-.1cm]
\draw (.5,-.7) -- (.5,.7);
\draw (.8,-.5) -- (.8,.5);
\draw[thick,blue] (.2,-.7) -- (.2,.7);
\nbox{unshaded}{(.7,0)}{.3}{0.05}{0}{$x$};

\draw[orange,thick] (1.1,-.5) -- (1.1,.5);
\draw[orange,thick] (.8,.5) arc (180:0:.15cm);
\draw[orange,thick] (.8,-.5) arc (-180:0:.15cm);

\node at (.5,-.9) {\tiny{$n\sm1$}};
\node at (.8,.8) {\tiny{$1$}};
\end{tikzpicture}
\ ,x\in A_{0,n}
\]

\item Pull down condition: For $x\in M_{n+1}$, $xe_n=dE_{n+1}(xe_n)e_n$.
\[
\begin{tikzpicture}[baseline=.9cm]
\draw[thick,blue] (0.4,0) -- (0.4,2);
\draw (0.6,1) -- (0.6,2);
\draw (1,1) -- (1,2);
\dbox{unshaded}{(.5,1.5)}{.3}{0.0}{0.0}{$x$};

\draw (.6,1) arc (-180:0:.2cm);
\draw (.6,0) arc (180:0:.2cm);
 
\draw[dashed] (0,1) -- (1.2,1);
\draw[dashed] (0,0) rectangle (1.2,2);
\node at (0.4,-.2) {\tiny{$j\sm 1$}};
\node at (.5,2.2) {\tiny{$j$}};
\node at (1,2.2) {\tiny{$1$}};
\end{tikzpicture}
\ =\
\begin{tikzpicture}[baseline=.9cm]
\draw[thick,blue] (0.4,-1) -- (0.4,2);
\draw (0.6,1) -- (0.6,2);
\draw (1,1) -- (1,2);
\draw (1.5,0) -- (1.5,2);
\dbox{unshaded}{(.5,1.5)}{.3}{0.0}{0.0}{$x$};

\draw (.6,1) arc (-180:0:.2cm);
\draw (.6,0) arc (180:0:.2cm);
\draw (.6,0) arc (-180:0:.45cm);
\draw (.6,-1) arc (180:0:.45cm);
\draw[orange,thick] (1.3,0) -- (1.3,2);
\draw[orange,thick] (1,2) arc (180:0:.15cm);
\draw[orange,thick] (1,0) arc (-180:0:.15cm);
 
\draw[dashed] (0,1) -- (1.7,1);
\draw[dashed] (0,0) -- (1.7,0);
\draw[dashed] (0,-1) rectangle (1.7,2);
\node at (0.4,-1.2) {\tiny{$j\sm1$}};
\node at (.5,2.2) {\tiny{$j$}};
\node at (1.5,2.2) {\tiny{$1$}}; 
\end{tikzpicture}
\]

\item Standard condition: For $f\in M_i$, $x\in A_{k,l}$ with $k\ge i$, then we regard $\phi,x$ as elements in $M_l$, $f x=x f$.
\[
\begin{tikzpicture}[baseline={([yshift=-\the\dimexpr\fontdimen22\textfont2\relax]current bounding box.center)}]
 
\draw[blue,thick] (0.2,0) -- (0.2,2);
\draw (0.5,0) -- (0.5,2);
\draw (1,0) -- (1,2);
\draw (1.5,0) -- (1.5,2);
\dbox{unshaded}{(0.4,0.5)}{.3}{0.1}{0.0}{$f$};
\roundNbox{unshaded}{(1.5,1.5)}{.3}{0.0}{0.0}{$x$};
 
\draw[dashed] (-.2,1) -- (2,1);
\draw[dashed] (-.2,0) rectangle (2,2);
\node at (0.5,-.2) {\tiny{$i$}};
\node at (1,2.2) {\tiny{$k\sm i$}};
\node at (1.5,-.2) {\tiny{$l\sm k$}};
\end{tikzpicture}
\ =\
\begin{tikzpicture}[baseline={([yshift=-\the\dimexpr\fontdimen22\textfont2\relax]current bounding box.center)}]
\draw[blue,thick] (0.2,0) -- (0.2,2);
\draw (0.5,0) -- (0.5,2);
\draw (1,0) -- (1,2);
\draw (1.5,0) -- (1.5,2);
\dbox{unshaded}{(0.4,1.5)}{.3}{0.1}{0.0}{$f$};
\roundNbox{unshaded}{(1.5,0.5)}{.3}{0.0}{0.0}{$x$};
 
\draw[dashed] (-.2,1) -- (2,1);
\draw[dashed] (-.2,0) rectangle (2,2);
\node at (0.5,-.2) {\tiny{$i$}};
\node at (1,2.2) {\tiny{$k\sm i$}};
\node at (1.5,-.2) {\tiny{$l\sm k$}};
\end{tikzpicture}
\]
\end{compactenum}

\subsection{From Markov tower as a standard module to planar module category}\label{MT to planar mod cat}

\subsubsection{Planar module category over planar tensor category}
\begin{definition}
Let $\mathcal{A}_0$ be a planar tensor category defined in \S\ref{planar tensor category}. 
Let $\mathcal{M}_0$ be an indecomposable semisimple ${\rm C}^*$ right $\mathcal{A}_0-$module category with following properties:
\begin{compactenum}[(a)]
\item Object: The objects of $\mathcal{M}_0$ are $[n]=[n]_{\mathcal{M}_0}$, $n\in\mathbb{Z}_{\ge 0}$, where $[0]$ is simple.
\item The tensor product of objects are 
$$[m]_{\mathcal{M}_0}\lhd [n,+]_{\mathcal{A}_0}=[m+n]_{\mathcal{M}_0},\qquad [m]_{\mathcal{M}_0}\lhd[n,-]_{\mathcal{A}_0}=0.$$
\item Only $\mathcal{M}_0([n]\to [n\pm 2i])$ is non-empty, $n,i\in\mathbb{Z}_{\ge0}$. The module product of morphism in $\Hom(\mathcal{M}_0)$ and $\Hom(\mathcal{A}_0)$ should match the shading types.
\item $\mathcal{M}_0$ is a strict right $\mathcal{A}_0-$module category, i.e., the module associator is identity. For $x_1,x_2\in \mathcal{A}_0$ and $f\in \mathcal{M}_0$, 
$$(f\lhd x_1)\lhd x_2=f\lhd (x_1\otimes x_2).$$
\item $\mathcal{M}_0$ is a ${\rm C}^*$ category with a natural dagger structure such that $\lhd$ is a dagger functor, i.e., for $x\in \Hom(\mathcal{A}_0)$ and $f\in \Hom(\mathcal{M}_0)$,
$$(f\lhd x)^\dagger = f^\dagger \lhd x^\dagger.$$
\end{compactenum}

Such module category is called a \textbf{planar module category}.
\end{definition}

\begin{remark}\label{main idea remark endo,domain,range for module}
Similar to Remark \ref{main idea remark endo,domain,range}, the morphisms in $\mathcal{M}_0$ is determined by its representation as an endomorphism and its domain and range.

There is a canonical isomorphism $\phi:\mathcal{M}_0([m]\to [m+2i])\to \mathcal{M}_0([m+i]\to [m+i])$ by using the rigid structure on $\mathcal{A}_0$.
\[\phi:
\begin{tikzpicture}[baseline=-.1cm]
\draw[blue,thick] (-.3,-.7) -- (-.3,.7);
\draw (0,-.7) -- (0,0);
\draw (-.1,0) -- (-.1,.7);
\draw (.1,0) -- (.1,.7);

\dbox{unshaded}{(0,0)}{.3}{.2}{0}{$x$};
\node at (0,-.9) {\tiny{$m$}};
\node at (-.2,.9) {\tiny{$m\+ i$}};
\node at (.2,.5) {\tiny{$i$}};
\end{tikzpicture}
\mapsto
\begin{tikzpicture}[baseline=-.1cm]
\draw[blue,thick] (-.3,-.7) -- (-.3,.7);
\draw (0,-.7) -- (0,0);
\draw (-.1,0) -- (-.1,.7);
\draw (.1,0) -- (.1,.5);
\draw (.1,.5) arc (180:0:.15);
\draw (.4,-.7) -- (.4,.5);

\dbox{unshaded}{(0,0)}{.3}{.2}{0}{$x$};
\node at (0,-.9) {\tiny{$m$}};
\node at (-.1,.9) {\tiny{$m\+ i$}};
\node at (.4,-.9) {\tiny{$i$}};
\end{tikzpicture}
\qquad\qquad \phi^{-1}:\ 
\begin{tikzpicture}[baseline=-.1cm]
\draw[blue,thick] (-.3,-.7) -- (-.3,.7);
\draw (0,0) -- (0,.7);
\draw (-.1,-.7) -- (-.1,0);
\draw (.1,-.7) -- (.1,0);
\dbox{unshaded}{(0,0)}{.3}{0.2}{0}{$x$};
\node at (0,.9) {\tiny{$m\+i$}};
\node at (-.1,-.9) {\tiny{$m$}};
\node at (.1,-.9) {\tiny{$i$}};
\end{tikzpicture}
\mapsto
\begin{tikzpicture}[baseline=-.1cm]
\draw[blue,thick] (-.3,-.7) -- (-.3,.7);
\draw (0,0) -- (0,.7);
\draw (-.1,-.7) -- (-.1,0);
\draw (.1,-.5) -- (.1,0);
\draw (.1,-.5) arc (-180:0:.15);
\draw (.4,-.5) -- (.4,.7);
\dbox{unshaded}{(0,0)}{.3}{0.2}{0}{$x$};
\node at (0,.9) {\tiny{$m\+i$}};
\node at (-.1,-.9) {\tiny{$m$}};
\node at (.4,.9) {\tiny{$i$}};
\end{tikzpicture}
\]
For morphism $x\in \mathcal{M}_0([m],[n])$, we can write a triple $(\phi(x);[m],[n])$ to represent $x$, where $\phi(x)\in \End([\frac{m+n}{2}])$, which is called the \textbf{endormophism representation part} of $x$. In the following context, we simply write $x$ instead of $\phi(x)$ in the triple $(x;[m],[n])$. 
\end{remark}

\subsubsection{From Markov tower as a standard module to planar module category}
\label{Markov tower to planar module cat}

Define the multi-step conditional expectation $E^m_n=E_{n-m+1}\circ \cdots \circ E_n$, for $m\le n$. Similar to Definition \ref{Def:std lambda lattice to planar tensor category}, we may regard the elements in $M_n$ as endomorphisms in the category, we can construct a planar module category from a given Markov tower as a standard module over a standard $\lambda$-lattice.

\begin{definition}\label{Def:planar module category}
Let $M=(M_n)_{n\ge 0}$ be a Markov tower as a standard right module over standard $\lambda$-lattice $A=(A_{i,j})$ with $\dim(M_0)=1$. We define a planar module category $\mathcal{M}_0$ from $M$ as follows.
\begin{compactenum}[(a)]
\item The objects of $\mathcal{M}_0$ are the symbols $[n]$ for $n\in \mathbb{Z}_{\ge 0}$.
\item Given $n\ge 0$, define $\mathcal{M}_0([n]\to [n]):=M_n$.
\item The identity morphism in $\mathcal{M}_0([n]\to [n])$ is $1_{M_{n}}$.
\item For $(f;[m],[n])$ with $2\mid m+n$, we define $(f;[m],[n])^\dagger:=(f^*;[n],[m])$, where $f,f^*\in M_{\frac{m+n}{2}}$.
\item We define composition in three cases.
\begin{compactenum}[{(C}1{)}]
\item $(g;[n+2i],[n+2i+2j])\circ (f;[n],[n+2i]) = (d^i E^i_{n+2i+j}(gfe^n_{j,i});[n],[n+2i+2j])$, where $f\in M_{n+i},g\in M_{n+2i+j}$ and $d^i E^i_{n+2i+j}(gfe^n_{j,i})\in M_{n+i+j}$.
\item $(g;[n+2i+2j],[n+2i])\circ (f;[n],[n+2i+2j]) = (d^iE^{i+j}_{n+2i+j}(gfe^{n,*}_{j,i});[n],[n+2i])$, where $f\in M_{n+i+j}, g\in M_{n+2i+j}$ and $d^iE^{i+j}_{n+2i+j}(gfe^{n,*}_{j,i})\in M_{n+i}$.
\item $(g;[n],[n+2i+2j])\circ (f;[n+2i],[n]) = (d^ige^{n,*}_{j,i}f;[n+2i],[n+2i+2j])$, where $f\in M_{n+i}, g\in M_{n+i+j}$ and $d^ige^{n,*}_{j,i}f\in M_{n+2i+j}$.
\end{compactenum}
For the other cases, we can use the dagger structure $f^\dagger \circ g^\dagger := (g\circ f)^\dagger$ to define.
\end{compactenum}
\end{definition}

Similarly, the composition and the dagger structure are well defined, and $\mathcal{M}_0$ is ${\rm C}^*$ \cite[\S3.4]{CHPS18}.

\begin{center}
\begin{tabular}{ c c c c c }
\begin{tikzpicture}[baseline=0.5cm]
\draw[dashed] (0.9,1) -- (2,1);
\draw[dashed] (.1,0) -- (2,0);
\draw[dashed] (.1,-1) rectangle (2,2);
\draw[thick,blue] (0.1,-1) -- (0.1,2);
\draw (0.4,-1) -- (0.4,2);
\draw (0.6,0) -- (0.6,2);
\draw (1,0) -- (1,2);  
\draw (1.4,0) -- (1.4,2);

\draw (1,-1) arc (180:0:.2cm);
\draw (.6,0) arc (-180:0:.2cm);
\draw (.6,-1)  .. controls ++(90:.35cm) and ++(270:.35cm) .. (1.4,0);

\draw[orange,thick] (1.7,-1) -- (1.7,2);
\draw[orange,thick] (1.4,2) arc (180:0:.15cm);
\draw[orange,thick] (1.4,-1) arc (-180:0:.15cm);

\dbox{unshaded}{(.9,1.5)}{.3}{0.7}{0.4}{$g$};
\dbox{unshaded}{(.5,.5)}{.3}{0.3}{0}{$f$};
  
\node at (0.4,2.2) {\tiny{$n$}};
\node at (0.8,2.2) {\tiny{$i\+j$}};
\node at (.7,1) {\tiny{$i$}};
\node at (.4,-1.2) {\tiny{$n$}};
\node at (.3,1) {\tiny{$n$}};
\node at (.6,-.2) {\tiny{$i$}};
\node at (1,-1.2) {\tiny{$i$}};
\node at (.6,-1.2) {\tiny{$j$}};
\end{tikzpicture} 
& &
\begin{tikzpicture}[baseline=0.5cm]
\draw[dashed] (1,1) -- (2,1);
\draw[dashed] (.1,0) -- (2,0);
\draw[dashed] (.1,-1) rectangle (2,2);
 
\draw[thick,blue] (0.1,-1) -- (0.1,2);
\draw (0.4,-1) -- (0.4,2);
\draw (0.6,0) -- (0.6,2);
\draw (1,0) -- (1,2);  
\draw (1.4,0) -- (1.4,2);

\draw (.6,-1) arc (180:0:.2cm);
\draw (1,0) arc (-180:0:.2cm);
\draw (1.4,-1)  .. controls ++(90:.35cm) and ++(270:.35cm) .. (0.6,0);

\draw[orange,thick] (1.7,-1) -- (1.7,2);
\draw[orange,thick] (1.4,2) arc (180:0:.15cm);
\draw[orange,thick] (1.4,-1) arc (-180:0:.15cm);
 
\draw[orange,thick] (1.8,-1) -- (1.8,2);
\draw[orange,thick] (1,2) arc (180:0:.4cm);
\draw[orange,thick] (1,-1) arc (-180:0:.4cm);

\dbox{unshaded}{(.9,1.5)}{.3}{0.7}{0.4}{$g$};
\dbox{unshaded}{(.7,.5)}{.3}{0.5}{0.2}{$f$};
 
\node at (0.4,2.2) {\tiny{$n$}};
\node at (0.6,2.2) {\tiny{$i$}};
\node at (.8,1) {\tiny{$j\+ i$}};
\node at (.4,-1.2) {\tiny{$n$}};
\node at (.3,1) {\tiny{$n$}};
\node at (.6,-.2) {\tiny{$j$}};
\node at (1.4,-.2) {\tiny{$i$}};
\node at (.6,-1.2) {\tiny{$i$}};
\end{tikzpicture}
& &
\begin{tikzpicture}[baseline=0.5cm]
\draw[dashed] (0.6,1) -- (1.6,1);
\draw[dashed] (.1,0) -- (1.6,0);
\draw[dashed] (.1,-1) rectangle (1.6,2);

\draw[thick,blue] (0.1,-1) -- (0.1,2);
\draw (0.4,-1) -- (0.4,2);
\draw (0.6,0) -- (0.6,-1);
\draw (0.6,1) -- (0.6,2);
\draw (1,1) -- (1,2);  
\draw (1,0) -- (1,-1);  
\draw (1.4,0) -- (1.4,-1);
\draw (1.4,2) -- (1.4,1);

\draw (.6,0) arc (180:0:.2cm);
\draw (1,1) arc (-180:0:.2cm);
\draw (1.4,0) .. controls ++(90:.35cm) and ++(270:.35cm) .. (.6,1);

\dbox{unshaded}{(.7,1.5)}{.3}{0.5}{0.2}{$g$};
\dbox{unshaded}{(.5,-.5)}{.3}{.3}{0}{$f$};
  
\node at (0.4,2.2) {\tiny{$n$}};
\node at (0.8,2.2) {\tiny{$i\+j$}};
\node at (1.4,-1.2) {\tiny{$j$}};
\node at (.4,-1.2) {\tiny{$n$}};
\node at (.3,1) {\tiny{$n$}};
\node at (1.4,2.2) {\tiny{$i$}};
\node at (1,-1.2) {\tiny{$i$}};
\node at (.6,-1.2) {\tiny{$i$}};
\end{tikzpicture}\\
(C1) & & (C2) & & (C3)
\end{tabular}
\end{center}

\begin{remark}
Readers can observe the similarity between the diagrammatic explanation of elements in $M_n$ and $A_{i,n}$, difference only appears on the leftmost. Moreover, the similar version of Lemma \ref{lemma 3} and Lemma \ref{lemma 4} is also true for Markov tower case. 
\end{remark}

Now we define the module action of morphisms. 
\begin{definition}\label{1 otimes x second times}
$f\lhd 1$ and $1\lhd x$, $f\in\Hom(\mathcal{M}_0)$ and $x\in \Hom(\mathcal{A}_0)$. The idea is the same as in Definition \ref{x otimes 1, 1 otimes y}.

First, we define $f\lhd 1$ as 
\begin{center}
\small
\begin{tabular}{ |c|c| } 
 \hline
 $f$ & $f\lhd 1_j$  \\
 \hline
  $(f;[m], [m+2i]),\ i\le j$ & $(fe^m_{j-i,i};[m+j],[m+2i+j])$  \\
\hline
  $(f;[m], [m+2i]),\ i>j$ & $(fe^{m,*}_{i-j,j};[m+j],[m+2i+j])$  \\
\hline
\end{tabular}
\end{center}

The definition of $1\lhd x$ will be the same as $1\otimes x$ by using the 2-shift maps in Definition \ref{x otimes 1, 1 otimes y}.

\begin{center}
\begin{tabular}{ c c c }
\begin{tikzpicture}[baseline=1cm]
\draw[dashed] (.1,1) -- (1.6,1);
\draw[dashed] (.1,0) rectangle (1.6,2);

\draw[thick,blue] (0.1,0) -- (0.1,2);
\draw (0.4,0) -- (0.4,2);
\draw (0.6,1) -- (0.6,2);
\draw (1,1) -- (1,2);
\draw (1.4,1) -- (1.4,2);
\dbox{unshaded}{(.5,1.5)}{.3}{0.3}{0.0}{$f$};

\draw (.6,1) arc (-180:0:.2cm);
\draw (1,0) arc (180:0:.2cm);
\draw (.6,0)  .. controls ++(90:.35cm) and ++(270:.35cm) .. (1.4,1);
 
\node at (0.4,-.2) {\tiny{$n$}};
\node at (1.4,2.2) {\tiny{$j\sm i$}};
\node at (1,-.2) {\tiny{$i$}};
\node at (.5,2.2) {\tiny{$n\+i$}};
\node at (1,2.2) {\tiny{$i$}};
\end{tikzpicture}
& &
\begin{tikzpicture}[baseline=1cm]
\draw[dashed] (.1,1) -- (1.6,1);
\draw[dashed] (.1,0) rectangle (1.6,2);

\draw[thick,blue] (0.1,0) -- (0.1,2);
\draw (0.4,0) -- (0.4,2);
\draw (0.6,1) -- (0.6,1.5);
\draw (1,1) -- (1,2);
\draw (1.4,1) -- (1.4,2);
\dbox{unshaded}{(.7,1.5)}{.3}{0.5}{0.2}{$f$};

\draw (1,1) arc (-180:0:.2cm);
\draw (.6,0) arc (180:0:.2cm);
\draw (1.4,0)  .. controls ++(90:.35cm) and ++(270:.35cm) .. (.6,1);
 
\node at (0.4,-.2) {\tiny{$n$}};
\node at (0.6,-.2) {\tiny{$j$}};
\node at (1.4,-.2) {\tiny{$i\sm j$}};
\node at (.4,2.2) {\tiny{$n$}};
\node at (1,2.2) {\tiny{$i$}};
\node at (1.4,2.2) {\tiny{$j$}};
\end{tikzpicture}\\
$i\le j$ & & $i>j$ 
\end{tabular}
\end{center}
\end{definition}

The proof of following propositions are the same as in Proposition \ref{tensor product def prop}, \ref{strictness prop} and \ref{circ tensor 1}. 
\begin{proposition}
For $f\in \Hom(\mathcal{M}_0)$, $x\in \Hom(\mathcal{A}_0)$, $(f\lhd 1)\circ (1\lhd x)=(1\lhd x)\circ (f\lhd 1)$.
\end{proposition}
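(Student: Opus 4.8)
The statement to prove is the module-category analogue of Proposition~\ref{tensor product def prop}, namely that for $f \in \Hom(\mathcal{M}_0)$ and $x \in \Hom(\mathcal{A}_0)$ we have $(f \lhd 1) \circ (1 \lhd x) = (1 \lhd x) \circ (f \lhd 1)$. The plan is to mirror the proof of Proposition~\ref{tensor product def prop} essentially verbatim, using the string-diagram dictionary of \S\ref{TLJ diagram explanation, module} in place of that of \S\ref{TLJ diagram explanation}. First I would reduce to checking the identity on endomorphism representation parts: by Remark~\ref{main idea remark endo,domain,range for module}, a morphism in $\mathcal{M}_0$ is determined by its endomorphism representation part together with source and target, and both sides visibly have the same source and target (namely $[m+n]$ and $[m+n+2i+2j]$ for $f = (f;[m],[m+2i])$ and $x=(y;[n,+],[n+2j,+])$), so it suffices to compare the box-space parts.

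Next I would split into the two cases $i \le j$ and $i > j$ exactly as in Proposition~\ref{tensor product def prop}, and in each case chase the string diagram through the same sequence of moves: the definitions of $f \lhd 1$ and $1 \lhd x$ from Definition~\ref{1 otimes x second times} unfold to the same pictures as $x\otimes 1$ and $1\otimes y$, the shift-property ($\lambda$15)/($\lambda$16) lets one slide the shift strand past the cups and caps, ($\lambda$10)/($\lambda$11) (pull-down and conditional-expectation compatibility) absorb the Jones projections, and the capping/Jones-projection relations together with Lemma~\ref{lemma 3} collapse the remaining diagram. The only structural input beyond the $\lambda$-lattice axioms is that $1 \lhd x$ is defined via the very same $2n$-shift maps $S^{(i)}$, which are $*$-isomorphisms satisfying the commuting-parallelogram and shift identities of Proposition~\ref{2n-shift map prop}; the blue module strand on the far left is inert throughout (it never meets a cup, cap, or shift strand), so every diagrammatic identity used in \S\ref{TLJ diagram explanation} applies unchanged with the blue strand carried along passively. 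As the excerpt itself notes, ``the similar version of Lemma~\ref{lemma 3} and Lemma~\ref{lemma 4} is also true for Markov tower case,'' which is exactly what makes the transcription legitimate.

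The remaining cases — where $2 \nmid m$ and $x$ is shaded, or where $f$ or $x$ goes ``down'' rather than ``up'' — follow by applying the dagger structure: since $(f\lhd 1)^\dagger = f^\dagger \lhd 1$ and $(1\lhd x)^\dagger = 1 \lhd x^\dagger$ and $\dagger$ reverses composition, an identity $AB = BA$ for the up-up case yields $B^\dagger A^\dagger = A^\dagger B^\dagger$ for the corresponding down-down case, and the mixed cases are handled by the same bookkeeping used implicitly in Definition~\ref{Def:planar module category}. The main obstacle is not conceptual but purely organizational: one must verify that the shading bookkeeping (which of $[n,+]$ versus $[n,-]$ appears, hence which of $e^n_{j,i}$ versus $e^{n+1}_{j,i}$ is the right Jones word) is consistent with the module constraint $[m]\lhd[n,-]=0$, so that the only surviving case is the one genuinely parallel to Proposition~\ref{tensor product def prop}. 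I would state this reduction explicitly and then write ``the computation is identical to that of Proposition~\ref{tensor product def prop}, with the left-most strand replaced by the blue module strand,'' exhibiting the one representative diagram chase and leaving the remaining cases to the reader, consistent with the style of the surrounding propositions.
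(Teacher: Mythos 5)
Your proposal is correct and matches the paper's approach: the paper disposes of this proposition with the single remark that "the proof \ldots [is] the same as in Proposition \ref{tensor product def prop}," i.e.\ the diagram chase for $(x\otimes 1)\circ(1\otimes y)=(1\otimes y)\circ(x\otimes 1)$ carries over verbatim with the inert blue module strand appended on the left and the Markov-tower versions of Lemmas \ref{lemma 3} and \ref{lemma 4}. Your additional remarks on reducing to endomorphism representation parts and handling the remaining shading/orientation cases via the dagger structure are consistent with how the paper treats the analogous "left to the reader" cases.
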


\begin{definition}
Define $f\lhd x:=(f\lhd 1)\circ (1\lhd x)$.
\end{definition}

The following propositions guarantee the module action defined above is well-defined.

\begin{proposition}
For $f\in \Hom(\mathcal{M}_0)$, $x,y\in \Hom(\mathcal{A}_0)$, $(f\lhd x)\lhd y=f\lhd(x\otimes y)$.
\end{proposition}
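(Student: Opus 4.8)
The plan is to imitate the proof of associativity of the tensor product in Proposition \ref{strictness prop}, but now with the leftmost object replaced by a module object $[n]_{\mathcal{M}_0}$ carrying a blue strand. By Definition \ref{definion x otimes y} and Definition \ref{1 otimes x second times} we have $f\lhd x := (f\lhd 1)\circ(1\lhd x)$ and $x\otimes y := (x\otimes 1)\circ(1\otimes y)$, and the $1\lhd(\cdot)$ action on $\mathcal{M}_0$ is literally the $1\otimes(\cdot)$ action of $\mathcal{A}_0$ given by the $2n$-shift maps. So first I would reduce the claim to an identity purely among compositions: expanding both sides,
\begin{align*}
(f\lhd x)\lhd y &= ((f\lhd 1)\circ(1\lhd x))\lhd y = (((f\lhd 1)\circ(1\lhd x))\lhd 1)\circ(1\lhd y),\\
f\lhd(x\otimes y) &= (f\lhd 1)\circ(1\lhd(x\otimes y)) = (f\lhd 1)\circ(1\lhd x)\circ(1\lhd y),
\end{align*}
where in the last line I use that $1\lhd(\cdot)$ is functorial and multiplicative on $\circ$ (the module analogue of Proposition \ref{circ tensor 1}, which holds because the shift map is a $*$-homomorphism). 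So the statement reduces to showing $((g)\lhd 1)\circ(1\lhd y)$ behaves correctly, i.e. to the module analogue of the mixed interchange $(g\lhd 1)\circ(1\lhd y) = (1\lhd y)\circ(g\lhd 1)$ (the module version of Proposition \ref{tensor product def prop}, already available), together with $(h\lhd 1)\lhd 1 = h\lhd 1$ for composable pieces, which is the $e^{m}_{j,i}$-concatenation identity from Lemma \ref{lemma 2} applied in the Markov tower setting.

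Concretely, the key steps in order would be: (1) unwind all four actions into their endomorphism-representation triples using Definition \ref{Def:planar module category} and Definition \ref{1 otimes x second times}, splitting into the cases $i\le j$ versus $i>j$ exactly as in Proposition \ref{strictness prop} and Proposition \ref{circ tensor 1}; (2) draw the string diagram for $(f\lhd x)\lhd 1$ and for $f\lhd(x\otimes 1)$ using the module dictionary (MT1)--(MT7) from \S\ref{TLJ diagram explanation, module} together with ($\lambda$1)--($\lambda$17); (3) transform one diagram into the other by a sequence of moves of exactly the type used in Proposition \ref{strictness prop}: Jones projection absorption ($e_ke_{k\pm1}e_k = d^{-2}e_k$), the pull-down relation (MT6), the shift/slide relation ($\lambda$16)--($\lambda$17) for the cyan $2n$-strands, and Lemma \ref{lemma 3} and Lemma \ref{lemma 4} (whose Markov-tower analogues are noted in the Remark after Definition \ref{Def:planar module category}); (4) then incorporate the $\circ(1\lhd y)$ on the right using the already-proven mixed interchange, and conclude both normal forms agree.

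The main obstacle I anticipate is purely bookkeeping: keeping the exponents of $d$ and the indices on the multi-step conditional expectations $E^m_n$ and on the partial-trace elements $e^n_{j,i}$, $e^{n,*}_{j,i}$ consistent through the case split $i\le j$ vs.\ $i>j$, just as in the proofs of Propositions \ref{strictness prop} and \ref{circ tensor 1}. No genuinely new phenomenon appears, because the blue module strand on the far left is a passive spectator under every move we use (the standard condition (MT7) lets us slide $\mathcal{A}_0$-morphisms past it, and none of the Jones-projection or shift relations touch it). So after handling the one representative case in detail --- say $(f;[m],[m+2i])$, $(x;[m+2i,+],[m+2i+2j,+])$ or the analogous shaded one --- the remaining cases follow by the same diagram manipulations, and I would leave them to the reader, as is done throughout \S\ref{std lattice to planar cat} and \S\ref{MT to planar mod cat}.
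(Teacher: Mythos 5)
Your proposal is correct and takes essentially the same route as the paper: the paper's entire proof of this proposition is the preceding remark that its proof is ``the same as'' those of Propositions \ref{tensor product def prop}, \ref{strictness prop} and \ref{circ tensor 1}, i.e., one reruns the string-diagram associativity computation of Proposition \ref{strictness prop} with the blue module strand as a passive spectator, which is exactly your plan. Your preliminary algebraic reduction through the interchange and functoriality propositions is only a reorganization; the substantive content you identify (the case split $i\le j$ vs.\ $i>j$, the Jones-projection, pull-down and shift relations, and the Markov-tower analogues of Lemmas \ref{lemma 2}--\ref{lemma 4}) is precisely what the paper's referenced proofs use.
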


\begin{proposition}
For $f,g\in \Hom(\mathcal{M}_0)$, $(f\circ g)\lhd 1=(f\lhd 1)\circ (g\lhd 1)$ and $1\lhd (x\otimes y)=(1\lhd x)\circ (1\lhd y)$.
\end{proposition}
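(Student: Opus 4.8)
The plan is to prove the two identities $(f\circ g)\lhd 1=(f\lhd 1)\circ(g\lhd 1)$ and $1\lhd(x\otimes y)=(1\lhd x)\circ(1\lhd y)$ by following verbatim the strategy of Proposition \ref{circ tensor 1}, since the module side is built from exactly the same data (Jones projections, multi-step conditional expectations, and the $2n$-shift maps), only with an extra blue strand on the far left that never interacts with any of the diagrammatic moves. For the second identity, which is the easier one, the key observation is that $1\lhd x$ is defined via the $2n$-shift maps $S^{(n)}$, exactly as $1\otimes x$, and the composition in $\mathcal{M}_0$ on straight-through pieces is governed by the same formula as in $\mathcal{A}_0$; so $1\lhd(x\otimes y)=(1\lhd x)\circ(1\lhd y)$ reduces to the fact, already used in Proposition \ref{circ tensor 1}, that $x\mapsto S^{(n)}(x)$ is a $*$-homomorphism (Proposition \ref{2n-shift map prop}(1)) together with $S^{(n)}\circ S^{(m)}=S^{(n+m)}$ read off the definition of the $2n$-shift.

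For the first identity, I would compute the endomorphism representation parts of both sides in the representative case $(f;[m],[m+2i])$, $(g;[m+2i],[m+2i+2j])$ with $n\ge i+j$, so that $(f\circ g)\lhd 1_n$ and $(f\lhd 1_n)\circ(g\lhd 1_n)$ both lie in $\mathcal{M}_0([m+n]\to[m+n+2i+2j])$. Concretely: expand $(f\circ g)\lhd 1_n$ using composition case (C1) of Definition \ref{Def:planar module category} followed by the $f\lhd 1$ table of Definition \ref{1 otimes x second times}; expand $(f\lhd 1_n)\circ(g\lhd 1_n)$ by first applying the $f\lhd 1$ table to each factor and then composing. Then I would transform one into the other through the same sequence of moves used in Proposition \ref{circ tensor 1}: an application of the $\lambda$-relation analog ($\lambda$11), then ($\lambda$10), then a final simplification via Lemma \ref{lemma 3} and the Jones projection relation $e_ke_{k\pm1}e_k=d^{-2}e_k$. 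As remarked after Definition \ref{Def:planar module category}, the module analogs of Lemma \ref{lemma 3} and Lemma \ref{lemma 4} hold (the blue strand plays no role), so every step transfers.

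The proof can therefore be written compactly: I would say that the argument is identical to that of Proposition \ref{circ tensor 1} after adding a blue strand on the leftmost of every diagram, invoke the module versions of Lemmas \ref{lemma 3} and \ref{lemma 4}, and note that the multi-step conditional expectations $E^m_n$ and the shift maps $S^{(n)}$ behave exactly as their $\lambda$-lattice counterparts $E^{r,k}_{i,j}$ and $S^{(n)}_{i,j}$ with respect to composition in $\mathcal{M}_0$. The only genuinely new point to verify is that the standard-module condition (Definition \ref{Def:Markov tower as standard module}(c)), $[M_i,A_{k,l}]=0$ for $i\le k\le l$, suffices in place of the standard condition of the $\lambda$-lattice wherever Proposition \ref{circ tensor 1} used $[A_{i,j},A_{k,l}]=0$; this is immediate because in the module diagrams the blue-strand box sits to the left and any $\mathcal{A}_0$-morphism it meets sits to its right, so the relevant index inequality is exactly $i\le k\le l$.

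The main obstacle is purely bookkeeping: matching the strand counts and the powers of $d$ in the two expansions so that the intermediate diagrams line up with the ones appearing in Proposition \ref{circ tensor 1}. In particular one must be careful that the $f\lhd 1_n$ table splits into the cases $i\le j$ and $i>j$ (and correspondingly uses $e^m_{j-i,i}$ versus $e^{m,*}_{i-j,j}$), and that after composition these recombine correctly; but since no new diagrammatic identity beyond those already established is needed, I expect no conceptual difficulty, and the remaining cases (different parities of $m$, the roles of $f$ and $g$ reversed) follow by the same dictionary, so I would leave them to the reader as is done throughout \S\ref{std lattice to planar cat}.
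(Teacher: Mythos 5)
Your proposal is correct and follows essentially the same route as the paper, which itself disposes of this proposition by the remark that its proof "is the same as in" Propositions \ref{tensor product def prop}, \ref{strictness prop} and \ref{circ tensor 1}: one repeats the string-diagram computation of Proposition \ref{circ tensor 1} with the blue $M$-strand added on the far left, invoking the module versions of Lemmas \ref{lemma 3} and \ref{lemma 4} (asserted in the remark after Definition \ref{Def:planar module category}) and the homomorphism property of the shift maps for the second identity. Your additional observations about the standard-module condition and the $i\le j$ versus $i>j$ case split are accurate bookkeeping that the paper leaves implicit.
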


\subsection{Indecomposable semisimple ${\rm C}^*$ $\mathcal{A}-$module categories and planar $\mathcal{A}_0-$module categories}
\subsubsection{Indecomposable semisimple ${\rm C}^*$ $\mathcal{A}-$module category}\label{cat M to M_0}
Let $\mathcal{A}$ be a 2-shaded rigid ${\rm C}^*$ multitensor category with a generator $X=1^+\otimes X\otimes 1^-$ with a canonical unitary dual functor $\overline{(\cdot)}$. Let $\mathcal{M}$ be a Cauchy complete indecomposable semisimple ${\rm C}^*$ $\mathcal{A}-$module category. Note that there is a natural dagger structure on $\mathcal{M}$, and the module action $\lhd$ is a dagger functor, namely, for morphism $f\in\Hom(\mathcal{M})$ and $x\in \Hom(\mathcal{A})$,
$$(f\lhd x)^\dagger = f^\dagger \lhd x^\dagger.$$

We call a module category $\mathcal{M}$ \textbf{indecomposable} if for any two simple objects $P,Q\in\mathcal{M}$, $Q$ is a direct summand of $P\lhd X^{\alt\otimes n}$ if $P=P\lhd 1^+$ ($P\lhd \overline{X}^{\alt\otimes n}$ if $P=P\lhd 1^-$) for some $n\in\mathbb{Z}_{\ge 0}$. 

\begin{construction}
Let $\mathcal{A}_0$ be a planar tensor category obtained from $(\mathcal{A},X)$ via the construction in \S\ref{cat A to A_0}. 
By MacLane's coherence theorem, $\mathcal{M}_{\mathcal{A}}$ is unitary equivalent to a strict one, i.e., $\mathcal{M}$ and $\mathcal{A}$ are strict and the right module associator is trivial. 
Then $\mathcal{M}$ is also a strict right $\mathcal{A}_0-$module category. 

We construct the planar $\mathcal{A}_0-$module category $\mathcal{M}_0$ as follows:
\begin{compactenum}[(a)]
\item Objects: Pick a simple object $Z=Z\lhd 1^+\in \mathcal{M}$, define $[0]:=Z$, and 
$$[n+1]:=[n]\lhd [1,?],$$
where $[1,?]=[1,+]$ if $2\mid n$ and $[1,?]=[1,-]$ if $2\nmid n$.
\item Morphisms: $\mathcal{M}_0$ is a full subcategory of $\mathcal{M}$ with above objects.
\end{compactenum}
\end{construction}

Given $\mathcal{M}_0$ to be a planar $\mathcal{A}_0-$module category, then its Cauchy completion $\widehat{M_0}$ is an $\widehat{\mathcal{A}_0}-$module, compatible with the dagger structure. The proof is left to the reader as an exercise.

\begin{remark}
Suppose $\mathcal{M}_0$ is a planar $\mathcal{A}_0-$module category constructed from $(\mathcal{M},Z)$ over $(\mathcal{A},X)$, then there is a unitary equivalence between $\mathcal{M}$ as $\mathcal{A}-$module and $\widehat{\mathcal{M}_0}$ as $\widehat{\mathcal{A}_0}-$module, which sends base object to base object.
\end{remark}

\subsubsection{From planar module category to Markov tower as a standard module over a standard $\lambda$-lattice}\label{M_0 to MT M}
\begin{construction}
Let $\mathcal{M}_0$ be a planar $\mathcal{A}_0-$module category with modulus $d$ and $A$ is a standard $\lambda$-lattice constructed from $\mathcal{A}_0$ as in \S\ref{A_0 to std A}. Define $M_j=\End([j])$, $j\in\mathbb{Z}_{\ge 0}$. Then we check $M=(M_j)_{j\ge 0}$ to be a Markov tower as a standard $A-$module.
\begin{compactenum}[(a)]
\item The horizontal inclusion $M_j\subset M_{j+1}$ sends $x\in M_j$ to $x\lhd \id_{[1,?]}\in M_{j+1}$, where $?=+$ if $2\mid j$ and $?=-$ if $2\nmid j$. The vertical inclusion $A_{0,j}\subset M_{j}$ sends $x\in A_{0,j}$ to $\id_{[0]}\lhd x\in M_j$.
\item Conditional expectation: Define $E^M_j:M_j\to M_{j-1}$ by
\begin{align*}
    & E^M_{2k}(x)=d^{-1}(\id_{[2k-1]}\otimes \ev_{\overline{[1,+]}})\circ (x\lhd [1,+])\circ (\id_{[2k-1]}\otimes \coev_{[1,+]}),\\
    & E^M_{2k+1}(x)=d^{-1}(\id_{[2k]}\otimes \ev_{\overline{[1,-]}})\circ (x\lhd [1,-])\circ (\id_{[2k]}\otimes \coev_{[1,-]}).
\end{align*}
\item Jones projections: the same Jones projections in $A$ and identify $e_n\in A_{0,n+1}$ with $1\lhd e_n\in M_{n+1}$.
\end{compactenum}
The check that $M$ is a Markov tower and a standard $A-$module is left to the reader. In particular, we have $E_{n+1}(e_n)=d^{-2}\cdot 1$.
\end{construction}

In this section, we show that the class of unitary equivalent pairs $(\mathcal{M},Z)$ with $\mathcal{M}$ an indecomposable right $\mathcal{A}-$module category and $Z$ a simple base point induces the equivalent class of planar module categories; according to \S\ref{Markov tower to planar module cat}, the class of equivalent planar module categories is one to one corresponding to the class of isomorphic Markov towers as standard module over isomorphic standard $\lambda$-lattices.

Combining above discussion, we can deduce the equivalence between $(\mathcal{M},Z)$ as $\mathcal{A}-$module category and Markov tower $M$ as standard $A-$module.
\begin{theorem}There is a bijective correspondence between equivalence classes of the following:
\[\left\{\, \parbox{5.1cm}{\rm 
Traceless Markov tower $M=(M_i)_{i\ge 0}$ with $\dim(M_0)=1$ 
as a standard right module over a standard $\lambda$-lattice $A$} \,\right\}\ \, \cong\ \, \left\{\, \parbox{7cm}{\rm Pairs $(\mathcal{M}, Z)$ with $\mathcal{M}$ an indecomposable semisimple ${\rm C}^*$ right $\mathcal{A}-$module category together with a choice of simple object $Z= Z\lhd 1^+_\mathcal{A}$} \,\right\}\]
Equivalence on the left hand side is $*$-isomorphism of traceless Markov towers as standard $A-$modules; 
equivalence on the right hand side is unitary $\mathcal{A}-$module equivalence on their Cauchy completions which maps the simple base object to simple base object.
\end{theorem}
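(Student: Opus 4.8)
The plan is to assemble this equivalence exactly as the paper assembled the two earlier ones (the Folklore Theorem and Theorem~\ref{thmx:ModuleEquivalence} in its body form): build a functor in each direction on the ``skeletal/planar'' level, check each is inverse to the other up to the appropriate notion of isomorphism, and then pass to Cauchy completions. Concretely, given a traceless Markov tower $M=(M_i)_{i\ge0}$ with $\dim(M_0)=1$ as a standard right $A$-module, Definition~\ref{Def:planar module category} produces a planar $\mathcal{A}_0$-module category $\mathcal{M}_0$; conversely the construction in \S\ref{M_0 to MT M} sends a planar $\mathcal{A}_0$-module category $\mathcal{M}_0$ (with its chosen base point $[0]$) to a Markov tower $M_j=\End([j])$ which is verified there to be a standard $A$-module. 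The first task is to confirm these two assignments are mutually inverse on the nose: starting from $M$, forming $\mathcal{M}_0$, and then taking endomorphism algebras returns $\End([n]_{\mathcal{M}_0})=\mathcal{M}_0([n]\to[n])=M_n$ with the same inclusions, Jones projections, and conditional expectations (here one uses that $E^M_j$ as defined via $\ev,\coev$ in \S\ref{M_0 to MT M} matches $E_j$ because of the defining relation $e_jxe_j=E_j(x)e_j$ and the pull-down property, exactly as in the diagrammatic dictionary (MT5)--(MT6)); and starting from $\mathcal{M}_0$, forming $M$, and reconstructing the planar module category recovers the composition rules (C1)--(C3) because those rules are forced by Frobenius reciprocity (Remark~\ref{main idea remark endo,domain,range for module}) once one knows the endomorphism algebras and the action of $1\lhd e_n$.

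Second, I would upgrade this to a statement about equivalence classes. On the left, a $*$-isomorphism of traceless Markov towers as standard $A$-modules is a compatible family of $*$-isomorphisms $M_i\to M_i'$ fixing $A_{0,i}$ and intertwining the $E_i$ and the Jones projections; such a family induces a strict unitary $\mathcal{A}_0$-module equivalence $\mathcal{M}_0\to\mathcal{M}_0'$ sending $[0]$ to $[0]$, and conversely any unitary $\mathcal{A}_0$-module equivalence between planar module categories that preserves the base object restricts to such a family on endomorphism algebras. On the right, Cauchy completion: by the construction in \S\ref{cat M to M_0}, a Cauchy-complete indecomposable semisimple ${\rm C}^*$ $\mathcal{A}$-module category $\mathcal{M}$ with a chosen simple $Z=Z\lhd 1^+$ yields a planar $\mathcal{A}_0$-module category $\mathcal{M}_0$ (full subcategory on the objects $[n]=Z\lhd[1,?]^{\cdots}$), and $\widehat{\mathcal{M}_0}\simeq\mathcal{M}$ as $\widehat{\mathcal{A}_0}$-modules sending base object to base object (the remark after that construction); indecomposability of $\mathcal{M}$ is precisely what guarantees the objects $[n]$ generate $\mathcal{M}$ under the module action, so the Cauchy completion of $\mathcal{M}_0$ is all of $\mathcal{M}$. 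Combining these, the composite left-to-right map is: Markov tower $\rightsquigarrow$ planar module category $\mathcal{M}_0$ $\rightsquigarrow$ its Cauchy completion $\widehat{\mathcal{M}_0}$ together with base object $[0]$; and this is a bijection on equivalence classes because each arrow is, by the two paragraphs of checks just described together with the identification of $\widehat{\mathcal{A}_0}$ with $\mathcal{A}$ from the Folklore Theorem.

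Third, the final sentence of the statement (tracial Markov towers correspond to pivotal module categories) is handled separately but cheaply: the condition that $\bigcup_n M_n$ carries a faithful normal trace with the $E_i$ trace-preserving translates, under the dictionary, into the left and right module traces on $\mathcal{M}_0$ being faithful, normal, tracial, and equal --- i.e.\ into $\mathcal{M}_0$ being a pivotal module category in the sense parallel to Definition~\ref{pivotal planar tensor category}. One direction uses that the composite of the $E_i$ is a trace (as noted at the end of \S\ref{A_0 to std A}), and the other direction defines the trace on the tower from the pivotal trace on $\mathcal{M}_0$, checking compatibility via the same $\ev/\coev$ manipulations (the analog of Proposition~\ref{A_0 is pivotal from A}).

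\textbf{Main obstacle.} The genuinely laborious step is the full verification that Definition~\ref{Def:planar module category} yields a \emph{module category} --- i.e.\ that the module action $f\lhd x:=(f\lhd 1)\circ(1\lhd x)$ is well-defined, associative/strict, and functorial --- together with the matching check on morphisms that a tower $*$-isomorphism and a base-point-preserving unitary module equivalence carry the same data. The paper defers all of these ``to the reader,'' asserting they run parallel to Propositions~\ref{tensor product def prop}, \ref{strictness prop}, \ref{circ tensor 1} and the functoriality proposition in the tensor-category case. Thus the real content is a string-diagram bookkeeping exercise: one must redo each of those diagrammatic calculations with the extra blue ``module'' strand on the left (the dictionary (MT1)--(MT7)), invoking the module analogs of Lemma~\ref{lemma 3} and Lemma~\ref{lemma 4} flagged in the remark after the composition pictures. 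I expect no conceptual surprise here --- the standard condition $[M_i,A_{k,l}]=0$ plays exactly the role that the standard condition played for $\lambda$-lattices, and the pull-down/Jones-relation identities are identical --- but it is where essentially all the work lives, so in a careful write-up I would either reproduce the key diagram equalities or state precisely which earlier lemmas transfer verbatim and why the leftmost strand never interferes.
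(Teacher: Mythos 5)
Your proposal follows essentially the same route as the paper: the paper likewise factors the correspondence through the planar $\mathcal{A}_0$-module category (\S\ref{MT to planar mod cat}, \S\ref{cat M to M_0}, \S\ref{M_0 to MT M}), defers the well-definedness, strictness, and functoriality of $f\lhd x=(f\lhd 1)\circ(1\lhd x)$ to the string-diagram arguments parallel to Propositions~\ref{tensor product def prop}, \ref{strictness prop}, \ref{circ tensor 1}, and then passes to Cauchy completions to match equivalence classes. Your only deviation is the third paragraph on the tracial/pivotal refinement, which is not part of the statement as quoted (the paper treats it separately in \S\ref{Tracial Markov tower}), but this does not affect correctness.
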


\begin{corollary}\label{corollary: TLJ module}
Any Markov tower $M$ with modulus $d$ and $\dim(M_0)=1$ is naturally a standard right $\mathrm{TLJ}(d)-$module, where $\mathrm{TLJ}(d)$ is a Temperley-Lieb-Jones standard $\lambda$-lattice as in Example \ref{TLJ lambda lattice}, which corresponds to an indecomposable semisimple ${\rm C}^*$ right $\mathcal{TLJ}(d)-$module category with a simple base object.
\end{corollary}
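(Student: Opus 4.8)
\textbf{Proof proposal for Corollary \ref{corollary: TLJ module}.}

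The plan is to exhibit the two claims as immediate consequences of the preceding structural results, so the work is really in checking that the Temperley-Lieb-Jones standard $\lambda$-lattice sits inside an arbitrary Markov tower in the canonical way. First I would recall from Example \ref{TLJ lambda lattice} that $\mathrm{TLJ}(d)$ is the standard $\lambda$-lattice with $A_{i,i}=A_{i,i+1}=\mathbb C$ and $A_{i,j}=\langle e_{i+1},\dots,e_{j-1}\rangle$ for $j-i\ge 2$, generated purely by Jones projections with modulus $d$. Given a traceless Markov tower $M=(M_n,E_n,e_{n+1})_{n\ge 0}$ with $\dim(M_0)=1$ and modulus $d$, I would define $A_{0,n}\subset M_n$ to be the subalgebra generated by $1$ together with the Jones projections $e_1,\dots,e_{n-1}$ (which lie in $M_n$ by the definition of a Markov tower, since $e_k\in M_{k+1}\subseteq M_n$ for $k\le n-1$), and then $A_{i,j}:=A_{0,j}\cap\{e_1,\dots,e_{i-1}\}'$ exactly as in the Proposition following Remark \ref{lambda lattice properties}. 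The verification that this is a copy of $\mathrm{TLJ}(d)$ and that $M$ with its given conditional expectations is a standard right $A$-module amounts to: (a) the commuting-square axioms for the $E_n$ restricted to the $A_{0,n}$ follow from (M2), (M3) and property (1) of Proposition \ref{Markov tower properties}; (b) the standard condition $[M_i,A_{k,l}]=0$ for $i\le k\le l$ follows from property (1) of the same proposition, since $A_{k,l}$ is generated by $e_{k+1},\dots,e_{l-1}$ and each such $e_m$ with $m\ge i+1$ commutes with $M_i$; (c) $E_i|_{A_{0,i}}=E^r_{0,i}$ holds because $E^r_{0,i}$ on $\mathrm{TLJ}(d)$ is by construction the composite of Jones-relation manipulations that $E_i$ also satisfies by (M2)--(M3). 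The key structural input is that the TLJ relations (TLJ1)--(TLJ3) in Definition \ref{def traceless MT} for $M$ are literally the relations defining $\mathrm{TLJ}(d)$, so the assignment $e_n\mapsto e_n$ is a well-defined $*$-homomorphism $\mathrm{TLJ}(d)_{0,n}\to M_n$, and it is unital and injective because $M_0=\mathbb C$ forces faithfulness of the trace-like expectations on the TLJ part.

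Once $M$ is exhibited as a standard right $\mathrm{TLJ}(d)$-module with $\dim(M_0)=1$, the first half of the corollary is done, and the second half is an immediate application of Theorem \ref{thmx:ModuleEquivalence} (proved in \S\ref{M_0 to MT M} and summarized in the theorem just preceding this corollary): the bijective correspondence there sends the traceless Markov tower $M$, viewed as a standard $\mathrm{TLJ}(d)$-module, to a pair $(\mathcal M,Z)$ with $\mathcal M$ an indecomposable semisimple $\mathrm C^*$ right $\mathcal{TLJ}(d)$-module category and $Z=Z\lhd 1^+$ a simple base object. Indecomposability of $\mathcal M$ corresponds on the Markov-tower side precisely to the connectedness that is automatic for a single Markov tower starting from $M_0=\mathbb C$ (every $M_n$ is reached from $M_0$ by the tower inclusions and the base object generates under $\lhd[1,\pm]$), so no extra hypothesis is needed.

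The main obstacle — really the only non-formal point — is checking that the subalgebras $A_{i,j}$ defined inside $M$ genuinely satisfy \emph{all} the axioms of a (standard) $\lambda$-lattice, in particular the dimension/Markov conditions (c1), (c2) of Definition \ref{standard lambda lattice}, rather than just the algebraic TLJ relations. For this I would lean on the Proposition after Remark \ref{lambda lattice properties}, which shows that a $\lambda$-sequence of two rows extends canonically to a full $\lambda$-lattice by the commutant construction $A_{i,j}=A_{i-1,j}\cap\{e_{i-1}\}'$; it remains to check that the top two rows, namely $A_{0,j}=\langle 1,e_1,\dots,e_{j-1}\rangle$ and $A_{1,j}=A_{0,j}\cap\{e_1\}'$ with the conditional expectations $E_j|_{A_{0,j}}$ and $E^l_{0,j}$, form such a $\lambda$-sequence — and here the Markov conditions reduce, via Proposition \ref{Markov tower properties}(3)--(4) and the pull-down axiom (M4), to the single identity $E_{j+1}(e_j)=d^{-2}\cdot 1$, which is (M3). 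Thus the corollary follows by assembling Example \ref{TLJ lambda lattice}, the commutant Proposition, Proposition \ref{Markov tower properties}, and Theorem \ref{thmx:ModuleEquivalence}, with essentially no new computation.
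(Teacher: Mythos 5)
The paper states this corollary without proof, as an immediate consequence of the preceding theorem, and your argument is exactly the intended one: the Jones projections of $M$ generate a copy of the $\mathrm{TLJ}(d)$ standard $\lambda$-lattice inside the tower, the three conditions of Definition \ref{Def:Markov tower as standard module} follow from (M2), (M3) and Proposition \ref{Markov tower properties}(1), and Theorem \ref{thmx:ModuleEquivalence} then produces the module category. Your proposal is correct and matches the paper's (implicit) approach.
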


\begin{remark}
The tracial case will be discussed in \S\ref{Tracial Markov tower}.
\end{remark}

\section{Markov lattices as standard bimodule over two standard $\lambda$-lattices and bimodule categories}
\label{Cpt 3}
In this chapter, we extend the discussion into the bimodule case. We give the notion Markov lattices and Markov lattices as bimodule over two standard $\lambda$-lattices, by using the similar method, which correspond to bimodule categories.

\subsection{Markov lattice and basic properties}
\begin{definition}[Markov lattice]
A tuple $M=(M_{i,j},E^{M,l}_{i,j},E^{M,r}_{i,j},e_i,f_j)_{i,j\ge 0}$ is called a Markov lattice if the following conditions hold.
$$\begin{matrix}
M_{i+1,j} & {\subset} & M_{i+1,j+1}\\
\cup & & \cup \\
M_{i,j} & {\subset} & M_{i,j+1} 
\end{matrix}$$

\begin{compactenum}[(a)]
\item $M_{i,j}\subset M_{i,j+1}$ and $M_{i,j}\subset M_{i+1,j}$ are unital inclusions.
\item $M_{-,j}=(M_{i,j},E^{M,l}_{i,j},e_i)_{i\ge 0}$ are Markov towers with the same modulus $d_0$ and $e_i\in M_{i+1,j}$ for all $j$; $M_{i,-}=(M_{i,j},E^{M,r}_{i,j},f_j)_{j\ge 0}$ are Markov towers with the same modulus $d_1$ and $f_j\in M_{i,j+1}$ for all $i$. We call $M$ of modulus $(d_0,d_1)$.
\item The commuting square condition:
$$\xymatrix@+0.5pc{
M_{i+1,j} \ar[d]_{E^{M,l}_{i+1,j}} &M_{i+1,j+1}  \ar[l]_{E^{M,r}_{i+1,j+1}}\ar[d]^{E^{M,l}_{i+1,j+1}}\\
M_{i,j} & M_{i,j+1}\ar[l]^{E^{M,r}_{i,j+1}}
}$$
is a commuting square, i.e., $E^{M,r}_{i,j+1}\circ E^{M,l}_{i,j}=E^{M,l}_{i,j+1}\circ E^{M,r}_{i+1,j+1}$.
\end{compactenum}
\end{definition}

Here are some properties of Markov lattice.

\begin{proposition} \label{Markov lattice prop 1} Let $M=(M_{i,j},E^{M,l}_{i,j},E^{M,r}_{i,j},e_i,f_j)_{i,j\ge 0}$ be a Markov lattice.
\begin{compactenum}[(1)]
\item $E^{M,r}_{i+1,j+1}(e_i)=e_i$ and $E^{M,l}_{i+1,j+1}(f_j)=f_j$ for each $i,j=1,2,\cdots$.
\item $[f_j,e_i]=0$ for each $i,j=1,2,3,\cdots$.
\end{compactenum}
\end{proposition}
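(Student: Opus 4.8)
The plan is to prove both statements by the same device already used repeatedly in this paper: to identify an element of $M_{i,j}$ it suffices to check an equation after multiplying by an appropriate Jones projection, using the injectivity of the maps $x\mapsto xe$ (Proposition \ref{Markov tower properties}(2)) coming from the two commuting Markov tower structures. For part (1), consider $E^{M,r}_{i+1,j+1}(e_i)$. Since $e_i\in M_{i+1,j}\subset M_{i+1,j+1}$, and the row $M_{i+1,-}=(M_{i+1,j},E^{M,r}_{i+1,j},f_j)_{j\ge 0}$ is a Markov tower, I would first observe that $e_i$ commutes with $f_j$ once we know part (2); but to avoid circularity I instead argue directly: the element $e_i$ lies in $M_{i+1,j}$, and $E^{M,r}_{i+1,j+1}$ is by axiom (b) the conditional expectation $M_{i+1,j+1}\to M_{i+1,j}$ of the row Markov tower, so it restricts to the identity on $M_{i+1,j}$. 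Hence $E^{M,r}_{i+1,j+1}(e_i)=e_i$ immediately from property (a) of a conditional expectation. Symmetrically $E^{M,l}_{i+1,j+1}(f_j)=f_j$ because $f_j\in M_{i,j+1}\subset M_{i+1,j+1}$ and $E^{M,l}_{i+1,j+1}$ restricts to the identity on $M_{i,j+1}$. So part (1) is essentially just unwinding which inclusions contain $e_i$ and $f_j$.

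For part (2), I would use the commuting square condition (c) together with part (1). Fix $i,j\ge 1$ and work inside $M_{i+1,j+1}$. The idea is to compute $E^{M,r}_{i,j+1}\bigl(E^{M,l}_{i,j}(f_j e_i)\bigr)$ in two ways. Using that $e_i\in M_{i+1,j}$ and $f_j\in M_{i,j+1}$: apply $E^{M,l}_{i,j}:M_{i,j}\to\dots$ — wait, we must be careful about domains, so the cleaner route is the following. Since $e_i\in M_{i+1,j}$ and $[M_{i,j'},e_i]$ behaves well, I would instead multiply the desired identity $f_je_i=e_if_j$ on the right by a Jones projection and use pull-down. Concretely: by Proposition \ref{Markov tower properties}(1) applied to the column Markov tower $M_{-,j+1}$, the element $f_j\in M_{i,j+1}$ commutes with $e_k$ for $k\ge i+1$; and applied to the row Markov tower $M_{i+1,-}$, the element $e_i\in M_{i+1,j}$ commutes with $f_k$ for $k\ge j+1$. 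Neither of these directly gives $[e_i,f_j]=0$ since here the indices are critical, so the real argument must use the commuting square. I would apply $E^{M,r}_{i+1,j+1}$ to the product $e_if_j\in M_{i+1,j+1}$: since $e_i\in M_{i+1,j}$ and $E^{M,r}_{i+1,j+1}$ is $M_{i+1,j}$-bilinear, $E^{M,r}_{i+1,j+1}(e_if_j)=e_iE^{M,r}_{i+1,j+1}(f_j)$. Now $E^{M,r}_{i+1,j+1}(f_j)=d_1^{-2}\cdot 1$ by (M3) for the row Markov tower (this is the Markov condition $E_{n+1}(e_n)=d^{-2}1$, with $f_j$ playing the role of the Jones projection $e_j$ of the row). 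So $E^{M,r}_{i+1,j+1}(e_if_j)=d_1^{-2}e_i$. Similarly $E^{M,r}_{i+1,j+1}(f_je_i)=E^{M,r}_{i+1,j+1}(f_j)e_i=d_1^{-2}e_i$ since again $e_i\in M_{i+1,j}$. That shows $E^{M,r}_{i+1,j+1}(e_if_j-f_je_i)=0$, but faithfulness of $E^{M,r}_{i+1,j+1}$ only kills positive elements, not arbitrary ones, so this alone is not enough — I would need to pair it with the analogous computation applying $E^{M,l}_{i+1,j+1}$, which gives $E^{M,l}_{i+1,j+1}(e_if_j)=d_0^{-2}f_j=E^{M,l}_{i+1,j+1}(f_je_i)$, and then feed both into the commuting square identity $E^{M,r}_{i,j+1}\circ E^{M,l}_{i+1,j+1}=E^{M,l}_{i,j+1}\circ E^{M,r}_{i+1,j+1}$ to pin down $e_if_j$ inside a small enough corner. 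The cleanest finish is likely: $e_if_j$ and $f_je_i$ both lie in $e_iM_{i+1,j+1}f_j$ (using $e_i^2=e_i$, $f_j^2=f_j$), and on that corner the conditional expectations together with the pull-down relations $M_{i+2,j+1}e_i=M_{i+1,j+1}e_i$ force uniqueness.

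The main obstacle I anticipate is precisely this last point: getting a genuine uniqueness/injectivity statement rather than just "both have the same image under $E$." Faithfulness of a conditional expectation gives $E(x^*x)=0\Rightarrow x=0$, not $E(y)=0\Rightarrow y=0$ for general $y$, so I cannot directly conclude $e_if_j=f_je_i$ from equality of expectations. I expect the resolution is to set $z:=e_if_j-f_je_i$, note $z^*=f_je_i-e_if_j=-z$, and then one has $z^*z=(f_je_i-e_if_j)(e_if_j-f_je_i)$; expanding and using $e_i^2=e_i$, $f_j^2=f_j$, together with the two computations $E^{M,r}_{i+1,j+1}(e_if_je_i)=e_iE^{M,r}_{i+1,j+1}(f_j)e_i=d_1^{-2}e_i$ and its $f$-analog, one should be able to show $E^{M,r}_{i+1,j+1}(z^*z)=0$ or $E^{M,l}_{i+1,j+1}(z^*z)=0$, and faithfulness then finishes. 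Working out which of the two expectations kills $z^*z$ — or whether one needs to first apply one and then the other via the commuting square — is the computational heart of the argument, but it is exactly the kind of ``multiply by $e$, pull down, apply $E$'' manipulation that the paper has set up machinery for, so I would present it as a short diagram/algebra computation citing Proposition \ref{Markov tower properties} and the commuting square axiom.
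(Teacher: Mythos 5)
Your part (1) is exactly the paper's argument and is fine. Part (2), however, has a genuine gap, and the irony is that you walk right past the correct one-line proof. You write that Proposition \ref{Markov tower properties}(1) applied to the column tower $M_{-,j+1}$ only gives $[f_j,e_k]=0$ for $k\ge i+1$ because you place $f_j$ at level $i$ of that tower. But the definition of a Markov lattice says $f_j\in M_{i,j+1}$ for \emph{all} $i$, in particular $f_j\in M_{0,j+1}$, i.e.\ at level $0$ of the column Markov tower $M_{-,j+1}$. Proposition \ref{Markov tower properties}(1) with $n=0$ then gives $[f_j,e_k]=0$ for every $k\ge 1$, hence $[f_j,e_i]=0$. (Equivalently, $e_i\in M_{i+1,0}$ sits at level $0$ of the row tower $M_{i+1,-}$.) This is the paper's entire proof of (2); the "critical indices" obstruction you invoke to reject it is not there.

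Having rejected the direct route, your replacement argument is not completed: you correctly observe that equality of conditional expectations does not imply equality of elements, you set $z=e_if_j-f_je_i$, and then you leave the decisive computation ("which of the two expectations kills $z^*z$") as an open step. As written this is a gap, not a proof. For what it is worth, the fallback can be salvaged without any expectation of $z^*z$ at all: axiom (M2) for the row Markov tower $M_{i+1,-}$ gives $f_j x f_j=E^{M,r}_{i+1,j}(x)f_j$ for $x\in M_{i+1,j}$, and since $e_i\in M_{i+1,j-1}$ one gets $f_je_if_j=e_if_j$; as $f_je_if_j$ is self-adjoint, taking adjoints yields $e_if_j=f_je_i$. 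Either fix works, but you must actually carry one of them out.
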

\begin{proof} 
\item[(1)] Note that $e_i\in M_{i+1,j}\subset M_{i+1,j+1}$ and $E^r_{i+1,j+1}:M_{i+1,j+1}\to M_{i+1,j}$ is a conditional expectation, we have $E^r_{i+1,j+1}(e_i)=e_i$. Similarly, $E^{M,l}_{i+1,j+1}(f_j)=f_j$.

\item[(2)] By Proposition \ref{Markov tower properties}(1). 
\end{proof}

\begin{remark}
If there is a faithful normal trace on $\bigcup_{i,j\ge 0}M_{i,j}$ and $E^{M,r}_{i,j},E^{M,l}_{i,j}$ are the canonical faithful normal trace-preserving conditional expectations for $i,j=0,1,2,\cdots$, then $M$ is called a \textbf{tracial Markov lattice}.
\end{remark}

In the rest of this Chapter, we only consider the traceless Markov lattice with $\dim (M_{0,0})=1$ unless stated.

\subsection{Markov lattice as a standard bimodule over two standard $\lambda$-lattices}

\begin{definition}[Markov lattice as a standard bimodule over two standard $\lambda$-lattices]\label{def ML std bimod}
$$\begin{matrix}
\cup & & \cup & & \cup & & \cup & & \cup & & \cup & & \\
A_{3,1} & \subset & A_{3,0} & \subset & M_{3,0} & \subset & M_{3,1} & \subset & M_{3,2} & \subset & M_{3,3} & \subset \\
\cup & & \cup & & \cup & & \cup & & \cup & & \cup & & \\
A_{2,1} & \subset & A_{2,0} & \subset & M_{2,0} & \subset & M_{2,1} & \subset & M_{2,2} & \subset & M_{2,3} & \subset \\
\cup & & \cup & & \cup & & \cup & & \cup & & \cup & & \\
A_{1,1} & \subset & A_{1,0} & \subset & M_{1,0} & \subset & M_{1,1} & \subset & M_{1,2} & \subset & M_{1,3} & \subset \\
 & & \cup & & \cup & & \cup & & \cup & & \cup & & \\
& & A_{0,0} & \subset & M_{0,0} & \subset & M_{0,1} & \subset & M_{0,2} & \subset & M_{0,3} & \subset \\
 & & & & \cup & & \cup & & \cup & & \cup & & \\
& & & & B_{0,0} & \subset & B_{0,1} & \subset & B_{0,2} & \subset & B_{0,3} & \subset\\
 & & & & & & \cup & & \cup & & \cup & & \\
& & & & & & B_{1,1} & \subset & B_{1,2} & \subset & B_{1,3} & \subset\\
\end{matrix}$$

Let $A^{\text{op}}=(A_{i,j})_{0\le j\le i<\infty}$ $B=(B_{i,j})_{0\le i\le j<\infty}$ be two standard $\lambda$-lattices with Jones projection $e_i\in A_{i+1,j}$, $f_j\in B_{i,j+1}$ respectively and compatible conditional expectations.
Here, $A$ and $M$ share the same Jones projections $e_i$; $B$ and $M$ share the same Jones projections $f_j$. 
(Warning: here we use the opposite $\lambda$-lattice $A^{\text{op}}$, see Definition \ref{Def:opposite Aop})

Let $M=(M_{i,j}, e_i,f_j)_{i,j\ge 0}$ be a Markov lattice with conditional expectation $E^{M,r},E^{M,l}$. We call a Markov lattice $M$ a standard $A-B$ bimodule where the left action is the opposite action, if it satisfies the following three conditions.
\begin{compactenum}[(a)]
\item $A_{i,0}\subset M_{i,0}$, $B_{0,j}\subset M_{0,j}$ are unital inclusions, $i,j=0,1,2,\cdots$.
\item $E^{M,l}_{i,0}|_{A_{i,0}}=E^{A,l}_{i,0}$, $E^{M,r}_{0,j}|_{B_{0,j}}=E^{B,r}_{0,j}$ $i=1,2,\cdots$.
\item (standard condition) $[M_{i,j},A_{p,q}]=0$ for $i\le q\le p$; $[M_{i,j},B_{k,l}]=0$, for $j\le k\le l$.
\end{compactenum}
\end{definition}

\begin{remark}\label{everything commute, for bimodule proof}
The standard condition implies that $[A_{p,q},B_{k,l}]=0$ for all $q\le p,k\le l$ since $A_{p,q}\subset A_{p,0}\subset M_{p,0}$ and $B_{k,l}\subset B_{0,l}\subset M_{0,l}$. Moreover, $E^{M,r}_{i,j}|_{A_{k,l}}=\id$, $E^{M,l}_{i,j}|_{B_{k,l}}=\id$. In particular, we have $E^{M,r}_{i,j}(e_k)=e_k$, $E^{M,l}_{i,j}(f_l)=f_l$ for Jones projections.
\end{remark}

\subsection{String diagram explanation}
We now provide the string diagram explanation of the element, conditional expectation, Jones projection and their relations in a Markov lattice with the same spirit in \S\ref{TLJ diagram explanation, module}.
\begin{compactenum}[({ML}1)]
\item Element $x\in M_{i,j}$:
\[
\begin{tikzpicture}[baseline=-.1cm]

\draw[thick,blue] (0,-.7) -- (0,.7);
\draw (.3,-.7) -- (.3,.7);
\draw[red] (-.3,-.7) -- (-.3,.7);
\nbox{unshaded}{(0,0)}{.3}{.15}{.15}{$x$};

\node at (-.3,-.9) {\tiny{$i$}};
\node at (.3,-.9) {\tiny{$j$}};
\end{tikzpicture}
=
\begin{tikzpicture}[baseline=-.1cm]
\filldraw[gray1] (.3,-.7) rectangle (.6,.7);
\filldraw[gray1] (.9,-.7) rectangle (1.2,.7);

\filldraw[gray2] (0,-.7) rectangle (-.3,.7);
\filldraw[gray2] (-.6,-.7) rectangle (-.9,.7);
\filldraw[gray3] (-.3,-.7) rectangle (-.6,.7);
\filldraw[gray3] (-.9,-.7) rectangle (-1.2,.7);

\draw[thick,blue] (0,-.7) -- (0,.7);

\draw (.3,-.7) -- (.3,.7);
\draw (.6,-.7) -- (.6,.7);
\draw (.9,-.7) -- (.9,.7);
\draw (1.2,-.7) -- (1.2,.7);

\draw[red] (-.3,-.7) -- (-.3,.7);
\draw[red] (-.6,-.7) -- (-.6,.7);
\draw[red] (-.9,-.7) -- (-.9,.7);
\draw[red] (-1.2,-.7) -- (-1.2,.7);

\nbox{unshaded}{(0,0)}{.3}{1.05}{1.05}{$x$};

\node at (-.6,-.9) {\tiny{$i$}};
\node at (.6,-.9) {\tiny{$j$}};
\end{tikzpicture}
\]
\item Horizontal inclusion $x\in M_{i,j}\subset M_{i,j+1}$ and $x\in A_{i,0}\subset M_{i,j}$:
\[
\begin{tikzpicture}[baseline=-.1cm]

\draw[thick,blue] (0,-.7) -- (0,.7);
\draw (.3,-.7) -- (.3,.7);
\draw[red] (-.3,-.7) -- (-.3,.7);

\nbox{unshaded,dashed}{(0,0)}{.35}{.15}{.4}{};
\draw (.6,-.7) -- (.6,.7);
\nbox{unshaded}{(0,0)}{.3}{.15}{.15}{$x$};

\node at (-.3,-.9) {\tiny{$i$}};
\node at (.3,-.9) {\tiny{$j$}};
\node at (.6,-.9) {\tiny{$1$}};
\end{tikzpicture}
\qquad\qquad
\begin{tikzpicture}[baseline=-.1cm]
\draw[red] (-.5,-.7) -- (-.5,.7);

\nbox{unshaded,dashed}{(-.5,0)}{.35}{0}{.7}{};
\draw[thick,blue] (0,-.7) -- (0,.7);
\draw (.3,-.7) -- (.3,.7);

\nbox{unshaded}{(-.5,0)}{.3}{0}{0}{$x$};

\node at (-.5,-.9) {\tiny{$i$}};
\node at (.3,-.9) {\tiny{$j$}};
\end{tikzpicture}
\]
\item Vertical inclusion $x\in M_{i,j}\subset M_{i+1,j}$ and $x\in B_{0,j}\subset M_{i,j}$:
\[
\begin{tikzpicture}[baseline=-.1cm]
\draw[thick,blue] (0,-.7) -- (0,.7);
\draw (.3,-.7) -- (.3,.7);
\draw[red] (-.3,-.7) -- (-.3,.7);

\nbox{unshaded,dashed}{(0,0)}{.35}{.4}{.15}{};
\draw[red] (-.6,-.7) -- (-.6,.7);
\nbox{unshaded}{(0,0)}{.3}{.15}{.15}{$x$};

\node at (-.3,-.9) {\tiny{$i$}};
\node at (-.6,-.9) {\tiny{$1$}};
\node at (.3,-.9) {\tiny{$j$}};
\end{tikzpicture}
\qquad\qquad
\begin{tikzpicture}[baseline=-.1cm]
\draw (.5,-.7) -- (.5,.7);

\nbox{unshaded,dashed}{(.5,0)}{.35}{.7}{0}{};
\draw[thick,blue] (0,-.7) -- (0,.7);
\draw[red] (-.3,-.7) -- (-.3,.7);
\nbox{unshaded}{(.5,0)}{.3}{0}{0}{$x$};

\node at (-.3,-.9) {\tiny{$i$}};
\node at (.5,-.9) {\tiny{$j$}};
\end{tikzpicture}
\]
\item Horizontal conditional expectation $E^{M,r}_{i,j}:M_{i,j}\to M_{i,j-1}$ and $E^{M,r}_{i,j}|_{B_{0,j}}=E^{B,r}_{0,j}$:
\[
E^{M,r}_{i,j}(x)=d_1^{-1}
\begin{tikzpicture}[baseline=-.1cm]
\draw[thick,blue] (0,-.7) -- (0,.7);
\draw (.3,-.7) -- (.3,.7);
\draw (.6,-.5) -- (.6,.5);
\draw[red] (-.3,-.7) -- (-.3,.7);

\nbox{unshaded}{(0,0)}{.3}{.15}{.45}{$x$};

\draw[orange,thick] (.6,.5) arc (180:0:.15);
\draw[orange,thick] (.6,-.5) arc (-180:0:.15);
\draw[orange,thick] (.9,-.5) -- (.9,.5);

\node at (-.3,-.9) {\tiny{$i$}};
\node at (.3,-.9) {\tiny{$j\sm 1$}};
\node at (.6,.8) {\tiny{$1$}};
\end{tikzpicture}
\ ,x\in M_{i,j}
\qquad
E^{M,r}_{i,j}(x)=E^{B,r}_{0,j}(x)=d_1^{-1}
\begin{tikzpicture}[baseline=-.1cm]

\draw[thick,blue] (0,-.7) -- (0,.7);
\draw (.3,-.7) -- (.3,.7);
\draw (.6,-.5) -- (.6,.5);
\draw[red] (-.3,-.7) -- (-.3,.7);

\nbox{unshaded}{(.45,0)}{.3}{0}{0}{$x$};

\draw[orange,thick] (.6,.5) arc (180:0:.15);
\draw[orange,thick] (.6,-.5) arc (-180:0:.15);
\draw[orange,thick] (.9,-.5) -- (.9,.5);

\node at (-.3,-.9) {\tiny{$i$}};
\node at (.3,-.9) {\tiny{$j\sm 1$}};
\node at (.6,.8) {\tiny{$1$}};
\end{tikzpicture}
\ ,x\in B_{0,j}
\]
\item Vertical conditional expectation $E^{M,l}_{i,j}:M_{i,j}\to M_{i-1,j}$ and $E^{M,l}_{i,j}|_{A_{i,0}}=E^{A,l}_{i,0}$:

\[
E^{M,l}_{i,j}(x)=d_0^{-1}
\begin{tikzpicture}[baseline=-.1cm]
\draw[thick,blue] (0,-.7) -- (0,.7);
\draw (.3,-.7) -- (.3,.7);
\draw[red] (-.6,-.5) -- (-.6,.5);
\draw[red] (-.3,-.7) -- (-.3,.7);

\nbox{unshaded}{(0,0)}{.3}{.45}{.15}{$x$};

\draw[green1,thick] (-.6,.5) arc (0:180:.15);
\draw[green1,thick] (-.6,-.5) arc (0:-180:.15);
\draw[green1,thick] (-.9,-.5) -- (-.9,.5);

\node at (-.3,-.9) {\tiny{$i\sm 1$}};
\node at (.3,-.9) {\tiny{$j$}};
\node at (-.6,.8) {\tiny{$1$}};
\end{tikzpicture}
\ ,x\in M_{i,j}
\qquad
E^{M,l}_{i,j}(x)=E^{A,l}_{i,0}(x)=d_0^{-1}
\begin{tikzpicture}[baseline=-.1cm]
\draw[thick,blue] (0,-.7) -- (0,.7);
\draw (.3,-.7) -- (.3,.7);
\draw[red] (-.6,-.5) -- (-.6,.5);
\draw[red] (-.3,-.7) -- (-.3,.7);

\nbox{unshaded}{(-.45,0)}{.3}{0}{0}{$x$};

\draw[green1,thick] (-.6,.5) arc (0:180:.15);
\draw[green1,thick] (-.6,-.5) arc (0:-180:.15);
\draw[green1,thick] (-.9,-.5) -- (-.9,.5);

\node at (-.3,-.9) {\tiny{$i\sm 1$}};
\node at (.3,-.9) {\tiny{$j$}};
\node at (-.6,.8) {\tiny{$1$}};
\end{tikzpicture}
,x\in A_{i,0}
\]
\item Commuting square of conditional expectation $E^{M,r}_{i,j+1}\circ E^{M,l}_{i,j}=E^{M,l}_{i,j+1}\circ E^{M,r}_{i+1,j+1}:M_{i+1,j+1}\to M_{i,j}$, $x\in M_{i+1,j+1}$:
\[
E^{M,r}_{i,j+1}\circ E^{M,l}_{i,j}(x)=E^{M,l}_{i,j+1}\circ E^{M,r}_{i+1,j+1}(x)=d_0^{-1}d_1^{-1}
\begin{tikzpicture}[baseline=-.1cm]
\draw[thick,blue] (0,-.7) -- (0,.7);
\draw (.3,-.7) -- (.3,.7);
\draw (.6,-.5) -- (.6,.5);
\draw[red] (-.3,-.7) -- (-.3,.7);
\draw[red] (-.6,-.5) -- (-.6,.5);

\nbox{unshaded}{(0,0)}{.3}{.45}{.45}{$x$};

\draw[orange,thick] (.6,.5) arc (180:0:.15);
\draw[orange,thick] (.6,-.5) arc (-180:0:.15);
\draw[orange,thick] (.9,-.5) -- (.9,.5);

\draw[green1,thick] (-.6,.5) arc (0:180:.15);
\draw[green1,thick] (-.6,-.5) arc (0:-180:.15);
\draw[green1,thick] (-.9,-.5) -- (-.9,.5);

\node at (-.3,-.9) {\tiny{$i$}};
\node at (.3,-.9) {\tiny{$j$}};
\node at (.6,-.9) {\tiny{$1$}};
\node at (-.6,-.9) {\tiny{$1$}};
\end{tikzpicture}
\]
\item Horizontal Jones projections $f_j\in M_{i,j+1}$ and vertical Jones projections $e_i\in M_{i+1,j}$: \small
\[\hspace*{-1.8cm} f_{2j+1}= d_1^{-1}
\begin{tikzpicture}[baseline=.4cm]
\filldraw[shaded] (.6,1) arc (-180:0:.3cm);
\filldraw[shaded] (.6,0) arc (180:0:.3cm);

\draw[blue,thick] (0,0) -- (0,1);
\draw (.3,0) -- (.3,1);
\draw[red] (-.3,0) -- (-.3,1);

\draw[dashed] (-.6,0) rectangle (1.5,1);
\node at (.3,-.2) {\tiny{$2j$}};
\node at (-.3,-.2) {\tiny{$i$}};
\end{tikzpicture}
\quad
f_{2j+2} = d_1^{-1}
\begin{tikzpicture}[baseline=.4cm]
\fill[shaded] (0.3,0) rectangle (1.5,1);
\filldraw[unshaded] (.6,1) arc (-180:0:.3cm);
\filldraw[unshaded] (.6,0) arc (180:0:.3cm);

\draw[blue,thick] (0,0) -- (0,1);
\draw (.3,0) -- (.3,1);
\draw[red] (-.3,0) -- (-.3,1);

\draw[dashed] (-.6,0) rectangle (1.5,1);
\node at (.3,-.2) {\tiny{$2j\+1$}};
\node at (-.3,-.2) {\tiny{$i$}};
\end{tikzpicture}
\quad
e_{2i+1}= d_0^{-1}
\begin{tikzpicture}[baseline=.4cm]
\fill[gray2] (-1.5,0) rectangle (-.3,1);
\filldraw[gray3] (-.6,1) arc (0:-180:.3cm);
\filldraw[gray3] (-.6,0) arc (0:180:.3cm);

\draw[red] (-.6,1) arc (0:-180:.3cm);
\draw[red] (-.6,0) arc (0:180:.3cm);

\draw[blue,thick] (0,0) -- (0,1);
\draw (.3,0) -- (.3,1);
\draw[red] (-.3,0) -- (-.3,1);

\draw[dashed] (-1.5,0) rectangle (.6,1);
\node at (-.3,-.2) {\tiny{$2i$}};
\node at (.3,-.2) {\tiny{$j$}};
\end{tikzpicture}
\quad
e_{2i+2} = d_0^{-1}
\begin{tikzpicture}[baseline=.4cm]
\fill[gray3] (-1.5,0) rectangle (-.3,1);
\filldraw[gray2] (-.6,1) arc (0:-180:.3cm);
\filldraw[gray2] (-.6,0) arc (0:180:.3cm);

\draw[red] (-.6,1) arc (0:-180:.3cm);
\draw[red] (-.6,0) arc (0:180:.3cm);

\draw[blue,thick] (0,0) -- (0,1);
\draw (.3,0) -- (.3,1);
\draw[red] (-.3,0) -- (-.3,1);

\draw[dashed] (-1.5,0) rectangle (.6,1);
\node at (-.3,-.2) {\tiny{$2i\+1$}};
\node at (.3,-.2) {\tiny{$j$}};
\end{tikzpicture}
\]\normalsize
\item Standard condition: 
\begin{compactenum}[$\bullet$]
\item $[M_{i,j},A_{p,q}]=0$ for $i\le q\le p$. For $g\in M_{i,j},x\in A_{p,q}$, regard them as elements in $M_{p,j}$, then $g x=x g$;
\item $[M_{i,j},B_{k,l}]=0$, for $j\le k\le l$. For $g\in M_{i,j},y\in B_{k,l}$, regard them as elements in $M_{i,l}$, then $g y=y g$:
\end{compactenum}

\[
\begin{tikzpicture}[baseline=.9cm]
\draw[thick,blue] (0,0) -- (0,2);
\draw[red] (-.3,0) -- (-.3,2);
\draw (.3,0) -- (.3,2);
\draw[red] (-.6,0) -- (-.6,2);
\draw[red] (-1.1,0) -- (-1.1,2);

\nbox{unshaded}{(0,.5)}{.3}{.15}{.15}{$g$};
\roundNbox{unshaded}{(-1.1,1.5)}{.3}{0.0}{0.0}{$x$};

\draw[dashed] (-1.6,1) -- (.6,1);
\draw[dashed] (-1.6,0) rectangle (.6,2);

\node at (-.3,-.2) {\tiny{$i$}};
\node at (.3,-.2) {\tiny{$j$}};
\node at (-.6,2.2) {\tiny{$q\sm i$}};
\node at (-1.1,-.2) {\tiny{$p\sm q$}};
\end{tikzpicture}
=
\begin{tikzpicture}[baseline=.9cm]
\draw[thick,blue] (0,0) -- (0,2);
\draw[red] (-.3,0) -- (-.3,2);
\draw (.3,0) -- (.3,2);
\draw[red] (-.6,0) -- (-.6,2);
\draw[red] (-1.1,0) -- (-1.1,2);

\nbox{unshaded}{(0,1.5)}{.3}{.15}{.15}{$g$};
\roundNbox{unshaded}{(-1.1,.5)}{.3}{0.0}{0.0}{$x$};

\draw[dashed] (-1.6,1) -- (.6,1);
\draw[dashed] (-1.6,0) rectangle (.6,2);

\node at (-.3,-.2) {\tiny{$i$}};
\node at (.3,-.2) {\tiny{$j$}};
\node at (-.6,2.2) {\tiny{$q\sm i$}};
\node at (-1.1,-.2) {\tiny{$p\sm q$}};
\end{tikzpicture}
\qquad\qquad
\begin{tikzpicture}[baseline=.9cm]
\draw[thick,blue] (0,0) -- (0,2);
\draw[red] (-.3,0) -- (-.3,2);
\draw (.3,0) -- (.3,2);
\draw (.6,0) -- (.6,2);
\draw (1.1,0) -- (1.1,2);

\nbox{unshaded}{(0,.5)}{.3}{.15}{.15}{$g$};
\roundNbox{unshaded}{(1.1,1.5)}{.3}{0.0}{0.0}{$y$};

\draw[dashed] (-.6,1) -- (1.6,1);
\draw[dashed] (-.6,0) rectangle (1.6,2);

\node at (-.3,-.2) {\tiny{$i$}};
\node at (.3,-.2) {\tiny{$j$}};
\node at (.6,2.2) {\tiny{$k\sm j$}};
\node at (1.1,-.2) {\tiny{$l\sm k$}};
\end{tikzpicture}
=
\begin{tikzpicture}[baseline=.9cm]
\draw[thick,blue] (0,0) -- (0,2);
\draw[red] (-.3,0) -- (-.3,2);
\draw (.3,0) -- (.3,2);
\draw (.6,0) -- (.6,2);
\draw (1.1,0) -- (1.1,2);

\nbox{unshaded}{(0,1.5)}{.3}{.15}{.15}{$g$};
\roundNbox{unshaded}{(1.1,.5)}{.3}{0.0}{0.0}{$y$};

\draw[dashed] (-.6,1) -- (1.6,1);
\draw[dashed] (-.6,0) rectangle (1.6,2);

\node at (-.3,-.2) {\tiny{$i$}};
\node at (.3,-.2) {\tiny{$j$}};
\node at (.6,2.2) {\tiny{$k\sm j$}};
\node at (1.1,-.2) {\tiny{$l\sm k$}};
\end{tikzpicture}
\]
\end{compactenum}

\subsection{From Markov lattice as standard bimodule to planar bimodule category}
\subsubsection{Planar bimodule category}

Let $\mathcal{A}_0$ and $\mathcal{B}_0$ be planar tensor categories. Let $\mathcal{M}_0$ be a ${\rm C}^*$ $\mathcal{A}_0-\mathcal{B}_0$ bimodule category with following properties:
\begin{compactenum}[(a)]
\item Object: The objects of $\mathcal{M}_0$ are $[m,n]=[m,n]_{\mathcal{M}_0}$, $m,n\in\mathbb{Z}_{\ge 0}$, where $[0,0]:=1_{\mathcal{M}_0}$ is simple.
\item The module tensor product of objects are
\begin{align*}
    [i,+]_{\mathcal{A}_0}\rhd [m,n]_{\mathcal{M}_0}=[m+i,n]_{\mathcal{M}_0},&\qquad [i,-]_{\mathcal{A}_0}\rhd [m,n]_{\mathcal{M}_0}=\text{none}\\
    [m,n]_{\mathcal{M}_0}\lhd [j,+]_{\mathcal{B}_0}=[i,n+j]_{\mathcal{M}_0},&\qquad
    [m,n]_{\mathcal{M}_0}\lhd [j,-]_{\mathcal{B}_0}=\text{none}\\
    ([i,+]_{\mathcal{A}_0}\rhd [m,n]_{\mathcal{M}_0})\lhd [j,+]_{\mathcal{B}_0} = [m+i,& n+j]_{\mathcal{M}_0} = [i,+]_{\mathcal{A}_0}\rhd( [m,n]_{\mathcal{M}_0}\lhd [j,+]_{\mathcal{B}_0})
\end{align*}
\item Only $\mathcal{M}_0([m,n]\to [m\pm 2i,n\pm 2j])$ is non-empty, $m,n,i,j\in \mathbb{Z}_{\ge 0}$. The module tensor product of morphisms in $\Hom(\mathcal{A}_0)$, $\Hom(\mathcal{M}_0)$ and $\Hom(\mathcal{M}_0)$ should match the shading types. 
\item $\mathcal{M}_0$ is a strict $\mathcal{A}_0-\mathcal{B}_0$ bimodule category, i.e., the left/right module associator and bimodule associator are trivial. For $x,x_1,x_2\in \Hom(\mathcal{A}_0)$, $g\in \Hom(\mathcal{M}_0)$ and $y,y_1,y_2\in \Hom(\mathcal{B}_0)$,
\begin{align*}
    & x_2\rhd(x_1\rhd g)=(x_2\otimes x_1)\rhd g \qquad (g\lhd y_1)\lhd y_2 = g\lhd (y_1\otimes y_2) \\
    & (x\rhd g)\lhd y = x\rhd (g\lhd y).
\end{align*}
\item $\mathcal{M}_0$ is a ${\rm C}^*$ category with a natural dagger structure such that $\lhd$ and $\rhd$ are dagger functors, i.e., for $x\in \Hom(\mathcal{A}_0),g\in\Hom(\mathcal{M}_0)$ and $y\in\Hom(\mathcal{B}_0)$,
$$(x\rhd g\lhd y)^\dagger =x^\dagger \rhd g^\dagger \lhd y^\dagger.$$
\end{compactenum}

Such bimodule category is called a \textbf{planar bimodule category}.

\begin{remark}
As in Remark \ref{main idea remark endo,domain,range for module}, the morphisms in $\mathcal{M}_0$ is determined by its representation as an endomorphism and its domain and range.

There is a canonical isomorphism $\phi:\mathcal{M}_0([m,n]\to [m+2i,n+2j])\to \mathcal{M}_0([m+i,n+j]\to [m+i,n+j])$ by using the rigid structure on $\mathcal{A}_0$ and $\mathcal{B}_0$.
\[
\begin{tikzpicture}[baseline=-.1cm]
\draw[thick,blue] (0,-.7) -- (0,.7);
\draw (.3,-.7) -- (.3,0);
\draw (.4,.7) -- (.4,0);
\draw (.2,.7) -- (.2,0);
\draw[red] (-.2,.7) -- (-.2,0);
\draw[red] (-.3,-.7) -- (-.3,0);
\draw[red] (-.4,.7) -- (-.4,0);
\nbox{unshaded}{(0,0)}{.3}{.3}{.3}{$x$};

\node at (-1,0) {$\phi:$};

\node at (-.3,.9) {\tiny{$m\+ i$}};
\node at (.25,.9) {\tiny{$n\+ j$}};
\node at (-.3,-.9) {\tiny{$m$}};
\node at (.3,-.9) {\tiny{$n$}};
\node at (-.5,.5) {\tiny{$i$}};
\node at (.5,.5) {\tiny{$j$}};
\end{tikzpicture}
\mapsto
\begin{tikzpicture}[baseline=-.1cm]
\draw[thick,blue] (0,-.7) -- (0,.7);
\draw (.3,-.7) -- (.3,0);
\draw (.4,.5) -- (.4,0);
\draw (.2,.7) -- (.2,0);
\draw[red] (-.2,.7) -- (-.2,0);
\draw[red] (-.3,-.7) -- (-.3,0);
\draw[red] (-.4,.5) -- (-.4,0);

\draw (.4,.5) arc (180:0:.15);
\draw (.7,.5) -- (.7,-.7);
\draw[red] (-.4,.5) arc (0:180:.15);
\draw[red] (-.7,.5) -- (-.7,-.7);

\nbox{unshaded}{(0,0)}{.3}{.3}{.3}{$x$};

\node at (-.3,.9) {\tiny{$m\+ i$}};
\node at (.25,.9) {\tiny{$n\+ j$}};
\node at (-.3,-.9) {\tiny{$m$}};
\node at (-.7,-.9) {\tiny{$i$}};
\node at (.7,-.9) {\tiny{$j$}};
\node at (.3,-.9) {\tiny{$n$}};
\end{tikzpicture}
\]
\end{remark}

\begin{remark}
Let $\mathcal{M}_0$ and $\mathcal{N}_0$ be planar bimodule categories over the same planar tensor category. If they are unitary monoidal equivalent, then they are unitary isomorphic.
\end{remark}

\subsubsection{From Markov lattice as standard bimodule to planar bimodule category}
Use the similar notion as we define the planar module category in Definition \ref{Def:planar module category}. 

Define the multi-step conditional expectations $E^{l,i}_{m,n}:=E^{M,l}_{m-i+1,n}\circ\cdots\circ E^{M,l}_{m,n}$ and $E^{r,k}_{m,n}:=E^{M,r}_{m,n-k+1}\circ\cdots\circ E^{M,r}_{m,n}$. 

\begin{definition}
Let $A,B$ be standard $\lambda$-lattices and $M=(M_{m,n})_{m,n\ge 0}$ be a Markov lattice as a standard $A-B$ bimodule with $\dim(M_{0,0})=1$. We define a planar bimodule category $\mathcal{M}_0$ from $M$ as follows.
\begin{compactenum}[(a)]
\item The objects of $\mathcal{M}_0$ are the symbols $[m,n]$ for $m,n\in\mathbb{Z}_{\ge 0}$.
\item Given $m,n\ge 0$, define $\mathcal{M}_0([m,n]\to [m,n]):=M_{m,n}$.
\item The identity morphism in $\mathcal{M}_0([m,n]\to [m,n])$ is $1_{M_{m,n}}$.
\item For $(f;[m_1,n_1],[m_2,n_2])$ with $2\mid m_1+m_2$ and $2\mid n_1+n_2$, define $(f;[m_1,n_1],[m_2,n_2])^\dagger = (f^*[m_2,n_2],[m_1,n_1])$, where $f,f^*\in M_{\frac{m_1+m_2}{2},\frac{n_1+n_2}{2}}$.
\item Define the composition in nine cases.
\begin{compactenum}[{(C1}1{)}]
\item $(h;[m+2i,n+2k],[m+2i+2j,n+2k+2t])\circ (g;[m,n],[m+2i,n+2k])\\ = (d_0^id_1^k E^{l,i}_{m+2i+j,n+k+t}(E^{r,k}_{m+2i+j,n+2k+t}(hgf^n_{t,k}e^m_{j,i}));[m,n],[m+2i+2j,n+2k+2t])$, where $g\in M_{m+i,n+k}$, $h\in M_{m+2i+j,n+2k+t}$ and $d_0^id_1^k E^{l,i}_{m+2i+j,n+k+t}(E^{r,k}_{m+i+j,n+2k+t}(hgf^n_{t,k}e^m_{j,i}))\in M_{m+i+j,n+k+t}$.
\item $(h;[m+2i,n+2k+2t],[m+2i+2j,n+2k])\circ (g;[m,n],[m+2i,n+2k+2t])\\ =(d_0^id_1^k E^{l,i}_{m+2i+j,n+k}(E^{r,k+t}_{m+2i+j,n+2k+t}(hgf^{n,*}_{t,k}e^m_{j,i}));[m,n],[m+2i+2j,n+2k])$, where $g\in M_{m+i,n+k+t}, h\in M_{m+2i+j,n+2k+t}$ and $d_0^id_1^k E^{l,i}_{m+2i+j,n+k}(E^{r,k+t}_{m+2i+j,n+2k+t}(hgf^{n,*}_{t,k}e^m_{j,i}))\in M_{m+i+j,n+k}$.
\item $(h;[m+2i,n][m+2i+2j,n+2k+2t])\circ (g;[m,n+2k],[m+2i,n])\\ = (d_0^id_1^k E^{l,i}_{m+2i+j,n+2k+l}(hf^{n,*}_{l,k}ge^m_{j,i});[m,n+2k],[m+2i+2j,n+2k+2t])$, where $g\in M_{m+i,n+k}, h\in M_{m+2i+j,n+k+t}$ and $d_0^id_1^k E^{l,i}_{m+2i+j,n+2k+l}(hf^{n,*}_{l,k}ge^m_{j,i}) \in M_{m+i+j,n+2k+t}$.
\end{compactenum}
\begin{compactenum}[{(C2}1{)}]
\item $(h;[m+2i+2j,n+2k],[m+2i,n+2k+2t])\circ (g;[m,n],[m+2i+2j,n+2k])\\ = (d_0^id_1^k E^{l,i+j}_{m+2i+j,n+k+t}(E^{r,k}_{m+2i+j,n+2k+t}(hgf^n_{t,k}e^{m,*}_{j,i}));[m,n],[m+2i,n+2k+2t])$, where $g\in M_{m+i+j,n+k}, h\in M_{m+2i+j,n+2k+t}$ and $d_0^id_1^k E^{l,i+j}_{m+2i+j,n+k+t}(E^{r,k}_{m+2i+j,n+2k+t}(hgf^n_{t,k}e^{m,*}_{j,i}))\in M_{m+i,n+2k+t}$.
\item $(h;[m+2i+2j,n+2k+2t],[m+2i,n+2k])\circ (g;[m,n],[m+2i+2j,n+2k+2t])\\ = (d_0^id_1^k E^{l,i+j}_{m+2i+j,n+k}(E^{r,k+t}_{m+2i+j,n+2k+t}(hgf^{n,*}_{t,k}e^{m,*}_{j,i}));[m,n],[m+2i,n+2k])$, where $g\in M_{m+i+j,n+k+t}, h\in M_{m+2i+j,n+2k+t}$ and $d_0^id_1^k E^{l,i+j}_{m+2i+j,n+k}(E^{r,k+t}_{m+2i+j,n+2k+t}(hgf^{n,*}_{t,k}e^{m,*}_{j,i}))\in M_{m+i,n+k}$.
\item $(h;[m+2i+2j,n],[m+2i,n+2k+2t])\circ (g;[m,n+2k],[m+2i+2j,n])\\ = (d_0^if_2^k E^{l,i+j}_{m+2i+j,n+2k+t}(hf^{n,*}_{t,k}ge^{m,*}_{j,i});[m,n+2k],[m+2i,n+2k+2t])$, where $g\in M_{m+i+j,n+k}, h\in M_{m+2i+j,n+k+t}$ and $d_0^if_2^k E^{l,i+j}_{m+2i+j,n+2k+t}(hf^{n,*}_{t,k}ge^{m,*}_{j,i})\in M_{m+i,n+2k+t}$.
\end{compactenum}
\begin{compactenum}[{(C3}1{)}]
\item $(h;[m,n+2k],[m+2i+2j,n+2k+2t])\circ (g;[m+2i,n],[m,n+2k])\\ = (d_0^id_1^k E^{r,k}_{m+2i+j,n+2k+t}(he^{m,*}_{j,i}gf^n_{t,k});[m+2i,n],[m+2i+2j,n+2k+2t])$, where $g\in M_{m+i,n+k}, h\in M_{m+i+j,n+2k+t}$ and $d_0^id_1^k E^{r,k}_{m+2i+j,n+2k+t}(he^{m,*}_{j,i}gf^n_{t,k})\in M_{m+2i+j,n+k+t}$.
\item $(h;[m,n+2k+2t],[m+2i+2j,n+2k])\circ (g;[m+2i,n],[m,n+2k+2t])\\ = (d_0^id_1^k E^{r,k+t}_{m+2i+j,n+2k+t}(he^{m,*}_{j,i}gf^{n,*}_{t,k});[m+2i,n],[m+2i+2j,n+2k])$, where $g\in M_{m+i,n+k+t}, h\in M_{m+i+j,n+2k+t}$ and $d_0^id_1^k E^{r,k+t}_{m+2i+j,n+2k+t}(he^{m,*}_{j,i}gf^{n,*}_{t,k})\in M_{m+2i+j,n+k}$.
\item $(h;[m,n],[m+2i+2j,n+2k+2t])\circ (g;[m+2i,n+2k],[m,n])\\ = (d_0^id_1^k hf^{n,*}_{t,k}e^{m,*}_{j,i}g;[m+2i,n+2k],[m+2i+2j,n+2k+2t])$, where $g\in M_{m+i,n+k}, h\in M_{m+i+j,n+k+t}$ and $d_0^id_1^k hf^{n,*}_{t,k}e^{m,*}_{j,i}g\in M_{m+2i+j,n+2k+t}$.
\end{compactenum}
For the other cases, we can use the dagger structure $g^\dagger \circ h^\dagger := (h\circ g)^\dagger$ to define.
\end{compactenum}
\end{definition}

Similarly, we use the string diagrams to explain the composition.
\begin{center}
\begin{tabular}{ c c c c c }
\begin{tikzpicture}[baseline=0.5cm]
\draw[dashed] (-1.9,1) -- (-.9,1);
\draw[dashed] (0.9,1) -- (1.9,1);
\draw[dashed] (-1.9,0) -- (1.9,0);
\draw[dashed] (-1.9,-1) rectangle (1.9,2);
 
\draw[thick,blue] (0,-1) -- (0,2);
\draw[red] (-0.3,-1) -- (-0.3,2);
\draw[red] (-0.5,0) -- (-0.5,2);
\draw[red] (-.9,0) -- (-.9,2);  
\draw[red] (-1.3,0) -- (-1.3,2);

\draw[red] (-1.3,-1) arc (180:0:.2cm);
\draw[red] (-.9,0) arc (-180:0:.2cm);
\draw[red] (-.5,-1)  .. controls ++(90:.35cm) and ++(270:.35cm) .. (-1.3,0);

\draw (0.3,-1) -- (0.3,2);
\draw (0.5,0) -- (0.5,2);
\draw (.9,0) -- (.9,2);  
\draw (1.3,0) -- (1.3,2);

\draw (.5,-1) arc (180:0:.2cm);
\draw (.9,0) arc (-180:0:.2cm);
\draw (1.3,-1)  .. controls ++(90:.35cm) and ++(270:.35cm) .. (0.5,0);

\draw[orange,thick] (1.6,-1) -- (1.6,2);
\draw[orange,thick] (1.3,2) arc (180:0:.15cm);
\draw[orange,thick] (1.3,-1) arc (-180:0:.15cm);
 
\draw[orange,thick] (1.7,-1) -- (1.7,2);
\draw[orange,thick] (.9,2) arc (180:0:.4cm);
\draw[orange,thick] (.9,-1) arc (-180:0:.4cm);

\draw[green1,thick] (-1.6,-1) -- (-1.6,2);
\draw[green1,thick] (-1.6,2) arc (180:0:.15cm);
\draw[green1,thick] (-1.6,-1) arc (-180:0:.15cm);

\nbox{unshaded}{(0,1.5)}{.3}{1.2}{1.2}{$g$};
\nbox{unshaded}{(0,.5)}{.3}{0.4}{0.8}{$f$};
  
\node at (-0.3,2.2) {\tiny{$m$}};
\node at (-0.7,2.2) {\tiny{$i\+j$}};
\node at (-.6,1) {\tiny{$i$}};
\node at (-.3,-1.2) {\tiny{$m$}};
\node at (-.18,1) {\tiny{$m$}};
\node at (-.5,-.2) {\tiny{$i$}};
\node at (-.9,-1.2) {\tiny{$i$}};
\node at (-.5,-1.2) {\tiny{$j$}};

\node at (0.3,2.2) {\tiny{$n$}};
\node at (0.5,2.2) {\tiny{$k$}};
\node at (.7,1) {\tiny{$t\+ k$}};
\node at (.3,-1.2) {\tiny{$n$}};
\node at (.2,1) {\tiny{$n$}};
\node at (.5,-.2) {\tiny{$t$}};
\node at (1.3,-.2) {\tiny{$k$}};
\node at (.5,-1.2) {\tiny{$k$}};
\end{tikzpicture} 
& &
\begin{tikzpicture}[baseline=0.5cm]
\draw[dashed] (-1.9,2) -- (1.5,2);
\draw[dashed] (-1.9,1) -- (-.9,1);
\draw[dashed] (.5,1) -- (1.5,1);
\draw[dashed] (-1.9,0) -- (1.5,0);
\draw[dashed] (-1.9,-1) rectangle (1.5,3);
 
\draw[thick,blue] (0,-1) -- (0,3);
\draw[red] (-0.3,-1) -- (-0.3,3);
\draw[red] (-0.5,0) -- (-0.5,3);
\draw[red] (-.9,0) -- (-.9,3);  
\draw[red] (-1.3,0) -- (-1.3,3);

\draw[red] (-.9,-1) arc (180:0:.2cm);
\draw[red] (-1.3,0) arc (-180:0:.2cm);
\draw[red] (-1.3,-1)  .. controls ++(90:.35cm) and ++(270:.35cm) .. (-0.5,0);

\draw (0.3,-1) -- (0.3,3);
\draw (0.5,1) -- (0.5,-1);
\draw (0.5,2) -- (0.5,3);
\draw (.9,2) -- (.9,3);  
\draw (.9,1) -- (.9,-1);  
\draw (1.3,1) -- (1.3,-1);
\draw (1.3,2) -- (1.3,3);

\draw (.5,1) arc (180:0:.2cm);
\draw (.9,2) arc (-180:0:.2cm);
\draw (1.3,1) .. controls ++(90:.35cm) and ++(270:.35cm) .. (.5,2);

\draw[green1,thick] (-1.6,-1) -- (-1.6,3);
\draw[green1,thick] (-1.6,3) arc (180:0:.15cm);
\draw[green1,thick] (-1.6,-1) arc (-180:0:.15cm);
 
\draw[green1,thick] (-1.7,-1) -- (-1.7,3);
\draw[green1,thick] (-1.7,3) arc (180:0:.4cm);
\draw[green1,thick] (-1.7,-1) arc (-180:0:.4cm);

\nbox{unshaded}{(0,2.5)}{.3}{1.2}{.8}{$g$};
\nbox{unshaded}{(0,.5)}{.3}{0.8}{0.4}{$f$};
 
\node at (-0.3,3.2) {\tiny{$m$}};
\node at (-0.5,3.2) {\tiny{$i$}};
\node at (-.7,1) {\tiny{$j\+ i$}};
\node at (-.3,-1.2) {\tiny{$m$}};
\node at (-.2,1) {\tiny{$m$}};
\node at (-.5,-.2) {\tiny{$j$}};
\node at (-1.3,-.2) {\tiny{$i$}};
\node at (-.5,-1.2) {\tiny{$i$}};

\node at (0.3,3.2) {\tiny{$n$}};
\node at (0.7,3.2) {\tiny{$k\+t$}};
\node at (1.3,-1.2) {\tiny{$t$}};
\node at (.3,-1.2) {\tiny{$n$}};
\node at (.2,1) {\tiny{$n$}};
\node at (1.3,3.2) {\tiny{$k$}};
\node at (.9,-1.2) {\tiny{$k$}};
\node at (.5,-1.2) {\tiny{$k$}};
\end{tikzpicture}
& &
\begin{tikzpicture}[baseline=0.5cm]
\draw[dashed] (-1.5,2) -- (1.9,2);
\draw[dashed] (-1.5,1) -- (-.5,1);
\draw[dashed] (.9,1) -- (1.9,1);
\draw[dashed] (-1.5,0) -- (1.9,0);
\draw[dashed] (-1.5,-1) rectangle (1.9,3);

\draw[thick,blue] (0,-1) -- (0,3);

\draw[red] (-0.3,-1) -- (-0.3,3);
\draw[red] (-0.5,1) -- (-0.5,-1);
\draw[red] (-0.5,2) -- (-0.5,3);
\draw[red] (-.9,2) -- (-.9,3);  
\draw[red] (-.9,1) -- (-.9,-1);  
\draw[red] (-1.3,1) -- (-1.3,-1);
\draw[red] (-1.3,2) -- (-1.3,3);

\draw[red] (-.9,1) arc (180:0:.2cm);
\draw[red] (-1.3,2) arc (-180:0:.2cm);
\draw[red] (-1.3,1) .. controls ++(90:.35cm) and ++(270:.35cm) .. (-.5,2);

\draw (0.3,-1) -- (0.3,3);
\draw (0.5,0) -- (0.5,3);
\draw (.9,0) -- (.9,3);  
\draw (1.3,0) -- (1.3,3);

\draw (.9,-1) arc (180:0:.2cm);
\draw (.5,0) arc (-180:0:.2cm);
\draw (.5,-1)  .. controls ++(90:.35cm) and ++(270:.35cm) .. (1.3,0);

\draw[orange,thick] (1.6,-1) -- (1.6,3);
\draw[orange,thick] (1.3,3) arc (180:0:.15cm);
\draw[orange,thick] (1.3,-1) arc (-180:0:.15cm);

\nbox{unshaded}{(0,2.5)}{.3}{.8}{1.2}{$g$};
\nbox{unshaded}{(0,.5)}{.3}{0.4}{0.4}{$f$};

\node at (-0.3,3.2) {\tiny{$m$}};
\node at (-0.7,3.2) {\tiny{$i\+j$}};
\node at (-1.3,-1.2) {\tiny{$j$}};
\node at (-.3,-1.2) {\tiny{$m$}};
\node at (-.2,1) {\tiny{$m$}};
\node at (-1.3,3.2) {\tiny{$i$}};
\node at (-.9,-1.2) {\tiny{$i$}};
\node at (-.5,-1.2) {\tiny{$i$}};

\node at (0.3,3.2) {\tiny{$n$}};
\node at (0.7,3.2) {\tiny{$k\+t$}};
\node at (.6,1) {\tiny{$k$}};
\node at (.3,-1.2) {\tiny{$n$}};
\node at (.2,1) {\tiny{$n$}};
\node at (.5,-.2) {\tiny{$k$}};
\node at (.9,-1.2) {\tiny{$k$}};
\node at (.5,-1.2) {\tiny{$t$}};
\end{tikzpicture}\\
(C12) & & (C23) & & (C31)
\end{tabular}
\end{center}
The composition is well-defined and $\mathcal{M}_0$ is a ${\rm C}^*$ category as before.

\begin{remark}
The composition is well-defined, because of the commuting square of left/right conditional expectation condition and Proposition \ref{Markov lattice prop 1}.
\end{remark}

The definition of $x\rhd 1$ and $1\lhd y$ for $x\in \Hom(\mathcal{A}_0)$ and $y\in \Hom(\mathcal{B}_0)$ are the same as in

\begin{definition} $1\rhd g\lhd 1$, $x\rhd 1$ and $1\lhd y$, $g\in \Hom(\mathcal{M}_0)$, $x\in \Hom(\mathcal{A}_0)$ and $y\in \Hom(\mathcal{B}_0)$. 

The idea is the same as in Definition \ref{1 otimes x second times}. First, we define $1\rhd g\lhd 1$ as
\begin{center}
\small
\begin{tabular}{ |c|c| } 
 \hline
 $g$ & $1_j\rhd g\lhd 1_t$  \\
 \hline
  $(g;[m,n], [m+2i,n+2k]),\ i\le j,k\le t$ & $(ge^m_{j-i,i}f^n_{t-k,k};[m+j,n+t],[m+2i+j,m+2k+t])$  \\
\hline
  $(g;[m,n], [m+2i,n+2k]),\ i>j, k\le t$ & $(ge^{m,*}_{i-j,j}f^n_{t-k,k};[m+j,n+t],[m+2i+j,m+2k+t])$  \\
\hline
$(g;[m,n], [m+2i,n+2k]),\ i\le j,k> t$ & $(ge^m_{j-i,i}f^{n,*}_{k-t,t};[m+j,n+t],[m+2i+j,m+2k+t])$  \\
\hline
  $(g;[m,n], [m+2i,n+2k]),\ i>j, k> t$ & $(ge^{m,*}_{i-j,j}f^{n,*}_{k-t,t};[m+j,n+t],[m+2i+j,m+2k+t])$  \\
\hline
\end{tabular}
\end{center}
Note that here we use the fact that the Jones projection $[e_i,f_k]=0$ for all $i,k\ge 1$ and hence $(1\rhd g)\lhd 1 = 1\rhd (g\lhd 1) =:1\rhd g\lhd 1$.

The definitions of $x\rhd 1$ and $1\lhd y$ will be the same as $x\otimes 1$ and $1\otimes y$ in Definition \ref{x otimes 1, 1 otimes y} by using the shift maps.

\begin{center}
\begin{tabular}{ c }
\begin{tikzpicture}[baseline=1cm]
\draw[dashed] (-1.5,1) -- (1.5,1);
\draw[dashed] (-1.5,0) rectangle (1.5,2);

\draw[thick,blue] (0,0) -- (0,2);
\draw (0.3,0) -- (0.3,2);
\draw (0.5,1) -- (0.5,2);
\draw (.9,1) -- (.9,2);
\draw (1.3,1) -- (1.3,2);

\draw (.5,1) arc (-180:0:.2cm);
\draw (.9,0) arc (180:0:.2cm);
\draw (.5,0)  .. controls ++(90:.35cm) and ++(270:.35cm) .. (1.3,1);

\draw[red] (-0.3,0) -- (-0.3,2);
\draw[red] (-0.5,1) -- (-0.5,1.5);
\draw[red] (-.9,1) -- (-.9,2);
\draw[red] (-1.3,1) -- (-1.3,2);

\draw[red] (-1.3,1) arc (-180:0:.2cm);
\draw[red] (-.9,0) arc (180:0:.2cm);
\draw[red] (-1.3,0)  .. controls ++(90:.35cm) and ++(270:.35cm) .. (-.5,1);

\nbox{unshaded}{(0,1.5)}{.3}{0.8}{0.4}{$g$};
 
\node at (0.3,-.2) {\tiny{$n$}};
\node at (1.3,2.2) {\tiny{$t\sm k$}};
\node at (.9,-.2) {\tiny{$k$}};
\node at (.4,2.2) {\tiny{$n\+k$}};
\node at (.9,2.2) {\tiny{$k$}};

\node at (-0.3,-.2) {\tiny{$m$}};
\node at (-0.5,-.2) {\tiny{$j$}};
\node at (-1.3,-.2) {\tiny{$i\sm j$}};
\node at (-.3,2.2) {\tiny{$m$}};
\node at (-.9,2.2) {\tiny{$i$}};
\node at (-1.3,2.2) {\tiny{$j$}};
\end{tikzpicture}
\\
$i\ge j,\ k\le t$  
\end{tabular}
\end{center}
\end{definition}

The proof of the following propositions are the same as in the Markov tower case with the fact in Remark \ref{everything commute, for bimodule proof}. To be precise, the diagrammatic proof can be split as left-hand-side and right-hand-side independently, and the proof on each side is the same as the Markov tower case.

\begin{proposition}
$\mathcal{M}_0$ is a left $\mathcal{A}_0-$module.
That is,
\begin{compactenum}[\rm (1)]
\item
For $g\in \Hom(\mathcal{M}_0)$, $x\in \Hom(\mathcal{A}_0)$, $(1\lhd g)\circ (x\lhd 1)=(x\lhd 1)\circ (1\lhd g)$.
\item
For $g\in \Hom(\mathcal{M}_0)$, $x_1,x_2\in \Hom(\mathcal{A}_0)$, $x_2\rhd (x_1\rhd g)=(x_2\otimes x_1)\rhd g$.
\item
For $g_1,g_2\in \Hom(\mathcal{M}_0)$, $x_1,x_2\in \Hom(\mathcal{A}_0)$, $1\rhd (g_1\circ g_2)=(1\rhd g_1)\circ (1\rhd g_2)$ and $(x_1\circ x_2)\rhd 1 = (x_1\rhd 1)\circ (x_2\rhd 1)$.
\end{compactenum}
\end{proposition}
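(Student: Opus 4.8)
The plan is to mirror, almost verbatim, the three-part structure established for the Markov tower case in Proposition~\ref{tensor product def prop}, Proposition~\ref{strictness prop}, and Proposition~\ref{circ tensor 1}, exploiting the decoupling observation already flagged in Remark~\ref{everything commute, for bimodule proof}: the left $\mathcal{A}_0$-action lives entirely on the red (left) strands and the right $\mathcal{B}_0$-action on the orange (right) strands, and these commute with each other on the nose because $[e_i,f_k]=0$ for all $i,k\ge 1$ (Proposition~\ref{Markov lattice prop 1}(2)), together with the standard condition giving $[A_{p,q},B_{k,l}]=0$. Consequently every diagrammatic manipulation needed is a ``left-hand-side copy'' of a Markov-tower identity applied to the red strands, with the orange side carried along as a spectator, or vice versa.

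First, for part~(1) I would check $(1\lhd g)\circ(x\rhd 1)=(x\rhd 1)\circ(1\lhd g)$ for $g\in\Hom(\mathcal{M}_0)$ and $x\in\Hom(\mathcal{A}_0)$. By the definitions of $x\rhd 1$ (a shift map acting on $2i$ red strands on the left) and $1\lhd g$ (an endomorphism-plus-caps expression living on the blue strand and the right), these two morphisms act on disjoint regions of the rectangular box diagram; the computation is identical in form to the proof of Proposition~\ref{tensor product def prop} after renaming ``the $\lambda$-lattice side'' to ``the $\mathcal{A}_0$ side on the red strands,'' using the shift property ($\lambda$15) and the pull-down / Jones-projection relations exactly as there. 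The opposite-algebra bookkeeping for $A^{\mathrm{op}}$ only permutes which conditional expectation is labelled $E^{M,l}$ versus $E^{M,r}$ and does not affect the diagrams. Then one \emph{defines} $x\rhd g:=(x\rhd 1)\circ(1\lhd g)$ (equivalently $(1\lhd g)\circ(x\rhd 1)$), just as $x\otimes y$ was defined in Definition~\ref{definion x otimes y}.

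For part~(2), $x_2\rhd(x_1\rhd g)=(x_2\otimes x_1)\rhd g$ is the associativity/strictness statement; its proof copies Proposition~\ref{strictness prop}. The relevant special cases reduce, via Lemma~\ref{lemma 4} and the Jones-projection relations, to a single diagram after the two successive shifts $S^{(\cdot)}$ on the red strands collapse to one shift by the composition law for $2n$-shift maps (Proposition~\ref{2n-shift map prop}), mirroring how $(x\otimes y)\otimes z$ and $x\otimes(y\otimes z)$ were matched in the tensor category. For part~(3), $1\rhd(g_1\circ g_2)=(1\rhd g_1)\circ(1\rhd g_2)$ follows purely from the shift maps being $*$-homomorphisms, exactly as the clause $1\otimes(x\circ y)=(1\otimes x)\circ(1\otimes y)$ in Proposition~\ref{circ tensor 1}; and $(x_1\circ x_2)\rhd 1=(x_1\rhd 1)\circ(x_2\rhd 1)$ copies the ``$(x\circ y)\otimes 1$'' half of that proposition, using Lemma~\ref{lemma 3}, Lemma~\ref{lemma 4}, and the Jones-projection property on the red strands while the blue and orange strands stay fixed.

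The main obstacle is not conceptual but organizational: one must be careful that the \emph{opposite} convention for $A$ really does make the left action into a genuine (not merely lax) module action, i.e.\ that ``$x_2\rhd(x_1\rhd g)$'' uses $x_2\otimes x_1$ in $\mathcal{A}_0$ rather than $x_1\otimes x_2$, and that this is consistent with how composition cases (C11)--(C33) were set up with $e^{m}_{j,i}$-words attached on the left. I would verify this matching once, at the level of the endomorphism-representation triples $(x;[m,n],[m',n'])$, and then invoke ``the remaining cases and the right-hand analogues are obtained by the same diagrammatic dictionary'' for the rest, precisely as the earlier propositions do. No new lemmas beyond Lemmas~\ref{lemma 1}--\ref{lemma 4} should be required.
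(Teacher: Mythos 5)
Your proposal is correct and follows essentially the same route as the paper: the paper likewise disposes of this proposition by observing (via Remark \ref{everything commute, for bimodule proof} and Proposition \ref{Markov lattice prop 1}) that the diagrammatic proof splits into independent left-hand and right-hand computations, each of which is verbatim the Markov tower argument of Propositions \ref{tensor product def prop}, \ref{strictness prop}, and \ref{circ tensor 1}. Your extra remark about checking the $A^{\op}$ convention once at the level of triples is a sensible organizational precaution but does not change the argument.
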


\begin{proposition}
Similarly, $\mathcal{M}_0$ is a right $\mathcal{B}_0-$module.
That is,
\begin{compactenum}[\rm (1)]
\item
For $g\in \Hom(\mathcal{M}_0)$, $y\in \Hom(\mathcal{B}_0)$, $(g\lhd 1)\circ (1\lhd y)=(1\lhd y)\circ (g\lhd 1)$.
\item
For $g\in \Hom(\mathcal{M}_0)$, $y_1,y_2\in \Hom(\mathcal{B}_0)$, $(g\lhd y_1)\lhd y_2=g\lhd (y_1\otimes y_2)$.
\item
For $g_1,g_2\in \Hom(\mathcal{M}_0)$, $y_1,y_2\in \Hom(\mathcal{B}_0)$, $(g_1\circ g_2)\lhd 1=(g_1\lhd 1)\circ (g_2\lhd 1)$ and $1\lhd (x_1\circ x_2) = (1\lhd x_1)\circ (1\lhd x_2)$.
\end{compactenum}
\end{proposition}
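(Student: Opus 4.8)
The plan is to reduce the claim to the module-category results already established in \S\ref{Cpt 2}, exactly as the author does with the left module structure in the preceding Proposition. Recall that by definition a Markov lattice $M=(M_{i,j})$ which is a standard $A$--$B$ bimodule restricts, for each fixed left index $m$, to a Markov tower $M_{m,-}=(M_{m,n},E^{M,r}_{m,n},f_{n+1})_{n\ge0}$ over the standard $\lambda$-lattice $B$; the standard condition $[M_{m,n},B_{k,l}]=0$ for $n\le k\le l$ is precisely the module-standardness hypothesis of Definition \ref{Def:Markov tower as standard module}. Thus the operations $g\lhd 1$, $1\lhd y$, and the composition formulas (C31), (C12), etc., when restricted to the ``right half'' of the diagram (the strands drawn in black to the right of the blue distinguished strand), coincide verbatim with the Markov-tower-to-planar-module-category constructions of \S\ref{Markov tower to planar module cat}. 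The three statements to prove are, in order: (1) $(g\lhd 1)\circ(1\lhd y)=(1\lhd y)\circ(g\lhd 1)$; (2) $(g\lhd y_1)\lhd y_2=g\lhd(y_1\otimes y_2)$ after defining $g\lhd y:=(g\lhd 1)\circ(1\lhd y)$; and (3) functoriality of $\lhd$, i.e. $(g_1\circ g_2)\lhd 1=(g_1\lhd 1)\circ(g_2\lhd 1)$ and $1\lhd(y_1\circ y_2)=(1\lhd y_1)\circ(1\lhd y_2)$.

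First I would set up the bookkeeping that makes the reduction rigorous: observe that in all the relevant string diagrams the picture splits along the blue distinguished strand into an independent ``$A$-side'' (red strands, on the left) and a ``$B$-side'' (black strands, on the right), and that the operations in each of the three statements only touch the $B$-side. This is the content of the remark preceding the proposition --- ``the diagrammatic proof can be split as left-hand-side and right-hand-side independently, and the proof on each side is the same as the Markov tower case.'' Concretely, one checks that the cup/cap creations, shift maps $S^{(k)}$, and multi-step conditional expectations $E^{r,k}_{m,n}$ appearing in $1\lhd y$ and $g\lhd 1$ all act by the identity on the $A$-side and on the blue strand, and commute with anything happening there; this commutation is guaranteed by Proposition \ref{Markov lattice prop 1} (the Jones projections $e_i$, $f_j$ commute, and $E^{M,r}_{i,j}$ fixes the $e_k$) and by Remark \ref{everything commute, for bimodule proof} ($E^{M,r}_{i,j}|_{A_{k,l}}=\id$). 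Having isolated the $B$-side, statements (1), (2), and the two halves of (3) become, respectively, the assertions of Proposition \ref{tensor product def prop}, Proposition \ref{strictness prop}, and Proposition \ref{circ tensor 1} (together with the fact that $S^{(k)}$ is a $*$-homomorphism, Proposition \ref{2n-shift map prop}(1)) applied to the Markov tower $M_{m,-}$ over $B$.

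The key steps, in order, are: (i) record the diagrammatic splitting lemma and justify it via Proposition \ref{Markov lattice prop 1} and Remark \ref{everything commute, for bimodule proof}; (ii) for (1), write $g\lhd 1$ and $1\lhd y$ in endomorphism-representation triples using Definition \ref{Def:planar module category}'s bimodule analogue, note that the $B$-side of each is literally the Markov-tower expression $f\lhd 1$ resp. $1\lhd x$ from \S\ref{Markov tower to planar module cat}, and invoke the already-proved Markov-tower commutation; (iii) define $g\lhd y:=(g\lhd 1)\circ(1\lhd y)$ and deduce (2) from the Markov-tower strictness proposition applied on the $B$-side; (iv) deduce (3) from the Markov-tower functoriality proposition (for $(g_1\circ g_2)\lhd1$) and from $S^{(k)}$ being multiplicative (for $1\lhd(y_1\circ y_2)$, which only uses that the shift map on $B$ is a $*$-homomorphism). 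Then remark that the left $\mathcal{A}_0$-module proposition is proved the same way with the roles of the two sides exchanged and $A^{\op}$ in place of $B$, and that the bimodule compatibility $(x\rhd g)\lhd y = x\rhd(g\lhd y)$ is immediate from the splitting since $x\rhd(-)$ and $(-)\lhd y$ act on disjoint halves of the diagram.

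The main obstacle I anticipate is not any single hard computation but making the ``splitting'' argument genuinely airtight rather than hand-wavy: one must verify that every ingredient used to build $1\lhd y$ and $g\lhd 1$ (the multi-step conditional expectations $E^{l,i}_{m,n}$, $E^{r,k}_{m,n}$, the shift maps, the $e^m_{j,i}$ and $f^n_{t,k}$ words in Jones projections, and the daggers) really does decompose as ``something on the blue+red strands'' $\otimes$ ``something on the black strands,'' and that the commuting-square condition (c) of the Markov lattice is exactly what lets the two families of conditional expectations be applied in either order without interference. This is where Proposition \ref{Markov lattice prop 1}, Remark \ref{everything commute, for bimodule proof}, and the commuting-parallelogram properties of the shift maps (Proposition \ref{2n-shift map prop}(2)) all get used; once these are marshalled, each of the three statements collapses to a citation of the corresponding Markov-tower proposition, and the cases not written out (different parities of $m$, $n$ and different orderings of $i,j$ versus $k,t$) are handled by the same reduction together with the string-diagram dictionary, exactly as the author leaves ``the rest of the cases'' to the reader in \S\ref{Cpt 2}.
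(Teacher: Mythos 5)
Your proposal is correct and follows exactly the route the paper takes: the paper's stated justification is that "the diagrammatic proof can be split as left-hand-side and right-hand-side independently, and the proof on each side is the same as the Markov tower case," using Remark \ref{everything commute, for bimodule proof}, which is precisely your splitting argument reducing (1)--(3) to Propositions \ref{tensor product def prop}, \ref{strictness prop}, and \ref{circ tensor 1}. Your write-up is in fact more explicit than the paper's about which commutation facts (Proposition \ref{Markov lattice prop 1}, the commuting-square condition) make the splitting rigorous.
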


\begin{proposition}
$\mathcal{M}_0$ is a $\mathcal{A}_0-\mathcal{B}_0$ bimodule.
That is,
for $g\in \Hom(\mathcal{M}_0)$, $x\in \Hom(\mathcal{A}_0)$, $y\in \Hom(\mathcal{B}_0)$, $(x\rhd 1)\circ (1\lhd y)\circ (1\rhd g \lhd 1)=(1\lhd y)\circ (x\rhd 1)\circ (1\rhd g \lhd 1)$.
\end{proposition}
\begin{proof} 
By Remark \ref{everything commute, for bimodule proof}. 
\end{proof}

\begin{definition}
Define $x\rhd g\lhd y:=(x\rhd 1)\circ (1\lhd y)\circ (1\rhd g\lhd 1)$.
\end{definition}

\subsection{Indecomposable semisimple ${\rm C}^*$ $\mathcal{A}-\mathcal{B}$ bimodules and planar $\mathcal{A}_0-\mathcal{B}_0$ bimodule categories}
\subsubsection{Indecomposable semisimple ${\rm C}^*$ $\mathcal{A}-\mathcal{B}$ bimodule category}
Let $\mathcal{A}$ and $\mathcal{B}$ be 2-shaded rigid ${\rm C}^*$ multitensor categories with generators $X=1^+_{\mathcal{A}}\otimes X\otimes 1^-_{\mathcal{A}}$ and $Y=1^+_{\mathcal{B}}\otimes Y\otimes 1^-_{\mathcal{B}}$. Let $\mathcal{M}$ be a Cauchy complete indecomposable semisimple ${\rm C}^*$ $\mathcal{A}-\mathcal{B}$ bimodule category. Note that there is a natural dagger structure on $\mathcal{M}$, and the left/right module actions are dagger functors, i.e., for morphism $g\in \Hom(\mathcal{M})$, $x\in \Hom(\mathcal{A})$ and $y\in \Hom(\mathcal{B})$,
$$(x\rhd g)^\dagger =x^\dagger \rhd g^\dagger,\qquad (f\lhd y)^\dagger =f^\dagger \lhd y^\dagger.$$

We call $\mathcal{M}$ \textbf{indecomposable} if for any two simple objects $P,Q\in\mathcal{M}$ (WLOG, $P=1_{\mathcal{A}}^+\rhd P\lhd 1_{\mathcal{B}}^+$), $Q$ is a direct summand of $(X^{\alt\otimes m}\rhd P)\lhd Y^{\alt\otimes n}$ for some $m,n\in\mathbb{Z}_{\ge 0}$.

Let $\mathcal{A}_0,\mathcal{B}_0$ be planar tensor categories constructed from $(\mathcal{A},X)$ and $(\mathcal{B},Y)$ respectively. 
By MacLane's coherence theorem, ${}_\mathcal{A}\mathcal{M}_{\mathcal{B}}$ is unitary equivalent to a strict one, i.e., $\mathcal{A},\mathcal{B}$ are strict, the right/left module associators and the bimodule associator are trivial. 
This strict category is also a strict $\mathcal{A}_0-\mathcal{B}_0$ bimodule category. WLOG, we also denote it as $\mathcal{M}$.

Pick a simple object $Z=1_{\mathcal{A}}^+\rhd Z\lhd 1_{\mathcal{B}}^+\in\mathcal{M}$, then we construct a planar $\mathcal{A}_0-\mathcal{B}_0$ bimodule category $\mathcal{M}_0$ as follows:
\begin{compactenum}[(a)]
\item Objects: Define $[0,0]:=Z$, and
$$[m+1,0]:= [1,?]_{\mathcal{A}_0}\rhd [m,0], \qquad [m,n+1]:= [m,n]\lhd [1,?]_{\mathcal{B}_0},$$
where $[1,?]_{\mathcal{A}_0}=[1,+]_{\mathcal{A}_0}$ if $2\mid m$ and $[1,?]_{\mathcal{A}_0}=[1,-]_{\mathcal{A}_0}$ if $2\nmid m$; $[1,?]_{\mathcal{B}_0}=[1,+]_{\mathcal{B}_0}$ if $2\nmid n$ and $[1,?]_{\mathcal{B}_0}=[1,-]_{\mathcal{B}_0}$ if $2\mid n$. 
\item $\mathcal{M}_0$ is a full subcategory of $\mathcal{M}$ with above objects.
\end{compactenum}

Given $\mathcal{M}_0$ to be a planar $\mathcal{A}_0-\mathcal{B}_0$ bimodule category, for the similar reason, its Cauchy completion $\widehat{\mathcal{M}_0}$ is a $\widehat{\mathcal{A}_0}-\widehat{\mathcal{B}_0}$ bimodule category, compatible with the dagger structure.

\begin{remark}
Suppose $\mathcal{M}_0$ is a planar $\mathcal{A}_0-\mathcal{B}_0$ bimodule category constructed from $\mathcal{M}$ over $(\mathcal{A},X)$ and $(\mathcal{B},Y)$, then there is a unitary equivalence between $\mathcal{M}$ as $\mathcal{A}-\mathcal{B}$ bimodule category and $\widehat{\mathcal{M}_0}$ as $\widehat{\mathcal{A}_0}-\widehat{\mathcal{B}_0}$ bimodule category, which maps base object to base object.
\end{remark}

\subsubsection{From planar bimodule to Markov lattice as standard bimodule}
\begin{construction}
Now let $M_{i,j}=\End([i,j])$, $i,j\in\mathbb{Z}_{\ge 0}$. After identifying $f\in M_{i,j}$ with $\id_{[1,?]}\rhd f\in M_{i+1,j}$ and $f\lhd \id_{[1,?]}\in M_{i,j+1}$ and identifying $x\in A_{i,0}=\End([i,+]_{\mathcal{A}_0})$ with $x\lhd \id_{[0,j]}\in M_{i,j}$ and $y\in B_{0,j}=\End([j,+]_{\mathcal{B}_0})$ with $\id_{[i,0]}\lhd y\in M_{i,j}$. 
It is easy to show that $M=(M_{i,j})_{i,j\ge 0}$ is a Markov lattice as a standard $A-B$ bimodule with modulus $(d_0,d_1)$.
\end{construction}

Similar to the module case, combining above discussion, we have the following theorem.

\begin{theorem}
There is a bijective correspondence between equivalence classes of the following:
\[\left\{\, \parbox{5.7cm}{\rm 
Traceless Markov lattice $M=(M_{i,j})_{i,j\ge 0}$ 
with $\dim(M_{0,0})=1$
as a standard $A-B$ bimodule over standard $\lambda$-lattices $A,B$} \,\right\}\ \, \cong\ \, \left\{\, \parbox{6.5cm}{\rm Pairs $(\mathcal{M}, Z)$ with $\mathcal{M}$ an indecomposable semisimple ${\rm C}^*$  $\mathcal{A}-\mathcal{B}$ bimodule category together with a choice of simple object $Z= 1^+_\mathcal{A}\rhd Z\lhd 1^+_\mathcal{B}$} \,\right\}\]
Equivalence on the left hand side is the $*$-isomorphism on the traceless Markov lattice as standard $A-B$ bimodule; the equivalence on the right hand side is the unitary $\mathcal{A}-\mathcal{B}$ bimodule equivalence between their Cauchy completions which maps the simple base object to simple base object.
\end{theorem}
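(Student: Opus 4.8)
The plan is to assemble this bijection from the three constructions already developed in \S\ref{Cpt 3}, following the pattern of the Folklore Theorem and Theorem \ref{thmx:ModuleEquivalence}. First I would set up the two directions of the correspondence as maps between the stated classes. Given a traceless Markov lattice $M=(M_{i,j})_{i,j\ge 0}$ as a standard $A-B$ bimodule with $\dim(M_{0,0})=1$, I pass through the planar $\mathcal{A}_0-\mathcal{B}_0$ bimodule category $\mathcal{M}_0$ constructed from $M$, and then take its Cauchy completion $\widehat{\mathcal{M}_0}$, which is a $\widehat{\mathcal{A}_0}-\widehat{\mathcal{B}_0}$ bimodule category; identifying $\widehat{\mathcal{A}_0}$ with the Cauchy completion of $\mathcal{A}$ (and likewise for $\mathcal{B}$) via the Folklore Theorem, this yields a pair $(\mathcal{M},Z)$ with $Z$ the distinguished simple object $[0,0]$. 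Conversely, given $(\mathcal{M},Z)$, I pick $Z=1^+_\mathcal{A}\rhd Z\lhd 1^+_\mathcal{B}$ as base object, build the planar $\mathcal{A}_0-\mathcal{B}_0$ bimodule category $\mathcal{M}_0$ as the full subcategory on the objects $[m,n]$ generated from $Z$, and then form the Markov lattice $M_{i,j}=\End([i,j])$ from $\mathcal{M}_0$ as in the last construction of the chapter. Each passage has already been checked to be well-defined (the composition and dagger structure on $\mathcal{M}_0$ are $\mathrm C^*$, the action propositions establish the bimodule structure, and $M=(M_{i,j})$ satisfies the Markov lattice axioms with modulus $(d_0,d_1)$ together with the standard condition).

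Next I would verify that these two assignments are mutually inverse up to the relevant equivalences. Starting from $M$, producing $\mathcal{M}_0$ and then taking $\End([i,j])$ should return $M$ on the nose: the objects $[i,j]$ of $\mathcal{M}_0$ have $\End([i,j])=M_{i,j}$ by definition, and one checks that the recovered inclusions, conditional expectations, and Jones projections agree with the original ones --- this is exactly the analogue of the round-trip checks done for Markov towers in \S\ref{M_0 to MT M}, where the left/right conditional expectations on $M_{i,j}$ are reconstructed from $\ev$ and $\coev$ at the object $[1,\pm]$, and the commuting square condition for the reconstructed expectations follows from the commuting parallelogram of the shift maps (Proposition \ref{2n-shift map prop}(2)). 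For the other composite: starting from $(\mathcal{M},Z)$, passing to $\mathcal{M}_0$, then to $M$, then back to a bimodule category, the key point is that $\widehat{\mathcal{M}_0}\simeq \mathcal{M}$ as $\widehat{\mathcal{A}_0}-\widehat{\mathcal{B}_0}$ bimodule categories sending base object to base object, which uses the indecomposability hypothesis (every simple of $\mathcal{M}$ is a summand of some $(X^{\alt\otimes m}\rhd Z)\lhd Y^{\alt\otimes n}$, hence of some $[m,n]$) together with semisimplicity and Cauchy completeness.

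Then I would check that both assignments respect the declared equivalence relations: a $*$-isomorphism of traceless Markov lattices as standard $A-B$ bimodules induces a unitary $\mathcal{A}_0-\mathcal{B}_0$ bimodule functor between the associated planar bimodule categories preserving the base object, hence (by Cauchy completion and by the last remark of \S3.4 that unitary monoidally-equivalent planar bimodule categories over the same planar tensor categories are unitarily isomorphic) a unitary $\mathcal{A}-\mathcal{B}$ bimodule equivalence on completions; conversely a unitary $\mathcal{A}-\mathcal{B}$ bimodule equivalence mapping base object to base object restricts to the full subcategories on the $[m,n]$'s and thereby gives a compatible family of $*$-isomorphisms $\End([i,j])\to\End([i,j]')$, i.e., a $*$-isomorphism of the Markov lattices. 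The final sentence of the theorem, that tracial Markov lattices correspond to pivotal $\mathcal{A}-\mathcal{B}$ bimodule categories, follows by tracking the pivotal/spherical structure through the construction exactly as in Theorem \ref{thmx:ModuleEquivalence}: the composition of the left/right conditional expectations in $M$ is a trace precisely when the dual functors involved are unitary with spherical pivotal structure, which on the categorical side is the pivotality of $\mathcal{M}$.

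The main obstacle I expect is bookkeeping rather than a conceptual gap: one must confirm that the \emph{bimodule associator} of $\mathcal{M}$ is faithfully encoded, and recovered, by the biunitary-connection-type data implicit in the two families of commuting squares in a Markov lattice, and that the two opposite roles of $A$ (via $A^{\op}$, with transposed indices) and $B$ do not interfere --- in particular that the standard condition $[M_{i,j},A_{p,q}]=0$ for $i\le q\le p$ and $[M_{i,j},B_{k,l}]=0$ for $j\le k\le l$ is exactly what is needed to make the nine composition cases (C12)--(C31) associative with the left and right $\mathcal{A}_0$- and $\mathcal{B}_0$-actions, as recorded in Remark \ref{everything commute, for bimodule proof}. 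Since the diagrammatic proofs split into independent left-hand and right-hand halves, each identical to the already-verified Markov tower case, I expect no new difficulty beyond carefully matching indices; the details are left to the reader as in the analogous theorems above.
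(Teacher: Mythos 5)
Your proposal is correct and follows essentially the same route as the paper: the paper's proof of this theorem is precisely "combine the constructions of \S3.4--3.5 in both directions, as in the module case," and you have elaborated exactly those constructions, the round-trip checks, and the compatibility with the two equivalence relations. (The only stray item is your discussion of the tracial/pivotal sentence, which is not part of this in-text statement --- the paper defers that to \S\ref{Tracial Markov lattice} --- and your worry about the bimodule associator is resolved by the strictification via MacLane coherence that the paper performs before restricting to the full subcategory on the objects $[m,n]$.)
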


\begin{corollary}
Any Markov lattice $M$ with modulus $(d_0,d_1)$ and $\dim(M_{00})=1$ is naturally a standard $\mathrm{TLJ}(d_0)-\mathrm{TLJ}(d_1)$ bimodule, which corresponds to an indecomposable semisimple ${\rm C}^*$ $\mathcal{TLJ}(d_0)-\mathcal{TLJ}(d_1)$ bimodule category with a simple base object.
\end{corollary}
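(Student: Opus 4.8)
The plan is to deduce this corollary directly from Theorem \ref{thmx:BimoduleEquivalence} (the bimodule equivalence theorem proved just above) together with the characterization of $\mathcal{TLJ}(d)$ as the standard $\lambda$-lattice of the Temperley--Lieb--Jones algebra (Example \ref{TLJ lambda lattice}). The key observation is that $\mathrm{TLJ}(d)$ is, up to unital $*$-isomorphism, the \emph{unique} standard $\lambda$-lattice all of whose algebras are generated by the Jones projections, i.e.\ $A_{i,i}=A_{i,i+1}=\mathbb{C}$ and $A_{i,j}=\langle e_{i+1},\dots,e_{j-1}\rangle$ for $j-i\ge 2$. So the whole content of the corollary is: given a traceless Markov lattice $M=(M_{i,j})_{i,j\ge 0}$ of modulus $(d_0,d_1)$ with $\dim(M_{0,0})=1$, the subalgebras generated by the two families of Jones projections $(e_i)_{i\ge 1}$ and $(f_j)_{j\ge 1}$ inside $M$ realize $\mathrm{TLJ}(d_0)^{\mathrm{op}}$ and $\mathrm{TLJ}(d_1)$ as standard $\lambda$-lattices acting on $M$ as a standard bimodule, so that the hypotheses of Theorem \ref{thmx:BimoduleEquivalence} are met.

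First I would make precise the claim that $\mathrm{TLJ}(d)$ is a standard $\lambda$-lattice: this is Example \ref{TLJ lambda lattice}, and one only needs to observe that the defining relations (TLJ1)--(TLJ3) together with the Markov conditions (c1),(c2) are automatically satisfied by the abstract Temperley--Lieb--Jones algebra of modulus $d$, with the conditional expectations $E^r, E^l$ forced by the (faithful) Markov trace. Second, given the Markov lattice $M$, I would set $A_{i,j}$ (for $i\ge j$, matching the transposed/opposite indexing of Definition \ref{def ML std bimod}) to be the unital subalgebra of $M_{i,0}$ generated by $\{e_j,\dots,e_{i-1}\}$, and similarly $B_{i,j}$ to be the unital subalgebra of $M_{0,j}$ generated by $\{f_i,\dots,f_{j-1}\}$. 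By condition (b) of the Markov lattice definition we have $e_i\in M_{i+1,0}$ and $f_j\in M_{0,j+1}$, so these inclusions land where they should; the TLJ relations hold because they hold for the $(e_i)$ and $(f_j)$ as Jones projections of the two Markov-tower directions (Definition \ref{def traceless MT}, (M1)); and the commuting-square conditions, the conditional-expectation compatibilities $E^{M,l}_{i,0}|_{A_{i,0}}=E^{A,l}_{i,0}$, $E^{M,r}_{0,j}|_{B_{0,j}}=E^{B,r}_{0,j}$, and the standard conditions $[M_{i,j},A_{p,q}]=0$ for $i\le q\le p$ and $[M_{i,j},B_{k,l}]=0$ for $j\le k\le l$ all follow from Proposition \ref{Markov tower properties}(1) (a Jones projection $e_k$ commutes with everything in $M_n$ for $k\ge n+1$, applied in both the row and column directions) and Proposition \ref{Markov lattice prop 1}(2) ($[f_j,e_i]=0$). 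Thus $M$ becomes a traceless Markov lattice as a standard $A$--$B$ bimodule with $A\cong\mathrm{TLJ}(d_0)^{\mathrm{op}}$ (hence $A^{\mathrm{op}}\cong\mathrm{TLJ}(d_0)$) and $B\cong\mathrm{TLJ}(d_1)$.

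Finally I would invoke Theorem \ref{thmx:BimoduleEquivalence} with this $A$ and $B$: the left-hand side of the bijection now contains $M$, and its image is a pair $(\mathcal{M},Z)$ with $\mathcal{M}$ an indecomposable semisimple $\mathrm{C}^*$ $\mathcal{A}$--$\mathcal{B}$ bimodule category and $Z=1^+_{\mathcal{A}}\rhd Z\lhd 1^+_{\mathcal{B}}$ a simple base object, where by the Folklore Theorem / the construction of \S\ref{std lattice to planar cat} the categories $\mathcal{A}$ and $\mathcal{B}$ attached to $\mathrm{TLJ}(d_0)$ and $\mathrm{TLJ}(d_1)$ are precisely $\mathcal{TLJ}(d_0)$ and $\mathcal{TLJ}(d_1)$ (Definition \ref{TLJ(d) category}, whose endomorphism algebras are the 2-shaded Temperley--Lieb--Jones algebras — matching $A_{0,n}=\langle e_1,\dots,e_{n-1}\rangle=\End([n,+])$). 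The one point deserving a word is why \emph{any} Markov lattice, with no extra hypothesis, already carries these two TLJ structures; this is where one uses that $\dim M_{0,0}=1$ forces $A_{i,i}=B_{j,j}=\mathbb{C}$ and that the TLJ subalgebras are automatically unital in the right ambient $M_{i,j}$ via condition (b).

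The main obstacle I anticipate is purely bookkeeping rather than conceptual: one must be careful with the opposite/transposed indexing convention (Definition \ref{Def:opposite Aop} and the picture in Definition \ref{def ML std bimod}), making sure that the algebra generated by $\{e_1,\dots,e_{i-1}\}$ is identified with $A^{\mathrm{op}}_{i,0}$ rather than $A_{i,0}$, and checking that the standard conditions as stated ($i\le q\le p$ for the $A$-side) are exactly the ones that come for free from Proposition \ref{Markov tower properties}(1) applied in the column direction. There is no hard analysis here; the corollary is a specialization of Theorem \ref{thmx:BimoduleEquivalence} obtained by recognizing the two Jones-projection subalgebras as TLJ $\lambda$-lattices, and the write-up should be short.
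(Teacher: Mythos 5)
Your proposal is correct and follows exactly the route the paper intends: the paper states this corollary (like its module analogue, Corollary \ref{corollary: TLJ module}) without proof, as an immediate specialization of Theorem \ref{thmx:BimoduleEquivalence} once one recognizes that the two families of Jones projections generate copies of $\mathrm{TLJ}(d_0)^{\op}$ and $\mathrm{TLJ}(d_1)$ inside $M$, with the compatibility and standardness conditions coming for free from Proposition \ref{Markov tower properties}(1) and Proposition \ref{Markov lattice prop 1}. Your write-up supplies more detail than the paper does (modulo a harmless off-by-one in the generating set, which should be $\{e_{j+1},\dots,e_{i-1}\}$ to match Example \ref{TLJ lambda lattice} under the opposite indexing), and the approach is the same.
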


\begin{remark}
The tracial case will be discussed in \S\ref{Tracial Markov lattice}.
\end{remark}

\section{Markov towers, 
bigraded Hilbert spaces,
and balanced fair graphs
}
\label{Cpt 4}

In this Chapter, as an application, we are going to classify all indecomposable semisimple $\mathcal{TLJ}-$modules (see Corollary \ref{corollary: TLJ module}) to get Markov tower, which are also the same as balanced $d$-fair bipartite graphs \cite{DY15}. We will explain exactly how these two classifications agree by directly constructing the correspondence passing through the 2-category \textsf{BigHilb} \cite{FP19}. Although this is known \cite{DY15,FP19}, we explain in detail here so that we are able to do the bimodules in \S\ref{Cpt 5} below.

\subsection{Balanced $d$-fair bipartite graph}
In \cite{DY15}, the authors classify unshaded unoriented $\mathcal{TLJ}(d)-$modules in terms of the combinatorial data of fair and balanced graphs.
This classification was generalized to $\mathcal{TLJ}(\Gamma)-$modules in \cite{FP19}, where $\mathcal{TLJ}(\Gamma)$ is a generalized Temperley-Lieb-Jones category associated to a weighted bidirected graph $\Gamma$.
We will be interested in the special case of 2-shaded $\mathcal{TLJ}(d)-$modules.

\begin{notation}
Let $\Lambda$ be a graph where $V(\Lambda)$ is the set of vertices and $E(\Lambda)$ is the set of edges.
Let $s,t: E(\Lambda) \to V(\Lambda)$ be the source and target functions respectively.
\end{notation}

\begin{definition} \label{Def:d-fairness and balanced}
Let $\Lambda$ be a 
bipartite graph with vertices $V(\Lambda)=V_0\sqcup V_1$ and $\{e|s(e),t(e)\in V_i\}=\varnothing$, $i=0,1$. Let $\omega:E(\Lambda)\to (0,\infty)$ be the weighting on the edges of graph \cite{FP19}.

We call $(\Lambda,\omega)$ a $d$\textbf{-fair} graph if for each $P\in V_0$, $Q\in V_1$
$$\sum_{\{e|s(e)=P\}}w(e)=\sum_{\{e|s(e)=Q\}}w(e)=d.$$

We call $(\Lambda, \omega)$ a \textbf{balanced} graph if there exists an involution $(\overline{\cdot})$ on $E(\Lambda)$ that switches sources and targets for each $e\in E(\Lambda)$ and 
$$\omega(e)\omega(\overline{e})=1.$$
\end{definition}

\begin{proposition}\label{uniform boundedness for graph}
Suppose $(\Lambda,\omega)$ is a balanced $d$-fair bipartite graph. 
Then the graph is locally finite, i.e., the number of edges coming in or out of any vertex is uniformly bounded:
$$\#\{e:s(e)=P\}=\#\{e:t(e)=P\}\le d^2 \qquad\text{ for any vertex }P.$$
\end{proposition}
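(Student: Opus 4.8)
The plan is to exploit the two structural hypotheses on $(\Lambda,\omega)$ — $d$-fairness and balancedness — together with a simple inequality on weights. Fix a vertex $P$; without loss of generality $P\in V_0$. By $d$-fairness we have $\sum_{\{e:s(e)=P\}}\omega(e)=d$. The balancedness involution switches sources and targets and satisfies $\omega(e)\omega(\overline e)=1$, so in particular $\overline{(\cdot)}$ restricts to a bijection between $\{e:s(e)=P\}$ and $\{e:t(e)=P\}$; this immediately gives the asserted equality $\#\{e:s(e)=P\}=\#\{e:t(e)=P\}$, and reduces the whole claim to bounding $\#\{e:s(e)=P\}$.

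Next I would observe that for any edge $e$ with $s(e)=P$, the partner $\overline e$ has $s(\overline e)=t(e)\in V_1$, and it too is one of the edges counted by the $d$-fairness condition at its source vertex $t(e)$: thus $\omega(\overline e)\le\sum_{\{e':s(e')=t(e)\}}\omega(e')=d$, whence $\omega(e)=\omega(\overline e)^{-1}\ge d^{-1}$. So every edge emanating from $P$ has weight at least $d^{-1}$. Combining this lower bound with $\sum_{\{e:s(e)=P\}}\omega(e)=d$ yields
\[
d^{-1}\cdot\#\{e:s(e)=P\}\ \le\ \sum_{\{e:s(e)=P\}}\omega(e)\ =\ d,
\]
so $\#\{e:s(e)=P\}\le d^2$, as desired. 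The argument for $P\in V_1$ is symmetric, using the $d$-fairness condition at vertices of $V_1$.

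There is essentially no obstacle here beyond being careful about one point: that the balanced involution genuinely restricts to a bijection of the edge-set at $P$ with the edge-set at $P$ "on the other side," i.e.\ that $\overline{(\cdot)}$ is defined on all of $E(\Lambda)$ and maps $\{e:s(e)=P\}$ onto $\{e:t(e)=P\}$. This is immediate from Definition~\ref{Def:d-fairness and balanced}, since the involution switches sources and targets and is an involution on the full edge set. The only mild subtlety is whether a priori the sums defining $d$-fairness are finite; but that is built into the definition (the sums are required to equal $d<\infty$), so each such sum is a finite sum of positive reals and the termwise lower bound $\omega(e)\ge d^{-1}$ forces finiteness of the index set with the stated cardinality bound. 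Hence local finiteness follows, and $d^2$ is the explicit uniform bound.
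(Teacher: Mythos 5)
Your proof is correct and follows essentially the same route as the paper: both arguments use balancedness to transfer the $d$-fairness bound at the opposite endpoint back to $P$, the only cosmetic difference being that you derive the uniform lower bound $\omega(e)\ge d^{-1}$ for every edge at $P$, whereas the paper applies the same reasoning only to a minimum-weight edge $e_0$ with $\omega(e_0)\le d/N$ obtained by pigeonhole. Your version has the minor merit of also explicitly justifying the equality $\#\{e:s(e)=P\}=\#\{e:t(e)=P\}$ via the involution, which the paper leaves implicit.
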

\begin{proof}
Suppose $P$ has $N$ edges, then there exists an edge $e_0:P\to Q$ such that $\omega(e_0)\le \frac{d}{N}$ and hence $\omega(\overline{e_0})=\frac{1}{\omega(e_0)}\ge \frac{N}{d}$. Note that $$d=\sum_{\{e|s(e)=Q\}}\omega(e)\ge \omega(\overline{e_0})\ge \frac{N}{d},$$
which follows that $N\le d^2<\infty$.
\end{proof} 

\begin{definition}\label{Def:graph automorphism}
We call $\theta:(\Lambda,\omega)\to(\Lambda',\omega')$ an 
isomorphism of edge-weighted graphs
if
$\theta$ is a graph isomorphism and $\omega'(\theta(e))=\omega(e)$ for each $e\in E(\Lambda)$.
\end{definition}

\subsection{\textsf{BigHilb} and 2-subcategory $\mathcal{C}(K,\ev_K)$}

\begin{definition}\label{Def:Hilb UxV}
Let $U,V$ be countable sets. Define a category $\textsf{Hilb}^{U\times V}_f$ as follows:
\begin{compactenum}[(a)]
\item Object: $U\times V-$bigraded Hilbert spaces 
$$H=\bigoplus_{\substack{u\in U \\ v\in V}}H_{uv},$$
where $H_{uv}$ is finite dimensional for each pair $(u,v)$, and only finite many $H_{uv}$ is non-trivial for each fixed $u\in U$ or each fixed $v\in V$. 
\item Morphism: The morphisms are defined as uniformly bounded operators 
$$f=\bigoplus_{\substack{u\in U \\ v\in V}}f_{uv}:H\to G,$$
where $f_{uv}:H_{uv}\to G_{uv}$ are morphisms in $\textsf{Hilb}_f$, the category of finitely dimensional Hilbert spaces. Uniformly boundedness means
$$\sup_{\substack{u\in U \\ v\in V}}\|f_{uv}\|<\infty.$$
\item The composition: For morphisms $f,g$, define the composition entry-wisely as 
$$g\circ f:=\bigoplus_{\substack{u\in U \\ v\in V}}g_{uv}\circ f_{uv}.$$
\item The identity morphism: Define the identity morphism $\id_H:H\to H$ as 
$$\id_H:=\bigoplus_{\substack{u\in U \\ v\in V}}\id_{H_{uv}},$$
where $\id_{H,uv}=\id_{H_{uv}}$ is the identity map on $H_{uv}$. 
\end{compactenum}
\end{definition}

\begin{definition}\label{Def:BigHilb}
Let $\textsf{BigHilb}$ be a dagger 2-category defined as follows:
\begin{compactenum}[(a)]
\item Object: Countable sets.
\item For objects $U,V$, $\Hom(U,V)=\textsf{Hilb}^{U\times V}_f$.
\item The composition of 1-morphisms: For 1-morphisms $H:U\to V$, $G:V\to W$, the composition of $U,V$ denoted by $\otimes$ is defined as 
$$G\circ H=H\otimes G:=\bigoplus_{\substack{u\in U \\ w\in W}}\bigoplus_{v\in V}H_{uv}\otimes G_{vw}:U\to W,$$
where the $\otimes$ on the right hand side is the tensor product of Hilbert spaces. The operator is analogous to matrix multiplication, the product is replaced by tensor product and the sum is replaced by direct sum. Clearly, $(H\otimes G)\otimes L=H\otimes (G\otimes L).$
\item The identity 1-morphism: For an object $U$, the identity 1-morphism $\mathbb{C}^{|U|}\in \Hom(U,U)$ is defined as 
$$\mathbb{C}^{|U|}:=\bigoplus_{u,v\in U}\delta_{u=v}\cdot \mathbb{C}.$$
\item The dual 1-morphism: For 1-morphism $H=\bigoplus\limits_{\substack{u\in U \\ v\in V}}H_{uv}:U\to V$, define its dual as 
$$\overline{H}:=\bigoplus\limits_{\substack{v\in V \\ u\in U}}\overline{H}_{vu}:V\to U,$$
where $\overline{H}_{vu}:=\overline{H_{uv}}$ and $\overline{H_{uv}}$ is the complex conjugate Hilbert space of $H_{uv}$. 
\item Tensor product of 2-morphisms. Let $H_1,H_2:U\to V$, $G_1,G_2:V\to W$, and $f:H_1\to H_2$, $g:G_1\to G_2$, define $f\otimes g$ as
$$(f\otimes g)_{uw}:=\bigoplus_{v\in V}f_{uv}\otimes g_{vw}:\bigoplus_{v\in V}H_{1,uv}\otimes G_{1,vw}\to \bigoplus_{v\in V}H_{2,uv}\otimes G_{2,vw}.$$
Clearly, $(f\otimes g)\otimes h=f\otimes (g\otimes h)$.

\item Dagger structure: For a 2-morphism $f=\bigoplus_{u,v}f_{uv}:H\to G$, define its adjoint $f^\dagger:=\bigoplus_{u,v}f_{uv}^*:G\to H$, where $f_{uv}^*$ is the adjoint of $f_{uv}$ as a bounded linear map. Clearly, $(f^\dagger)^\dagger=f$.
\end{compactenum}

\end{definition}

\begin{definition}
We call a 1-morphism $H:U\to V$ \textbf{dualizable}, if there exist evaluation and coevaluation 2-morphisms $\ev_H:\overline{H}\otimes H\to \mathbb{C}^{|V|}$ and $\coev_H:\mathbb{C}^{|U|}\to H\otimes \overline{H}$ meeting the \textbf{zigzag condition}:
\begin{align*}
    (\id_H\otimes \ev_{H})\circ (\coev_{H}\otimes \id_H) &= \id_H\\
    (\ev_{H}\otimes \id_{\overline{H}})\circ (\id_{\overline{H}}\otimes \coev_{H}) &= \id_{\overline{H}}.
\end{align*}
\end{definition}

We are going to discuss the evaluation and coevaluation $\ev_H$ and $\coev_H$ in more details.

\begin{definition}\label{Def:C,D for ev,coev} Note that $\ev_{H,uv}:\bigoplus_{w}\overline{H}_{uw}\otimes H_{wv}=(\overline{H}\otimes H)_{uv}\to (\mathbb{C}^{|V|})_{uv}=\delta_{u=v}\cdot\mathbb{C}$, only $\ev_{H,vv}$ is nonzero for $v\in V$.
Let $C_{H,vu}:\overline{H}_{vu}\otimes H_{uv}=\overline{H_{uv}}\otimes H_{uv}\to \mathbb{C}$ such that $\ev_{H,vv}=\bigoplus_{u\in U}C_{H,vu}$.
Similarly, only $\coev_{H,uu}:\mathbb{C}\to (H\otimes \overline{H})_{uu}=\bigoplus_{v\in V}H_{uv}\otimes \overline{H}_{vu}$ is nonzero for $u\in U$. Let $D_{H,uv}:\mathbb{C}\to H_{uv}\otimes \overline{H}_{vu}=H_{uv}\otimes \overline{H_{uv}}$ such that $\coev_{H,uu}=\bigoplus_{v\in V}D_{H,uv}$.

Then
\begin{align*}
   \id_{H,uv} & = ((\id_H\otimes \ev_{H})\circ (\coev_{H}\otimes \id_H))_{uv} \\
   & = (\id_H\otimes \ev_{H})_{uv} \circ (\coev_{H}\otimes \id_H)_{uv}\\
   & = \left(\bigoplus_{w\in V}\id_{H,uw}\otimes\ev_{H,wv}\right)\circ \left(\bigoplus_{t\in U}\coev_{H,ut}\otimes\id_{H,tv}\right) \\
   & = (\id_{H,uv}\otimes \ev_{H,vv})\circ (\coev_{H,uu}\otimes \id_{H,uv})\\
   & = (\id_{H,uv}\otimes C_{H,vu})\circ (D_{H,uv}\otimes \id_{H,uv})
\end{align*}
for $u\in U,\ v\in V$. 
Similarly,
$$\id_{\overline{H},vu}=(\ev_{H,vv}\otimes \id_{\overline{H},vu})\circ (\id_{\overline{H},vu}\otimes \coev_{H,uu})=(C_{H,vu}\otimes \id_{\overline{H},vu})\circ (\id_{\overline{H},vu}\otimes D_{H,uv}),$$
for $v\in V,u\in U$.
\end{definition}

\begin{remark}
$\ev_H$ and $\coev_H$ are completely determined by $C_{H,uv}$ and $D_{H,uv}$.
\end{remark}

\begin{definition}\label{C(K,w) def}
Let $\mathcal{C}(K,\ev_K)=\mathcal{C}(K,\ev_K,\coev_K)$ be a 2-subcategory of \textsf{BigHilb} with a 1-morphism generator $K:V_0\to V_1$ and distinguished 2-morphisms evaluation and coevaluation $\ev_K,\coev_K$. We require that 
\begin{compactenum}[(a)]
\item $K$ is dualizable. 
\item The evaluation and coevaluation for the dual $\overline{K}$: $$\ev_{\overline{K}}:= (\coev_K)^\dagger\qquad\text{and}\qquad \coev_{\overline{K}}:=(\ev_K)^\dagger.$$
\item They satisfy the \textbf{$d-$fairness condition}, namely,
\begin{align*}
    \ev_{\overline{K}}\circ \coev_K = d\cdot \id_{\mathbb{C}^{|V_0|}} &\qquad \ev_{K}\circ \coev_{\overline{K}} = d\cdot \id_{\mathbb{C}^{|V_1|}}. 
\end{align*}
\end{compactenum}
In other words,
$$C_{\overline{K},uv}=(D_{K,uv})^\dagger \qquad D_{\overline{K},vu}=(C_{K,vu})^\dagger,$$
and 
\begin{align*}
    \text{For each }P\in V_0,&\ \sum_{Q\in V_1}C_{\overline{K},PQ}\circ D_{K,PQ} = d\cdot \id_{\mathbb{C}} \\
    \text{For each }Q\in V_1,&\ \sum_{P\in V_0}C_{K,QP}\circ D_{\overline{K},QP} = d\cdot\id_{\mathbb{C}},
\end{align*}

Here, the 1-morphism generator means all the 1-morphism is Cauchy generated by $K$ and $\overline{K}$. 
\end{definition}

\begin{remark}
$\coev_K,\ev_{\overline{K}}$ and $\coev_{\overline{K}}$ are determined by $\ev_K$ in $\mathcal{C}(K,\ev_K)$.
\end{remark}

\begin{proposition}The followings are some properties of $\mathcal{C}(K,\ev_K)$.
\begin{compactenum}[\rm (1)]
\item Let $V=V_0\sqcup V_1$, then all the 1-morphisms in $\mathcal{C}(K,\ev_K)$, including $K,\overline{K}$, can be regarded as $V\times V-$bigraded Hilbert spaces. So we can regard $\mathcal{C}(K,\ev_K)$ as a 2-category with one object $V$.
Then all the 2-morphisms can be regarded as $V\times V-$bigraded uniformly bounded operators.

If $(P,Q)\not\in V_0\times V_1$, then $K_{PQ}=\overline{K}_{QP}=0$, which follows that $C_{K,QP}=D_{K,PQ}=0$. The zigzag condition between them still hold. 

\item All the 1-morphisms in $\mathcal{C}(K,\ev_K)$ are dualizable.
\item $\sup_{P\in V_0,Q\in V_1}\dim (K_{PQ})<\infty$. In fact, we will see $\sup_{P\in V_0,Q\in V_1}\dim (K_{PQ})\le d^2$ in the next section \S\ref{2-Hilb and edge weighting} together with Proposition \ref{uniform boundedness for graph}.

\item There exist standard spherical evaluation and coevaluation in 2-morphisms: 
\begin{align*}
    \ev_K^{\text{st}}:\overline{K}\otimes K\to \mathbb{C}^{|V_1|}  & \qquad \coev_K^{\text{st}}:\mathbb{C}^{|V_0|}\to K\otimes \overline{K}\\
    \ev_{\overline{K}}^{\text{st}}:=(\coev_K)^\dagger  & \qquad \coev_{\overline{K}}^{\text{st}}:=(\ev_K)^\dagger.
\end{align*}

In more details, 
Let $\{\epsilon_i\}_{i=1}^k$ be the orthonormal basis $($ONB\,$)$ of $K_{PQ}$ and $\{\epsilon_i^*\}$ be the dual basis of $\overline{K_{PQ}}$, $P\in V_0,\ Q\in V_1$ then
\begin{align*}
    C_{K,QP}^{\text{st}}:\overline{K}_{QP}\otimes K_{PQ}=\overline{K_{PQ}}\otimes K_{PQ}\to \mathbb{C} & \qquad D_{K,ab}^{\text{st}}:\mathbb{C}\to K_{PQ}\otimes \overline{K}_{QP}=K_{PQ}\otimes \overline{K_{PQ}}\\
    C_{\overline{K},PQ}^{\text{st}}:= (D_{K,PQ}^{\text{st}})^\dagger & \qquad D_{\overline{K},QP}^{\text{st}}:= (C_{K,QP}^{\text{st}})^\dagger
\end{align*}
are defined as
\begin{align*}
    C_{K,QP}^{\text{st}} : \epsilon_i^*\otimes\epsilon_j\mapsto \delta_{i=j} \qquad
    D_{K,PQ}^{\text{st}} : 1\mapsto \sum_{i=1}^k \epsilon_i\otimes \epsilon_i^*.
\end{align*}
 
Note that $\ev_K^{\text{st}}$ and $\coev_K^\text{st}$ are well-defined 2-morphisms because of $(3)$, and the definitions of $\ev^{\text{st}}_K$ and $\coev^{\text{st}}_K$ do not depend on the choice of ONB on each $K_{PQ}$ and they also meet the zigzag condition.. 
\end{compactenum}
\end{proposition}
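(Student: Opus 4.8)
The plan is to verify the four listed properties in order, each reducing to an elementary computation with the zigzag relations of Definition~\ref{Def:C,D for ev,coev}. For~(1), the idea is bookkeeping: any 1-morphism in $\mathcal{C}(K,\ev_K)$ is a Cauchy summand of an alternating tensor product $K\otimes\overline K\otimes K\otimes\cdots$, and such a tensor product of $V_0\times V_1$- and $V_1\times V_0$-graded spaces is naturally $V\times V$-graded with $V=V_0\sqcup V_1$; I would observe that this extension of the grading is strictly functorial (zero blocks go to zero blocks), so the whole 2-category sits inside the one-object 2-category of $V\times V$-bigraded uniformly bounded operators. The vanishing $K_{PQ}=\overline K_{QP}=0$ off $V_0\times V_1$ forces $C_{K,QP}=D_{K,PQ}=0$ there by Definition~\ref{Def:C,D for ev,coev}, and the zigzag identities hold trivially on those blocks since every term is zero.

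For~(2), I would argue by induction on tensor length: $K$ and $\overline K$ are dualizable by hypothesis (Definition~\ref{C(K,w) def}(a) together with $\ev_{\overline K}=\coev_K^\dagger$, $\coev_{\overline K}=\ev_K^\dagger$), $\mathbb{C}^{|V|}$ is self-dual, and a tensor product of dualizable 1-morphisms is dualizable with $\overline{H\otimes G}=\overline G\otimes\overline H$ and the obvious composite $\ev$ and $\coev$ — this is the standard rigidity computation in any 2-category. Finally a Cauchy (orthogonal) summand of a dualizable object is dualizable by compressing the evaluation and coevaluation through the relevant projections. For~(3), I would combine the zigzag condition at the block $(P,Q)$, namely $\id_{K_{PQ}} = (\id_{K_{PQ}}\otimes C_{K,QP})\circ(D_{K,PQ}\otimes\id_{K_{PQ}})$, with the $d$-fairness relation $\sum_{Q\in V_1} C_{\overline K,PQ}\circ D_{K,PQ} = d\cdot\id_{\mathbb C}$. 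Tracing the zigzag identity gives $\dim(K_{PQ}) = C_{K,QP}\circ(\text{flip})\circ D_{K,PQ}$ up to the pairing with $C_{\overline K}$; more precisely, positivity of $C_{\overline K,PQ}\circ D_{K,PQ} = D_{K,PQ}^\dagger\circ D_{K,PQ} = \|D_{K,PQ}\|^2$ and the zigzag relation force $\dim(K_{PQ}) \le \|D_{K,PQ}\|^2 \le \sum_{Q'} \|D_{K,PQ'}\|^2 = d$, so in fact $\dim(K_{PQ})\le d$, hence certainly uniformly bounded; I will defer the sharper bound $d^2$ to \S\ref{2-Hilb and edge weighting} as the statement indicates.

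For~(4), the construction of $\ev_K^{\mathrm{st}}$ and $\coev_K^{\mathrm{st}}$ is explicit via the formulas given: pick an ONB $\{\epsilon_i\}$ of each $K_{PQ}$, set $C_{K,QP}^{\mathrm{st}}(\epsilon_i^*\otimes\epsilon_j)=\delta_{ij}$ and $D_{K,PQ}^{\mathrm{st}}(1)=\sum_i\epsilon_i\otimes\epsilon_i^*$, and define the dual evaluations by dagger. I would check: (i) well-definedness as a 2-morphism, i.e.\ uniform boundedness — this is exactly where~(3) is used, since $\|C_{K,QP}^{\mathrm{st}}\|=1$ but $\|D_{K,PQ}^{\mathrm{st}}\|=\sqrt{\dim K_{PQ}}$, bounded by $\sqrt d$; (ii) independence of the ONB choice, which follows because $\sum_i\epsilon_i\otimes\epsilon_i^*$ is the canonical element of $K_{PQ}\otimes\overline{K_{PQ}}\cong\mathrm{End}(K_{PQ})$ corresponding to the identity, invariant under change of basis; (iii) the zigzag identities, which reduce blockwise to the finite-dimensional computation $(\id\otimes C^{\mathrm{st}})\circ(D^{\mathrm{st}}\otimes\id)=\id$ on each $K_{PQ}$ — a one-line check with the explicit formulas. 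I expect the main obstacle to be purely organizational: keeping the $V_0/V_1$ shading conventions and the source/target direction of $K$ versus $\overline K$ straight throughout, so that the block indices in the zigzag relations line up correctly; there is no deep content, but the notation is dense. Sphericality of $\ev_K^{\mathrm{st}}$ in the sense needed later is immediate from the symmetry of the canonical element, though I would only state it here and invoke it when needed.
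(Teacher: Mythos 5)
The paper states this proposition without proof, treating (1), (2) and (4) as routine and explicitly deferring the quantitative content of (3) to \S4.1 and \S4.3; your arguments for (1), (2) and (4) are the standard ones and are fine. The one genuine gap is in your inline argument for (3). You assert that positivity of $C_{\overline K,PQ}\circ D_{K,PQ}=\|D_{K,PQ}\|^2$ together with the zigzag relation forces $\dim(K_{PQ})\le\|D_{K,PQ}\|^2$. That inequality does not follow. Identify $D_{K,PQ}(1)\in K_{PQ}\otimes\overline{K_{PQ}}$ with an operator $A$ on $K_{PQ}$ and $C_{K,QP}$ with an operator $B$; the zigzag condition says only that $BA=\id_{K_{PQ}}$, so nothing prevents $A=\epsilon\cdot\id$ for small $\epsilon>0$ (with $B=\epsilon^{-1}\id$), in which case $\|D_{K,PQ}\|^2=\epsilon^{2}\dim(K_{PQ})<\dim(K_{PQ})$. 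A single fairness condition, which only controls $\|A\|$ from above, cannot bound the dimension; you must also use the fairness condition for $\overline K$, which controls $\|B\|$.

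The repair is the same Cauchy--Schwarz argument as in the proof of Proposition \ref{uniform boundedness for graph} that the paper points to. The two fairness conditions give $\sum_{Q}\|A\|_{HS}^{2}=d$ (sum over $Q$ adjacent to a fixed $P$) and $\sum_{P}\|B\|_{HS}^{2}=d$ (sum over $P$ adjacent to a fixed $Q$), hence $\|A\|_{HS}^{2}\le d$ and $\|B\|_{HS}^{2}\le d$ for each fixed pair $(P,Q)$, and then
$\dim(K_{PQ})=\Tr(BA)\le\|A\|_{HS}\,\|B\|_{HS}\le d$.
This actually gives the per-block bound $\dim(K_{PQ})\le d$ (the paper's $d^{2}$ bounds the total valence $\sum_{Q}\dim(K_{PQ})$ at a vertex), and it is exactly what your step (i) in part (4) needs to conclude that $\|D^{\mathrm{st}}_{K,PQ}\|=\sqrt{\dim K_{PQ}}$ is uniformly bounded. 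With this substitution the rest of your proposal goes through.
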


\begin{notation}

Now, we use the graphic calculus to describe $\mathcal{C}(K,\ev_K)$. The idea is from the graphical calculus for 2-\textsf{Hilb} \cite{RV16}. However, in their paper, they only care about the case when $\ev=\ev^\text{st}$ and $\coev=\coev^\text{st}$, which is not necessarily true in our context.

First we provide the single object version:
\begin{compactenum}[(1)]
\item For $P\in V_0,Q\in V_1$, $C_{\overline{K},PQ}$, $D_{\overline{K},QP}$, $C^{\text{st}}_{\overline{K},PQ}$ and $D^{\text{st}}_{\overline{K},QP}$.
\[
\begin{tikzpicture}[baseline=0cm]
\draw (0,1) arc (180:0:.5cm);

\node at (0,.8) {\tiny{$K_{PQ}$}};
\node at (1,.8) {\tiny{$\overline{K}_{QP}$}};
\node at (-.6,1.1) {$P$};
\node at (.5,1.1) {$Q$};
\node at (0.2,0) {\tiny{$C_{\overline{K},PQ}:K_{PQ}\otimes \overline{K}_{QP}\to\mathbb{C}$}};
\end{tikzpicture}  
\qquad
\begin{tikzpicture}[baseline=0cm]   
\draw (0,1.3) arc (-180:0:.5cm);  

\node at (0,1.5) {\tiny{$\overline{K}_{QP}$}}; 
\node at (1,1.5) {\tiny{$K_{PQ}$}}; 
\node at (-.6,1.1) {$Q$};
\node at (.5,1.1) {$P$};
\node at (.2,0) {\tiny{$D_{\overline{K},QP}:\mathbb{C}\to \overline{K}_{QP}\otimes K_{PQ}$}};
\end{tikzpicture}
\qquad
\begin{tikzpicture}[baseline=0cm]
\draw[green1,thick] (0,1) arc (180:0:.5cm);

\node at (0,.8) {\tiny{$K_{PQ}$}};
\node at (1,.8) {\tiny{$\overline{K}_{QP}$}};
\node at (-.6,1.1) {$P$};
\node at (.5,1.1) {$Q$};
\node at (0.2,0) {\tiny{$C^{\text{st}}_{\overline{K},PQ}:K_{PQ}\otimes \overline{K}_{QP}\to\mathbb{C}$}};
\end{tikzpicture}  
\qquad
\begin{tikzpicture}[baseline=0cm]   
\draw[green1,thick] (0,1.3) arc (-180:0:.5cm);  

\node at (0,1.5) {\tiny{$\overline{K}_{QP}$}}; 
\node at (1,1.5) {\tiny{$K_{PQ}$}}; 
\node at (-.6,1.1) {$Q$};
\node at (.5,1.1) {$P$};
\node at (.2,0) {\tiny{$D^{\text{st}}_{\overline{K},QP}:\mathbb{C}\to \overline{K}_{QP}\otimes K_{PQ}$}};
\end{tikzpicture}
\]
\item Rigidity: 
\[
\begin{tikzpicture}[baseline=0cm]
\draw (0,.3) arc (-180:0:.3cm);
\draw (0,0.3) -- (0,1.6);
\draw (.6,.3) -- (.6,1.3);
\draw (1.2,0) -- (1.2,1.3);
\draw (.6,1.3) arc (180:0:.3cm);

\node at (-.5,.8) {$P$};
\node at (1.5,.8) {$Q$};
\node at (2.5,.8) {$P$};
\node at (3.5,.8) {$Q$};
\node at (4.5,.8) {$P$};
\node at (6.5,.8) {$Q$};

\node at (2,.8) {$=$};

\draw (3,0) -- (3,1.6);

\node at (4,.8) {$=$};

\draw (5.6,.3) arc (-180:0:.3cm);
\draw (5,0) -- (5,1.3);
\draw (5.6,.3) -- (5.6,1.3);
\draw (6.2,0.3) -- (6.2,1.6);
\draw (5,1.3) arc (180:0:.3cm);

\end{tikzpicture}
\qquad
\begin{tikzpicture}[baseline=0cm]
\draw[green1,thick] (0,.3) arc (-180:0:.3cm);
\draw[green1,thick] (0,0.3) -- (0,1.6);
\draw[green1,thick] (.6,.3) -- (.6,1.3);
\draw[green1,thick] (1.2,0) -- (1.2,1.3);
\draw[green1,thick] (.6,1.3) arc (180:0:.3cm);

\node at (-.5,.8) {$P$};
\node at (1.5,.8) {$Q$};
\node at (2.5,.8) {$P$};
\node at (3.5,.8) {$Q$};
\node at (4.5,.8) {$P$};
\node at (6.5,.8) {$Q$};

\node at (2,.8) {$=$};

\draw[green1,thick] (3,0) -- (3,1.6);

\node at (4,.8) {$=$};

\draw[green1,thick] (5.6,.3) arc (-180:0:.3cm);
\draw[green1,thick] (5,0) -- (5,1.3);
\draw[green1,thick] (5.6,.3) -- (5.6,1.3);
\draw[green1,thick] (6.2,0.3) -- (6.2,1.6);
\draw[green1,thick] (5,1.3) arc (180:0:.3cm);
\end{tikzpicture}
\]
\item $d$-fairness. For $P\in V$,
\[
\begin{tikzpicture}[baseline=0cm]
\node at (5,1.3) {$\sum\limits_{Q\in V}$}; 
   
\draw (6,1.5) arc (180:0:.5cm);  
\draw (6,1.5) arc (-180:0:.5cm); 
\draw[dashed] (8.3,1) rectangle (9.3,2);

\node at (5.6,1.5) {$P$}; 
\node at (6.5,1.5) {$Q$}; 
\node at (7.8,1.5) {= $d\cdot$};
\node at (8.8,1.5) {$P$};
\end{tikzpicture}
\]
\end{compactenum}

Then the graphical calculus version: In the $n$-category setting, $n$-morphisms are n-morphisms are used to label codimension $n$ cells of an $n$-manifold.
So here, 0-morphisms in \textsf{BigHilb} label regions of the plane, 1-morphisms label strings from left to right, and 2-morphisms label tickets (including ev and coev) from bottom to top. Shading is just shorthand for the labelling. 
The unshaded region indicates the object $V_0$ and the shaded region indicates $V_1$.

\begin{compactenum}[(1)]
\item $\coev_K$, $\ev_{K}$, $\coev_{\overline{K}}^\text{st}$ and $\ev_{\overline{K}}^\text{st}$.
\[
\begin{tikzpicture}
\fill[white] (0,0) rectangle (1.8,1);
\filldraw[gray1] (.4,1) arc (-180:0:.5);

\draw (.4,1) arc (-180:0:.5);

\draw[dashed] (0,0) rectangle (1.8,1);
\node at (.9,-.4) {\tiny{$\coev_K:\mathbb{C}^{|V_0|}\to K\otimes\overline{K}$}};
\end{tikzpicture}
\qquad
\begin{tikzpicture}
\fill[gray1] (9,0) rectangle (10.8,1);
\filldraw[white] (9.4,0) arc (180:0:.5);

 \draw (9.4,0) arc (180:0:.5);

\draw[dashed] (9,0) rectangle (10.8,1);
\node at (9.9,-.4) {\tiny{$\ev_{K}:\overline{K}\otimes K\to\mathbb{C}^{|V_1|}$}};
\end{tikzpicture}
\qquad
\begin{tikzpicture}
\fill[gray1] (6,0) rectangle (7.8,1);
\filldraw[white] (6.4,1) arc (-180:0:.5);

\draw[green1,thick] (6.4,1) arc (-180:0:.5);

\draw[dashed] (6,0) rectangle (7.8,1);
\node at (6.9,-.4) {\tiny{$\coev_{\overline{K}}^\text{st}:\mathbb{C}^{|V_1|}\to \overline{K}\otimes K$}};
\end{tikzpicture}
\qquad
\begin{tikzpicture}
\fill[white] (3,0) rectangle (4.8,1);
\filldraw[gray1] (3.4,0) arc (180:0:.5);

\draw (3.4,0)[green1,thick] arc (180:0:.5);

\draw[dashed] (3,0) rectangle (4.8,1);
\node at (3.9,-.4) {\tiny{$\ev_{\overline{K}}^\text{st}:K\otimes\overline{K}\to\mathbb{C}^{|V_0|}$}};
\end{tikzpicture}
\]
\item Rigidity: 
\[
\begin{tikzpicture}
\fill[white] (0,0) rectangle (0.3,1.2);
\fill[white] (0.3,0.6) rectangle (1.5,1.2);
\fill[gray1] (0.3,0.6) rectangle (1.5,0);
\fill[gray1] (1.5,0) rectangle (1.8,1.2);

\filldraw[white] (.9,0.6) arc (-180:0:.3);
\filldraw[gray1] (.3,0.6) arc (180:0:.3);

\draw (.9,0.6) arc (-180:0:.3);
\draw (.3,0.6) arc (180:0:.3);
\draw (.3,0) -- (.3,.6);
\draw (1.5,.6) -- (1.5,1.2);

\draw[dashed] (0,0) rectangle (1.8,1.2);
\node at (2.1,.6) {$=$};
\fill[white] (2.4,0) rectangle (3.3,1.2);
\fill[gray1] (3.3,0) rectangle (4.2,1.2);
\draw (3.3,0) -- (3.3,1.2);
\draw[dashed] (2.4,0) rectangle (4.2,1.2);
\node at (4.5,.6) {$=$};
\fill[white] (4.8,0) rectangle (5.1,1.2);
\fill[gray1] (5.1,0.6) rectangle (6.3,1.2);
\fill[white] (5.1,0.6) rectangle (6.3,0);
\fill[gray1] (6.3,0) rectangle (6.6,1.2);

\filldraw[gray1] (5.1,0.6) arc (-180:0:.3);
\filldraw[white] (5.7,0.6) arc (180:0:.3);

\draw (5.1,0.6) arc (-180:0:.3);
\draw (5.7,0.6) arc (180:0:.3);
\draw (6.3,0) -- (6.3,.6);
\draw (5.1,.6) -- (5.1,1.2);

\draw[dashed] (4.8,0) rectangle (6.6,1.2);
\end{tikzpicture}
\qquad
\begin{tikzpicture}
\fill[white] (0,0) rectangle (0.3,1.2);
\fill[white] (0.3,0.6) rectangle (1.5,1.2);
\fill[gray1] (0.3,0.6) rectangle (1.5,0);
\fill[gray1] (1.5,0) rectangle (1.8,1.2);

\filldraw[white] (.9,0.6) arc (-180:0:.3);
\filldraw[gray1] (.3,0.6) arc (180:0:.3);

\draw[green1,thick] (.9,0.6) arc (-180:0:.3);
\draw[green1,thick] (.3,0.6) arc (180:0:.3);
\draw[green1,thick] (.3,0) -- (.3,.6);
\draw[green1,thick] (1.5,.6) -- (1.5,1.2);

\draw[dashed] (0,0) rectangle (1.8,1.2);
\node at (2.1,.6) {$=$};
\fill[white] (2.4,0) rectangle (3.3,1.2);
\fill[gray1] (3.3,0) rectangle (4.2,1.2);
\draw[green1,thick] (3.3,0) -- (3.3,1.2);
\draw[dashed] (2.4,0) rectangle (4.2,1.2);
\node at (4.5,.6) {$=$};
\fill[white] (4.8,0) rectangle (5.1,1.2);
\fill[gray1] (5.1,0.6) rectangle (6.3,1.2);
\fill[white] (5.1,0.6) rectangle (6.3,0);
\fill[gray1] (6.3,0) rectangle (6.6,1.2);

\filldraw[gray1] (5.1,0.6) arc (-180:0:.3);
\filldraw[white] (5.7,0.6) arc (180:0:.3);

\draw[green1,thick] (5.1,0.6) arc (-180:0:.3);
\draw[green1,thick] (5.7,0.6) arc (180:0:.3);
\draw[green1,thick] (6.3,0) -- (6.3,.6);
\draw[green1,thick] (5.1,.6) -- (5.1,1.2);

\draw[dashed] (4.8,0) rectangle (6.6,1.2);
\end{tikzpicture}
\]
\item $d$-fairness:
\[
\begin{tikzpicture}
\fill[white] (0,0) rectangle (1.8,1.2);
\filldraw[gray1] (.6,.6) arc (0:360:-.3);
\draw (.6,.6) arc (0:360:-.3);
\draw[dashed] (0,0) rectangle (1.8,1.2);

\node at (2.3,.6) {$=d\cdot$};

\fill[white] (2.7,0) rectangle (4.5,1.2);
\draw[dashed] (2.7,0) rectangle (4.5,1.2);
\fill[gray1] (6,0) rectangle (7.8,1.2);
\filldraw[white] (6.6,.6) arc (0:360:-.3);
\draw (6.6,.6) arc (0:360:-.3);
\draw[dashed] (6,0) rectangle (7.8,1.2);

\node at (8.3,.6) {$=d\cdot$};

\fill[gray1] (8.7,0) rectangle (10.5,1.2);
\draw[dashed] (8.7,0) rectangle (10.5,1.2);

\end{tikzpicture}
\]
\item Dagger structure on $\ev$ and $\ev^{\text{st}}$.
\[ \left(\,\begin{tikzpicture}[baseline={([yshift=-\the\dimexpr\fontdimen22\textfont2\relax]current bounding box.center)}]
\fill[white] (0,0) rectangle (1.8,1);
\filldraw[gray1] (.4,1) arc (-180:0:.5);

\draw (.4,1) arc (-180:0:.5);

\draw[dashed] (0,0) rectangle (1.8,1);
\end{tikzpicture} \,\right)^\dagger
=\ 
\begin{tikzpicture}[baseline={([yshift=-\the\dimexpr\fontdimen22\textfont2\relax]current bounding box.center)}]
\fill[white] (0,0) rectangle (1.8,1);
\filldraw[gray1] (0.4,0) arc (180:0:.5);

 \draw (0.4,0) arc (180:0:.5);

\draw[dashed] (0,0) rectangle (1.8,1);
\end{tikzpicture} \qquad
\left(\,\begin{tikzpicture}[baseline={([yshift=-\the\dimexpr\fontdimen22\textfont2\relax]current bounding box.center)}]
\fill[gray1] (0,0) rectangle (1.8,1);
\filldraw[white] (.4,1) arc (-180:0:.5);

\draw[green1,thick] (.4,1) arc (-180:0:.5);

\draw[dashed] (0,0) rectangle (1.8,1);
\end{tikzpicture} \,\right)^\dagger
=\ 
\begin{tikzpicture}[baseline={([yshift=-\the\dimexpr\fontdimen22\textfont2\relax]current bounding box.center)}]
\fill[gray1] (0,0) rectangle (1.8,1);
\filldraw[white] (0.4,0) arc (180:0:.5);

\draw[green1,thick] (0.4,0) arc (180:0:.5);

\draw[dashed] (0,0) rectangle (1.8,1);
\end{tikzpicture}\]
\end{compactenum}
\end{notation}

\subsection{The 2-subcategory of \textsf{BigHilb} generated by a balanced $d$-fair bipartite graph}\label{2-Hilb and edge weighting}
In this section, we show the relation between 2-categories $\mathcal{C}(K,\ev_K)$ and $d$-fair bipartite graphs $(\Lambda,\omega)$. Then we may regard the generator $K$ as a \textsf{Hilb}-enriched graph, and the edge-weighting $\omega$ giving the interesting dual pair.

\begin{construction}\label{construction: graph to BigHilb}
First, we construct a $\rm W^*$ 2-subcategory $\mathcal{C}(\Lambda,\omega)$ of \textsf{BigHilb} from a balanced $d$-fair bipartite graph $(\Lambda,\omega)$ as follows:
\begin{compactenum}[(a)]
\item Object is $V=V(\Lambda)=V_0\sqcup V_1$, which is a countable set.
\item The 1-morphism generator $K=K_{\Lambda}$: At $(P,Q)\in V_0\times V_1$, $K_{PQ}$ is the Hilbert space with ONB $\{|e\rangle: e\in E(\Lambda), s(e)=P,t(e)=Q\}$ and other entries are 0. The uniform boundedness condition follows from Proposition \ref{uniform boundedness for graph}.

As for the dual 1-morphism $\overline{K}$, at entry $(Q,P)\in V_1\times V_0$, $\overline{K}_{QP}$ is the Hilbert space with ONB $\{|e\rangle: e\in E(\Lambda), s(e)=Q,t(e)=P\}=\{|\overline{e}\rangle: e\in E(\Lambda), s(e)=P,t(e)=Q\}$, where $\overline{(\cdot)}$ is the involution of edge.

So we may regard $K$ as a \textsf{Hilb}-enriched graph.

\item All the 1-morphisms are Cauchy generated by $K$ and $\overline{K}$.

\item 2-morphisms are $V\times V$-bigraded uniformly bounded operators between those 1-morphisms.

\item The edge-weighting gives the distinguished evaluation and coevaluation  $\ev$ and $\coev$. 
Note that $K_{PQ}$ is a Hilbert space with orthonormal basis $\{|e\rangle:e\in E(\Lambda), s(e)=P,t(e)=Q\}$, then $\{|\bar{e}\rangle : e\in E(\Lambda), s(e)=P,t(e)=Q\}$ is an orthonormal basis for $\overline{K}_{QP}$. Define 
\begin{align*}
    C_{\overline{K},PQ}: K_{PQ}\otimes \overline{K}_{QP}\to \mathbb{C} \quad &\text{by}\quad |e\rangle\otimes |\overline{e'}\rangle \mapsto \delta_{e=e'}w(e)^{\frac{1}{2}}, \ e:P\to Q\\
    D_{K,PQ}: \mathbb{C}\to K_{PQ}\otimes \overline{K}_{QP} \quad &\text{by}\quad 1\mapsto \sum_{e:P\to Q} w(e)^{\frac{1}{2}}|e\rangle\otimes |\overline{e}\rangle = \sum_{e:Q\to P} w(\overline{e})^{\frac{1}{2}}|\overline{e}\rangle\otimes |e\rangle.\\
    C_{K,QP}: \overline{K}_{QP}\otimes K_{PQ}\to \mathbb{C} \quad &\text{by}\quad |e\rangle\otimes |\overline{e'}\rangle \mapsto \delta_{e=e'}w(e)^{\frac{1}{2}},\ e:Q\to P\\
    D_{\overline{K},QP}: \mathbb{C}\to \overline{K}_{QP}\otimes K_{PQ} \quad &\text{by}\quad 1\mapsto \sum_{e:Q\to P} w(e)^{\frac{1}{2}}|e\rangle\otimes |\overline{e}\rangle = \sum_{e:P\to Q} w(\overline{e})^{\frac{1}{2}}|\overline{e}\rangle\otimes |e\rangle.
\end{align*}
\end{compactenum}
\end{construction}

\begin{proposition}
$\mathcal{C}(\Lambda,\omega)$ satisfies the condition in Definition \ref{C(K,w) def}.
\end{proposition}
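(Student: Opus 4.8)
The plan is to verify the three conditions (a), (b), (c) of Definition \ref{C(K,w) def} directly from the explicit formulas for $C$ and $D$ given in Construction \ref{construction: graph to BigHilb}. First I would check that $K = K_\Lambda$ is dualizable, i.e., that the zigzag conditions hold. Since $\ev_K$ and $\coev_K$ are completely determined by the $C_{\overline{K},PQ}$ and $D_{K,PQ}$ (Remark after Definition \ref{Def:C,D for ev,coev}), it suffices to check the entrywise identities $\id_{K,PQ} = (\id_{K,PQ}\otimes C_{\overline{K},PQ})\circ(D_{K,PQ}\otimes \id_{K,PQ})$ and its mirror for $\overline{K}$. Plugging in the basis vectors: $|e\rangle \mapsto 1\otimes|e\rangle \mapsto \sum_{e':P\to Q} w(e')^{1/2}|e'\rangle\otimes|\overline{e'}\rangle\otimes|e\rangle \mapsto \sum_{e'} w(e')^{1/2}|e'\rangle\cdot\delta_{e'=e}w(e)^{1/2} = w(e)|e\rangle$. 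So the naive computation gives $w(e)|e\rangle$, not $|e\rangle$ — this means one has to be slightly careful about which of $C,D$ carry the weight factor, or one adjusts so the product of the two relevant weight exponents in a zigzag is $w(e)\cdot w(\overline{e})^{\cdots}$; using the balanced condition $\omega(e)\omega(\overline{e}) = 1$ is exactly what makes the zigzag close. Concretely, in the left zigzag for $K_{PQ}$ one composes $D_{K,PQ}$ (weight $w(e)^{1/2}$) with $C_{\overline{K},PQ}$, and since $C_{\overline{K},PQ} = (D_{K,PQ})^\dagger$ pairs $|\overline{e}\rangle$ against the conjugate basis, the surviving scalar is $w(e)^{1/2}\cdot w(e)^{1/2}$ unless the $\overline{K}$-side evaluation uses the edge $\overline{e}$ with weight $w(\overline{e})^{1/2} = w(e)^{-1/2}$; tracking this bookkeeping carefully and invoking balancedness is the crux of condition (a).

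Next, condition (b) requires $\ev_{\overline{K}} := (\coev_K)^\dagger$ and $\coev_{\overline{K}} := (\ev_K)^\dagger$, equivalently $C_{\overline{K},PQ} = (D_{K,PQ})^\dagger$ and $D_{\overline{K},QP} = (C_{K,QP})^\dagger$. This is essentially a matter of reading off the formulas: the formula for $C_{\overline{K},PQ}$ sends $|e\rangle\otimes|\overline{e'}\rangle \mapsto \delta_{e=e'}w(e)^{1/2}$, and $D_{K,PQ}$ sends $1 \mapsto \sum_{e:P\to Q} w(e)^{1/2}|e\rangle\otimes|\overline{e}\rangle$; taking the adjoint of the latter — i.e., the map $H_{PQ}\otimes\overline{H}_{QP}\to\mathbb{C}$, $\xi \mapsto \langle \sum_e w(e)^{1/2}|e\rangle\otimes|\overline{e}\rangle, \xi\rangle$ — reproduces exactly $C_{\overline{K},PQ}$ on basis elements. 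The analogous check for $D_{\overline{K},QP}$ versus $C_{K,QP}$ is symmetric. I would present this as a short basis-level computation with one displayed \texttt{align}.

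Finally, condition (c), the $d$-fairness condition, amounts to $\sum_{Q\in V_1} C_{\overline{K},PQ}\circ D_{K,PQ} = d\cdot\id_\mathbb{C}$ for each $P\in V_0$ (and the mirror statement summing over $P\in V_0$ for each $Q\in V_1$). Here $C_{\overline{K},PQ}\circ D_{K,PQ}: \mathbb{C}\to\mathbb{C}$ sends $1\mapsto \sum_{e:P\to Q} w(e)^{1/2}\cdot w(e)^{1/2} = \sum_{e:P\to Q} w(e)$, so summing over all $Q\in V_1$ gives $\sum_{\{e\,:\,s(e)=P\}} \omega(e)$, which equals $d$ precisely by the $d$-fairness of $(\Lambda,\omega)$ in Definition \ref{Def:d-fairness and balanced}. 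The mirror identity follows identically from the second equality $\sum_{\{e\,:\,s(e)=Q\}}\omega(e) = d$. I also need the uniform-boundedness/local-finiteness of $K$ (so that these are honest 1-morphisms in \textsf{BigHilb}), but that is already guaranteed by Proposition \ref{uniform boundedness for graph}, and the fact that all 1-morphisms are Cauchy-generated by $K,\overline{K}$ is built into the construction. The main obstacle, as noted, is getting the weight bookkeeping in the zigzag identities (condition (a)) exactly right — in particular making sure the balanced condition $\omega(e)\omega(\overline{e})=1$ is used in the correct place so that all four zigzag maps collapse to the identity rather than to a diagonal scaling.
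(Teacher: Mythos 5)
Your overall strategy is exactly the paper's: verify dualizability and $d$-fairness entrywise from the explicit formulas for $C$ and $D$. Your treatments of conditions (b) and (c) are correct and match the paper (condition (c) is literally the two displayed sums $\sum_{\{e:s(e)=P\}}\omega(e)=d$ and $\sum_{\{e:s(e)=Q\}}\omega(e)=d$).

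The problem is condition (a), which you never actually close. Your ``naive computation'' uses the wrong component map: the zigzag for $K$ at $(P,Q)$ is $(\id_{K,PQ}\otimes C_{K,QP})\circ(D_{K,PQ}\otimes\id_{K,PQ})$, with $C_{K,QP}:\overline{K}_{QP}\otimes K_{PQ}\to\mathbb{C}$ the component of $\ev_K$ --- not $C_{\overline{K},PQ}$, whose domain is $K_{PQ}\otimes\overline{K}_{QP}$ and which is a component of $\ev_{\overline{K}}$. Substituting $C_{\overline{K},PQ}$ is what produces your spurious scalar $w(e)$. With the correct map there is no remaining ``obstacle'' or delicate bookkeeping: by Construction \ref{construction: graph to BigHilb}, $C_{K,QP}$ assigns the weight $w(f)^{1/2}$ to the edge $f:Q\to P$ sitting in $\overline{K}_{QP}$, so when the zigzag pairs $|\overline{e}\rangle\otimes|e\rangle$ (with $e:P\to Q$) the surviving scalar is $w(e)^{1/2}\,w(\overline{e})^{1/2}=\bigl(w(e)w(\overline{e})\bigr)^{1/2}=1$ by balancedness. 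That one-line computation (and its mirror using $D_{\overline{K},QP}$ and $C_{\overline{K},PQ}$) is the entirety of the paper's rigidity argument. You do correctly diagnose that the fix is that the $K$-side evaluation weights the reversed edge $\overline{e}$, but you present this as something still to be verified rather than reading it off the definitions; to make the proof complete you must identify the correct pairing of $C$'s with $D$'s (as in Definition \ref{Def:C,D for ev,coev}) and carry the computation through.
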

\begin{proof} We shall prove that $C(\Lambda,\omega)$ is rigid and $d$-fair.
\begin{compactenum}[(a)]
\item Rigidity: For each $P,Q\in V$, $e:P\to Q$,
\begin{align*}
    (C_{\overline{K},PQ}\otimes \id_{K,{PQ}})\circ (\id_{K,PQ}\otimes D_{\overline{K},QP})(|e\rangle\otimes 1) & = (C_{\overline{K},PQ}\otimes \id_{K,{PQ}})\left(|e\rangle \otimes \sum_{e:P\to Q}w(\bar{e})^{\frac{1}{2}}|\bar{e}\rangle\otimes |e\rangle\right)\\
    & = w(e)^{\frac{1}{2}}w(\bar{e})^{\frac{1}{2}}|e\rangle = |e\rangle,\\
    (\id_{K,{PQ}}\otimes C_{K,QP})\circ (D_{K,PQ}\otimes \id_{K,{QP}}) (1\otimes |e\rangle) & = (\id_{K,{PQ}}\otimes C_{K,QP})\left(\sum_{e:P\to Q}w(e)^{\frac{1}{2}}|e\rangle\otimes |\bar{e}\rangle\otimes |e\rangle\right)\\
    & = w(e)^{\frac{1}{2}}w(\bar{e})^{\frac{1}{2}}|e\rangle = |e\rangle.
\end{align*}
\item $d$-fairness: 
\begin{align*}
    \sum_{Q\in V_1}C_{\overline{K},PQ}\circ D_{K,PQ}(1) &=\sum_{Q\in V}C_{\overline{K},PQ}\left(\sum_{e:P\to Q}w(e)^{\frac{1}{2}}|e\rangle\otimes |\bar{e}\rangle\right)=\sum_{\{e|s(e)=P\}}w(e)^{\frac{1}{2}}w(e)^{\frac{1}{2}}=d,;\\
    \sum_{P\in V_0}C_{K,QP}\circ D_{\overline{K},QP}(1) &=\sum_{a\in V}C_{K,QP}\left(\sum_{e:Q\to P}w(e)^{\frac{1}{2}}|e\rangle\otimes |\bar{e}\rangle\right)=\sum_{\{e|s(e)=Q\}}w(e)^{\frac{1}{2}}w(e)^{\frac{1}{2}}=d.
\end{align*}
\end{compactenum}
\end{proof}

\begin{remark}\label{Rmk:graph automorphism and C(Lambda,omega)}
Suppose $\theta:(\Lambda,\omega)\to (\Lambda',\omega')$ is an isomorphism of edge-weighted graphs (see Definition \ref{Def:graph automorphism}). 
We construct a unitary equivalence between $\mathcal{C}(\Lambda,\omega)$ and $\mathcal{C}(\Lambda',\omega')$.
For the 1-morphism generators $K_\Lambda$ and $K_{\Lambda'}$, we have
$$K_{\Lambda,PQ}\cong K_{\Lambda',\theta(P)\theta(Q)}$$
as finite dimensional Hilbert spaces, via the bijection of ONBs given by $|e\rangle\mapsto |\theta(e)\rangle$. 
Denote by $u_{\theta}:K_{\Lambda}\to K_{\Lambda'}$ this unitary isomorphism.

As for the evaluation $\ev_{K_\Lambda}$ and $\ev_{K_{\Lambda'}}$, we look at $C_{K_{\Lambda},PQ}$ and $C_{K_{\Lambda'},\theta(P)\theta(Q)}$ (see Definition \ref{Def:C,D for ev,coev}). Note that 
$C_{K_{\Lambda'},\theta(P)\theta(Q)}:\overline{K}_{\Lambda',\theta(Q)\theta(P)}\otimes {K}_{\Lambda',\theta(P)\theta(Q)}\to \mathbb{C}$ by 
$$|\theta(e)\rangle\otimes |\theta(e')\rangle \mapsto \delta_{\theta(e)=\theta(e')}\omega'(\theta(e))=\delta_{e=e'}\omega(e),\qquad\qquad \forall\, e:Q\to P\in E(\Lambda).$$ 
We have
$$C_{K_{\Lambda'},\theta(P)\theta(Q)}=C_{K_{\Lambda},PQ}\circ (\overline{u_{\theta}}_{QP}^\dagger\otimes {u_\theta}_{PQ}^\dagger).$$
In other words, 
$$\ev_{K_{\Lambda'}} = \ev_{K_\Lambda}\circ (\overline{u_\theta}^\dagger\otimes u_\theta^\dagger).$$
Therefore, $\mathcal{C}(\Lambda,\omega)$ and $\mathcal{C}(\Lambda',\omega')$ are unitary equivalent up to the unitary 2-morphism $u_\theta$.
\end{remark}

Next, start with a 2-category $\mathcal{C}(K,\ev_K)$, we construct a balanced $d$-fair bipartite graph $(\Lambda,\omega)$.

\begin{definition}
For $P\in V_0,\ Q\in V_1$, let $v_{PQ}:K_{PQ}\to \overline{K_{PQ}}=\overline{K}_{QP}$ be the canonical dual map that $\xi\mapsto \xi^*$ and $v^\dagger_{PQ}:\overline{K}_{QP}\to K_{PQ}$ defined by $\xi^*\to \xi^{**}=\xi$. Then $v^\dagger_{PQ}\circ v_{PQ}=\id_{K,{PQ}}$ and $v_{PQ}\circ v^\dagger_{PQ}=\id_{\overline{K},{QP}}$. Define 
\begin{align*}
    \varphi_{K,PQ}:\overline{K}_{QP}\to K_{PQ} &\text{ by }\varphi_{K,PQ}=(\id_{K,{PQ}}\otimes C_{K,QP}^\text{st})\circ (D_{K,PQ}\otimes v^\dagger_{PQ})\\
    \varphi_{\overline{K},QP}:K_{PQ}\to \overline{K}_{QP} &\text{ by }\varphi_{\overline{K},QP}=(\id_{\overline{K},{QP}}\otimes C_{\overline{K},PQ}^\text{st})\circ (D_{\overline{K},QP}\otimes v^\dagger_{PQ}).
\end{align*}
\end{definition}

\begin{proposition} \label{get edge weight from C}
Here are some properties for $\varphi_K$ and $\varphi_{\overline{K}}$.
\begin{compactenum}[\rm (1)]
\item $\varphi_{K,PQ}\circ \varphi_{\overline{K},QP}=\id_{K,{PQ}}$.
\item $\sum_{Q\in V_1}\Tr(\varphi_{K,PQ}^\dagger\circ\varphi_{K,PQ})=\sum_{P\in V_0}\Tr(\varphi_{\overline{K},QP}^\dagger\circ\varphi_{\overline{K},QP})=d$.
\end{compactenum}
\end{proposition}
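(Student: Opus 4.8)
The plan is to compute both quantities directly from the definitions of $\varphi_{K}$ and $\varphi_{\overline{K}}$ in terms of the structure maps $C,D$ and the canonical dual maps $v$, and then invoke the $d$-fairness condition from Definition \ref{C(K,w) def}. First, for item (1), I would substitute the definitions of $\varphi_{K,PQ}$ and $\varphi_{\overline{K},QP}$ and untangle the resulting string diagram using the zigzag (rigidity) identities for both $(\ev_K,\coev_K)$ and the standard pair $(\ev_K^{\text{st}},\coev_K^{\text{st}})$, together with the fact that $v_{PQ}^\dagger\circ v_{PQ}=\id_{K,PQ}$ and $v_{PQ}\circ v_{PQ}^\dagger=\id_{\overline{K},QP}$. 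The composite $\varphi_{K,PQ}\circ\varphi_{\overline{K},QP}$ telescopes: the $C^{\text{st}}/D^{\text{st}}$ cap-cup on one side cancels against the $D/C$ cup-cap on the other via a zigzag, and the two $v^\dagger$ maps compose with $v$ (implicit in the standard structure maps) to the identity, leaving $\id_{K,PQ}$. This is essentially a bookkeeping computation once the diagram is drawn, analogous to the rigidity verification in the proposition following Construction \ref{construction: graph to BigHilb}.

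For item (2), I would first observe that $\varphi_{K,PQ}^\dagger\circ\varphi_{K,PQ}$ is a positive operator on the finite-dimensional Hilbert space $K_{PQ}$, so its trace is well-defined, and the sum over $Q$ is finite by Proposition \ref{uniform boundedness for graph} (equivalently by property (3) of $\mathcal{C}(K,\ev_K)$). The key identity to establish is that $\sum_{Q\in V_1}\Tr(\varphi_{K,PQ}^\dagger\circ\varphi_{K,PQ})$ equals the scalar obtained by closing up $\ev_{\overline{K}}\circ\coev_K$ at the vertex $P$, which by the $d$-fairness condition is exactly $d\cdot\id_{\mathbb{C}}=d$. Concretely, I would express $\Tr(\varphi_{K,PQ}^\dagger\circ\varphi_{K,PQ})$ diagrammatically as a closed diagram built from $C_{K},D_{K}$ (the weighted structure maps coming from $\ev_K$) sandwiched between the standard $C^{\text{st}},D^{\text{st}}$ and the $v$'s, and show that after using the zigzag relations the standard caps/cups and the $v$'s all cancel, reducing the closed diagram to $C_{\overline{K},PQ}\circ D_{K,PQ}$ (or its transpose). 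Summing over $Q\in V_1$ and applying the first $d$-fairness equation $\sum_{Q\in V_1}C_{\overline{K},PQ}\circ D_{K,PQ}=d\cdot\id_{\mathbb{C}}$ gives the result; the computation for $\varphi_{\overline{K}}$ and the sum over $P\in V_0$ is symmetric, using the second $d$-fairness equation.

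The main obstacle I anticipate is carefully tracking how the \emph{standard} structure maps $C^{\text{st}},D^{\text{st}}$ interact with the \emph{chosen} (possibly non-standard) maps $C_K,D_K$ through the dual maps $v_{PQ}$ — in particular making sure one does not accidentally conflate $\ev_K^{\text{st}}$ with $\ev_K$, since the whole point is that they may differ. The quantity $\varphi_{K,PQ}$ is precisely the operator that records the discrepancy between the two, and the trace $\Tr(\varphi_{K,PQ}^\dagger\circ\varphi_{K,PQ})$ should turn out to be the edge-weight sum $\sum_{e:P\to Q}\omega(e)$ once one passes to the graph picture; verifying this cleanly with correct normalization conventions (factors of $\omega(e)^{1/2}$ versus $\omega(e)$) is the delicate part. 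I would handle this by doing the computation in a fixed ONB $\{\epsilon_i\}$ of each $K_{PQ}$ as in property (4) of the proposition, where $C^{\text{st}}$ and $D^{\text{st}}$ have the explicit form given, so that $\varphi_{K,PQ}$ becomes an explicit matrix and the trace is a manifestly positive sum whose total over $Q$ collapses to the $d$-fairness scalar.
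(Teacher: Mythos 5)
Your approach is sound, but you should know that the paper does not actually prove this proposition: its ``proof'' consists entirely of the citation to \cite[Prop.~1.8]{DY15} and \cite[Prop.~3.10]{FP19}. So your self-contained computation is doing more work than the text itself, and it is the right work --- it is essentially the argument one finds in those references. For item (2) in particular, the identity you single out is exactly the correct one: since $C_{\overline{K},PQ}=(D_{K,PQ})^\dagger$, the scalar $C_{\overline{K},PQ}\circ D_{K,PQ}$ is $\|D_{K,PQ}(1)\|^2$, and writing $D_{K,PQ}(1)=\sum_{i,j}d_{ij}\,\epsilon_i\otimes\epsilon_j^*$ in an ONB one checks that $\varphi_{K,PQ}$ has matrix $(d_{ij})$, so $\Tr(\varphi_{K,PQ}^\dagger\circ\varphi_{K,PQ})=\sum_{i,j}|d_{ij}|^2=C_{\overline{K},PQ}\circ D_{K,PQ}$; summing over $Q$ and invoking $d$-fairness finishes it, with the $\varphi_{\overline{K}}$ case symmetric. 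One caution on item (1): your phrase that the ``$C^{\text{st}}/D^{\text{st}}$ cap-cup cancels against the $D/C$ cup-cap via a zigzag'' is slightly misleading as stated, because the zigzag identity only holds \emph{within} a matched dual pair, not between the standard pair and the chosen pair. The actual mechanism is that the standard caps merely read off coefficients: in the ONB, $\varphi_{K,PQ}$ carries the coefficient matrix of $D_{K,PQ}$ and $\varphi_{\overline{K},QP}$ carries (the conjugate of) that of $C_{K,QP}$, and their product is the identity precisely because of the zigzag relation for the \emph{chosen} pair $(\ev_K,\coev_K)$, which in matrix form reads $dc=I$. You already flag this as the delicate point and propose the ONB computation as the remedy, which is the correct fix; just make sure the final write-up runs the cancellation through the chosen pair's zigzag rather than a mixed one. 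Also note that the definition of $\varphi_{\overline{K},QP}$ in the paper has a typo (its second tensor factor should be $v_{PQ}$ rather than $v_{PQ}^\dagger$ for the domains to match), which you will need to repair silently when you do the computation.
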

\begin{proof} 
See \cite[Prop.~1.8]{DY15}, \cite[Prop.~3.10]{FP19}.
\end{proof}

\begin{construction}\label{graph Lambda to edge weighting}
Define the graph $\Lambda$ to be $V(\Lambda):=V$ and the number of edges from $P\in V_0$ to $Q\in V_1$ to be $\dim K_{PQ}$. Define edge-weighting function $\omega:E(\Lambda)\to(0,\infty)$ as the multiset
\begin{align*}
    \{\omega(e)\}_{e:P\to Q}&:=\{\text{eigenvalues of } \varphi_{K,PQ}\circ\varphi_{K,PQ}^\dagger\}\\
    \{\omega(e)\}_{e:Q\to P}&:=\{\text{eigenvalues of } \varphi_{\overline{K},QP}\circ\varphi_{\overline{K},QP}^\dagger\}.
\end{align*}
\end{construction}

From above Proposition \ref{get edge weight from C}, $(\Lambda,\omega)$ is a $d$-fair and balanced bipartite graph. 
To be precise, (1) gives the balance condition and (2) gives the $d$-fairness. 
In fact, 
\begin{align*}
    \varphi_{K,PQ}\circ \varphi^\dagger_{K,PQ} = & (\id_{K,{PQ}}\otimes C_{K,QP}^\text{st})\circ (D_{K,PQ}\otimes \id_{K,{PQ}})\circ \\ 
    & (C_{\overline{K},PQ}\otimes \id_{K,{PQ}})\circ (\id_{K,{PQ}}\otimes D_{\overline{K},QP}^\text{st})\\
    \varphi_{\overline{K},QP}\circ \varphi^\dagger_{\overline{K},QP} = & (\id_{\overline{K},{QP}}\otimes C_{\overline{K},PQ}^\text{st})\circ (D_{\overline{K},QP}\otimes \id_{\overline{K},{QP}})\circ \\ 
    & (C_{K,QP}\otimes \id_{\overline{K},{QP}})\circ (\id_{\overline{K},{QP}}\otimes D_{K,PQ}^\text{st}).
\end{align*}

\[
\begin{tikzpicture}[baseline=0cm]
\draw (0,.6) -- (0,.9);
\draw (0,-.6) -- (0,-.9);
\draw[green1,thick] (1.2,.6) -- (1.2,-.6);

\draw (0,.6) arc (-180:0:.3);
\draw[green1,thick] (.6,.6) arc (180:0:.3);
\draw (0,-.6) arc (180:0:.3);
\draw[green1,thick] (.6,-.6) arc (-180:0:.3);

\node at (-.3,0) {$P$};
\node at (1.5,0) {$Q$};
\end{tikzpicture}
\qquad\qquad
\begin{tikzpicture}[baseline=0cm]
\draw (0,.6) -- (0,.9);
\draw (0,-.6) -- (0,-.9);
\draw[green1,thick] (1.2,.6) -- (1.2,-.6);

\draw (0,.6) arc (-180:0:.3);
\draw[green1,thick] (.6,.6) arc (180:0:.3);
\draw (0,-.6) arc (180:0:.3);
\draw[green1,thick] (.6,-.6) arc (-180:0:.3);

\node at (-.3,0) {$Q$};
\node at (1.5,0) {$P$};
\end{tikzpicture}
\]

\begin{remark}\label{Rmk:choice of ONB}
For a given 2-category $\mathcal{C}(K,\ev_K)$, let $(\Lambda,\omega)$ be the balanced $d$-fair bipartite graph obtained from Construction \ref{graph Lambda to edge weighting}. 
When we construct the 1-morphism generator $K=K_\Lambda$ in $\mathcal{C}(\Lambda,\omega)$ from the bipartite graph $\Lambda$, 
we secretly make a choice of ONB for each $(K_\Lambda)_{PQ}$, 
so there is a unitary 2-morphism $\alpha:K\to K_\Lambda$ such that $\ev_K=\ev_{K_\Lambda}\circ (\overline{\alpha}\otimes \alpha)$. 
Therefore, $\mathcal{C}(K,\ev_K)$ and $\mathcal{C}(\Lambda,\omega)$ are unitary equivalent up to a unitary 2-morphism $\alpha$. 
\end{remark}

\subsection{From $\mathcal{C}(K,\ev_K)$ to Markov tower}

\begin{construction}\label{From C(K,w) to MT}
Here, we are going to build a tower of algebra from the 2-category $\mathcal{C}(K,\ev_K)$ discussed above with a chosen point, say $P_0\in V_0$. Let $\mathbb{C}^{|P_0|}$ be a 1-morphism with all the entry being 0 except $(\mathbb{C}^{|P_0|})_{P_0P_0}=\mathbb{C}$. 

Note that $\mathbb{C}^{|P_0|}\otimes K^{\alt\otimes n}$ is a 1-morphism for each $n\in\mathbb{Z}_{\ge 0}$.

Let $M_n=\End\left(\mathbb{C}^{|P_0|}\otimes K^{\alt\otimes n}\right)$ and identify $M_n\ni x$ with $x\otimes \id_{K^?}\in M_{n+1}$, where $K^?=K$ if $2\mid n$, $K^?=\overline{K}$ if $2\nmid n$. We use the graphical calculus to show $M=(M_n)_{n\ge 0}$ is a Markov tower.

\begin{compactenum}[(1)]
\item Element $x\in M_n$:
\[ 
\begin{tikzpicture}[baseline={([yshift=-\the\dimexpr\fontdimen22\textfont2\relax]current bounding box.center)}]
\fill[white] (0,0) rectangle (1.2,2);
\fill[gray1] (1.2,0) rectangle (1.8,2);
\fill[white] (1.8,0) rectangle (2.4,2);

\draw (4.8,0) -- (4.8,2);
\draw (4.2,0) -- (4.2,2);
\draw (2.4,0) -- (2.4,2);
\draw (1.8,0) -- (1.8,2);
\draw (1.2,0) -- (1.2,2);
\draw[blue,thick] (0.6,0) -- (.6,2);

\nbox{unshaded}{(2.7,1)}{.3}{2.1}{2.1}{$x$};

\draw[dashed] (-.5,0) rectangle (5.4,2);
\node at (-.1,1) {$P_0$};
\node at (3.3,.35) {$\cdots$};
\node at (3.3,1.65) {$\cdots$};

\node at (.6,-.2) {\tiny{$\mathbb{C}^{|P_0|}$}};
\node at (1.2,-.2) {\tiny{$K$}};
\node at (1.8,-.2) {\tiny{$\overline{K}$}};
\node at (2.4,-.2) {\tiny{$K$}};
\node at (4.8,-.2) {\tiny{$n^{\text{th}}$}};
\end{tikzpicture}
\]
\item Inclusion $x\in M_n\subset M_{n+1}$:
\[
\begin{tikzpicture}[baseline={([yshift=-\the\dimexpr\fontdimen22\textfont2\relax]current bounding box.center)}]
\fill[white] (0,0) rectangle (1.2,2);
\fill[gray1] (1.2,0) rectangle (1.8,2);
\fill[white] (1.8,0) rectangle (2.4,2);

\draw (4.2,0) -- (4.2,2);
\draw (2.4,0) -- (2.4,2);
\draw (1.8,0) -- (1.8,2);
\draw (1.2,0) -- (1.2,2);
\draw[blue,thick] (0.6,0) -- (.6,2);

\nbox{dashed}{(2.7,1)}{.35}{2.1}{2.1}{};
\nbox{unshaded}{(2.7,1)}{.3}{2.1}{1.5}{$x$};
\draw (4.8,0) -- (4.8,2);

\draw[dashed] (-.5,0) rectangle (5.4,2);
\node at (-.1,1) {$P_0$};
\node at (3.3,.35) {$\cdots$};
\node at (3.3,1.65) {$\cdots$};

\node at (.6,-.2) {\tiny{$\mathbb{C}^{|P_0|}$}};
\node at (1.2,-.2) {\tiny{$K$}};
\node at (1.8,-.2) {\tiny{$\overline{K}$}};
\node at (2.4,-.2) {\tiny{$K$}};
\node at (4.2,-.2) {\tiny{$n^{\text{th}}$}};
\node at (4.9,-.2) {\tiny{$(n\+1)^{\text{th}}$}};
\end{tikzpicture}
\]
\item Conditional expectation $E_{n+1}:M_{n+1}\to M_n$, $x\in M_n$:
\[ E_{n+1}(x) = d^{-1}\
\begin{tikzpicture}[baseline=.9cm]
\fill[white] (0,0) rectangle (1.2,2);
\fill[gray1] (1.2,0) rectangle (1.8,2);
\fill[white] (1.8,0) rectangle (2.4,2);

\draw (4.8,.5) -- (4.8,1.5);
\draw (4.2,0) -- (4.2,2);
\draw (2.4,0) -- (2.4,2);
\draw (1.8,0) -- (1.8,2);
\draw (1.2,0) -- (1.2,2);
\draw[blue,thick] (0.6,0) -- (.6,2);

\nbox{unshaded}{(2.7,1)}{.3}{2.1}{2.1}{$x$};

\draw (4.8,1.5) arc (180:0:.3);
\draw (4.8,.5) arc (-180:0:.3);
\draw (5.4,.5) -- (5.4,1.5);

\draw[dashed] (-.5,0) rectangle (6,2);
\node at (-.1,1) {$P_0$};
\node at (3.3,.35) {$\cdots$};
\node at (3.3,1.65) {$\cdots$};

\node at (.6,-.2) {\tiny{$\mathbb{C}^{|P_0|}$}};
\node at (1.2,-.2) {\tiny{$K$}};
\node at (1.8,-.2) {\tiny{$\overline{K}$}};
\node at (2.4,-.2) {\tiny{$K$}};
\node at (4.2,-.2) {\tiny{$n^{\text{th}}$}};
\node at (4.9,-.2) {\tiny{$(n\+1)^{\text{th}}$}};
\end{tikzpicture}
\]
Here, the choice of the duality pair $(\coev_K,(\coev_K)^\dagger)$ or $(\ev_K,(\ev_K)^\dagger)$ depends on the shading.\\

\item Jones projection $e_n\in M_{n+1}$:
\[
e_n = d^{-1}\
\begin{tikzpicture}[baseline=.9cm]
\fill[white] (0,0) rectangle (1.2,2);
\fill[gray1] (1.2,0) rectangle (1.8,2);
\fill[white] (1.8,0) rectangle (2.4,2);

\draw (4.8,1.4) -- (4.8,2);
\draw (4.2,1.4) -- (4.2,2);
\draw (4.8,0) -- (4.8,.6);
\draw (4.2,0) -- (4.2,.6);
\draw (2.4,0) -- (2.4,2);
\draw (1.8,0) -- (1.8,2);
\draw (1.2,0) -- (1.2,2);
\draw[blue,thick] (0.6,0) -- (.6,2);

\draw (4.2,1.4) arc (-180:0:.3);
\draw (4.2,.6) arc (180:0:.3);

\draw[dashed,thick] (.3,.6) -- (.3,1.4);
\draw[dashed,thick] (.3,.6) -- (5.1,.6);
\draw[dashed,thick] (5.1,.6) -- (5.1,1.4);
\draw[dashed,thick] (.3,1.4) -- (5.1,1.4);

\draw[dashed] (-.5,0) rectangle (5.4,2);
\node at (-.1,1) {$P_0$};
\node at (3.3,.35) {$\cdots$};
\node at (3.3,1.65) {$\cdots$};

\node at (.6,-.2) {\tiny{$\mathbb{C}^{|P_0|}$}};
\node at (1.2,-.2) {\tiny{$1^{\text{st}}$}};
\node at (4.2,-.2) {\tiny{$n^{\text{th}}$}};
\end{tikzpicture}
\]

\item The pull down property is true automatically in this setting. See the diagram \ref{TLJ diagram explanation, module}(MT6).
\end{compactenum}
\end{construction}

\subsection{More properties of Markov tower}
Here, we are going to explore more properties of Markov tower. The tracial version has been proved in \cite[Thm.~4.1.4, Thm.~4.6.3]{GHJ89}\cite[Prop.~3.4]{CHPS18}. For convenience, here we will prove those properties for the traceless case.

\begin{lemma}\label{otb lemma for f.d. inclusion}
Suppose $A\subset B$ is a unital inclusion of finite dimensional ${\rm C}^*$-algebras and $E:B\to A$ is a faithful conditional expectation. 
Then there is an orthonormal basis $\{u_i\}_{i\in I}$ such that $\sum_{i\in I}u_i E(u_i^*x)=x$ for all $x\in B$, where  $|I|<\infty$.
\end{lemma}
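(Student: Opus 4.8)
The plan is to produce the Pimsner--Popa basis in the traceless setting by a decomposition argument into matrix blocks, mimicking the classical proof but replacing the trace by the faithful conditional expectation $E$. First I would reduce to the case where $A$ is a factor, i.e. $A \cong M_k(\mathbb C)$. Indeed, writing $A = \bigoplus_{s} A^{(s)}$ with central projections $z_s$, the inclusion $A \subset B$ decomposes as $A^{(s)} \subset z_s B z_s$ (note $z_s \in A \subset B$ is central in $A$ but not necessarily in $B$; still $z_s B z_s$ is a finite dimensional ${\rm C}^*$-algebra containing $A^{(s)}$ unitally, and $E$ restricts to a faithful conditional expectation $z_s B z_s \to A^{(s)}$ because $E(z_s x z_s) = z_s E(x) z_s$). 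If $\{u_i^{(s)}\}$ is a basis for each block, then $\{u_i^{(s)}\}_{s,i}$ works for $A \subset B$: for $x \in B$ we have $\sum_{s,i} u_i^{(s)} E((u_i^{(s)})^* x) = \sum_s z_s\big(\sum_i u_i^{(s)} E((u_i^{(s)})^* z_s x z_s)\big) = \sum_s z_s x z_s = x$ since $\sum_s z_s x z_s = (\sum_s z_s) x$ fails in general --- so actually one must be slightly more careful and use that $u_i^{(s)} \in z_s B z_s$ forces the cross terms to involve $z_s x z_t = 0$ in the relevant places; I would check that $\sum_s z_s x z_s = x$ does hold after absorbing the off-diagonal pieces, or alternatively argue the factor case directly for the pair $A \subset B$ via a faithful state.

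For the factor case $A = M_k(\mathbb C)$ with matrix units $(e_{pq})$, the standard trick is: pick a maximal family of partial isometries. Concretely, consider the $A$-$A$ bimodule $B$ and decompose $e_{11} B e_{11}$ --- this is a finite dimensional ${\rm C}^*$-algebra, hence $\cong \bigoplus_m M_{n_m}(\mathbb C)$, and $E$ restricts to a faithful conditional expectation $e_{11} B e_{11} \to e_{11} A e_{11} = \mathbb C e_{11}$, i.e. a faithful state $\phi$ on $e_{11} B e_{11}$. Choose an orthonormal basis $\{b_\ell\}$ of $e_{11} B e_{11}$ with respect to the inner product $\langle x, y\rangle = \phi(x^* y)$; then $\sum_\ell b_\ell \phi(b_\ell^* x) = x$ for $x \in e_{11} B e_{11}$. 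Now set $u_{p,\ell} := e_{p1} b_\ell$ for $p = 1, \dots, k$ and all $\ell$. The claim is that $\{u_{p,\ell}\}$ is the desired basis. One computes $E(u_{p,\ell}^* u_{q,\ell'}) = E(b_\ell^* e_{1p} e_{q1} b_{\ell'}) = \delta_{pq} E(b_\ell^* e_{11} b_{\ell'})$; since $b_\ell^* e_{11} b_{\ell'} \in e_{11} B e_{11}$ and $E$ on this corner is $\phi$ (valued in $\mathbb C e_{11}$), this equals $\delta_{pq}\delta_{\ell\ell'} e_{11}$ --- an orthonormality relation. For the reconstruction, given $x \in B$, write $x = \sum_{p,q} e_{p1}(e_{1p} x e_{q1})e_{1q}$ using $\sum_p e_{pp} = 1$, note $e_{1p} x e_{q1} \in e_{11} B e_{11}$, expand each in the $b_\ell$ basis, and reassemble to get $x = \sum_{p,\ell} u_{p,\ell} E(u_{p,\ell}^* x)$. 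The cardinality is $k \cdot \dim(e_{11} B e_{11}) < \infty$.

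The key inputs are thus: (i) a faithful conditional expectation onto $\mathbb C$ is the same as a faithful state, and a finite dimensional ${\rm C}^*$-algebra with a faithful state has an orthonormal basis for the associated GNS inner product summing correctly; (ii) the matrix-unit decomposition of $M_k(\mathbb C)$ lets one transport a corner basis to a full basis compatibly with $E$, using $E(a x b) = a E(x) b$ for $a, b \in A$. I expect the \textbf{main obstacle} to be the bookkeeping in the non-factor reduction: one must verify that $\sum_s z_s x z_s$ and the off-diagonal contributions $z_s x z_t$ ($s \neq t$) are handled correctly, i.e. that the naively-assembled basis really does reconstruct $x$ and not just $\sum_s z_s x z_s$. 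The clean fix is to not reduce to factors at all but to choose, for the general finite dimensional $A = \bigoplus_s M_{k_s}(\mathbb C)$, a faithful state on the (finite dimensional) commutant-corner algebras $e_{11}^{(s)} B e_{11}^{(s)}$ and partial isometries $e_{p1}^{(s)}$ within each block, then take $u_{s,p,\ell} = e_{p1}^{(s)} b_\ell^{(s)}$; the relation $\sum_s \sum_p e_{pp}^{(s)} = 1$ together with $E(e_{1p}^{(s)} x e_{q1}^{(t)}) = 0$ for $s \neq t$ (since $e_{1p}^{(s)} x e_{q1}^{(t)} = e_{11}^{(s)}(\cdots) e_{11}^{(t)}$ and $E$ is $A$-bilinear with $e_{11}^{(s)} e_{11}^{(t)} = 0$) makes the reconstruction go through directly. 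I would write the proof in this unified form, citing Lemma \ref{otb lemma for f.d. inclusion}'s classical analogues only for motivation.
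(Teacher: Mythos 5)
Your explicit construction has a genuine gap, and it is exactly the one you flagged in your first paragraph and then claimed to have repaired: the ``clean fix'' does not fix it. Every proposed basis element $u_{s,p,\ell}=e^{(s)}_{p1}b^{(s)}_\ell$ lies in the diagonal corner $z_sBz_s$, where $z_s$ is the central support of the $s$-th block of $A$. For such $u$ one has $E(u^*x)\in z_sA$, hence $uE(u^*x)\in z_sBz_s$, so the sum $\sum_iu_iE(u_i^*x)$ lands in $\bigoplus_sz_sBz_s$ and can only ever reproduce $\sum_sz_sxz_s$, never the off-diagonal components $z_sxz_t$ with $s\neq t$. Your own bookkeeping confirms this: $\sum_{s,p,q}e^{(s)}_{pp}xe^{(s)}_{qq}=\sum_sz_sxz_s$. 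The identity $E(e^{(s)}_{1p}xe^{(t)}_{q1})=0$ for $s\neq t$ is not what makes the reconstruction ``go through'' --- it is precisely why the off-diagonal part is irrecoverable from your family. A minimal counterexample is $A=\mathbb{C}\oplus\mathbb{C}$ (diagonal matrices) inside $B=M_2(\mathbb{C})$ with any faithful $E$: your recipe yields multiples of $e_{11}$ and $e_{22}$, and $\sum_iu_iE(u_i^*x)=E(x)$, missing $e_{12}$ and $e_{21}$ entirely. (In the factor case $A=M_k(\mathbb{C})$ there is a single $z_s=1$ and your argument is fine; the defect appears only for several blocks.)

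The repair is to use the one-sided corners $Be^{(t)}_{11}$ instead of the two-sided corners $e^{(t)}_{11}Be^{(t)}_{11}$. For $x,y\in Be^{(t)}_{11}$ one has $E(x^*y)\in e^{(t)}_{11}Ae^{(t)}_{11}=\mathbb{C}e^{(t)}_{11}$, and faithfulness of $E$ makes the resulting scalar form positive definite; choosing an orthonormal basis $\{c^{(t)}_\ell\}$ of each $Be^{(t)}_{11}$ and taking the union over the blocks $t$ already gives $E\bigl((c^{(s)}_\ell)^*c^{(t)}_{\ell'}\bigr)=\delta_{st}\delta_{\ell\ell'}e^{(t)}_{11}$ and $x=\sum_{t,\ell}c^{(t)}_\ell E\bigl((c^{(t)}_\ell)^*x\bigr)$ (expand $E((c^{(t)}_\ell)^*x)=\sum_qE((c^{(t)}_\ell)^*xe^{(t)}_{q1})e^{(t)}_{1q}$ and apply the corner reconstruction to each $xe^{(t)}_{q1}\in Be^{(t)}_{11}$, which sums to $xz_t$). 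Note that $Be^{(t)}_{11}$ contains $z_sBe^{(t)}_{11}$ for all $s$, which is where the off-diagonal information lives. For comparison, the paper avoids any explicit construction: it regards $B$ as a finitely generated projective Hilbert $A$-module with $\langle x|y\rangle_A=E(x^*y)$ and quotes the existence of a finite orthonormal basis from Frank--Larson and Kajiwara--Watatani, so your hands-on route, once corrected as above, is a legitimate self-contained alternative.
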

\begin{proof} 
Regard $B$ as a right $A$-module equipped with an $A$-valued inner product $\langle x|y\rangle_A:=E(x^*y)$. Note that $A$ and $B$ are finite dimensional, so $B$ is a finitely generated projective Hilbert $A$-module. 
By \cite[Thm.~4.1]{FL02}\cite[Lemma.~1.7]{KW00}, 
there exists an orthonormal basis $\{u_i\}_{i\in I}\subset B$ such that $x=\sum_{i\in I}u_i\langle u_i|x\rangle_A =\sum_{i\in I}u_i E(u_i^*x)$ for all $x\in B$ and $|I|<\infty$. 
\end{proof}

\begin{proposition}\label{Markov Tower prop 2}
\mbox{}
\begin{compactenum}[\rm (1)]
\item $X_{n+1}:=M_ne_nM_n$ is a 2-sided ideal of $M_{n+1}$ and hence $M_{n+1}$  splits as a direct sum of von Neumann algebras $X_{n+1}\oplus Y_{n+1}$. We also define $Y_0=M_0,Y_1=M_1$ so that $X_0=X_1=0$. $X_{n+1}$ is called the old stuff and $Y_{n+1}$ is called the new stuff.
\item $X_{n+1}$ is isomorphic to $M_n\otimes_{M_{n-1}}M_n$, which is the basic construction from $E_n:M_n\to M_{n-1}$. Denote this isomorphism as $\phi$. Here, $M_n\otimes_{M_{n-1}}M_n$ is a $*$-algebra with multiplication $(x_1\otimes y_1)(x_2\otimes y_2)=x_1E_n(y_1x_2)\otimes y_2$ and adjoint $(x\otimes y)^*=y^*\otimes x^*$.
\item If $y\in Y_{n+1}$ and $x\in X_n$, then $yx=0$ in $M_{n+1}$. Hence $E_{n+1}(Y_{n+1})\subset Y_n$, which means the new stuff comes from the old new stuff.
\item If $Y_n=0$, then $Y_k=0$ for all $k\ge n$.
\end{compactenum}
\end{proposition}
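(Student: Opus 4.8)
The plan is to prove the four parts of Proposition~\ref{Markov Tower prop 2} in order, since each relies on the previous ones. The key tool throughout will be the Temperley–Lieb–Jones relations and the pull-down identity \textbf{(M4)}, together with the orthonormal basis from Lemma~\ref{otb lemma for f.d. inclusion}.

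First I would prove (1). That $X_{n+1}=M_ne_nM_n$ is a two-sided ideal of $M_{n+1}$ follows once I show $M_{n+1}=M_n+M_ne_nM_n$ as vector spaces, or at least that $M_{n+1}X_{n+1}\subseteq X_{n+1}$. For the latter, take $z\in M_{n+1}$ and a generator $ae_nb$ with $a,b\in M_n$. By the pull-down property (Proposition~\ref{Markov tower properties}(3), equivalently (M4)) we have $ze_n = d^2E_{n+1}(ze_n)e_n\in M_ne_n$; hence $z(ae_nb)=(za e_n)\cdots$ — more precisely I will use $za e_n\in M_{n+1}e_n = M_ne_n$ to rewrite $za e_n = c e_n$ for some $c\in M_n$, giving $z(ae_nb) = ce_nb\in X_{n+1}$. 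Right multiplication is handled by taking adjoints, using $e_n^*=e_n$. Since $X_{n+1}$ is a $*$-closed two-sided ideal of the finite dimensional $\rm C^*$-algebra $M_{n+1}$, it is itself a von Neumann algebra with a central projection as its unit, so $M_{n+1}=X_{n+1}\oplus Y_{n+1}$ with $Y_{n+1}$ the complementary ideal. The conventions $Y_0=M_0$, $Y_1=M_1$, $X_0=X_1=0$ are just definitions.

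Next, (2): the map $\phi:M_n\otimes_{M_{n-1}}M_n\to X_{n+1}$ sending $x\otimes y\mapsto d^2\,x e_n y$ (the factor $d^2$ chosen so that this is unital and $*$-preserving — I will fix the normalization during the write-up). I would check it is a well-defined $*$-algebra homomorphism: well-definedness over $M_{n-1}$ uses $e_n w = w e_n$ for $w\in M_{n-1}$ (Proposition~\ref{Markov tower properties}(1)) together with $e_n w e_n = E_n(w)e_n = w e_n$; multiplicativity uses $e_n y_1 x_2 e_n = E_n(y_1 x_2)e_n$, which is exactly \textbf{(M2)} and matches the stated multiplication on $M_n\otimes_{M_{n-1}}M_n$. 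Surjectivity is clear since $X_{n+1}=M_ne_nM_n$. For injectivity I would use Lemma~\ref{otb lemma for f.d. inclusion}: with an orthonormal basis $\{u_i\}$ for $M_n\supset M_{n-1}$, every element of $M_n\otimes_{M_{n-1}}M_n$ is $\sum_i u_i\otimes y_i$ with $y_i\in M_n$ uniquely determined, and if $\sum_i d^2 u_i e_n y_i = 0$ then multiplying by $e_n$ on the left, using $e_n u_i e_n = E_n(u_i)e_n$ and the pull-down uniqueness (Proposition~\ref{Markov tower properties}(3)), forces each $y_i=0$. I expect this injectivity argument to be the main obstacle, since it requires combining the basis expansion with the pull-down uniqueness carefully — this is the one genuinely computational step.

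Finally (3) and (4) are short. For (3), let $y\in Y_{n+1}$ and $x\in X_n = M_{n-1}e_{n-1}M_{n-1}$; it suffices to show $y\,ae_{n-1}b = 0$ for $a,b\in M_{n-1}$. Since $e_{n-1}\in M_n$ and $y$ annihilates $X_{n+1}\ni M_n e_n M_n$, I would write $e_{n-1}$ in terms of $e_n$: from \textbf{(TLJ3)}, $e_{n-1} = d^2\, e_{n-1}e_ne_{n-1}\in M_ne_nM_n = X_{n+1}$, so $ya e_{n-1} b$ lies in $y X_{n+1}=0$. Then for any $y\in Y_{n+1}$, $E_{n+1}(y)\in M_n = X_n\oplus Y_n$; writing $E_{n+1}(y) = E_{n+1}(y)p_{X_n} + E_{n+1}(y)p_{Y_n}$ where $p_{X_n}$ is the central support of $X_n$, and noting $y\cdot p_{X_n}=0$ by what we just showed (since $p_{X_n}\in X_n$), conditional expectation bimodularity gives $E_{n+1}(y)p_{X_n} = E_{n+1}(y p_{X_n}) = 0$, so $E_{n+1}(Y_{n+1})\subseteq Y_n$. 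For (4): if $Y_n=0$ then $M_n = X_n$, so $Y_{n+1}\subseteq M_{n+1}$ satisfies, for all $z\in M_{n+1}$ and $y\in Y_{n+1}$, that $y$ annihilates $X_{n+1}=M_ne_nM_n$; but $M_n = X_n$ and by (3) $Y_{n+1}$ annihilates $X_n$, hence $Y_{n+1}\cdot M_n = 0$, and in particular $Y_{n+1}\ni y = y\cdot 1_{M_{n+1}}$; since $1_{M_n}\in M_n$ acts as the unit and... I would finish by observing $y = y e_n$-type argument or simply $Y_{n+1}M_n=0$ with $1_{M_n}=1_{M_{n+1}}$ forces $Y_{n+1}=0$, then induct. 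I will tighten this last step during the write-up.
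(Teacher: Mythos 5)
Your proposal is correct and, for parts (1) and (3), follows the paper's own argument exactly: (1) is the pull-down identity $M_{n+1}e_n=M_ne_n$ plus taking adjoints, and (3) rests on $e_{n-1}=d^2e_{n-1}e_ne_{n-1}\in M_ne_nM_n$ together with bimodularity of $E_{n+1}$ applied to the central support of $X_n$. The differences are in (2) and (4). For (2) the paper simply cites Watatani's index theory together with Lemma~\ref{otb lemma for f.d. inclusion}, whereas you propose to prove the isomorphism directly; your plan is the standard one and works, but two details need fixing in the write-up. First, the normalization must be $\phi(x\otimes y)=xe_ny$ with constant $1$, not $d^2$: multiplicativity forces $c^2=c$ since $e_nzе_n=E_n(z)e_n$ already matches the relative tensor product multiplication. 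Second, multiplying $\sum_i u_ie_ny_i=0$ by $e_n$ alone only yields the single relation $\sum_iE_n(u_i)y_i=0$; you need to multiply on the left by $e_nu_j^*$ for each $j$, obtaining $\sum_iE_n(u_j^*u_i)y_i=0$, and then reconstruct $\sum_iu_i\otimes y_i=\sum_ju_j\otimes\sum_iE_n(u_j^*u_i)y_i=0$ (note the coefficients $y_i$ in a frame expansion need not be unique, but uniqueness is not needed). For (4) the paper argues via $E_{n+1}(Y_{n+1})\subset Y_n=0$ and faithfulness of $E_{n+1}$; your alternative — $Y_{n+1}M_n=Y_{n+1}X_n=0$ by (3) and $1_{M_{n+1}}=1_{M_n}$, hence $Y_{n+1}=0$ — is equally valid and arguably more elementary, since it avoids faithfulness entirely. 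With the two (2)-related slips repaired as above, the proof goes through.
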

\begin{proof}
\item[(1)]
Note that $M_{n+1}e_n=M_ne_n$, then $M_{n+1}M_ne_nM_n\subset M_{n+1}e_nM_n=M_ne_nM_n$ and $M_ne_nM_nM_{n+1}=(M_{n+1}M_ne_nM_n)^*\subset (M_ne_nM_n)^*=M_ne_nM_n$.

\item[(2)]
See Watatani index theory \cite[\S1]{Wa90} with Lemma \ref{otb lemma for f.d. inclusion}.

\item[(3)]
Note that as a finite dimensional von Neumann algebra, $M_{n+1}=\bigoplus_i M_{n+1}p_i$, where $p_i$ are the minimum central projections. So if $y\in Y_{n+1}$, then $y=\sum_j m_jp_j$, where $[p_j,e_n]=0$. 

For $ae_{n-1}b\in X_n$ and $m_jp_j\in Y_{n+1}$, by Jones projection property,
$$m_jp_jae_{n-1}b=d^{-2}m_jp_jae_{n-1}e_ne_{n-1}b=d^{-2}m_jae_{n-1}p_je_ne_{n-1}b=0,$$
so $yx=0$ for any $x\in X_n,\ y\in Y_{n+1}$.

Let $X_n=\bigoplus_k M_nq_k$, where $q_k$ are the minimum central projections. For any $y\in Y_{n+1}$, $q_kE_{n+1}(y)=E_{n+1}(q_ky)=0$ for all $k$, which implies that $E_{n+1}(y)\in Y_n$. 

\item[(4)]
By (3) and faithfulness of $E_n$. 
\end{proof}

\subsection{From Markov tower to $\mathcal{C}(\Lambda,\omega)$}\label{MT to Cb}
Now we are able to extract the so-called principal graph data from the Markov tower, which is similar to the classical tracial Markov tower 
\cite{Oc88}\cite[\S4.2]{JS97}.

If $A$ is a finite dimensional ${\rm C}^*$-algebra, we write $\pi(A)$ to be the set of minimal central projections of $A$. If $A\subset B$ is a unital inclusion of finite dimensional ${\rm C}^*$-algebras, then the inclusion matrix is the $\pi(A)\times \pi(B)$ matrix, with $(p,q)$-th entry being $(\dim_\mathbb{C}(pqA'pq\cap pqBpq))^{\frac{1}{2}}$. If $A\subset B\subset B_1$ is a basic construction, then the inclusion matrix of $B\subset B_1$ is the transpose of the inclusion matrix of $A\subset B$ \cite[\S2]{GHJ89}\cite{JS97}.

The inclusion matrix of $A\subset B$ can be described as the \textbf{Bratteli diagram} of $A\subset B$, whose vertices are the minimal central projections and the number of edges between $p$ and $q$ is the $(p,q)$-th entry. 

The Bratteli diagram $\Delta$ of the Markov tower $M=(M_n)_{n\ge 0}$ contains all the Bratteli diagram $\Delta_n$ of $M_n\subset M_{n+1}$. Then by the property of  inclusion matrix of basic construction and Proposition \ref{Markov Tower prop 2}(2), the Bratteli diagram for $M_n\subset M_{n+1}$ contains the reflection of the Bratteli diagram of $M_{n-1}\subset M_n$ and new part, which is called the \textbf{principal part}. A vertex in the new part is called a \textbf{new vertex}, otherwise, called an \textbf{old vertex}. The reflected vertex from a new vertex is called a \textbf{new old vertex}. Moreover, for a new vertex $p\in Y_n$, denote $p'$ to be the new old vertex of $p$ in $M_{n+2}$.

The \textbf{principal graph} $\Lambda$ contains the new part in the Bratteli diagram $\Delta$, so its vertices are new vertices. To be precise, $V(\Lambda)$ contains all the minimal central projections $p$ in the new stuff. By Proposition \ref{Markov Tower prop 2}(4), the new stuff comes from the old new stuff, then for $p,q\in \Lambda$, $E(\Lambda)$ contains all the edges between $p$ and $q$. 

It is clear that both the Bratteli diagram and the principal graph are  bipartite. We can also use the principal graph to construct the Bratteli diagram by doing the reflection at each level.

\[
\begin{tikzpicture}
\draw[red,thick] (0,0) -- (2.1,2.1);
\draw[red,thick] (.7,.7) -- (.7,1.4);
\draw (.7,.7) -- (0,1.4);
\draw (0,1.4) -- (2.1,3.5);
\draw (.7,1.4) -- (.7,3.5);
\draw (1.4,1.4) -- (0,2.8);
\draw (2.1,2.1) -- (.7,3.5);
\draw (0,2.8) -- (.7,3.5);

\node at (0,0)[circle,fill,red,inner sep=1.5pt]{};
\node at (0.7,0.7)[circle,fill,red,inner sep=1.5pt]{};
\node at (1.4,1.4)[circle,fill,red,inner sep=1.5pt]{};
\node at (2.1,2.1)[circle,fill,red,inner sep=1.5pt]{};
\node at (.7,1.4)[circle,fill,red,inner sep=1.5pt]{};

\node at (0,1.4)[circle,fill,inner sep=1.5pt]{};
\node at (0,2.8)[circle,fill,inner sep=1.5pt]{};
\node at (0.7,2.1)[circle,fill,inner sep=1.5pt]{};
\node at (0.7,2.8)[circle,fill,inner sep=1.5pt]{};
\node at (0.7,3.5)[circle,fill,inner sep=1.5pt]{};
\node at (1.4,2.8)[circle,fill,inner sep=1.5pt]{};
\node at (2.1,3.5)[circle,fill,inner sep=1.5pt]{};

\node at (.9,.5)[red] {$p$};
\node at (1.6,1.2)[red] {$q$};
\node at (1.0,2.14) {$p'$};

\node at (5,1) {$p,q$ are new vertices};
\node at (5,2) {$p'$ is the new old vertex of $p$};
\node at (5,3) {The red part is principal part};
\end{tikzpicture}
\]

Let us then compute the edge weighting $w:E(\Lambda)\to (0,\infty)$. Before that, we first give a lemma:

\begin{lemma}\label{Lemma: relative commutant in BigHilb}
The follows are some properties for the relative commutant in $\mathsf{BigHilb}$:
\begin{compactenum}[\rm (1)]

\item Let $H_1,H_2,\cdots,H_n,G_1,G_2,\cdots,G_n$ be finite dimensional Hilbert spaces. We identify $B(H_i)$ with $B(H_i)\otimes \id_{G_i}$ and $B(G_i)$ with $\id_{H_i}\otimes B(G_i)$ as subalgebras in $B(\bigoplus_{i=1}^n H_i\otimes G_i)$ for each $i=1,\cdots, n$, then the relative commutant 
\begin{align*}
    \bigcap_{i=1}^n \left(B(H_i)'\cap B\left(\bigoplus_{i=1}^n H_i\otimes G_i\right)\right)=\bigoplus_{i=1}^n B(G_i).\tag{$*$}
\end{align*}
\item Let $H$ be a 1-morphism in $\mathsf{BigHilb}$, then the center  $Z(\End(H))$ is the linear span of all the direct summands of $\id_H$.
\item Let $G$ be another 1-morphism in $\mathsf{BigHilb}$ such that $H\otimes G$ is nondegenerate, i.e., for each nonzero $H_{pq}$, there is a nonzero $G_{qr}$ and vice versa. We identify $\End(H)$ with $\End(H)\otimes \id_G$ and $\End(G)$ with $\id_H\otimes \End(G)$ as subalgebras in $\End(H\otimes G)$. Then the relative commutant 
$$\End(H)'\cap \End(H\otimes G)=Z(\End(H))\otimes \End(G).$$
\item Moreover, if $H_{pq}$ is nonzero only when $p=p_0\in V$, then the relative commutant can be represented as 
$$\End(H)'\cap \End(H\otimes G)=\id_H\otimes \End(G).$$
\end{compactenum}
\textbf{Warning}: the tensor product in (1) is the tensor product of Hilbert spaces and bounded operators; the tensor product in (3) and (4) is the tensor product of 1-morphisms/2-morphisms in $\mathsf{BigHilb}$, see Definition \ref{Def:BigHilb}.
\end{lemma}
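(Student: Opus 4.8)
The plan is to prove the four claims in sequence, since (2)--(4) reduce to (1) plus bookkeeping. For (1), I would first observe that since the algebra $B(\bigoplus_{i=1}^n H_i\otimes G_i)$ decomposes as the block-diagonal-plus-off-diagonal sum $\bigoplus_{i,j} B(H_i\otimes G_i, H_j\otimes G_j)$, and each $B(H_i)\otimes \id_{G_i}$ acts only within the $i$-th block, an element $T=(T_{ij})$ commuting with all $B(H_i)\otimes\id_{G_i}$ must first be block diagonal: testing against a rank-one partial isometry supported only on block $i$ (resp.\ block $j$) forces $T_{ij}=0$ for $i\ne j$ by a standard double-commutant-style argument. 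Within each block, $T_{ii}\in B(H_i\otimes G_i)$ commutes with $B(H_i)\otimes \id_{G_i}$, and the classical fact that the commutant of $B(H_i)\otimes\id_{G_i}$ in $B(H_i\otimes G_i)$ is $\id_{H_i}\otimes B(G_i)$ finishes the identification $(\ast)$. This is entirely routine finite-dimensional operator algebra.

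For (2), I would note that $\End(H)=\bigoplus_{p,q} B(H_{pq})$ as a $V\times V$-bigraded algebra, so $\End(H)$ is a finite or countable direct sum of matrix algebras indexed by the pairs $(p,q)$ with $H_{pq}\ne 0$; its center is therefore spanned by the central projections $\id_{H_{pq}}$, which are exactly the direct summands of $\id_H = \bigoplus_{p,q}\id_{H_{pq}}$. For (3), I would compute entrywise: $\End(H\otimes G)_{pr}=B\big(\bigoplus_q H_{pq}\otimes G_{qr}\big)$, the subalgebra $\End(H)\otimes\id_G$ contributes $\bigoplus_q B(H_{pq})\otimes\id_{G_{qr}}$ on the $(p,r)$ block (acting block-diagonally in $q$), and the subalgebra $\id_H\otimes\End(G)$ contributes $\bigoplus_q \id_{H_{pq}}\otimes B(G_{qr})$. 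Applying part (1) with the index $i$ ranging over the $q$'s for which $H_{pq}\otimes G_{qr}\ne 0$ — here nondegeneracy of $H\otimes G$ guarantees these blocks are genuinely present — gives that the $(p,r)$-block of the relative commutant is $\bigoplus_q \id_{H_{pq}}\otimes B(G_{qr})$, which is precisely the $(p,r)$-block of $Z(\End(H))\otimes\End(G)$ by part (2). Part (4) is the special case $H_{pq}=0$ unless $p=p_0$: then $Z(\End(H))=\bigoplus_q \id_{H_{p_0q}}\cdot\mathbb{C}$ collapses the tensor factor $Z(\End(H))\otimes\End(G)$ down to $\id_H\otimes\End(G)$, since each summand $\id_{H_{p_0q}}\otimes B(G_{qr})$ absorbs into $\id_H\otimes\End(G)$.

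The main obstacle — modest as it is — is the first claim, specifically being careful about the off-diagonal blocks and about the fact that the ambient ``algebra'' here is the algebra of \emph{uniformly bounded} graded operators rather than all operators; one must check that the commutant computed blockwise is again uniformly bounded, which is automatic since each block is finite-dimensional and the resulting operator is supported blockwise in the same pattern as the inputs. I would also take care in part (3) that the countable direct sums over $q$ and $r$ do not cause trouble: all relevant sums are finite for each fixed $(p,r)$ by the defining finiteness condition on objects of $\textsf{Hilb}^{U\times V}_f$ (Definition \ref{Def:Hilb UxV}), so the entrywise argument is legitimate and the global statement follows by assembling the blocks. No deeper input is needed; the lemma is essentially a graded/2-categorical repackaging of the elementary identity $B(H)'\cap B(H\otimes G)=\id_H\otimes B(G)$.
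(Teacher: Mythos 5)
Your proposal is correct and follows essentially the same route as the paper: block-decompose the operator, kill the off-diagonal blocks by testing commutation against elements supported in a single block, invoke the classical identity $B(H_i)'\cap B(H_i\otimes G_i)=\id_{H_i}\otimes B(G_i)$ on the diagonal, and then assemble (2)--(4) entrywise from (1). The only place worth being slightly more explicit than your ``absorbs'' remark in (4) is the reassembly of the a priori $q$-dependent family $g^{(q)}$ into a single $g\in\End(G)$ via $g_{ij}:=g^{(i)}_{ij}$, which is exactly what the paper writes out.
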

\begin{proof}
\item[(1)] $\supset$ is clear. We show $\subset$.

For $f\in B(\bigoplus_{i=1}^n H_i\otimes G_i)$, $f=\bigoplus_{i,j=1}^n f_{i,j}$, where $f_{i,j}\in B(H_i\otimes G_i,H_j\otimes G_j)$. We shall prove that $f_{i,j}=0$ for $i\ne j$ and $f_{i,i}\in \id_{H_i}\otimes B(G_i)$ if $f\in$ LHS of equation ($*$). Let $x_i\in B(H_i)$, then 
$$f(x_i\otimes \id_{G_i})=\bigoplus_{j=1}^n f_{i,j}(x_i\otimes \id_{G_i})=\bigoplus_{k=1}^n (x_i\otimes \id_{G_i})f_{k,i}=(x_i\otimes \id_{G_i})f,$$
which implies that $f_{i,j}(x_i\otimes \id_{G_i})=(x_i\otimes \id_{G_i})f_{k,i}=0$ for $k\ne i,\ j\ne i$ and $f_{i,i}(x_i\otimes \id_{G_i})=(x_i\otimes \id_{G_i})f_{i,i}$. 

From the first half, if we choose $x_i=\id_{H_i}$, we obtain $f_{i,j}=f_{k,i}=0$, $j\ne i,\ k\ne i$; from the second half, from a well-known statement that $B(H_i)'\cap B(H_i\otimes G_i)=B(G_i)$, so that $f_{i,i}\in \id_{H_i}\otimes G_i$.

\item[(2)] Clear, see Definition \ref{Def:Hilb UxV}(d).

\item[(3)] $\supset$ is clear. We show $\subset$.

For $f\in \End(H)'\cap \End(H\otimes G)$, we shall prove that $f_{pq}\in \bigoplus_{r\in V} \id_{H_{pr}}\otimes B(H_{rq})$.

Note that 
$$(\End(H\otimes G))_{pq}=\End((H\otimes G)_{pq})=B\left(\bigoplus_{r\in V}H_{pr}\otimes G_{rq}\right)$$

For $f\in \End(H)'\cap \End(H\otimes G)$, $f_{pq}$ commute with $B(H_{pr})\otimes \id_{G_{rq}}$ for all $r\in V$. By (1), we have $f_{pq}\in \bigoplus_{r\in V} \id_{H_{pr}}\otimes B(H_{rq})$. Together with (2), we prove this statement.

\item[(4)] From (3), for $f\in \End(H)'\cap \End(H\otimes G)$, 
$$f=\bigoplus_{q\in V} \id_{H_{p_0q}}\otimes g^{(q)},$$
where $g^{(q)}\in \End(G)$.

Now we define $g\in \End(G)$ by $g_{ij}:=g^{(i)}_{ij}.$
Then $f=\id_H\otimes g$.
\end{proof}  

By \S\ref{2-Hilb and edge weighting}, we are able to construct a $\rm W^*$ 2-subcategory $\mathcal{C}(\Lambda)$ without providing the distinguished evaluation and coevaluation given by the edge weighting, though we still have the canonical evaluation and coevaluation denoted by $\ev^{\text{st}}$ and $\coev^{\text{st}}$, which are drawn in green below. 
We denote the generators by $K=K_\Lambda$ and $\overline{K}$. 
From Construction \ref{From C(K,w) to MT}, let $N_n:=\text{End}(\mathbb{C}^{|p_0|}\otimes K^{\text{alt}\otimes n})$. 

\begin{notation} \textbf{and Observation}\label{Notation:Kn}
Denote $\Lambda_n$ to be the subgraph of $\Lambda$ with vertices depth $\le n$ and the corresponding \textsf{Hilb}-enriched graph to be $K_n:=K_{\Lambda_n}$ and $\overline{K}_n$ the dual space in the sense of Construction \ref{construction: graph to BigHilb}. As a convention, $p_0$ is of depth $0$.
Observe that 
$$N_n=\End(K_1\otimes \overline{K}_2\otimes K_3\otimes \overline{K}_4\otimes \cdots\otimes K^?_{n}).$$
where $K^?_n=K_n$ if $2\nmid n$, $K^?_n=\overline{K}_n$ if $2\mid n$.
\end{notation}

\begin{example}\label{Ex:A_5 Markov tower}
Let us take $A_5$ graph for example. We label the vertices as follows.
\[
\begin{tikzpicture}
\draw[red,thick] (0,0) -- (2.8,2.8);
\draw (.7,.7) -- (0,1.4);
\draw (0,1.4) -- (2.8,4.2);
\draw (1.4,1.4) -- (0,2.8);
\draw (2.1,2.1) -- (0,4.2);
\draw (0,2.8) -- (1.4,4.2);
\draw (2.8,2.8) -- (1.4,4.2);

\node at (0,0)[circle,fill,red,inner sep=1.5pt]{};
\node at (0.7,0.7)[draw,red,circle,fill=white,inner sep=1.5pt]{};
\node at (1.4,1.4)[circle,fill,red,inner sep=1.5pt]{};
\node at (2.1,2.1)[draw,red,circle,fill=white,inner sep=1.5pt]{};
\node at (2.8,2.8)[circle,fill,red,inner sep=1.5pt]{};

\node at (0,1.4)[circle,fill,inner sep=1.5pt]{};
\node at (0,2.8)[circle,fill,inner sep=1.5pt]{};
\node at (0,4.2)[circle,fill,inner sep=1.5pt]{};
\node at (0.7,2.1)[draw,circle,fill=white,inner sep=1.5pt]{};
\node at (0.7,3.5)[draw,circle,fill=white,inner sep=1.5pt]{};
\node at (1.4,2.8)[circle,fill,inner sep=1.5pt]{};
\node at (1.4,4.2)[circle,fill,inner sep=1.5pt]{};
\node at (2.1,3.5)[draw,circle,fill=white,inner sep=1.5pt]{};
\node at (2.8,4.2)[circle,fill,inner sep=1.5pt]{};

\node at (0,.3)[red] {$p_1$};
\node at (.7,1)[red] {$p_4$};
\node at (1.4,1.7)[red] {$p_2$};
\node at (2.1,2.4)[red] {$p_5$};
\node at (2.8,3.1)[red] {$p_3$};

\node at (0,1.7) {$p_1$};
\node at (0,3.1) {$p_1$};
\node at (0,4.5) {$p_1$};
\node at (.7,2.4) {$p_4$};
\node at (.7,3.8) {$p_4$};
\node at (1.4,3.1) {$p_2$};
\node at (1.4,4.5) {$p_2$};
\node at (2.8,4.5) {$p_3$};
\node at (2.1,3.8) {$p_5$};

\end{tikzpicture}
\]

Then 
$$K_1=\begin{bmatrix}0 & 0 & 0 & \mathbb{C} & 0 \\ 0 & 0 & 0 & 0 & 0 \\ 0 & 0 & 0 & 0 & 0 \\ 0 & 0 & 0 & 0 & 0 \\ 0 & 0 & 0 & 0 & 0\end{bmatrix}\ 
\overline{K}_2=\begin{bmatrix}0 & 0 & 0 & 0 & 0 \\ 0 & 0 & 0 & 0 & 0 \\ 0 & 0 & 0 & 0 & 0 \\ \mathbb{C} & \mathbb{C} & 0 & 0 & 0 \\ 0 & 0 & 0 & 0 & 0\end{bmatrix}\ 
K_3=\begin{bmatrix}0 & 0 & 0 & \mathbb{C} & 0 \\ 0 & 0 & 0 & \mathbb{C} & \mathbb{C} \\ 0 & 0 & 0 & 0 & 0 \\ 0 & 0 & 0 & 0 & 0 \\ 0 & 0 & 0 & 0 & 0\end{bmatrix} 
$$

$$\overline{K}_4=\begin{bmatrix}0 & 0 & 0 & 0 & 0 \\ 0 & 0 & 0 & 0 & 0 \\ 0 & 0 & 0 & 0 & 0 \\ \mathbb{C} & \mathbb{C} & 0 & 0 & 0 \\ 0 & \mathbb{C} & \mathbb{C} & 0 & 0\end{bmatrix}=\overline{K}_{4+2k},\quad 
K_5=\begin{bmatrix}0 & 0 & 0 & \mathbb{C} & 0 \\ 0 & 0 & 0 & \mathbb{C} & \mathbb{C} \\ 0 & 0 & 0 & 0 & \mathbb{C} \\ 0 & 0 & 0 & 0 & 0 \\ 0 & 0 & 0 & 0 & 0\end{bmatrix}=K_{5+2k},\quad k=0,1,2,\cdots$$

$$K_1\otimes \overline{K}_2 \otimes K_3=\begin{bmatrix}0 & 0 & 0 & \mathbb{C}^2 & \mathbb{C} \\ 0 & 0 & 0 & 0 & 0 \\ 0 & 0 & 0 & 0 & 0 \\ 0 & 0 & 0 & 0 & 0 \\ 0 & 0 & 0 & 0 & 0\end{bmatrix}
\qquad\qquad
K_1\otimes \overline{K}_2 \otimes K_3\otimes \overline{K_4} = \begin{bmatrix}\mathbb{C}^2 & \mathbb{C}^3 & \mathbb{C} & 0 & 0 \\ 0 & 0 & 0 & 0 & 0 \\ 0 & 0 & 0 & 0 & 0 \\ 0 & 0 & 0 & 0 & 0 \\ 0 & 0 & 0 & 0 & 0\end{bmatrix}
$$
For this example, observe that $\End(K_1\otimes \overline{K}_2\otimes \cdots\otimes K_n^?)$ is the semisimple quotient of $TLJ_{n}(\sqrt{3})$.

One can regard $\Lambda_n$ as the subgraph of the Bratteli diagram between depth $n-1$ and $n$, and $K_n$ is the \textsf{Hilb}-enriched graph of $\Lambda_n$.
The entry $(i,j)$ in $K_1\otimes \overline{K}_2\otimes \cdots \otimes K^?_n$ indicates the number of paths from the vertex $p_i$ at depth $0$ to the vertex $p_j$ at depth $n$.
Note that the base point is a single vertex $p_1$, so  entry only at $(1,j)$ can be nonzero.
\end{example}

\begin{proposition}\label{Prop: Nn-1' cap Nn+1}
$$N_{n-1}'\cap N_{n+1}=
\begin{cases} 
\id_{K_1\otimes \overline{K}_2\otimes \cdots \otimes K_{2k-1}}\otimes \End(\overline{K}_{2k}\otimes K_{2k+1}) & n=2k\\ 
\id_{K_1\otimes \overline{K}_2\otimes \cdots \otimes \overline{K}_{2k}}\otimes \End(K_{2k+1}\otimes \overline{K}_{2k+2}) & n=2k+1. 
\end{cases}$$
\end{proposition}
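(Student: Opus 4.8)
The plan is to compute the relative commutant $N_{n-1}'\cap N_{n+1}$ inside \textsf{BigHilb} directly using the observation in Notation and Observation \ref{Notation:Kn}, namely that $N_n=\End(K_1\otimes \overline{K}_2\otimes \cdots\otimes K^?_n)$, together with the structure lemma for relative commutants, Lemma \ref{Lemma: relative commutant in BigHilb}. The key point is that all the $N_m$ are endomorphism algebras of iterated tensor products of the \textsf{Hilb}-enriched graphs $K_j$, so the inclusion $N_{n-1}\subset N_{n+1}$ is exactly of the form $\End(H)\subset \End(H\otimes G)$ with $H=K_1\otimes\cdots\otimes K^?_{n-1}$ and $G=K^?_n\otimes K^?_{n+1}$, after identifying $N_{n-1}$ with $\End(H)\otimes\id_G$.

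First I would unwind the notation: write $H_{n-1}:=K_1\otimes \overline{K}_2\otimes\cdots\otimes K^?_{n-1}$, so $N_{n-1}=\End(H_{n-1})$, and note that $N_{n+1}=\End(H_{n-1}\otimes K^?_n\otimes K^?_{n+1})$ where the $?$'s alternate as dictated by parity. Because the base point is a single vertex $p_0\in V_0$, the 1-morphism $\mathbb{C}^{|p_0|}\otimes K_1\otimes\cdots$ has $H_{n-1,pq}$ nonzero only for $p=p_0$; this is precisely the hypothesis of Lemma \ref{Lemma: relative commutant in BigHilb}(4). Second, I would check that $H_{n-1}\otimes(K^?_n\otimes K^?_{n+1})$ is nondegenerate in the sense of part (3)/(4) of that lemma — this follows because every vertex of $\Lambda$ at depth $\le n-1$ reachable from $p_0$ by definition has an outgoing edge continuing the walk, so there is a nonzero entry of $K^?_n$ attached; and conversely. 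Third, apply Lemma \ref{Lemma: relative commutant in BigHilb}(4) with $H=H_{n-1}$ and $G=K^?_n\otimes K^?_{n+1}$ to conclude
\[
N_{n-1}'\cap N_{n+1}=\id_{H_{n-1}}\otimes\End(K^?_n\otimes K^?_{n+1}).
\]
Finally, substituting the parities: for $n=2k$ we have $K^?_n=\overline{K}_{2k}$ and $K^?_{n+1}=K_{2k+1}$, and $H_{n-1}=K_1\otimes\overline{K}_2\otimes\cdots\otimes K_{2k-1}$, giving the first line of the claimed formula; for $n=2k+1$ we get $K^?_n=K_{2k+1}$, $K^?_{n+1}=\overline{K}_{2k+2}$, and $H_{n-1}=K_1\otimes\overline{K}_2\otimes\cdots\otimes\overline{K}_{2k}$, giving the second line.

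The main obstacle I anticipate is the careful bookkeeping of which tensor factors are $K_j$ versus $\overline{K}_j$ and verifying the nondegeneracy hypothesis rigorously rather than hand-wavingly — in particular, one must be sure that restricting to the subgraph $\Lambda_n$ (so that $K^?_n=K^?_{\Lambda_n}$ rather than $K^?_\Lambda$) does not change the relative commutant. This is where I would lean on the description of the principal graph via the Bratteli diagram from \S\ref{MT to Cb}: a path in $\Lambda$ from $p_0$ of length $n+1$ only ever visits vertices of depth $\le n+1$, and the inclusion data of $N_{n-1}\subset N_n\subset N_{n+1}$ is governed by edges between consecutive depths, so replacing $K$ by $K_n$ (which records exactly the depth-$(n-1)$-to-depth-$n$ edges) is harmless. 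A secondary subtlety is confirming that the identification $N_n\ni x\mapsto x\otimes\id_{K^?}\in N_{n+1}$ from Construction \ref{From C(K,w) to MT} really does realize $N_{n-1}$ inside $N_{n+1}$ as $\End(H_{n-1})\otimes\id_{K^?_n}\otimes\id_{K^?_{n+1}}$, which is immediate from functoriality of $\otimes$ in \textsf{BigHilb} but should be stated. Everything else is a direct application of Lemma \ref{Lemma: relative commutant in BigHilb}, so the proof should be short.
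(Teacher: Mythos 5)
Your proposal is correct and follows essentially the same route as the paper, which simply observes that $K_1\otimes\overline{K}_2\otimes\cdots\otimes K^?_{n-1}$ satisfies the hypotheses of Lemma \ref{Lemma: relative commutant in BigHilb}(3) and (4) and applies them. Your additional care about nondegeneracy and the harmlessness of truncating to $K^?_n=K^?_{\Lambda_n}$ fills in details the paper leaves implicit, but the underlying argument is identical.
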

\begin{proof}
Note that $K_1\otimes \overline{K}_2\otimes \cdots \otimes K_n^?$ satisfies the condition in Lemma \ref{Lemma: relative commutant in BigHilb}(3) and (4).
\end{proof}

The idea is to transport the Jones projections from the Markov tower $(M_n)$ to the endomorphism algebras $(N_n)$ in order to obtain the edge weighting $\omega$. 
Let $\psi_n:M_n\to N_n$ be a $*$-algebra isomorphism for each $n\ge 0$ with $\psi_{n+1}|_{M_n}=\psi_n$.

Let us consider the image of Jones projection $\psi(e_n)\in N_{n+1}$. 
Note that $e_n\in M_{n-1}'\cap M_{n+1}$, so $\psi(e_n)\in N_{n-1}'\cap N_{n+1}$.

\begin{proposition}
WLOG, let $n=2k$. There exists a projection $\varepsilon_{2k}\in\End(\overline{K}_{2k}\otimes K_{2k+1})$ such that $\psi(e_{2k})=\id_{K_1\otimes \overline{K}_2\otimes \cdots \otimes K_{2k-1}}\otimes \varepsilon_{2k}$.
\end{proposition}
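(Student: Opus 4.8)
The plan is to obtain $\varepsilon_{2k}$ directly from the relative-commutant computation of Proposition~\ref{Prop: Nn-1' cap Nn+1}, together with the compatibility $\psi_{n+1}|_{M_n}=\psi_n$ of the isomorphisms $\psi_n\colon M_n\to N_n$. First I would locate the Jones projection: by Definition~\ref{def traceless MT} the projection $e_{2k}$ lies in $M_{2k+1}$, and by Proposition~\ref{Markov tower properties}(1) it commutes with every $x\in M_n$ for which $2k\ge n+1$, in particular with all of $M_{2k-1}$. Hence $e_{2k}\in M_{2k-1}'\cap M_{2k+1}$.

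Next I would transport this across $\psi$. Writing $\psi=\psi_{2k+1}$, the identity $\psi|_{M_{2k-1}}=\psi_{2k-1}$ says that $\psi$ carries the unital inclusion $M_{2k-1}\subset M_{2k+1}$ isomorphically onto $N_{2k-1}\subset N_{2k+1}$, so it restricts to a $*$-isomorphism $M_{2k-1}'\cap M_{2k+1}\xrightarrow{\ \sim\ }N_{2k-1}'\cap N_{2k+1}$; in particular $\psi(e_{2k})$ lies in the latter algebra. Applying Proposition~\ref{Prop: Nn-1' cap Nn+1} with $n=2k$ (and using the description $N_n=\End(K_1\otimes\overline{K}_2\otimes\cdots\otimes K^?_n)$ recorded in Notation~\ref{Notation:Kn}) identifies $N_{2k-1}'\cap N_{2k+1}$ with $\id_{K_1\otimes\overline{K}_2\otimes\cdots\otimes K_{2k-1}}\otimes\End(\overline{K}_{2k}\otimes K_{2k+1})$, the identification being the injective unital $*$-homomorphism $\varepsilon\mapsto\id_{K_1\otimes\cdots\otimes K_{2k-1}}\otimes\varepsilon$ supplied by Lemma~\ref{Lemma: relative commutant in BigHilb}. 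This produces a (unique) $\varepsilon_{2k}\in\End(\overline{K}_{2k}\otimes K_{2k+1})$ with $\psi(e_{2k})=\id_{K_1\otimes\cdots\otimes K_{2k-1}}\otimes\varepsilon_{2k}$.

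It remains to check that $\varepsilon_{2k}$ is a projection. Since $e_{2k}^{2}=e_{2k}=e_{2k}^{*}$ in the ${\rm C}^*$-algebra $M_{2k+1}$ and $\psi$ is a $*$-isomorphism, $\psi(e_{2k})$ is a self-adjoint idempotent; because $\varepsilon\mapsto\id\otimes\varepsilon$ is an injective $*$-homomorphism, pulling the relation back yields $\varepsilon_{2k}^{2}=\varepsilon_{2k}=\varepsilon_{2k}^{*}$. The odd case $n=2k+1$ runs verbatim after interchanging the roles of $K_\bullet$ and $\overline{K}_\bullet$ in Proposition~\ref{Prop: Nn-1' cap Nn+1}. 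I do not expect a genuine obstacle here: the only points deserving care are the parity bookkeeping that places $e_{2k}$ in $N_{2k-1}'\cap N_{2k+1}$ rather than one level off, and the fact that $\varepsilon\mapsto\id\otimes\varepsilon$ is an isomorphism onto the relative commutant (not merely a set-bijection), which is precisely what lets the projection property be transferred; both are immediate from Proposition~\ref{Prop: Nn-1' cap Nn+1} and Lemma~\ref{Lemma: relative commutant in BigHilb}.
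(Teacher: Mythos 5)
Your proof is correct and follows the same route as the paper: the paper likewise notes $e_{2k}\in M_{2k-1}'\cap M_{2k+1}$, transports it by $\psi$ into $N_{2k-1}'\cap N_{2k+1}$, and invokes Proposition \ref{Prop: Nn-1' cap Nn+1} to split off $\varepsilon_{2k}$, with the projection property carried over by the $*$-isomorphism. Your write-up just makes explicit the compatibility $\psi_{n+1}|_{M_n}=\psi_n$ and the injectivity of $\varepsilon\mapsto\id\otimes\varepsilon$, which the paper leaves implicit.
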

\begin{proof}
By proposition \ref{Prop: Nn-1' cap Nn+1}, there exists  $\varepsilon_{2k}\in\End(\overline{K}_{2k}\otimes K_{2k+1})$ such that $\psi(e_{2k})=\id_{K_1\otimes \overline{K}_2\otimes \cdots \otimes K_{2k-1}} \otimes \varepsilon_{2k}$. Note that $e_{2k}$ is a projection, so is $\varepsilon_{2k}$.
\end{proof}

\begin{lemma}\label{Lemma:Jones projection split}
Let $H$ be a Hilbert space and $p\ne 0$ be a projection on $H$. Suppose $pfp\in \mathbb{C} p$ for all $f\in B(H)$, then $p=r^*r$, where $r:H\to \mathbb{C}$ and $rr^*=1$. 

Similarly, let $H$ be a 1-morphism in $\mathsf{BigHilb}$ and $p\ne 0$ be a projection on $H$. Suppose $pfp\in\mathbb{C}p$ for all $f\in\End(H)$, then $p=r^*r$, where $r:H\to \mathbb{C}^{|V|}$ and $rr^*=\mathbb{C}^{|V|}$. 
\end{lemma}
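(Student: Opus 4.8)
The plan is to prove both statements (the $B(H)$ version and the $\mathsf{BigHilb}$ version) by the same argument, since the second is really just the first applied entrywise. First I would treat the Hilbert space case. Let $p$ be a nonzero projection on $H$ with $pfp\in\mathbb{C}p$ for all $f\in B(H)$. The key observation is that this forces $p$ to be rank one: if $p$ had rank $\geq 2$, pick orthonormal $\xi,\eta$ in the range of $p$ and take $f$ to be the rank-one operator $|\xi\rangle\langle\eta|$ (or a suitable self-adjoint combination); then $pfp=f$ is not a scalar multiple of $p$, a contradiction. So $p=|\zeta\rangle\langle\zeta|$ for some unit vector $\zeta\in H$. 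Then define $r:H\to\mathbb{C}$ by $r(\xi)=\langle\zeta,\xi\rangle$, i.e. $r=\langle\zeta,-\rangle$; its adjoint $r^*:\mathbb{C}\to H$ is $r^*(1)=\zeta$. One checks immediately $r^*r=|\zeta\rangle\langle\zeta|=p$ and $rr^*=\langle\zeta,\zeta\rangle=1$, which is the claim.

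Next I would handle the $\mathsf{BigHilb}$ version. Here $H=\bigoplus_{u\in U,v\in V}H_{uv}$ is a $U\times V$-bigraded Hilbert space (or, after the identification used in \S\ref{MT to Cb}, a $V\times V$-bigraded one), $p=\bigoplus p_{uv}$ is a projection, and the hypothesis $pfp\in\mathbb{C}p$ ranges over $f\in\End(H)$. Since $\End(H)$ contains all block-diagonal uniformly bounded operators, in particular for each fixed pair $(u,v)$ it contains $0\oplus\cdots\oplus f_{uv}\oplus\cdots\oplus 0$ with $f_{uv}\in B(H_{uv})$ arbitrary, the hypothesis restricts to: $p_{uv}f_{uv}p_{uv}\in\mathbb{C}p_{uv}$ for all $f_{uv}\in B(H_{uv})$. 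By the Hilbert space case, each nonzero $p_{uv}$ is a rank-one projection $r_{uv}^*r_{uv}$ with $r_{uv}r_{uv}^*=1$. I then need to check there is at most one nonzero $p_{uv}$ per relevant index — this is where I would use off-diagonal morphisms: if $p_{uv}\neq 0$ and $p_{u'v'}\neq 0$ with the two blocks "compatible" for composition, an off-diagonal $f$ mapping one block into the other produces $pfp$ supported off the diagonal, hence not in $\mathbb{C}p$ unless that off-diagonal piece vanishes; combined with the structure of $p$ as a projection (so $p=p^2$, forcing consistency across blocks) this pins down the support of $p$ to a single "row", so that $p=r^*r$ for a single 2-morphism $r:H\to\mathbb{C}^{|V|}$ assembled from the $r_{uv}$, with $rr^*=\mathbb{C}^{|V|}$ (the identity on the relevant summand). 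Then I would reassemble: define $r:H\to\mathbb{C}^{|V|}$ blockwise by $r_{v}=\bigoplus_u r_{uv}$ landing in the $v$-summand, verify $r^*r=p$ and $rr^*$ is the identity projection onto the support, which is what the statement asserts by "$rr^*=\mathbb{C}^{|V|}$".

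The main obstacle I anticipate is the bookkeeping in the $\mathsf{BigHilb}$ case: one must be careful about which off-diagonal morphisms actually lie in $\End(H)$ (uniform boundedness is automatic here since everything is finite-dimensional blockwise and only finitely many blocks are nonzero in each row/column), and about correctly identifying the codomain $\mathbb{C}^{|V|}$ and what "$rr^*=\mathbb{C}^{|V|}$" should mean as a 2-morphism equation — presumably $rr^*$ equals the identity on the sub-object of $\mathbb{C}^{|V|}$ corresponding to the vertices where $p$ is supported, or more precisely $rr^*$ is a subprojection of $\id_{\mathbb{C}^{|V|}}$ equal to its image. I would state this precisely before starting the computation. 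Everything else is a direct translation of the rank-one argument, so modulo this indexing care the proof is short. I expect this lemma is used afterward to split the transported Jones projections $\psi(e_n)$ via their defining property $e_nxe_n=E_n(x)e_n$ (which on simple summands gives $pfp\in\mathbb{C}p$), producing the maps $r$ that encode the edge weights $\omega$ of the principal graph.
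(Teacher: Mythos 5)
Your Hilbert-space argument is correct and is essentially the paper's: the paper observes that $pfp\in\mathbb{C}p$ forces $\operatorname{Im}(p)$ to be one-dimensional, so that $p$ factors through $\mathbb{C}$ as $r^*r$ with $rr^*=1$; your $|\xi\rangle\langle\eta|$ test establishes the same rank-one fact by contradiction. Since the paper dispatches the second half with only ``the similar argument on 1-morphisms in $\mathsf{BigHilb}$,'' the only substantive point of comparison is how you carry the argument over to $\mathsf{BigHilb}$.

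There your blockwise strategy is the right one, but the step where you ``use off-diagonal morphisms'' to pin down the support of $p$ is vacuous: by the definition of $\textsf{Hilb}^{U\times V}_f$, every $f\in\End(H)$ has the form $\bigoplus_{u,v}f_{uv}$ with $f_{uv}\in B(H_{uv})$, so $\End(H)$ contains no morphism carrying one graded block into another and there is nothing to test with. The support of $p$ is instead controlled by the diagonal tests you already have: taking $f$ to be the identity on one block and zero elsewhere, the scalar $\lambda$ in $pfp=\lambda p$ must be $1$ on that block and $0$ on every other nonzero block, so a single global scalar forces a single nonzero block, while the blockwise reading of the hypothesis (one scalar per block, which is what the application to $\varepsilon_{2k}$ in \S\ref{MT to Cb} actually produces) yields one rank-one projection $r_{vv}^\dagger r_{vv}$ per diagonal entry, assembling into $r:H\to\mathbb{C}^{|V|}$ with $rr^\dagger$ the identity on the summand of $\mathbb{C}^{|V|}$ supporting $p$. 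Your closing caveat about the precise meaning of ``$rr^*=\mathbb{C}^{|V|}$'' is well taken --- that imprecision sits in the statement rather than in your argument --- but the off-diagonal mechanism should be deleted, since it neither exists in $\mathsf{BigHilb}$ nor is needed.
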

\begin{proof}
For the Hilbert space case: Note that $\text{Im}(fp)$ can be any subspace of $H$ and $\text{Im}(p(fp))=\text{Im}(p)$, 
so $\text{Im}(p)$ does not depend on the input, i.e., $p$ facts through $\mathbb{C}$. Let $r:H\to \mathbb{C}$ and $p=r^*r$ with $rr^*=1$, since $p^*=p=p^*p$.

The similar argument on 1-morphisms in \textsf{BigHilb}.
\end{proof}

As we see the construction of Jones projection in Construction \ref{From C(K,w) to MT}(4), we shall prove that the Jones projection splits into two pieces.

By Proposition \ref{Markov tower properties}(6), $e_nM_{n+1}e_n=M_{n-1}e_n$, so $\psi(e_n)N_{n+1}\psi(e_n)=N_{n-1}e_n$. 
WLOG, let $n=2k$.
For each $f\in \End(\overline{K}_{2k}\otimes K_{2k+1})$, $\id_{K_1\otimes \overline{K}_2\otimes \cdots \otimes K_{2k-1}}\otimes f\in N_{2k+1}$, there exists $x\in N_{2k-1}$ such that 
$$\id_{K_1\otimes \overline{K}_2\otimes \cdots \otimes K_{2k-1}}\otimes (\varepsilon_{2k} f \varepsilon_{2k})=(x\otimes \id_{\overline{K}_{2k}\otimes K_{2k+1}})(\id_{K_1\otimes \overline{K}_2\otimes \cdots \otimes K_{2k-1}}\otimes \varepsilon_{2k})=x\otimes \varepsilon_{2k},$$
which follows that $\varepsilon_{2k} f \varepsilon_{2k} \in \mathbb{C}\varepsilon_{2k}$.

By Lemma \ref{Lemma:Jones projection split}, there exists $r_{2k}: \overline{K}_{2k}\otimes K_{2k+1}\to \mathbb{C}^{|V_{1, 2k-1}|}$ such that $$\varepsilon_{2k}=r_{2k}^\dagger r_{2k}\qquad\text{and}\qquad r_{2k}r_{2k}^\dagger = \mathbb{C}^{|V_{1, 2k-1}|},$$
where $V_{1, 2k+1}$ contains all the simple objects in $\Lambda_{2k+1}$ with odd depth.

Similarly, we can define $\varepsilon_{2k+1}\in\End(K\otimes \overline{K})$ corresponding to Jones projection $e_{2k+1}$ and there exists $r_{2k+1}:K_{2k+1}\otimes \overline{K}_{2k+2}\to \mathbb{C}^{|V_{0,2k}|}$ such that 
$$\varepsilon_{2k+1}=r_{2k+1}^\dagger r_{2k+1}\qquad \text{and}\qquad r_{2k+1}r_{2k+1}^\dagger=\mathbb{C}^{|V_{0,2k}|},$$
where $V_{0, 2k}$ contains all the simple objects in $\Lambda_{2k}$ with even depth.

Now consider $u_{2k}:=d(\id_{\overline{K}}\otimes r_{2k+1})\circ (r_{2k}^\dagger\otimes \id_K)\in\End(\overline{K})$. Note that $e_{2k}e_{2k+1}e_{2k}=d^{-2}e_{2k}$ and $e_{2k+1}e_{2k}e_{2k+1}=d^{-2}e_{2k+1}$, we have $u_{2k}^\dagger u_{2k}=\id_{\overline{K}_{2k}}$ and $u_{2k}u_{2k}^\dagger =\id_{\overline{K}_{2k+2}}$, so $u_{2k}$ is a unitary. 

\[\hspace*{-.3cm}
d^2\
\begin{tikzpicture}[baseline = -.1cm]
\draw[blue,thick] (0,-2.7) -- (0,2.7);
\draw (.4,-2.7) -- (.4,2.7);
\draw (1.2,-2.7) -- (1.2,2.7);
\draw (1.6,-2.7) -- (1.6,-.4);
\draw (1.6,.4) -- (1.6,2.7);
\draw (2,-2.7) -- (2,-2.2);
\draw (2,-1.4) -- (2,-.4);
\draw (2,.4) -- (2,1.4);
\draw (2,2.2) -- (2,2.7);
\draw (2.4,-2.7) -- (2.4,-2.2);
\draw (2.4,-1.4) -- (2.4,1.4);
\draw (2.4,2.2) -- (2.4,2.7);

\nbox{unshaded}{(1.8,.4)}{.3}{.15}{.15}{$r_{2k}^\dagger$};
\nbox{unshaded}{(1.8,-.4)}{.3}{.15}{.15}{$r_{2k}$};

\nbox{unshaded}{(2.2,2.2)}{.3}{.15}{.15}{$r_{2k+1}^\dagger$};
\nbox{unshaded}{(2.2,1.4)}{.3}{.15}{.15}{$r_{2k+1}$};

\nbox{unshaded}{(2.2,-1.4)}{.3}{.15}{.15}{$r_{2k+1}^\dagger$};
\nbox{unshaded}{(2.2,-2.2)}{.3}{.15}{.15}{$r_{2k+1}$};

\draw[dashed] (-.8,-.9) -- (2.8,-.9);
\draw[dashed] (-.8,.9) -- (2.8,.9);
\draw[dashed] (-.8,-2.7) rectangle (2.8,2.7);

\node at (.8,0) {$\cdots$};
\node at (.8,1.8) {$\cdots$};
\node at (.8,-1.8) {$\cdots$};
\node at (-.4,0) {$p_0$};

\node at (0,-2.9) {\tiny{$\mathbb{C}^{|p_0|}$}};
\node at (.4,-2.9) {\tiny{$K$}};
\node at (1.2,-2.9) {\tiny{$K$}};
\node at (1.6,-2.9) {\tiny{$\overline{K}$}};
\node at (2,-2.9) {\tiny{$K$}};
\node at (2.4,-2.9) {\tiny{$\overline{K}$}};
\end{tikzpicture}
=
\begin{tikzpicture}[baseline = -.1cm]
\draw[blue,thick] (0,-.9) -- (0,.9);
\draw (.4,-.9) -- (.4,.9);
\draw (1.2,-.9) -- (1.2,.9);
\draw (1.6,-.9) -- (1.6,.9);
\draw (2,-.9) -- (2,-.4);
\draw (2,.4) -- (2,.9);
\draw (2.4,-.9) -- (2.4,-.4);
\draw (2.4,.4) -- (2.4,.9);

\nbox{unshaded}{(2.2,.4)}{.3}{.15}{.15}{$r_{2k+1}^\dagger$};
\nbox{unshaded}{(2.2,-.4)}{.3}{.15}{.15}{$r_{2k+1}$};

\draw[dashed] (-.8,-.9) rectangle (2.8,.9);

\node at (.8,0) {$\cdots$};
\node at (-.4,0) {$p_0$};

\node at (0,-1.1) {\tiny{$\mathbb{C}^{|p_0|}$}};
\node at (.4,-1.1) {\tiny{$K$}};
\node at (1.2,-1.1) {\tiny{$K$}};
\node at (1.6,-1.1) {\tiny{$\overline{K}$}};
\node at (2,-1.1) {\tiny{$K$}};
\node at (2.4,-1.1) {\tiny{$\overline{K}$}};
\end{tikzpicture}
\qquad
d^2\
\begin{tikzpicture}[baseline = -.1cm]
\draw[blue,thick] (0,-2.7) -- (0,2.7);
\draw (.4,-2.7) -- (.4,2.7);
\draw (1.2,-2.7) -- (1.2,2.7);
\draw (1.6,-2.7) -- (1.6,-2.2);
\draw (1.6,-1.4) -- (1.6,1.4);
\draw (1.6,2.2) -- (1.6,2.7);
\draw (2,-2.7) -- (2,-2.2);
\draw (2,-1.4) -- (2,-.4);
\draw (2,.4) -- (2,1.4);
\draw (2,2.2) -- (2,2.7);
\draw (2.4,-2.7) -- (2.4,-.4);
\draw (2.4,.4) -- (2.4,2.7);

\nbox{unshaded}{(1.8,2.2)}{.3}{.15}{.15}{$r_{2k}^\dagger$};
\nbox{unshaded}{(1.8,1.4)}{.3}{.15}{.15}{$r_{2k}$};

\nbox{unshaded}{(1.8,-1.4)}{.3}{.15}{.15}{$r_{2k}^\dagger$};
\nbox{unshaded}{(1.8,-2.2)}{.3}{.15}{.15}{$r_{2k}$};

\nbox{unshaded}{(2.2,.4)}{.3}{.15}{.15}{$r_{2k+1}^\dagger$};
\nbox{unshaded}{(2.2,-.4)}{.3}{.15}{.15}{$r_{2k+1}$};

\draw[dashed] (-.8,-.9) -- (2.8,-.9);
\draw[dashed] (-.8,.9) -- (2.8,.9);
\draw[dashed] (-.8,-2.7) rectangle (2.8,2.7);

\node at (.8,0) {$\cdots$};
\node at (.8,1.8) {$\cdots$};
\node at (.8,-1.8) {$\cdots$};
\node at (-.4,0) {$p_0$};

\node at (0,-2.9) {\tiny{$\mathbb{C}^{|p_0|}$}};
\node at (.4,-2.9) {\tiny{$K$}};
\node at (1.2,-2.9) {\tiny{$K$}};
\node at (1.6,-2.9) {\tiny{$\overline{K}$}};
\node at (2,-2.9) {\tiny{$K$}};
\node at (2.4,-2.9) {\tiny{$\overline{K}$}};
\end{tikzpicture}
=
\begin{tikzpicture}[baseline = -.1cm]
\draw[blue,thick] (0,-.9) -- (0,.9);
\draw (.4,-.9) -- (.4,.9);
\draw (1.2,-.9) -- (1.2,.9);
\draw (1.6,-.9) -- (1.6,-.4);
\draw (1.6,.4) -- (1.6,.9);
\draw (2,-.9) -- (2,-.4);
\draw (2,.4) -- (2,.9);
\draw (2.4,-.9) -- (2.4,.9);

\nbox{unshaded}{(1.8,.4)}{.3}{.15}{.15}{$r_{2k}^\dagger$};
\nbox{unshaded}{(1.8,-.4)}{.3}{.15}{.15}{$r_{2k}$};

\draw[dashed] (-.8,-.9) rectangle (2.8,.9);

\node at (.8,0) {$\cdots$};
\node at (-.4,0) {$p_0$};

\node at (0,-1.1) {\tiny{$\mathbb{C}^{|p_0|}$}};
\node at (.4,-1.1) {\tiny{$K$}};
\node at (1.2,-1.1) {\tiny{$K$}};
\node at (1.6,-1.1) {\tiny{$\overline{K}$}};
\node at (2,-1.1) {\tiny{$K$}};
\node at (2.4,-1.1) {\tiny{$\overline{K}$}};
\end{tikzpicture}
\]

For adjacent simple objects $p,q\in \Lambda$ with $p$ at depth $n$ and $q$ at depth $n+1$, we shall compute the edge weighting on the edges $e:p\to q$ and $e:q\to p$. 
WLOG, $n=2k$.

Define $\varphi_{2k}$ and $\varphi_{2k+1}$ as follows:
\[
\varphi_{2k} = d^{\frac{1}{2}}\ 
\begin{tikzpicture}[baseline = -.1cm]
\draw[blue,thick] (0,-.9) -- (0,.9);
\draw (.4,-.9) -- (.4,.9);
\draw (2,-.9) -- (2,.2);
\draw (2.4,-.9) -- (2.4,.9);

\draw[green1,thick] (1.6,-.1) arc (0:-180:.2);
\draw[green1,thick] (1.2,-.1) -- (1.2,.9);

\nbox{unshaded}{(1.8,.2)}{.3}{.15}{.15}{$r_{2k}$};

\draw[dashed] (-.8,-.9) rectangle (2.8,.9);

\node at (.8,0) {$\cdots$};
\node at (-.4,0) {$p_0$};

\node at (0,-1.1) {\tiny{$\mathbb{C}^{|p_0|}$}};
\node at (.4,-1.1) {\tiny{$K$}};
\node at (1.2,-1.1) {\tiny{$K$}};
\node at (1.6,-1.1) {\tiny{$\overline{K}$}};
\node at (2,-1.1) {\tiny{$K$}};
\node at (2.4,-1.1) {\tiny{$\overline{K}$}};
\end{tikzpicture}
\qquad
\varphi_{2k+1} = d^{\frac{1}{2}}\
\begin{tikzpicture}[baseline = -.1cm]
\draw[blue,thick] (0,-1) -- (0,1);
\draw (.4,-1) -- (.4,1);
\draw (1.2,-1) -- (1.2,1);
\draw (1.6,-1) -- (1.6,1);
\draw (2,-1) -- (2,.5);
\draw (2.4,-.3) -- (2.4,.5);

\nbox{unshaded}{(2.2,.5)}{.3}{.15}{.15}{$r_{2k+1}$};
\nbox{unshaded}{(2.4,-.3)}{.3}{0}{0}{$u_{2k}^\dagger$};

\draw[green1,thick] (2.8,-.6) arc (0:-180:.2);
\draw[green1,thick] (2.8,-.6) -- (2.8,1);

\draw[dashed] (-.8,-1) rectangle (3.2,1);

\node at (.8,0) {$\cdots$};
\node at (-.4,0) {$p_0$};

\node at (0,-1.2) {\tiny{$\mathbb{C}^{|p_0|}$}};
\node at (.4,-1.2) {\tiny{$K$}};
\node at (1.2,-1.2) {\tiny{$K$}};
\node at (1.6,-1.2) {\tiny{$\overline{K}$}};
\node at (2,-1.2) {\tiny{$K$}};
\node at (2.4,-1.2) {\tiny{$\overline{K}$}};
\node at (2.8,-1.2) {\tiny{$K$}};
\end{tikzpicture}
\]
and we have following properties:
\begin{compactenum}[(1)]
\item $\varphi_{2k+1}\circ \varphi_{2k}^\dagger =\id$.
\item $\Tr(\varphi_{2k}^\dagger\circ \varphi_{2k})=d\Tr(r_{2k}^\dagger r_{2k})=d\Tr(r_{2k}r_{2k}^\dagger)=d$.
\item $\Tr(\varphi_{2k+1}^\dagger\circ \varphi_{2k+1})=d\Tr(u_{2k}r_{2k+1}^\dagger r_{2k+1}u_{2k}^\dagger)=d\Tr(r_{2k+1}r_{2k+1}^\dagger)=d$.
\end{compactenum}

\begin{definition}
Define the edge-weighting function $\omega$ as the multiset:
\begin{align*}
    \{\omega(e)\}_{e:p\to q} &:=\{\text{eigenvalues of }(\varphi_{2k}^\dagger\circ \varphi_{2k})_{pq}\}\\
    \{\omega(e)\}_{e:q\to p} &:=\{\text{eigenvalues of }(\varphi_{2k+1}^\dagger\circ \varphi_{2k+1})_{pq}\}
\end{align*}
\end{definition}

Combining Construction \ref{graph Lambda to edge weighting} and our definition with properties for $\varphi_{2k},\varphi_{2k+1}$, the edge weighting $\omega$ we obtained for bipartite graph $\Lambda$ is $d$-fair and balanced.

\subsection{$\mathcal{C}(K,\ev_K)$ and $\End^\dag_0(\mathcal{M},F)$}\label{C(K) End(M)}

In this section, $\mathcal{TLJ}(d)$ means the 2-shaded pivotal rigid ${\rm C}^*$ multitensor category 
from Definition \ref{TLJ(d) category}
with 
endomorphism spaces the Temperley-Lieb algebras and simple generator $X=1^+\otimes X\otimes 1^-$.

We have already seen the ways to construct a Markov tower from $\mathcal{C}(K,\ev_K)$ in this chapter or from $\mathcal{M}$ in \S\ref{Cpt 2} with a simple base point $Z$, where $\mathcal{M}$ is an indecomposable semisimple ${\rm C}^*$  $\mathcal{TLJ}(d)-$module category. 
In this section, we will show their relation to each other. 


\begin{definition}[Endofunctor monoidal category] \label{End(M)}
Define $\End^\dagger(\mathcal{M})$ to be a $\rm C^*$ tensor category as follows:
\begin{compactenum}[(a)]
\item Objects: The objects are all the dagger endofunctors of $\mathcal{M}$.
\item Morphisms: The morphisms are the uniformly bounded natural transformations between these dagger endofunctors which compatible with the dagger structure.
\item Tensor structure: The tensor product is given by the composition of endofunctors, i.e., $F_1\otimes F_2:=F_2\circ F_1$ for endofunctors $F_1,F_2$.
\end{compactenum}
\end{definition}

\begin{definition}\label{(End(M),F)}
Define $F:=-\lhd X,\ \overline{F}:=-\lhd \overline{X}$, which are endofunctors of $\mathcal{M}$. 
Note that $F$ and $\overline{F}$ are adjoint functors, with unit $\ev_F$ and counit $\coev_F$ induced by $\ev_X$ and $\coev_X$. 

Define $\End^\dag_0(\mathcal{M},F)$ to be the full category Cauchy generated by $F$ and $\overline{F}$. Since the generators are dualizable, the category is rigid.

We warn the reader that $\End^\dag_0(\mathcal{M},F)$ will only be multitensor ($\dim(\End(\operatorname{id}_{\mathcal{M}}))<\infty$) when $\mathcal{M}$ is finitely semisimple.
Moreover, the dual functor on $\End^\dag_0(\mathcal{M},F)$ given by $\ev_F$ and $\coev_F$ is not a unitary dual functor.
\end{definition}


We can give an alternative description of $\End_0^\dag(\mathcal{M},F)$ using the following remark.

\begin{remark}
Let $\mathcal{A}$ be a 2-shaded rigid $\rm C^*$ multitensor category with generator $X$.
The follows are equivalent \cite{GMPPS18}:
\begin{compactenum}[(1)]
\item $\mathcal{M}$ is an indecomposable semisimple $\rm C^*$ right $\mathcal{A}$-module category;
\item there is a faithful dagger tensor functor $\phi:\mathcal{A}\to \End^\dagger(\mathcal{M})$, where $\End^\dagger(\mathcal{M})$ is a tensor category with all the dagger endofunctors being objects and uniformly bounded natural transformations being morphisms.
\end{compactenum}

We see that under this equivalence, $\End^\dagger_0(\mathcal{M},F):=\phi(\mathcal{A})$ is
the $\rm C^*$ category Cauchy tensor generated by the image 
of the tensor functor $\mathcal{TLJ} \to \End^\dag(\mathcal{M})$, where $F=-\lhd X$. 
Then $\End_0^\dagger(\mathcal{M},F)$ is clearly a rigid $\rm C^*$ tensor category.
\end{remark}


As the end of this chapter, we are going to show that the tensor category $\End^\dag_0(\mathcal{M},F)$ and 2-category $\mathcal{C}(K,\ev_K)$ are unitarily equivalent. 


\begin{construction} 
\label{construction:CatFromEndM}
We construct $\mathcal{C}(K,\ev_K)$ from $\End^\dag_0(\mathcal{M},F)$ functorially.
\begin{compactenum}[(a)]
\item Object: Let $V_0$ be a set of representatives of all isomorphism classes of simple objects $P\in\mathcal{M}$ such that $P=P\lhd 1^+$ and $V_1$ a set of representatives of all isomorphism classes of simple objects $Q\in\mathcal{M}$ such that $Q=Q\lhd 1^-$. Then the object is the set $V=V_0\sqcup V_1$.
\item 1-morphism: Let $G\in \End^\dag_0(\mathcal{M},F)$ be an object with adjoint $\overline{G}$. Define the $V\times V-$bigraded Hilbert space $H_G$ by
$$H_{G,PQ}:=\Hom(Q,G(P)),$$
with inner product $\langle f|g\rangle_{G,PQ}$ for $f,g\in \Hom(Q,G(P))$ defined by
$$f^\dagger \circ g=\langle f|g\rangle_{G,PQ} \cdot \id_Q,$$
since $Q$ is simple and $f^\dagger \circ g\in\End(Q)\cong \mathbb{C}\cdot\id_Q$. Note that $\Hom(Q,G(P))\cong \Hom(\overline{G}(Q),P)$ 
is a natural isomorphism, so $H_{\overline{G},QP}$ and $H_{G,PQ}$ are dual Hilbert spaces.
\item Composition of 1-morphisms: 
\begin{proposition}
For $G_1,G_2\in \End^\dag_0(\mathcal{M},F)$, we have $H_{G_1\circ G_2}\cong H_{G_1}\circ H_{G_2}$ as $V\times V-$bigraded Hilbert spaces, i.e., 
$$H_{G_1\circ G_2,PQ}\cong (H_{G_1}\circ H_{G_2})_{PQ}=(H_{G_2}\otimes H_{G_1})_{PQ}=\bigoplus_{R} H_{G_2,PR}\otimes H_{G_1,RQ}.$$
is a unitary isomorphism between Hilbert spaces for each pair $(P,Q)\in V\times V$.
\end{proposition}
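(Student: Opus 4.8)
The plan is to write down the obvious ``fusion'' map and verify directly that it is a unitary isomorphism, all the genuine content being a single resolution-of-the-identity computation in the semisimple ${\rm C}^*$ category $\mathcal{M}$. For each pair $(P,Q)\in V\times V$ I would define
\[
\Phi_{PQ}\colon\ \bigoplus_{R}\Hom(R,G_2(P))\otimes\Hom(Q,G_1(R))\ \longrightarrow\ \Hom(Q,G_1(G_2(P))),\qquad \alpha\otimes\beta\ \longmapsto\ G_1(\alpha)\circ\beta,
\]
extended linearly and summed over a chosen set of representatives $R$ of the simple objects. First I would record the relevant structural facts: every functor in $\End_0^\dagger(\mathcal{M},F)$ is additive and dagger, since $F=-\lhd X$ is additive and dagger and both are preserved under composition and Cauchy completion; and $G_2(P)$, being an object of a semisimple ${\rm C}^*$ category, is a finite direct sum of simples, so $\Hom(R,G_2(P))=0$ for all but finitely many $R$ with each space finite dimensional. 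Together with the row/column finiteness already established for the $1$-morphisms $H_{G_i}$, this makes the source a legitimate $V\times V$-bigraded Hilbert space and $\Phi_{PQ}$ a well-defined linear map (using that $G_1(\alpha)\in\Hom(G_1(R),G_1(G_2(P)))$ and $\beta\in\Hom(Q,G_1(R))$).

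Next I would construct the inverse. Fix for each simple $R$ an orthonormal basis $\{\alpha^R_k\}_k$ of $\Hom(R,G_2(P))$ with respect to the inner product determined by $\alpha^\dagger\circ\alpha'=\langle\alpha\,|\,\alpha'\rangle\cdot\id_R$; these are isometries $R\to G_2(P)$ with pairwise orthogonal ranges, and completeness of the decomposition of $G_2(P)$ gives $\sum_{R,k}\alpha^R_k\circ(\alpha^R_k)^\dagger=\id_{G_2(P)}$. For arbitrary $\gamma\in\Hom(Q,G_1(G_2(P)))$, applying $G_1$ to this identity and pre-composing with $\gamma$ gives
\[
\gamma\ =\ G_1\bigl(\textstyle\sum_{R,k}\alpha^R_k\circ(\alpha^R_k)^\dagger\bigr)\circ\gamma\ =\ \sum_{R,k}G_1(\alpha^R_k)\circ\bigl(G_1((\alpha^R_k)^\dagger)\circ\gamma\bigr)\ =\ \Phi_{PQ}\bigl(\textstyle\sum_{R,k}\alpha^R_k\otimes\beta^R_k\bigr),
\]
where $\beta^R_k:=G_1((\alpha^R_k)^\dagger)\circ\gamma\in\Hom(Q,G_1(R))$; this proves surjectivity. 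For injectivity I would expand a general source element along the $\{\alpha^R_k\}$, suppose $\sum_{R,k}G_1(\alpha^R_k)\circ\beta^R_k=0$, and post-compose with $G_1((\alpha^{R'}_{k'})^\dagger)$; since $(\alpha^{R'}_{k'})^\dagger\circ\alpha^R_k=\delta_{R,R'}\delta_{k,k'}\,\id_R$ (non-isomorphic simples admit no nonzero morphisms, distinct $k$'s are orthonormal), functoriality collapses the sum to $\beta^{R'}_{k'}=0$ for all $R',k'$.

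Then I would check that $\Phi_{PQ}$ preserves inner products. Because $G_1$ is a dagger functor, $(G_1(\alpha)\circ\beta)^\dagger\circ(G_1(\alpha')\circ\beta')=\beta^\dagger\circ G_1(\alpha^\dagger\circ\alpha')\circ\beta'$. If $\alpha,\alpha'$ belong to different simple summands then $\alpha^\dagger\circ\alpha'=0$, matching the orthogonality of distinct $R$-summands in ${\rm BigHilb}$; if $R=R'$ then $\alpha^\dagger\circ\alpha'=\langle\alpha\,|\,\alpha'\rangle_{G_2,PR}\,\id_R$, and absorbing $G_1(\id_R)=\id_{G_1(R)}$ gives $\langle\alpha\,|\,\alpha'\rangle_{G_2,PR}\,\langle\beta\,|\,\beta'\rangle_{G_1,RQ}\cdot\id_Q$ (using simplicity of $Q$ to read off the scalar). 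This is precisely the tensor-product-of-orthogonal-direct-sums inner product on the source, so each $\Phi_{PQ}$ is unitary. Finally I would note that $\alpha\otimes\beta\mapsto G_1(\alpha)\circ\beta$ is manifestly natural in $G_1$ and $G_2$, so these unitaries assemble into an isomorphism of $1$-morphisms compatible with the composition/tensor structure, as needed for the functoriality of the construction $G\mapsto H_G$.

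The main obstacle is not a conceptual one but the bookkeeping: one must keep straight the order-reversal in ${\rm BigHilb}$ (the $1$-morphism of $G_1\circ G_2$ is $H_{G_2}\otimes H_{G_1}$, matching $(G_1\circ G_2)(P)=G_1(G_2(P))$) and one must invoke the semisimplicity of $\mathcal{M}$ exactly at the three places it is used: finiteness/additivity to make $\Phi_{PQ}$ well defined, the resolution $\sum\alpha^R_k(\alpha^R_k)^\dagger=\id_{G_2(P)}$ for surjectivity and injectivity, and simplicity of $Q$ together with the dagger-functoriality of $G_1$ for the isometry identity.
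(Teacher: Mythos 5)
Your proof is correct, and the map you write down ($\alpha\otimes\beta\mapsto G_1(\alpha)\circ\beta$) together with the inner-product computation via dagger-functoriality of $G_1$ and simplicity of $Q$ is exactly what the paper does. The one place you diverge is in establishing bijectivity: the paper, having shown $\bigoplus_R\theta_R$ is an isometry (hence injective), finishes by a dimension count, using the adjoint $\overline{G_1}$ and Frobenius reciprocity to write $\dim\Hom(Q,G_1(G_2(P)))=\dim\bigoplus_R\Hom(Q,G_1(R))\otimes\Hom(R,G_2(P))$, so that injectivity between finite-dimensional spaces of equal dimension forces surjectivity. You instead construct the inverse explicitly from the resolution of the identity $\sum_{R,k}\alpha^R_k\circ(\alpha^R_k)^\dagger=\id_{G_2(P)}$ supplied by semisimplicity of $\mathcal{M}$. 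Both are valid and both ultimately rest on semisimplicity; your route is slightly more self-contained in that it never invokes the dual functor or Frobenius reciprocity and gives an explicit formula for $\Phi_{PQ}^{-1}$, while the paper's dimension count is shorter once the isometry is in hand. Your closing remark on naturality in $G_1,G_2$ is a welcome addition the paper leaves implicit when it later asserts compatibility with triple compositions.
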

\begin{proof} Note that the direct sum contains finite many components. For each nonzero component with respect to $R$, define $\theta_R:H_{G_2,PR}\otimes H_{G_1,RP}\to H_{G_1\circ G_2,PQ}$ by
$$\theta_R(f_2\otimes f_1):= G_1(f_2)\circ f_1.$$
First, we prove that $\theta_R$ is an isometry, i.e., 
$$\langle \theta(f_2\otimes f_1)|\theta(g_2\otimes g_1)\rangle_{G_1\circ G_2,PQ}=\langle f_2\otimes f_1|g_2\otimes g_1\rangle=\langle f_2|g_2\rangle_{G_2,PR}\cdot \langle f_1|g_1\rangle_{G_1,RQ}$$
for $f_2,g_2\in H_{G_2,PR},\ f_1,g_1\in H_{G_1,RQ}$.
\begin{align*}
    LHS & = \langle G_1(f_2)\circ f_1|G_1(g_2)\circ g_1\rangle_{G_1\circ G_2,PQ}\\
    & = (G_1(f_2)\circ f_1)^\dagger\circ (G_1(g_2)\circ g_1)\\
    & = f_1^\dagger \circ G_1(f_2^\dagger \circ g_2)\circ g_1 \tag{$G_1$ is a dagger functor}\\
    & = f_1^\dagger \circ G_1(\langle f_2|g_2\rangle_{G_2,PR}\cdot \id_{R})\circ g_1\\
    & = \langle f_2|g_2\rangle_{G_2,PR}\cdot f_1^\dagger \circ \id_{G_1(R)}\circ g_1 \tag{$G_1$ is a functor}\\
    & = \langle f_2|g_2\rangle_{G_2,PR}\cdot f_1^\dagger \circ g_1\\
    & = RHS.
\end{align*}
It follows that $\bigoplus_R \theta_R:\bigoplus_{R} H_{G_2,PR}\otimes H_{G_1,RQ} \to H_{G_1\circ G_2,PQ}$ is an isometry.

Note that for a semisimple rigid ${\rm C}^*$ category,
\begin{align*}
    \dim H_{G_1\circ G_2,PQ} & = \dim \Hom(Q,G_1\circ G_2(P))\\
    & = \dim \Hom(\overline{G_1}(Q),G_2(P))\\
    & = \dim \bigoplus_R \Hom(\overline{G_1}(Q),R)\otimes \Hom(R,G_2(P))\\
    & = \dim \bigoplus_R \Hom(Q,G_1(R))\otimes \Hom(R,G_2(P))\\
    & = \dim \bigoplus_R H_{G_1,RQ}\otimes H_{G_2,PR}\\
    & = \dim \bigoplus_{R} H_{G_2,PR}\otimes H_{G_1,RQ}.
\end{align*}
Note that $\bigoplus_R \theta_R$ is an isometry and hence injective, so $\bigoplus_R \theta_R:\bigoplus_{R} H_{G_2,PR}\otimes H_{G_1,RQ} \to H_{G_1\circ G_2,PQ}$ is a bijection and hence a unitary.
\end{proof}

It follows that 
$$H_{G_1\circ G_2}\circ H_{G_3}\cong H_{G_1\circ G_2\circ G_3}\cong H_{G_1}\circ H_{G_2\circ G_3}$$ 
as $V\times V-$bigraded Hilbert space.

\item 1-morphism generator: Define $K:=H_F$ and $\overline{K}:=H_{\overline{F}}$. It is clear that $\mathbb{C}^{|V_0|}=H_{I^+}$ and $\mathbb{C}^{|V_1|}=H_{I^-}$.

\item 2-morphism: The 2-morphism of $\mathcal{C}(K)$ is the morphism of $\End^\dag_0(\mathcal{M},F)$. Let $\alpha:G_1\to G_2$ be a uniformly bounded natural transformation. Then $\alpha(P):G_1(P)\to G_2(P)$ and hence 
$$\alpha_{PQ}:=\alpha_P\circ -:H_{G_1,PQ}=\Hom(Q,G_1(P))\to \Hom(Q,G_2(P))=H_{G_2,PQ}$$
is a uniformly bounded linear map.

\item Composition of 2-morphisms: Let $\alpha_1:G_1\to G_2$, $\alpha_2:G_2\to G_3$ be uniformly bounded natural transformations. Then $G_1(P)\xrightarrow{\alpha_1(P)} G_2(P)\xrightarrow{\alpha_2}G_3(P)$, then
$$(\alpha_2\circ \alpha_1)_{PQ}=(\alpha_2\circ \alpha_1)_P\circ -=\alpha_{2,P}\circ \alpha_{1,P}\circ - = \alpha_{2,PQ}\circ \alpha_{1,PQ}:H_{G_1,PQ}\to H_{G_2,PQ}\to H_{G_3,PQ}.$$

\item Tensor product of 2-morphisms: Let $\alpha_1:G_1\to G_2$, $\alpha_2:G_3\to G_4$ be uniformly bounded natural transformation. Then $\alpha_1\otimes \alpha_2:G_3\circ G_1= G_1\otimes G_3\to G_2\otimes G_4= G_4\circ G_2$ defined as 
\[
\begin{tikzpicture}[baseline=.7cm]
\draw[->] (0,1.2) -- (0,.3);
\draw[->] (.8,0) -- (1.7,0);
\draw[->] (2.5,1.2) -- (2.5,.3);
\draw[->] (.8,1.5) -- (1.7,1.5);

\node at (2.5,1.5) {$G_3\circ G_2$};
\node at (0,1.5) {$G_3\circ G_1$};
\node at (2.5,0) {$G_4\circ G_2$};
\node at (0,0) {$G_4\circ G_1$};
\end{tikzpicture}
\quad
\Longrightarrow
\quad
\begin{tikzpicture}[baseline=.7cm]
\draw[->] (0,1.2) -- (0,.3);
\draw[->] (1.1,0) -- (2.1,0);
\draw[->] (3.2,1.2) -- (3.2,.3);
\draw[->] (1.1,1.5) -- (2.1,1.5);

\node at (3.2,1.5) {$H_{G_2}\otimes H_{G_3}$};
\node at (0,1.5) {$H_{G_1}\otimes H_{G_3}$};
\node at (3.2,0) {$H_{G_2}\otimes H_{G_4}$};
\node at (0,0) {$H_{G_1}\otimes H_{G_4}$};
\end{tikzpicture}
\]
Clearly, the tensor product is strict.

\item $\ev_K$ and $\coev_K$: Define $\ev_K$ to be the unit of adjoint pair $(F,\overline{F})$ and $\coev_K$ to be the counit of $(F,\overline{F})$. 
Note that the duality is a property, not an extra structure. 
The dual functor is generated by the duality of generator, which is not necessarily a unitary dual functor.
\end{compactenum}
\end{construction}

\begin{definition}[{\cite[Def.~7.2.1]{EGNO15}}]\label{Def:C-module functor}
Let $\mathcal{M}$ and $\mathcal{N}$ be two semisimple $\rm C^*$ module category categories over a semisimple rigid $\rm C^*$ (multi)tensor category $\mathcal{C}$. A $\mathcal{C}$-module functor from $\mathcal{M}$ to $\mathcal{N}$ consists of a functor $\psi:\mathcal{M}\to \mathcal{N}$ and a natural isomorphism $s_{X,M}:\psi(M\lhd X)\to \psi(M)\lhd X$ for all $X\in \mathcal{C}$, $M\in\mathcal{M}$ which satisfies the pentagon equation. 

We call that $\mathcal{M}$ and $\mathcal{N}$ are $\mathcal{C}$-module equivalent if $\psi$ is an equivalence of categories. 
\end{definition}

Let $\mathcal{C}=\mathcal{TLJ}(d)$. Now we discuss the relation between the equivalence on $\mathcal{TLJ}(d)-$module category and the equivalence on $\End^\dagger_0(\mathcal{M},F)$, where $F=-\lhd X$, and the corresponding 2-category $\mathcal{C}(K,\ev_K)$.

\begin{remark}\label{Rmk:TLJ equivalence}
Let $\mathcal{M}$ be an indecomposable semisimple  $\mathcal{TLJ}(d)-$module $\rm C^*$ categories and $(\psi,s):\mathcal{M}\to \mathcal{M}$ is an $\mathcal{TLJ}(d)-$module equivalence. Then $\psi\in\End^\dagger(M)$ is an object.
Since $\mathcal{TLJ}(d)$ is generated by $X$, $s_{-,-}$ in above Definition \ref{Def:C-module functor} is determined by $s_{X,-}$. 
Note that
$$s_{X,-}:\psi(F(-))=\psi(-\lhd X)\to \psi(-)\lhd X=F(\psi(-))$$
is a unitary natural isomorphism. 
Note that as an equivalence, $\psi$ maps simple objects in $\mathcal{M}$ to simple objects. 
Then we have
$$H_{F,\psi(P)\psi(Q)}=\Hom(\psi(Q),F(\psi(P)))\xrightarrow[-\circ s^{-1}]{\sim} \Hom(\psi(Q),\psi(F(P)))\cong \Hom(Q,F(P))= H_{F,PQ}.$$
It follows that the 1-morphism generator $K=H_{F}$ indexed by $V$ and $H_{F}$ indexed by $\psi(V)$ are unitary equivalent.  

Comparing the discussion here with Remark \ref{Rmk:graph automorphism and C(Lambda,omega)}, the $\mathcal{TLJ}(d)-$module equivalence corresponds to the unitary equivalence on $\mathcal{C}(K,\ev_K)$, which corresponds to isomorphism of edge-weighted graphs $(\Lambda,\omega)$.  
\end{remark}

\begin{theorem}
There is a bijective correspondence between equivalence classes of the following:
\[\left\{\, \parbox{5.4cm}{\rm Indecomposable semisimple $\rm C^*$
$\mathcal{TLJ}(d)-$module categories $\mathcal{M}$} \,\right\}\ \, \cong\ \, \left\{\, \parbox{7cm}{\rm $\rm W^*$ 2-subcategories $\mathcal{C}(\Lambda,\omega)$ of \textsf{BigHilb}, where $\Lambda$ is a balanced $d$-fair bipartite graph with  edge-weighting $\omega$
} \,\right\}\]
Equivalence on the left hand side is unitary equivalence; equivalence on the right hand side is isomorphism of edge-weighted graphs.
\end{theorem}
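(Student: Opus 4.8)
The plan is to establish the bijection by passing through the two intermediate classifications already developed in this chapter, namely Markov towers (with $\dim M_0 = 1$) as standard right $\mathrm{TLJ}(d)$-modules and planar $\mathcal{TLJ}(d)$-module categories, and then to stitch together the various constructions and equivalences proved above. Concretely, the chain of correspondences I would assemble is: indecomposable semisimple $\mathrm{C}^*$ $\mathcal{TLJ}(d)$-module categories $\leftrightarrow$ Markov towers with modulus $d$ and $\dim M_0 = 1$ (by the module equivalence theorem, i.e.\ Theorem~\ref{thmx:ModuleEquivalence} specialized to $\mathcal{A} = \mathcal{TLJ}(d)$, together with Corollary~\ref{corollary: TLJ module}) $\leftrightarrow$ $\rm W^*$ 2-subcategories $\mathcal{C}(K,\ev_K)$ of \textsf{BigHilb} (by Constructions \ref{From C(K,w) to MT} and the Markov-tower-to-$\mathcal{C}(\Lambda,\omega)$ construction of \S\ref{MT to Cb}) $\leftrightarrow$ balanced $d$-fair bipartite graphs $(\Lambda,\omega)$ with edge-weighting (by Constructions \ref{construction: graph to BigHilb}, \ref{graph Lambda to edge weighting} and Remark~\ref{Rmk:choice of ONB}).

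First I would make precise that each leg of the chain is a genuine bijection on equivalence classes. For the first leg this is immediate from the cited theorem. For the middle leg, I would invoke Construction~\ref{From C(K,w) to MT} (producing a Markov tower from $\mathcal{C}(K,\ev_K)$ with a chosen base point $P_0$) and the construction of \S\ref{MT to Cb} (producing $(\Lambda,\omega)$, hence $\mathcal{C}(\Lambda,\omega)$, from a Markov tower via its principal graph and the transported Jones projections), and check these are mutually inverse up to the appropriate unitary equivalence --- the key computational inputs here are Proposition~\ref{Markov Tower prop 2}, Proposition~\ref{Prop: Nn-1' cap Nn+1}, Lemma~\ref{Lemma:Jones projection split}, and the $d$-fairness/balance properties (1)--(3) of $\varphi_{2k},\varphi_{2k+1}$. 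For the last leg I would use that $\mathcal{C}(\Lambda,\omega)$ determines $(\Lambda,\omega)$ back via Construction~\ref{graph Lambda to edge weighting} and Proposition~\ref{get edge weight from C}, and conversely that $\mathcal{C}(\Lambda,\omega)$ recovers $\mathcal{C}(K,\ev_K)$ up to a unitary 2-morphism by Remark~\ref{Rmk:choice of ONB}. Then I would also present the direct route via \S\ref{C(K) End(M)}: from $\mathcal{M}$ one forms $\End^\dag_0(\mathcal{M},F)$ with $F = -\lhd X$, and Construction~\ref{construction:CatFromEndM} builds $\mathcal{C}(K,\ev_K)$ from it functorially, with $K := H_F$; this gives a clean conceptual definition of the forward map that matches the one obtained by going through Markov towers.

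The most delicate point --- and the one I expect to be the main obstacle --- is the compatibility of the notions of equivalence across all three columns. One must show that two $\mathcal{TLJ}(d)$-module categories $\mathcal{M}, \mathcal{N}$ are unitarily (module) equivalent if and only if the associated edge-weighted graphs are isomorphic in the sense of Definition~\ref{Def:graph automorphism}. The ``only if'' direction follows from Remark~\ref{Rmk:TLJ equivalence}: a $\mathcal{TLJ}(d)$-module equivalence $(\psi,s)$ induces, via the natural isomorphism $H_{F,\psi(P)\psi(Q)} \cong H_{F,PQ}$, a unitary equivalence between $\mathcal{C}(K,\ev_K)$ indexed by $V$ and by $\psi(V)$, which by Remark~\ref{Rmk:graph automorphism and C(Lambda,omega)} descends to a graph isomorphism preserving weights. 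For the ``if'' direction one runs Remark~\ref{Rmk:graph automorphism and C(Lambda,omega)} in reverse: an isomorphism $\theta\colon(\Lambda,\omega)\to(\Lambda',\omega')$ yields a unitary 2-morphism $u_\theta\colon K_\Lambda \to K_{\Lambda'}$ intertwining the evaluations, hence a unitary equivalence $\mathcal{C}(\Lambda,\omega)\simeq\mathcal{C}(\Lambda',\omega')$, and then one transports this back through the middle bijection to get a unitary $\mathcal{TLJ}(d)$-module equivalence of the reconstructed module categories. Care is needed regarding the base-point choice: the Markov tower depends on a chosen simple object, but the resulting principal graph and weighting do not depend on it up to graph isomorphism because $\mathcal{M}$ is indecomposable (so all simple objects in one shading class are connected), and I would spell this independence out explicitly. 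Assembling these compatibilities, the composite of the three bijections is the desired correspondence, and I would close by noting that it sends unitary equivalence classes to isomorphism classes of edge-weighted graphs, which is exactly the statement.
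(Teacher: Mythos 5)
Your proposal is correct and follows essentially the same route as the paper: the paper's proof likewise establishes the based version by passing through Markov towers, removes the base-point dependence via the $\End^\dag_0(\mathcal{M},F)\to\mathcal{C}(K,\ev_K)$ construction, and handles the matching of equivalences by the two remarks you cite on $\mathcal{TLJ}(d)$-module equivalences and edge-weighted graph isomorphisms. Your version simply spells out in more detail the steps the paper leaves implicit.
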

\begin{proof} We can prove this correspondence for the version with base point by passing through the Markov tower. 
According to Construction \ref{construction:CatFromEndM}, the correspondence holds without fixing the base point. 
As for the equivalence, see Remark \ref{Rmk:TLJ equivalence}.
\end{proof}

\begin{remark}
Given a semisimple $\rm C^*$ category $\mathcal{C}$,
similar to Construction \ref{construction:CatFromEndM}, we get a dagger tensor functor from $\End^\dagger(\mathcal{C})$ to the tensor category
$\textsf{Hilb}^{\operatorname{Irr}(\mathcal{C})\times \operatorname{Irr}(\mathcal{C})}_f$, which is the endomorphism tensor category of the object $\operatorname{Irr}(\mathcal{C})$ in \textsf{BigHilb}.
One should view this as a concrete version of $\End^\dagger(\mathcal{C})$. 
Note that dualizable endofunctors always map to dualizable 1-morphisms. 
\end{remark}

\section{Markov lattices and biunitary connections
}
\label{Cpt 5}
\subsection{Balanced $(d_0,d_1)$-fair square-partite graph}

\begin{definition}\label{Def:square partite graph} Let $\Gamma$ be an oriented square-partite graph with vertices $V(\Gamma)=V_{00}\sqcup V_{01}\sqcup V_{10}\sqcup V_{11}$.
$$\xymatrix{
V_{10} \ar@{-}_{\Lambda_0}[d] &V_{11} \ar@[red]@{-}_{\Omega_0}[l]\ar@{-}^{\Lambda_0}[d]\\
V_{00} & V_{01}\ar@[red]@{-}^{\Omega_1}[l]
}$$
We call that $\Gamma$ \textbf{associative} if for any two vertices on opposite corners of $\Gamma$, there are the same number of length 2 paths going either way around $\Gamma$. In more details,
\begin{itemize}
\item for any $P\in V_{00}$ and $R\in V_{11}$, there are the same number of length 2 paths from $P$ to $R$ (or $R$ to $P$) through vertices $Q\in V_{01}$ and through vertices $S\in V_{01}$;
\item for any $Q\in V_{01}$ and $S\in V_{10}$, there are the same number of length 2 paths from $Q$ to $S$ (or $S$ to $Q$) through vertices $P\in V_{00}$ and through vertices $R\in V_{11}$.
\end{itemize}

Let $\omega:E(\Gamma)\to (0,\infty)$ be a weighting on the edges of graph $\Gamma$. 

Let $\Lambda_i$ denote the full subgraph of $\Gamma$ on $V_{0i}\sqcup V_{1i}$, $i=0,1$; let $\Omega_i$ denote the full subgraph of $\Gamma$ on $V_{i0}\sqcup V_{i1}$, $i=0,1$. Then $\Lambda_1,\Lambda_2,\Omega_1,\Omega_2$ are oriented bipartite graphs.

We call $(\Gamma,\omega)$ a \textbf{balanced $(d_0,d_1)$-fair} square-partite graph if $\Lambda_0,\Lambda_1$ are balanced $d_0$-fair bipartite graphs and $\Omega_0,\Omega_1$ are balanced $d_1$-fair bipartite graphs.
\end{definition}

\begin{remark}
We can define the edge-weighting preserving graph isomorphism literally the same as in Definition \ref{Def:graph automorphism} for balanced $(d_0,d_1)$-fair square partite graph.
\end{remark}







\subsection{2-subcategory $\mathcal{C}(K_0,K_1,L_0,L_1,\ev)$ of \textsf{BigHilb}  and biunitary connection $\Phi$}
\begin{definition}
Let $\mathcal{C}(K_0,K_1,L_0,L_1,\ev)$ be a $\rm W^*$ 2-subcategory of \textsf{BigHilb} with four 1-morphism generators $K_i:V_{0i}\to V_{1i}$, $L_i:V_{i0}\to V_{i1}$, $i=0,1$ and a chosen evaluation and coevaluation for each generator. 
We require that
\begin{compactenum}[(a)]
\item 
$K_i,L_i$ are dualizable, $i=0,1$.
\item The evaluation and coevaluation for the dual:
$$\ev_{\overline{?}}:=(\coev_?)^\dagger\qquad\text{and}\qquad\coev_{\overline{?}}:=(\ev_?)^\dagger,$$
where $?=K_i,L_i$, $i=0,1$.

\item 
They satisfy the $(d_0,d_1)-$fairness condition, namely,
\begin{align*}
    \ev_{\overline{K_0}}\circ \coev_{K_0}= d_0\cdot\id_{\mathbb{C}^{|V_{00}|}} &\qquad
    \ev_{K_0}\circ \coev_{\overline{K_0}}= d_0\cdot\id_{\mathbb{C}^{|V_{10}|}}\\
    \ev_{\overline{K_1}}\circ \coev_{K_1}= d_0\cdot\id_{\mathbb{C}^{|V_{01}|}} &\qquad
    \ev_{K_1}\circ \coev_{\overline{K_1}}= d_0\cdot\id_{\mathbb{C}^{|V_{11}|}}\\
    \ev_{\overline{L_0}}\circ \coev_{L_0}= d_1\cdot\id_{\mathbb{C}^{|V_{00}|}} &\qquad
    \ev_{L_0}\circ \coev_{\overline{L_0}}= d_1\cdot\id_{\mathbb{C}^{|V_{01}|}}\\
    \ev_{\overline{L_1}}\circ \coev_{L_1}= d_1\cdot\id_{\mathbb{C}^{|V_{10}|}} &\qquad
    \ev_{L_1}\circ \coev_{\overline{L_1}}= d_1\cdot\id_{\mathbb{C}^{|V_{11}|}}
\end{align*}
\end{compactenum}
\end{definition}

\begin{notation}
Now, we provide the graphical calculus to describe $\mathcal{C}(K_0,K_1,L_0,L_1,\ev)$. The white region indicates the object $V_{00}$, the lightest gray for $V_{10}$, the medium gray for $V_{11}$ and the darkest gray for $V_{01}$; the black edge indicates $K_0,K_1$ and red for $L_0,L_1$, so white and medium gray, lightest gray and darkest gray will not be adjacent.

\[
\begin{tikzpicture}
\fill[white] (0,0) rectangle (1.8,1);
\filldraw[gray1] (.4,1) arc (-180:0:.5);

\draw (.4,1) arc (-180:0:.5);

\draw[dashed] (0,0) rectangle (1.8,1);
\node at (.9,-.4) {\tiny{$\coev_{K_0}:\mathbb{C}^{|V_{00}|}\to K_0\otimes\overline{K_0}$}};
\end{tikzpicture}
\qquad
\begin{tikzpicture}
\fill[gray2] (9,0) rectangle (10.8,1);
\filldraw[gray3] (9.4,0) arc (180:0:.5);

\draw (9.4,0) arc (180:0:.5);

\draw[dashed] (9,0) rectangle (10.8,1);
\node at (9.9,-.4) {\tiny{$\ev_{K_1}:\overline{K_1}\otimes K_1\to\mathbb{C}^{|V_{11}|}$}};
\end{tikzpicture}
\qquad
\begin{tikzpicture}
\fill[gray1] (6,0) rectangle (7.8,1);
\filldraw[gray3] (6.4,1) arc (-180:0:.5);

\draw[red] (6.4,1) arc (-180:0:.5);

\draw[dashed] (6,0) rectangle (7.8,1);
\node at (6.9,-.4) {\tiny{$\coev_{\overline{L_0}}:\mathbb{C}^{|V_{01}|}\to \overline{L_0}\otimes L_0$}};
\end{tikzpicture}
\qquad
\begin{tikzpicture}
\fill[gray2] (3,0) rectangle (4.8,1);
\filldraw[gray1] (3.4,0) arc (180:0:.5);

\draw (3.4,0)[red] arc (180:0:.5);

\draw[dashed] (3,0) rectangle (4.8,1);
\node at (3.9,-.4) {\tiny{$\ev_{\overline{L_1}}:L_1\otimes\overline{L_1}\to\mathbb{C}^{|V_{10}|}$}};
\end{tikzpicture}
\]
\end{notation}

\begin{remark}\label{C(K0,K1,L0,L1;ev) and C(Gamma,omega)}
Similar to the discussion in \S\ref{2-Hilb and edge weighting}, from a given balanced $(d_0,d_1)$-fair square-partite graph $(\Gamma,\omega)$, we can construct a 2-subcategory $\mathcal{C}(\Gamma,\omega)$ of \textsf{BigHilb}; on the other hand, if we start with $\mathcal{C}(K_0,K_1,L_0,L_1,\ev)$, we can obtain the $(\Gamma,\omega)$. Moreover, $\mathcal{C}(K_0,K_1,L_0,L_1,\ev)$ and $\mathcal{C}(\Gamma,\omega)$ are unitary equivalent.

Similar to the discussion in Remark \ref{Rmk:graph automorphism and C(Lambda,omega)}, the edge-weighting preserving graph automorphism will result in the unitary equivalence on $\mathcal{C}(\Gamma,\omega)$.
\end{remark}
\vspace{.2cm}

In the rest of this section, we define a special 2-morphism $\Phi$ in $\mathcal{C}(K_0,K_1,L_0,L_1,\ev)$, called \textbf{biunitary connection}.


\begin{definition}
[Biunitary connection]
A \textbf{biunitary connection} $\Phi:K_0\otimes L_1\to L_0\otimes K_1$ is a 2-morphism which is a vertical unitary and a horizontal unitary, as defined as follows. 
Here is the graphical calculus of $\Phi$.
\begin{compactenum}[(1)]
\item The biunitary connection $\Phi$:
\[
\begin{tikzpicture}
\fill[white] (0,0) rectangle (1,2);
\fill[gray2] (1,0) rectangle (2,2);

\filldraw[gray1] (.6,0)  .. controls ++(90:.2cm) and ++(180:.2cm) .. (1,1);
\filldraw[gray1] (1.4,0)  .. controls ++(90:.2cm) and ++(0:.2cm) .. (1,1);
\filldraw[gray1] (.6,0) -- (1,1) -- (1.4,0);

\filldraw[gray3] (.6,2)  .. controls ++(-90:.2cm) and ++(180:.2cm) .. (1,1);
\filldraw[gray3] (1.4,2) .. controls ++(-90:.2cm) and ++(0:.2cm) .. (1,1);
\filldraw[gray3] (.6,2) -- (1,1) -- (1.4,2);

\draw (.6,0)  .. controls ++(90:.2cm) and ++(180:.2cm) .. (1,1);
\draw[red] (1.4,0)  .. controls ++(90:.2cm) and ++(0:.2cm) .. (1,1);
\draw[red] (.6,2)  .. controls ++(-90:.2cm) and ++(180:.2cm) .. (1,1);
\draw (1.4,2) .. controls ++(-90:.2cm) and ++(0:.2cm) .. (1,1);

\nbox{unshaded}{(1,1)}{.3}{0}{0}{$\Phi$};
\draw[dashed] (0,0) rectangle (2,2);
\end{tikzpicture}
\]
\item Vertical unitary: $\Phi^\dagger\circ \Phi=\id_{K_0}\otimes \id_{L_1}$ and $\Phi\circ \Phi^\dagger = \id_{L_0}\otimes \id_{K_1}$.
\[
\begin{tikzpicture}
\fill[white] (0,0) rectangle (1,4);
\fill[gray2] (1,0) rectangle (2,4);

\filldraw[gray1] (.6,0)  .. controls ++(90:.2cm) and ++(180:.2cm) .. (1,1);
\filldraw[gray1] (1.4,0)  .. controls ++(90:.2cm) and ++(0:.2cm) .. (1,1);
\filldraw[gray1] (.6,0) -- (1,1) -- (1.4,0);

\filldraw[gray3] (1,3) arc (90:-90: .3cm and 1cm);
\filldraw[gray3] (1,3) arc (90:-90: -.3cm and 1cm);
\filldraw[gray3] (.8,1) -- (.8,3) -- (1.2,3) -- (1.2,1);

\filldraw[gray1] (.6,4)  .. controls ++(-90:.2cm) and ++(180:.2cm) .. (1,3);
\filldraw[gray1] (1.4,4) .. controls ++(-90:.2cm) and ++(0:.2cm) .. (1,3);
\filldraw[gray1] (.6,4) -- (1,3) -- (1.4,4);

\draw (.6,0)  .. controls ++(90:.2cm) and ++(180:.2cm) .. (1,1);
\draw[red] (1.4,0)  .. controls ++(90:.2cm) and ++(0:.2cm) .. (1,1);
\draw (.6,4)  .. controls ++(-90:.2cm) and ++(180:.2cm) .. (1,3);
\draw[red] (1.4,4) .. controls ++(-90:.2cm) and ++(0:.2cm) .. (1,3);
\draw (1,3) arc (90:-90: .3cm and 1cm);
\draw[red] (1,3) arc (90:-90: -.3cm and 1cm);

\nbox{unshaded}{(1,1)}{.3}{0}{0}{$\Phi$};
\nbox{unshaded}{(1,3)}{.3}{0}{0}{$\Phi^\dagger$};
\draw[dashed] (0,0) rectangle (2,4);

\node at (2.3,2) {$=$};

\fill[white] (2.6,0) rectangle (3.2,4);
\fill[gray1] (3.2,0) rectangle (4,4);
\fill[gray2] (4,0) rectangle (4.6,4);
\draw (3.2,0) -- (3.2,4);
\draw[red] (4,0) -- (4,4);
\draw[dashed] (2.6,0) rectangle (4.6,4);
\end{tikzpicture}
\qquad\qquad
\begin{tikzpicture}
\fill[white] (0,0) rectangle (1,4);
\fill[gray2] (1,0) rectangle (2,4);

\filldraw[gray3] (.6,0)  .. controls ++(90:.2cm) and ++(180:.2cm) .. (1,1);
\filldraw[gray3] (1.4,0)  .. controls ++(90:.2cm) and ++(0:.2cm) .. (1,1);
\filldraw[gray3] (.6,0) -- (1,1) -- (1.4,0);

\filldraw[gray1] (1,3) arc (90:-90: .3cm and 1cm);
\filldraw[gray1] (1,3) arc (90:-90: -.3cm and 1cm);
\filldraw[gray1] (.8,1) -- (.8,3) -- (1.2,3) -- (1.2,1);

\filldraw[gray3] (.6,4)  .. controls ++(-90:.2cm) and ++(180:.2cm) .. (1,3);
\filldraw[gray3] (1.4,4) .. controls ++(-90:.2cm) and ++(0:.2cm) .. (1,3);
\filldraw[gray3] (.6,4) -- (1,3) -- (1.4,4);

\draw[red] (.6,0)  .. controls ++(90:.2cm) and ++(180:.2cm) .. (1,1);
\draw (1.4,0)  .. controls ++(90:.2cm) and ++(0:.2cm) .. (1,1);
\draw (.6,4)  .. controls ++(-90:.2cm) and ++(180:.2cm) .. (1,3);
\draw[red] (1.4,4) .. controls ++(-90:.2cm) and ++(0:.2cm) .. (1,3);
\draw[red] (1,3) arc (90:-90: .3cm and 1cm);
\draw (1,3) arc (90:-90: -.3cm and 1cm);

\nbox{unshaded}{(1,3)}{.3}{0}{0}{$\Phi$};
\nbox{unshaded}{(1,1)}{.3}{0}{0}{$\Phi^\dagger$};
\draw[dashed] (0,0) rectangle (2,4);

\node at (2.3,2) {$=$};

\fill[white] (2.6,0) rectangle (3.2,4);
\fill[gray3] (3.2,0) rectangle (4,4);
\fill[gray2] (4,0) rectangle (4.6,4);
\draw[red] (3.2,0) -- (3.2,4);
\draw (4,0) -- (4,4);
\draw[dashed] (2.6,0) rectangle (4.6,4);
\end{tikzpicture}
\]
\item Horizontal unitary: 
\begin{align*}
    (\id_{L_0}\otimes \ev_{\overline{K_1}}\otimes \id_{\overline{L_0}})\circ (\Phi\otimes \overline{\Phi}^\dagger)\circ (\id_{K_0}\otimes \coev_{L_1}\otimes \id_{\overline{K_0}}) & = \coev_{L_0}\circ \ev_{\overline{K_0}}\\
    (\id_{\overline{K_1}}\otimes \ev_{L_0}\otimes \id_{K_1})\circ (\overline{\Phi}^\dagger\otimes \Phi)\circ (\id_{\overline{L_1}}\otimes \coev_{\overline{K_0}}\otimes \id_{L_1}) & = \coev_{\overline{K_1}}\circ \ev_{L_1}.
\end{align*}
\[
\begin{tikzpicture}
\filldraw[white] (0,0) rectangle (.8,3);
\filldraw[white] (2.2,0) rectangle (3,3);
\filldraw[gray3] (0.6,3)  .. controls ++(-90:.2cm) and ++(180:.2cm) .. (1,1.5);
\filldraw[gray3] (2.4,3)  .. controls ++(-90:.2cm) and ++(0:.2cm) .. (2,1.5);
\filldraw[gray3] (.8,1.8) -- (.6,3) -- (2.4,3) -- (2.2,1.8);
\filldraw[gray1] (0.6,0)  .. controls ++(90:.2cm) and ++(180:.2cm) .. (1,1.5);
\filldraw[gray1] (2.4,0) .. controls ++(90:.2cm) and ++(0:.2cm) .. (2,1.5);
\filldraw[gray1] (.8,1.2) -- (.6,0) -- (2.4,0) -- (2.2,1.2);
\filldraw[gray2] (1.2,1) rectangle (1.8,2);
\filldraw[gray2] (1.2,2) arc (180:0:.3);
\filldraw[gray2] (1.2,1) arc (-180:0:.3);

\draw[red] (1.2,1) -- (1.2,1.5);
\draw (1.2,1.5) -- (1.2,2);
\draw[red] (1.8,1) -- (1.8,1.5);
\draw (1.8,1.5) -- (1.8,2);
\draw (1.2,2) arc (180:0:.3);
\draw[red] (1.2,1) arc (-180:0:.3);
\draw (0.6,0)  .. controls ++(90:.2cm) and ++(180:.2cm) .. (1,1.5); 
\draw (2.4,0)  .. controls ++(90:.2cm) and ++(0:.2cm) .. (2,1.5); 
\draw[red] (0.6,3)  .. controls ++(-90:.2cm) and ++(180:.2cm) .. (1,1.5); 
\draw[red] (2.4,3)  .. controls ++(-90:.2cm) and ++(0:.2cm) .. (2,1.5); 

\nbox{unshaded}{(1,1.5)}{.3}{0}{0}{$\Phi$};
\nbox{unshaded}{(2,1.5)}{.3}{0}{0}{$\overline{\Phi}^\dagger$};
\draw[dashed] (0,0) rectangle (3,3);

\node at (3.3,1.5) {$=$};
\filldraw[white] (3.6,.5) rectangle (5.6,2.5);
\filldraw[gray3] (4.1,2.5) arc (-180:0:.5);
\filldraw[gray1] (4.1,.5) arc (180:0:.5);

\draw[red] (4.1,2.5) arc (-180:0:.5);
\draw (4.1,.5) arc (180:0:.5);
\draw[dashed] (3.6,.5) rectangle (5.6,2.5);
\end{tikzpicture}
\qquad
\begin{tikzpicture}
\filldraw[gray2] (0,0) rectangle (.8,3);
\filldraw[gray2] (2.2,0) rectangle (3,3);
\filldraw[gray3] (0.6,3)  .. controls ++(-90:.2cm) and ++(180:.2cm) .. (1,1.5);
\filldraw[gray3] (2.4,3)  .. controls ++(-90:.2cm) and ++(0:.2cm) .. (2,1.5);
\filldraw[gray3] (.8,1.8) -- (.6,3) -- (2.4,3) -- (2.2,1.8);
\filldraw[gray1] (0.6,0)  .. controls ++(90:.2cm) and ++(180:.2cm) .. (1,1.5);
\filldraw[gray1] (2.4,0) .. controls ++(90:.2cm) and ++(0:.2cm) .. (2,1.5);
\filldraw[gray1] (.8,1.2) -- (.6,0) -- (2.4,0) -- (2.2,1.2);
\filldraw[white] (1.2,1) rectangle (1.8,2);
\filldraw[white] (1.2,2) arc (180:0:.3);
\filldraw[white] (1.2,1) arc (-180:0:.3);

\draw (1.2,1) -- (1.2,1.5);
\draw[red] (1.2,1.5) -- (1.2,2);
\draw (1.8,1) -- (1.8,1.5);
\draw[red] (1.8,1.5) -- (1.8,2);
\draw[red] (1.2,2) arc (180:0:.3);
\draw (1.2,1) arc (-180:0:.3);
\draw[red] (0.6,0)  .. controls ++(90:.2cm) and ++(180:.2cm) .. (1,1.5); 
\draw[red] (2.4,0)  .. controls ++(90:.2cm) and ++(0:.2cm) .. (2,1.5); 
\draw (0.6,3)  .. controls ++(-90:.2cm) and ++(180:.2cm) .. (1,1.5); 
\draw (2.4,3)  .. controls ++(-90:.2cm) and ++(0:.2cm) .. (2,1.5); 

\nbox{unshaded}{(2,1.5)}{.3}{0}{0}{$\Phi$};
\nbox{unshaded}{(1,1.5)}{.3}{0}{0}{$\overline{\Phi}^\dagger$};
\draw[dashed] (0,0) rectangle (3,3);

\node at (3.3,1.5) {$=$};
\filldraw[gray2] (3.6,.5) rectangle (5.6,2.5);
\filldraw[gray3] (4.1,2.5) arc (-180:0:.5);
\filldraw[gray1] (4.1,.5) arc (180:0:.5);

\draw (4.1,2.5) arc (-180:0:.5);
\draw[red] (4.1,.5) arc (180:0:.5);
\draw[dashed] (3.6,.5) rectangle (5.6,2.5);
\end{tikzpicture}
\]
Here $\overline{\Phi}$ is defined as the dual of $\Phi$  in the sense of Definition \ref{Def:bar action}.
\end{compactenum}
\end{definition}

\begin{definition}
$\mathcal{C}(K_0,K_1,L_0,L_1,\ev)$ equipped with a biunitary connection $\Phi$ is written as $\mathcal{C}(K_0,K_1,L_0,L_1,\ev;\Phi)$ or simply $\mathcal{C}(\Phi)$.
\end{definition}

\begin{remark}
The existence of $\Phi$ implies that
\begin{align*}
    \dim(K_0\otimes L_1)_{uv} &=\dim(L_0\otimes K_1)_{uv}\\
    \dim(\overline{K_0}\otimes L_0)_{uv} &=\dim(L_1\otimes \overline{K_1})_{uv},
\end{align*}
for each pair $(u,v)\in V\times V$. In other word, the corresponding square-partite graph is associative.
\end{remark}

We are going to discuss some properties of biunitary connection.
\begin{definition}[Rotation by $90^\circ$]
Define the rotation by $90^\circ$ to be
$$\Phi^r:=(\id_{\overline{K_0}}\otimes\id_{L_0}\otimes \ev_{\overline{K_1}})\circ (\id_{\overline{K_0}}\otimes \Phi\otimes \id_{\overline{K_1}})\circ (\coev_{\overline{K_0}}\otimes \id_{L_1}\otimes\id_{\overline{K_1}}).$$
Similarly, 
$$\Phi^{r^2}:=(\id_{\overline{L_1}}\otimes\id_{\overline{K_0}}\otimes \ev_{\overline{L_0}})\circ (\id_{\overline{L_1}}\otimes \Phi^r\otimes \id_{\overline{L_0}})\circ (\coev_{\overline{L_1}}\otimes \id_{\overline{K_1}}\otimes\id_{\overline{L_0}})=\overline{\Phi}.$$
\[
\begin{tikzpicture}[baseline=1cm]
\fill[gray1] (0,0) rectangle (1,2);
\fill[gray3] (1,0) rectangle (2,2);

\filldraw[gray2] (.6,0)  .. controls ++(90:.2cm) and ++(180:.2cm) .. (1,1);
\filldraw[gray2] (1.4,0)  .. controls ++(90:.2cm) and ++(0:.2cm) .. (1,1);
\filldraw[gray2] (.6,0) -- (1,1) -- (1.4,0);

\filldraw[white] (.6,2)  .. controls ++(-90:.2cm) and ++(180:.2cm) .. (1,1);
\filldraw[white] (1.4,2) .. controls ++(-90:.2cm) and ++(0:.2cm) .. (1,1);
\filldraw[white] (.6,2) -- (1,1) -- (1.4,2);

\draw[red] (.6,0)  .. controls ++(90:.2cm) and ++(180:.2cm) .. (1,1);
\draw (1.4,0)  .. controls ++(90:.2cm) and ++(0:.2cm) .. (1,1);
\draw (.6,2)  .. controls ++(-90:.2cm) and ++(180:.2cm) .. (1,1);
\draw[red] (1.4,2) .. controls ++(-90:.2cm) and ++(0:.2cm) .. (1,1);

\nbox{unshaded}{(1,1)}{.3}{0}{0}{$\Phi^r$};
\draw[dashed] (0,0) rectangle (2,2);
\end{tikzpicture}
:=
\begin{tikzpicture}[baseline=1cm]
\filldraw[gray1] (0,0) rectangle (.3,2);
\filldraw[gray1] (0.3,0) rectangle (1.1,.7);
\filldraw[gray3] (1.7,0) rectangle (2,2);
\filldraw[gray3] (.9,1.3) rectangle (1.7,2);
\filldraw[white] (0.9,0.7) arc (0:-180:.3);
\filldraw[white] (.3,.7) rectangle (.9,2);
\filldraw[gray2] (1.1,1.3) arc (180:0:.3);
\filldraw[gray2] (1.1,0) rectangle (1.7,1.3);

\draw[red] (.9,0.7) -- (.9,2);
\draw[red] (1.1,1.3) -- (1.1,0);
\draw (1.1,1.3) arc (180:0:.3);
\draw (0.9,0.7) arc (0:-180:.3);
\draw (.3,.7) -- (.3,2);
\draw (1.7,1.3) -- (1.7,0);

\nbox{unshaded}{(1,1)}{.3}{0}{0}{$\Phi$};
\draw[dashed] (0,0) rectangle (2,2);
\end{tikzpicture}
\qquad\qquad
\begin{tikzpicture}[baseline=1cm]
\fill[gray2] (0,0) rectangle (1,2);
\fill[white] (1,0) rectangle (2,2);

\filldraw[gray3] (.6,0)  .. controls ++(90:.2cm) and ++(180:.2cm) .. (1,1);
\filldraw[gray3] (1.4,0)  .. controls ++(90:.2cm) and ++(0:.2cm) .. (1,1);
\filldraw[gray3] (.6,0) -- (1,1) -- (1.4,0);

\filldraw[gray1] (.6,2)  .. controls ++(-90:.2cm) and ++(180:.2cm) .. (1,1);
\filldraw[gray1] (1.4,2) .. controls ++(-90:.2cm) and ++(0:.2cm) .. (1,1);
\filldraw[gray1] (.6,2) -- (1,1) -- (1.4,2);

\draw (.6,0)  .. controls ++(90:.2cm) and ++(180:.2cm) .. (1,1);
\draw[red] (1.4,0)  .. controls ++(90:.2cm) and ++(0:.2cm) .. (1,1);
\draw[red] (.6,2)  .. controls ++(-90:.2cm) and ++(180:.2cm) .. (1,1);
\draw (1.4,2) .. controls ++(-90:.2cm) and ++(0:.2cm) .. (1,1);

\nbox{unshaded}{(1,1)}{.3}{0}{0}{$\Phi^{r^2}$};
\draw[dashed] (0,0) rectangle (2,2);
\end{tikzpicture}
:=
\begin{tikzpicture}[baseline=1cm]
\filldraw[gray2] (0,0) rectangle (.3,2);
\filldraw[gray2] (0.3,0) rectangle (1.1,.7);
\filldraw[white] (1.7,0) rectangle (2,2);
\filldraw[white] (.9,1.3) rectangle (1.7,2);
\filldraw[gray1] (0.9,0.7) arc (0:-180:.3);
\filldraw[gray1] (.3,.7) rectangle (.9,2);
\filldraw[gray3] (1.1,1.3) arc (180:0:.3);
\filldraw[gray3] (1.1,0) rectangle (1.7,1.3);

\draw (.9,0.7) -- (.9,2);
\draw (1.1,1.3) -- (1.1,0);
\draw[red] (1.1,1.3) arc (180:0:.3);
\draw[red] (0.9,0.7) arc (0:-180:.3);
\draw[red] (.3,.7) -- (.3,2);
\draw[red] (1.7,1.3) -- (1.7,0);

\nbox{unshaded}{(1,1)}{.3}{0}{0}{$\Phi^r$};
\draw[dashed] (0,0) rectangle (2,2);
\end{tikzpicture}
\]
\end{definition}

\begin{remark} Here are some properties for biunitary connections and rotation.
\begin{compactenum}[(1)]
\item The group $\langle r,\dagger\rangle=\langle r,\dagger|r^4=\dagger^2=\id,r\dagger=\dagger r^3 \rangle$ for the biunitary connection is isomorphic to the dihedral group $D_4$.
\item $\Phi$ is a biunitary connection if and only if $\Phi^g$ is both vertical unitary and horizontal unitary, where $g\in \langle r,\dagger\rangle$. 
\end{compactenum}
\end{remark}

\begin{definition}[{\cite[\S4]{RV16}}]\label{Def:Gauge equivalence}
We call biunitary connections $\Phi:K_0\otimes L_1\to L_0\otimes K_1$ and $\Phi':K_0'\otimes L_1'\to L_0'\otimes K_1'$ \textbf{gauge equivalent}, if there exist unitaries $u_1:K_0'\to K_0$, $u_2:L_0\to L_0'$, $u_3:K_1\to K_1'$ and $u_4:L_1'\to L_1$ such that $\Phi_2=(u_2\otimes u_3)\circ \Phi_1\circ (u_1\otimes u_4)$.
\[
\begin{tikzpicture}[baseline=.9cm]
\fill[white] (0,0) rectangle (1,2);
\fill[gray2] (1,0) rectangle (2,2);

\filldraw[gray1] (.6,0)  .. controls ++(90:.2cm) and ++(180:.2cm) .. (1,1);
\filldraw[gray1] (1.4,0)  .. controls ++(90:.2cm) and ++(0:.2cm) .. (1,1);
\filldraw[gray1] (.6,0) -- (1,1) -- (1.4,0);

\filldraw[gray3] (.6,2)  .. controls ++(-90:.2cm) and ++(180:.2cm) .. (1,1);
\filldraw[gray3] (1.4,2) .. controls ++(-90:.2cm) and ++(0:.2cm) .. (1,1);
\filldraw[gray3] (.6,2) -- (1,1) -- (1.4,2);

\draw (.6,0)  .. controls ++(90:.2cm) and ++(180:.2cm) .. (1,1);
\draw[red] (1.4,0)  .. controls ++(90:.2cm) and ++(0:.2cm) .. (1,1);
\draw[red] (.6,2)  .. controls ++(-90:.2cm) and ++(180:.2cm) .. (1,1);
\draw (1.4,2) .. controls ++(-90:.2cm) and ++(0:.2cm) .. (1,1);

\nbox{unshaded}{(1,1)}{.3}{0}{0}{$\Phi'$};
\draw[dashed] (0,0) rectangle (2,2);
\end{tikzpicture}
=
\begin{tikzpicture}[baseline=1.9cm]
\fill[white] (0,0) rectangle (1,4);
\fill[gray2] (1,0) rectangle (2,4);

\filldraw[gray1] (.6,0)  .. controls ++(90:.2cm) and ++(180:.2cm) .. (1,2);
\filldraw[gray1] (1.4,0)  .. controls ++(90:.2cm) and ++(0:.2cm) .. (1,2);
\filldraw[gray1] (.6,0) -- (1,2) -- (1.4,0);

\filldraw[gray3] (.6,4)  .. controls ++(-90:.2cm) and ++(180:.2cm) .. (1,2);
\filldraw[gray3] (1.4,4) .. controls ++(-90:.2cm) and ++(0:.2cm) .. (1,2);
\filldraw[gray3] (.6,4) -- (1,2) -- (1.4,4);

\draw (.6,0)  .. controls ++(90:.2cm) and ++(180:.2cm) .. (1,2);
\draw[red] (1.4,0)  .. controls ++(90:.2cm) and ++(0:.2cm) .. (1,2);
\draw[red] (.6,4)  .. controls ++(-90:.2cm) and ++(180:.2cm) .. (1,2);
\draw (1.4,4) .. controls ++(-90:.2cm) and ++(0:.2cm) .. (1,2);

\nbox{unshaded}{(1,2)}{.3}{0}{0}{$\Phi$};
\nbox{unshaded}{(.65,1)}{.2}{0}{0}{$u_1$};
\nbox{unshaded}{(.65,3)}{.2}{0}{0}{$u_2$};
\nbox{unshaded}{(1.35,3)}{.2}{0}{0}{$u_3$};
\nbox{unshaded}{(1.35,1)}{.2}{0}{0}{$u_4$};

\draw[dashed] (0,0) rectangle (2,4);
\end{tikzpicture}
\]
\end{definition}

\begin{notation}\textbf{and Observation}

Observe that once we know the color of region and the color of edge, the biunitary connection in the circle is determined. So we can simplify the graphical calculus of biunitary connection as follows.
\[
\begin{tikzpicture}[baseline=.9cm]
\fill[white] (0,0) rectangle (1,2);
\fill[gray2] (1,0) rectangle (2,2);

\filldraw[gray1] (.6,0)  .. controls ++(90:.2cm) and ++(180:.2cm) .. (1,1);
\filldraw[gray1] (1.4,0)  .. controls ++(90:.2cm) and ++(0:.2cm) .. (1,1);
\filldraw[gray1] (.6,0) -- (1,1) -- (1.4,0);

\filldraw[gray3] (.6,2)  .. controls ++(-90:.2cm) and ++(180:.2cm) .. (1,1);
\filldraw[gray3] (1.4,2) .. controls ++(-90:.2cm) and ++(0:.2cm) .. (1,1);
\filldraw[gray3] (.6,2) -- (1,1) -- (1.4,2);

\draw (.6,0)  .. controls ++(90:.2cm) and ++(180:.2cm) .. (1,1);
\draw[red] (1.4,0)  .. controls ++(90:.2cm) and ++(0:.2cm) .. (1,1);
\draw[red] (.6,2)  .. controls ++(-90:.2cm) and ++(180:.2cm) .. (1,1);
\draw (1.4,2) .. controls ++(-90:.2cm) and ++(0:.2cm) .. (1,1);

\nbox{unshaded}{(1,1)}{.3}{0}{0}{$\Phi$};
\draw[dashed] (0,0) rectangle (2,2);
\end{tikzpicture}
\qquad\Longrightarrow\qquad
\begin{tikzpicture}[baseline=.9cm]
\filldraw[white] (0,0) rectangle (1,2);
\filldraw[gray2] (1,0) rectangle (2,2);
\filldraw[gray3] (.5,2) -- (1,1) -- (1.5,2);
\filldraw[gray1] (.5,0) -- (1,1) -- (1.5,0);

\draw (.5,0) -- (1.5,2);
\draw[red] (.5,2) -- (1.5,0);

\filldraw[white] (1.1,1) arc (0:360:.1);
\draw (1.1,1) arc (0:360:.1);
\draw[dashed] (0,0) rectangle (2,2);
\end{tikzpicture}
\]
Moreover, if the color of the leftmost region and the color of each edge are determined, then the color of the rest of the regions will be determined. The 4 colors on the leftmost region and 2 colors on the edge (8 cases) can represent all $\Phi^g$, $g\in \langle r,\dagger\rangle$.

Here are the simplified graphical calculus of vertical unitarity and horizontal unitarity. In the following context, We require that the leftmost regions in the uncolored equality have the same color.
\[
\begin{tikzpicture}[baseline=1cm]
\draw (.2,0) -- (.8,1);
\draw[red] (.8,0) -- (.2,1);

\draw (.8,1) -- (.2,2);
\draw[red] (.2,1) -- (.8,2);

\draw[dashed] (0,1) -- (1,1);

\filldraw[white] (.6,1.5) arc (0:360:.1);
\draw (.6,1.5) arc (0:360:.1);
\filldraw[white] (.6,.5) arc (0:360:.1);
\draw (.6,.5) arc (0:360:.1);
\draw[dashed] (0,0) rectangle (1,2);
\end{tikzpicture}
=
\begin{tikzpicture}[baseline=1cm]
\draw (.3,0) -- (.3,2);
\draw[red] (.7,0) -- (.7,2);

\draw[dashed] (0,0) rectangle (1,2);
\end{tikzpicture}
\qquad\qquad\qquad\qquad
\begin{tikzpicture}[baseline=1cm]
\draw[red] (.2,0) -- (.8,1);
\draw (.8,0) -- (.2,1);

\draw[red] (.8,1) -- (.2,2);
\draw (.2,1) -- (.8,2);

\draw[dashed] (0,1) -- (1,1);

\filldraw[white] (.6,1.5) arc (0:360:.1);
\draw (.6,1.5) arc (0:360:.1);
\filldraw[white] (.6,.5) arc (0:360:.1);
\draw (.6,.5) arc (0:360:.1);
\draw[dashed] (0,0) rectangle (1,2);
\end{tikzpicture}
=
\begin{tikzpicture}[baseline=1cm]
\draw (.3,0) -- (.3,2);
\draw[red] (.7,0) -- (.7,2);

\draw[dashed] (0,0) rectangle (1,2);
\end{tikzpicture}
\]
\[
\begin{tikzpicture}[baseline=1cm]
\draw (.2,0) -- (.7,1.25);
\draw[red] (.2,2) -- (.7,.75);
\draw[red] (1.8,2) -- (1.3,.75);
\draw (1.8,0) -- (1.3,1.25);

\draw (.7,1.25) arc (180:0:.3);
\draw[red] (.7,.75) arc (-180:0:.3);

\draw[dashed] (0,1.25) -- (2,1.25);
\draw[dashed] (0,.75) -- (2,.75);

\filldraw[white] (.7,1) arc (0:360:.1);
\draw (.7,1) arc (0:360:.1);
\filldraw[white] (1.5,1) arc (0:360:.1);
\draw (1.5,1) arc (0:360:.1);
\draw[dashed] (0,0) rectangle (2,2);
\end{tikzpicture}
=
\begin{tikzpicture}[baseline=1cm]
\draw[red] (.4,2) arc (-180:0:.6);
\draw (.4,0) arc (180:0:.6);
\draw[dashed] (0,0) rectangle (2,2);
\end{tikzpicture}
\qquad\qquad
\begin{tikzpicture}[baseline=1cm]
\draw[red] (.2,0) -- (.7,1.25);
\draw (.2,2) -- (.7,.75);
\draw (1.8,2) -- (1.3,.75);
\draw[red] (1.8,0) -- (1.3,1.25);

\draw[red] (.7,1.25) arc (180:0:.3);
\draw (.7,.75) arc (-180:0:.3);

\draw[dashed] (0,1.25) -- (2,1.25);
\draw[dashed] (0,.75) -- (2,.75);

\filldraw[white] (.7,1) arc (0:360:.1);
\draw (.7,1) arc (0:360:.1);
\filldraw[white] (1.5,1) arc (0:360:.1);
\draw (1.5,1) arc (0:360:.1);
\draw[dashed] (0,0) rectangle (2,2);
\end{tikzpicture}
=
\begin{tikzpicture}[baseline=1cm]
\draw (.4,2) arc (-180:0:.6);
\draw[red] (.4,0) arc (180:0:.6);
\draw[dashed] (0,0) rectangle (2,2);
\end{tikzpicture}
\]
\end{notation}

\begin{proposition}\label{prop of BC}
Here are some properties that will be used in the next section and the proof is left to the reader.
\begin{compactenum}[\rm (1)]
\item 
\[
\begin{tikzpicture}
\draw[red] (.4,2) -- (1,3);
\draw (1,2) -- (.4,3);
\draw[red] (1.6,2) -- (1.6,3);
\filldraw[white] (.8,2.5) arc (0:360:.1);
\draw (.8,2.5) arc (0:360:.1);

\draw[red] (.4,1) -- (.4,2);
\draw[red] (1,1) -- (1.6,2);
\draw (1.6,1) -- (1,2);
\filldraw[white] (1.4,1.5) arc (0:360:.1);
\draw (1.4,1.5) arc (0:360:.1);

\draw[red] (.4,1) arc (-180:0:.3);
\draw (1.6,0) -- (1.6,1);

\draw[dashed] (0,2) -- (2,2);
\draw[dashed] (0,1) -- (2,1);
\draw[dashed] (0,0) rectangle (2,3);

\node at (2.3,1.5) {$=$};

\draw (3,1) -- (3,2);
\draw[red] (3.6,2) arc (-180:0:.3);

\draw[dashed] (2.6,1) rectangle (4.6,2);
\end{tikzpicture}
\]
\item For 2-morphism $x\in\End(F\otimes K_0\otimes L_1)$, where $F$ is a proper $2$-morphism, we have
\[
\begin{tikzpicture}[baseline=.9cm]
\draw (.2,0) -- (.2,2);
\draw (.5,.5) -- (.5,1.5);
\draw[red] (.8,.5) -- (.8,1.5);

\draw (.5,1.5) arc (180:0:.45);
\draw (.5,.5) arc (-180:0:.45);
\draw (1.4,.5) -- (1.4,1.5);

\draw[red] (.8,1.5) arc (180:0:.2);
\draw[red] (.8,.5) arc (-180:0:.2);
\draw[red] (1.2,.5) -- (1.2,1.5);

\nbox{unshaded}{(.5,1)}{.2}{.3}{.3}{$x$};
\node at (.2,-.2) {\tiny{$F$}};
\node at (.48,-.23) {\tiny{$K_0$}};
\node at (.82,-.23) {\tiny{$L_1$}};
\end{tikzpicture}
=
\begin{tikzpicture}[baseline=1.4cm]
\draw (.2,.5) -- (.2,2.5);
\draw (.5,1) -- (.5,2);
\draw (.5,2) -- (1.1,2.5);
\draw (.5,1) -- (1.1,.5);
\draw[red] (.8,.5) -- (.8,2.5);

\draw (1.1,2.5) arc (180:0:.15);
\draw (1.1,.5) arc (-180:0:.15);
\draw (1.4,.5) -- (1.4,2.5);

\draw[red] (.8,2.5) arc (180:0:.4);
\draw[red] (.8,.5) arc (-180:0:.4);
\draw[red] (1.6,.5) -- (1.6,2.5);

\filldraw[white] (.9,2.25) arc (0:360:.1);
\draw (.9,2.25) arc (0:360:.1);
\filldraw[white] (.9,.75) arc (0:360:.1);
\draw (.9,.75) arc (0:360:.1);

\nbox{unshaded}{(.5,1.5)}{.2}{.3}{.3}{$x$};
\node at (.2,.3) {\tiny{$F$}};
\node at (.48,.77) {\tiny{$K_0$}};
\node at (1,1) {\tiny{$L_1$}};
\node at (.65,.3) {\tiny{$L_0$}};
\node at (1.01,.35) {\tiny{$K_1$}};
\end{tikzpicture}
\]
\end{compactenum}
\end{proposition}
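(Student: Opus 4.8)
The plan is to reduce both parts to the defining unitarity properties of a biunitary connection together with the rotation calculus recorded in the preceding remark. Throughout I would work in the simplified graphical calculus for $\mathcal{C}(\Phi)$, in which a crossing marked by a small circle stands for the appropriate $\Phi^g$, $g\in\langle r,\dagger\rangle$, uniquely determined by the colors of the adjacent regions and strands; recall that for every such $g$ the $2$-morphism $\Phi^g$ is both a vertical and a horizontal unitary.

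For part (1): read the left-hand side from bottom to top. There is a red strand that is capped off against its neighbour at the top, passing through two crossings on the way. The first move is to use the rigidity (zigzag) relations to bend the lower crossing around the cap, rewriting the configuration so that the two crossings appear in precisely the "opposite-rotation" pattern occurring on the left side of the horizontal-unitarity equation for a suitable rotated connection, i.e. an instance of $(\id\otimes\ev\otimes\id)\circ(\Phi^g\otimes\overline{\Phi^g}^\dagger)\circ(\id\otimes\coev\otimes\id)=\coev\circ\ev$, read with the shading dictated by the colors. Applying that horizontal-unitarity relation collapses the two crossings joined through a cap into a single cap--cup pair; what remains is one crossing composed with a cap, which straightens out by the vertical-unitarity instance $\Phi^g\circ(\Phi^g)^\dagger=\id$ forced by the colors, leaving exactly the right-hand side: a straight black strand next to a red cap. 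I would present this as a short chain of three graphical equalities, each one an instance of rigidity, horizontal unitarity, or vertical unitarity.

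For part (2): here the strand $F$ on the left is a fixed composite of generators for which $F\otimes K_0\otimes L_1$ is a legal $1$-morphism, and $x$ is an arbitrary $2$-endomorphism of it. Since $x$ plays no active role in the claimed identity --- it is merely carried along --- the statement is equivalent to an identity between the purely connection-theoretic $2$-morphisms obtained by deleting $x$ and joining the two copies of $F\otimes K_0\otimes L_1$, namely the assertion that capping the $K_0\otimes L_1$ legs directly agrees with first routing them through $\Phi$ and $\overline{\Phi}^\dagger$ and then capping. That is precisely the content of part (1): the cluster of two crossings-through-a-cap on the right-hand side of (2) is simplified by (1) to the bare cap on $K_0\otimes L_1$ appearing on the left, while the leftover $F$ strand and the remaining red cap match on the nose. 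So the proof of (2) would be: strip off $x$ by functoriality, recognise the remaining diagram as a shaded instance of the two sides of (1), and invoke (1).

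The main obstacle I anticipate is bookkeeping rather than conceptual content: one must pin down which of the eight rotated connections $\Phi^g$ sits at each crossing from the colors of the four region types and of the two strand types, and verify that the $\ev/\coev$ pairs used in the zigzag moves are attached to the correct generator and its dual (recall $\ev_{\overline{?}}=(\coev_?)^\dagger$ and $\coev_{\overline{?}}=(\ev_?)^\dagger$ for $?=K_i,L_i$). Once the shadings are fixed consistently, each step is a verbatim application of vertical unitarity, horizontal unitarity, or rigidity, and the $D_4$-symmetry $\langle r,\dagger\rangle\cong D_4$ guarantees that the rotated forms of the unitarity relations needed in the argument are all available.
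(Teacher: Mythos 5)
The paper itself leaves this proposition to the reader, so I am judging your argument on its own merits. For part (1) your overall strategy (rigidity plus the unitarity axioms) is right, but the chain of moves you describe does not parse. The clean derivation is shorter: by one zigzag move and the definition of the rotations, the lower crossing precomposed with the red coevaluation on its left equals the doubly rotated connection $\Phi^{r^2}=\overline{\Phi}$ (suitably shaded) precomposed with a red coevaluation on the \emph{other} side of the black strand; after this single rigidity move the two crossings sit directly on top of one another as $(\Phi^{r^2})^\dagger\circ\Phi^{r^2}$ and cancel by vertical unitarity of the rotated connection (equivalently, horizontal unitarity of $\Phi^{r}$). In particular your final step --- ``what remains is one crossing composed with a cap, which straightens out by $\Phi^g\circ(\Phi^g)^\dagger=\id$'' --- is not a legal move: a single crossing composed with a cap is a \emph{rotation} of the connection, not a composite of a unitary with its adjoint, and once the unitarity relation has been applied there is no leftover crossing to dispose of.

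For part (2) there is a genuine gap. The two crossings on the right-hand side are separated by $x$, so they are not a ``cluster of two crossings through a cap'' to which (1) applies in situ; moreover the two sides close the $K_0$ and $L_1$ legs with \emph{different} duals in \emph{opposite} nesting order ($\overline{L_1}\otimes\overline{K_0}$, red innermost, on the left versus $\overline{K_1}\otimes\overline{L_0}$, black innermost, on the right), so ``stripping off $x$'' does not produce a single frame identity that is literally a shaded instance of (1). The correct argument is: use (1), in suitable rotations and once per cup, to slide the crossing below $x$ across the nested coevaluations onto the closure strands, where it becomes $\overline{\Phi^\dagger}=\overline{\Phi}^\dagger$; likewise slide the crossing above $x$ across the evaluations, where it becomes $\overline{\Phi}$. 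These two dual crossings are then adjacent on the closure strands with nothing between them, and they cancel by vertical unitarity, $\overline{\Phi}\circ\overline{\Phi}^\dagger=\overline{\Phi^\dagger\circ\Phi}=\id$. This final cancellation (and the fact that (1) must be invoked several times, not once) is essential and is entirely absent from your write-up; without it the two sides of (2) do not match ``on the nose.''
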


\subsection{From $\mathcal{C}(\Phi)$ to Markov lattice}
\begin{construction}\label{C(Phi) to ML}
Here we are going to construct a Markov lattice from the 2-category $\mathcal{C}(\Phi)$ discussed above with a chosen point, say $P_{0}\in V_{00}$. Let $\mathbb{C}^{|P_{0}|}$ be a 1-morphism with all the entry being $0$ except $(\mathbb{C}^{|P_0|})_{P_{0}P_{0}}=\mathbb{C}$.

Note that $\mathbb{C}^{|P_0|}\otimes K_0^{\alt\otimes i}\otimes L_?^{\alt\otimes j}$ is a 1-morphism for each $i,j\in\mathbb{Z}_{\ge 0}$.

Let $M_{i,j}=\End\left(\mathbb{C}^{|P_0|}\otimes K_0^{\alt\otimes i}\otimes L_?^{\alt\otimes j}\right)$, where $L_?=L_0$ if $2\mid i$ and $L_?=L_1$ if $2\nmid j$. We use the graphical calculus to show $M=(M_{i,j})_{i,j\ge 0}$ is a Markov lattice.
\begin{compactenum}[(1)]
\item Element $x\in M_{i,j}$:
\[
\begin{tikzpicture}
\filldraw[white] (0,0) rectangle (.6,2);
\filldraw[gray1] (.6,0) rectangle (.9,2);

\draw[red] (1.8,0) -- (1.8,2);
\draw[red] (2.1,0) -- (2.1,2);
\draw[red] (2.7,0) -- (2.7,2);
\node[red] at (2.4,1.7) {$\cdots$};
\node[red] at (2.4,.3) {$\cdots$};

\draw (1.5,0) -- (1.5,2);
\draw (.9,0) -- (.9,2);
\draw (.6,0) -- (.6,2);
\node at (1.2,1.7) {$\cdots$};
\node at (1.2,.3) {$\cdots$};

\draw[thick,blue] (.3,0) -- (.3,2);

\nbox{unshaded}{(1.5,1)}{.2}{1.15}{1.15}{$x$};

\node at (.3,-.2) {\tiny{$\mathbb{C}^{|P_0|}$}};
\node at (.6,2.2) {\tiny{$1^\text{st}$}};
\node at (1.5,-.2) {\tiny{$i^\text{th}$}};
\node[red] at (1.8,-.2) {\tiny{$1^\text{st}$}};
\node[red] at (2.7,-.2) {\tiny{$j^\text{th}$}};
\draw[dashed] (-.6,0) rectangle (3,2);
\node at (-.2,1) {$P_0$};
\end{tikzpicture}
\]
\item Horizontal inclusion $x\in M_{i,j}\subset M_{i,j+1}$:
\[
\begin{tikzpicture}
\filldraw[white] (0,0) rectangle (.6,2);
\filldraw[gray1] (.6,0) rectangle (.9,2);

\draw[red] (1.8,0) -- (1.8,2);
\draw[red] (2.1,0) -- (2.1,2);
\draw[red] (2.7,0) -- (2.7,2);
\draw[red] (3,0) -- (3,2);
\node[red] at (2.4,1.7) {$\cdots$};
\node[red] at (2.4,.3) {$\cdots$};

\draw (1.5,0) -- (1.5,2);
\draw (.9,0) -- (.9,2);
\draw (.6,0) -- (.6,2);
\node at (1.2,1.7) {$\cdots$};
\node at (1.2,.3) {$\cdots$};

\draw[thick,blue] (.3,0) -- (.3,2);

\nbox{unshaded}{(1.5,1)}{.2}{1.15}{1.15}{$x$};

\draw[dashed,thick] (.1,1.3) -- (.1,.7);
\draw[dashed,thick] (.1,.7) -- (3.1,.7);
\draw[dashed,thick] (3.1,1.3) -- (3.1,.7);
\draw[dashed,thick] (.1,1.3) -- (3.1,1.3);

\node at (.3,-.2) {\tiny{$\mathbb{C}^{|P_0|}$}};
\node at (.6,2.2) {\tiny{$1^\text{st}$}};
\node at (1.5,-.2) {\tiny{$i^\text{th}$}};
\node[red] at (1.8,-.2) {\tiny{$1^\text{st}$}};
\node[red] at (2.6,-.2) {\tiny{$j^\text{th}$}};
\node[red] at (3.1,2.2) {\tiny{$(j\+1)^\text{th}$}};
\draw[dashed] (-.6,0) rectangle (3.3,2);
\node at (-.25,1) {$P_0$};
\end{tikzpicture}
\]
\item Vertical inclusion $x\in M_{i,j}\subset M_{i+1,j}$:
\[
\begin{tikzpicture}
\filldraw[white] (0,0) rectangle (.6,3);
\filldraw[gray1] (.6,0) rectangle (.9,3);

\draw[red] (2.1,0) -- (2.1,3);
\draw[red] (2.7,0) -- (2.7,3);
\node[red] at (2.4,1.9) {$\cdots$};
\node[red] at (2.4,1.1) {$\cdots$};

\draw (1.8,0) -- (1.8,.5);
\draw (1.8,2.5) -- (1.8,3);
\draw (1.5,0) -- (1.5,3);
\draw (.9,0) -- (.9,3);
\draw (.6,0) -- (.6,3);
\node at (1.2,1.9) {$\cdots$};
\node at (1.2,1.1) {$\cdots$};
\draw (3,1) -- (3,2);
\draw (3,1) -- (1.8,.5);
\draw (3,2) -- (1.8,2.5);

\draw[thick,blue] (.3,0) -- (.3,3);

\nbox{unshaded}{(1.5,1.5)}{.2}{1.15}{1.15}{$x$};

\filldraw[white] (2.8,2.125) arc (0:360:.1);
\draw (2.8,2.125) arc (0:360:.1);
\filldraw[white] (2.8,.875) arc (0:360:.1);
\draw (2.8,.875) arc (0:360:.1);

\filldraw[white] (2.2,2.375) arc (0:360:.1);
\draw (2.2,2.375) arc (0:360:.1);
\filldraw[white] (2.2,.625) arc (0:360:.1);
\draw (2.2,.625) arc (0:360:.1);

\draw[dashed,thick] (.1,.3) -- (.1,2.7);
\draw[dashed,thick] (.1,.3) -- (3.1,.3);
\draw[dashed,thick] (3.1,.3) -- (3.1,2.7);
\draw[dashed,thick] (.1,2.7) -- (3.1,2.7);

\node at (.3,-.2) {\tiny{$\mathbb{C}^{|P_0|}$}};
\node at (.6,3.2) {\tiny{$1^\text{st}$}};
\node at (1.5,-.2) {\tiny{$i^\text{th}$}};
\node[red] at (2.1,-.2) {\tiny{$1^\text{st}$}};
\node[red] at (2.7,-.2) {\tiny{$j^\text{th}$}};
\node at (1.8,3.2) {\tiny{$(i\+1)^\text{th}$}};
\draw[dashed] (-.6,0) rectangle (3.3,3);
\node at (-.25,1.5) {$P_0$};
\end{tikzpicture}
\]
\item Horizontal conditional expectation $E^{M,r}_{i,j}:M_{i,j}\to M_{i,j-1}$, $,x\in M_{i,j}$:
\[
E^{M,r}_{i,j}(x)=d_1^{-1}
\begin{tikzpicture}[baseline=.9cm]
\filldraw[white] (0,0) rectangle (.6,2);
\filldraw[gray1] (.6,0) rectangle (.9,2);

\draw[red] (1.8,0) -- (1.8,2);
\draw[red] (2.4,0) -- (2.4,2);
\draw[red] (2.7,.5) -- (2.7,1.5);
\draw[red] (3.1,.5) -- (3.1,1.5);
\node[red] at (2.1,1.7) {$\cdots$};
\node[red] at (2.1,.3) {$\cdots$};

\draw (1.5,0) -- (1.5,2);
\draw (.9,0) -- (.9,2);
\draw (.6,0) -- (.6,2);
\node at (1.2,1.7) {$\cdots$};
\node at (1.2,.3) {$\cdots$};

\draw[red] (2.7,1.5) arc (180:0:.2);
\draw[red] (2.7,.5) arc (-180:0:.2);

\draw[thick,blue] (.3,0) -- (.3,2);

\nbox{unshaded}{(1.5,1)}{.2}{1.15}{1.15}{$x$};

\node at (.3,-.2) {\tiny{$\mathbb{C}^{|P_0|}$}};
\node at (.6,2.2) {\tiny{$1^\text{st}$}};
\node at (1.5,-.2) {\tiny{$i^\text{th}$}};
\node[red] at (1.8,-.2) {\tiny{$1^\text{st}$}};
\node[red] at (2.4,-.2) {\tiny{$(j\sm 1)^\text{th}$}};
\node[red] at (2.7,2.2) {\tiny{$j^\text{th}$}};

\draw[dashed] (-.6,0) rectangle (3.3,2);
\node at (-.2,1) {$P_0$};
\end{tikzpicture}
\]
\item Vertical conditional expectation $E^{M,l}_{i,j}:M_{i,j}\to M_{i-1,j}$, $x\in M_{i,j}$:
\[
E^{M,l}_{i,j}(x)=d_0^{-1}
\begin{tikzpicture}[baseline=1.4cm]
\filldraw[white] (0,0) rectangle (.6,3);
\filldraw[gray1] (.6,0) rectangle (.9,3);

\draw[red] (2.1,0) -- (2.1,3);
\draw[red] (2.7,0) -- (2.7,3);
\node[red] at (2.4,1.9) {$\cdots$};
\node[red] at (2.4,1.1) {$\cdots$};

\draw (1.5,0) -- (1.5,3);
\draw (.9,0) -- (.9,3);
\draw (.6,0) -- (.6,3);
\node at (1.2,1.9) {$\cdots$};
\node at (1.2,1.1) {$\cdots$};
\draw (1.8,1) -- (1.8,2);
\draw (3,.5) -- (1.8,1);
\draw (3,2.5) -- (1.8,2);

\draw (3,2.5) arc (180:0:.15);
\draw (3,.5) arc (-180:0:.15);
\draw (3.3,.5) -- (3.3,2.5);

\draw[thick,blue] (.3,0) -- (.3,3);

\nbox{unshaded}{(1.5,1.5)}{.2}{1.15}{1.15}{$x$};

\filldraw[white] (2.8,2.375) arc (0:360:.1);
\draw (2.8,2.375) arc (0:360:.1);
\filldraw[white] (2.8,.625) arc (0:360:.1);
\draw (2.8,.625) arc (0:360:.1);

\filldraw[white] (2.2,2.125) arc (0:360:.1);
\draw (2.2,2.125) arc (0:360:.1);
\filldraw[white] (2.2,.875) arc (0:360:.1);
\draw (2.2,.875) arc (0:360:.1);

\node at (.3,-.2) {\tiny{$\mathbb{C}^{|P_0|}$}};
\node at (.6,3.2) {\tiny{$1^\text{st}$}};
\node at (1.5,-.2) {\tiny{$(i\sm1)^\text{th}$}};
\node[red] at (2.1,-.2) {\tiny{$1^\text{st}$}};
\node[red] at (2.7,-.2) {\tiny{$j^\text{th}$}};
\node at (1.8,3.2) {\tiny{$i^\text{th}$}};
\draw[dashed] (-.6,0) rectangle (3.6,3);
\node at (-.25,1.5) {$P_0$};
\end{tikzpicture}
\]
\item Commuting square of conditional expectations
$E^{M,r}_{i-1,j}\circ E^{M,l}_{i-1,j-1}=E^{M,l}_{i-1,j}\circ E^{M,r}_{i,j}:M_{i,j}\to M_{i-1,j-1}$, $x\in M_{i,j}$:
\[\hspace*{-1.8cm}
E^{M,r}_{i-1,j}\circ E^{M,l}_{i-1,j-1}(x)= d_0^{-1}d_1^{-1}
\begin{tikzpicture}[baseline=1.4cm]
\filldraw[white] (0,0) rectangle (.6,3);
\filldraw[gray1] (.6,0) rectangle (.9,3);

\draw[red] (2.1,0) -- (2.1,3);
\draw[red] (2.7,0) -- (2.7,3);
\draw[red] (3,.3) -- (3,2.7);
\node[red] at (2.4,1.9) {$\cdots$};
\node[red] at (2.4,1.1) {$\cdots$};

\draw (1.5,0) -- (1.5,3);
\draw (.9,0) -- (.9,3);
\draw (.6,0) -- (.6,3);
\node at (1.2,1.9) {$\cdots$};
\node at (1.2,1.1) {$\cdots$};
\draw (1.8,1) -- (1.8,2);
\draw (3.3,.5) -- (1.8,1);
\draw (3.3,2.5) -- (1.8,2);

\draw (3.3,2.5) arc (180:0:.15);
\draw (3.3,.5) arc (-180:0:.15);
\draw (3.6,.5) -- (3.6,2.5);

\draw[red] (3,2.7) arc (180:0:.4);
\draw[red] (3,.3) arc (-180:0:.4);
\draw[red] (3.8,.3) -- (3.8,2.7);

\draw[thick,blue] (.3,0) -- (.3,3);

\nbox{unshaded}{(1.5,1.5)}{.2}{1.15}{1.45}{$x$};

\filldraw[white] (3.1,2.4) arc (0:360:.1);
\draw (3.1,2.4) arc (0:360:.1);
\filldraw[white] (3.1,.6) arc (0:360:.1);
\draw (3.1,.6) arc (0:360:.1);

\filldraw[white] (2.8,2.3) arc (0:360:.1);
\draw (2.8,2.3) arc (0:360:.1);
\filldraw[white] (2.8,.7) arc (0:360:.1);
\draw (2.8,.7) arc (0:360:.1);

\filldraw[white] (2.2,2.1) arc (0:360:.1);
\draw (2.2,2.1) arc (0:360:.1);
\filldraw[white] (2.2,.9) arc (0:360:.1);
\draw (2.2,.9) arc (0:360:.1);

\node at (.3,-.2) {\tiny{$\mathbb{C}^{|P_0|}$}};
\node at (.6,3.2) {\tiny{$1^\text{st}$}};
\node at (1.5,-.2) {\tiny{$(i\sm1)^\text{th}$}};
\node[red] at (2.1,-.2) {\tiny{$1^\text{st}$}};
\node[red] at (2.7,-.2) {\tiny{$(j\sm1)^\text{th}$}};
\node[red] at (3,3.2) {\tiny{$j^\text{th}$}};
\node at (1.8,3.2) {\tiny{$i^\text{th}$}};
\node at (-.25,1.5) {$P_0$};
\draw[dashed] (-.6,0) rectangle (4,3);
\end{tikzpicture}
=d_0^{-1}d_1^{-1}
\begin{tikzpicture}[baseline=1.4cm]
\filldraw[white] (0,0) rectangle (.6,3);
\filldraw[gray1] (.6,0) rectangle (.9,3);

\draw[red] (2.1,0) -- (2.1,3);
\draw[red] (2.7,0) -- (2.7,3);
\draw[red] (3,1) -- (3,2);
\node[red] at (2.4,1.9) {$\cdots$};
\node[red] at (2.4,1.1) {$\cdots$};

\draw (1.5,0) -- (1.5,3);
\draw (.9,0) -- (.9,3);
\draw (.6,0) -- (.6,3);
\node at (1.2,1.9) {$\cdots$};
\node at (1.2,1.1) {$\cdots$};
\draw (1.8,1) -- (1.8,2);
\draw (3,.5) -- (1.8,1);
\draw (3,2.5) -- (1.8,2);

\draw (3,2.5) arc (180:0:.3);
\draw (3,.5) arc (-180:0:.3);
\draw (3.6,.5) -- (3.6,2.5);

\draw[red] (3,2) arc (180:0:.2);
\draw[red] (3,1) arc (-180:0:.2);
\draw[red] (3.4,1) -- (3.4,2);

\draw[thick,blue] (.3,0) -- (.3,3);

\nbox{unshaded}{(1.5,1.5)}{.2}{1.15}{1.45}{$x$};

\filldraw[white] (2.8,2.375) arc (0:360:.1);
\draw (2.8,2.375) arc (0:360:.1);
\filldraw[white] (2.8,.625) arc (0:360:.1);
\draw (2.8,.625) arc (0:360:.1);

\filldraw[white] (2.2,2.125) arc (0:360:.1);
\draw (2.2,2.125) arc (0:360:.1);
\filldraw[white] (2.2,.875) arc (0:360:.1);
\draw (2.2,.875) arc (0:360:.1);

\node at (.3,-.2) {\tiny{$\mathbb{C}^{|P_0|}$}};
\node at (.6,3.2) {\tiny{$1^\text{st}$}};
\node at (1.5,-.2) {\tiny{$(i\sm1)^\text{th}$}};
\node[red] at (2.1,-.2) {\tiny{$1^\text{st}$}};
\node[red] at (2.7,-.2) {\tiny{$(j\sm1)^\text{th}$}};
\node[red] at (3,3.2) {\tiny{$j^\text{th}$}};
\node at (1.8,3.2) {\tiny{$i^\text{th}$}};
\node at (-.25,1.5) {$P_0$};
\draw[dashed] (-.6,0) rectangle (3.8,3);
\end{tikzpicture}
=E^{M,l}_{i-1,j}\circ E^{M,r}_{i,j}(x)
\]

\item Vertical Jones projections $e_i\in M_{i+1,j}$ and horizontal Jones projection $f_j\in M_{i,j+1}$:
\[e_i=d_0^{-1}
\begin{tikzpicture}[baseline=.9cm]
\filldraw[white] (0,0) rectangle (.6,2);
\filldraw[gray1] (.6,0) rectangle (.9,2);

\draw[red] (2.1,0) -- (2.1,2);
\draw[red] (2.7,0) -- (2.7,2);
\node[red] at (2.4,1.7) {$\cdots$};
\node[red] at (2.4,.3) {$\cdots$};

\draw (1.8,1.2) -- (1.8,2);
\draw (1.8,0) -- (1.8,.8);
\draw (1.5,1.2) -- (1.5,2);
\draw (1.5,0) -- (1.5,.8);
\draw (.9,0) -- (.9,2);
\draw (.6,0) -- (.6,2);
\node at (1.2,1.7) {$\cdots$};
\node at (1.2,.3) {$\cdots$};

\draw (1.5,1.2) arc (-180:0:.15);
\draw (1.5,.8) arc (180:0:.15);

\draw[thick,blue] (.3,0) -- (.3,2);

\draw[dashed,thick] (.15,.8) -- (2.85,.8);
\draw[dashed,thick] (2.85,.8) -- (2.85,1.2);
\draw[dashed,thick] (.15,1.2) -- (2.85,1.2);
\draw[dashed,thick] (.15,.8) -- (.15,1.2);

\node at (.3,-.2) {\tiny{$\mathbb{C}^{|P_0|}$}};
\node at (.6,2.2) {\tiny{$1^\text{st}$}};
\node at (1.5,-.2) {\tiny{$i^\text{th}$}};
\node[red] at (2.1,-.2) {\tiny{$1^\text{st}$}};
\node[red] at (2.7,-.2) {\tiny{$j^\text{th}$}};
\node at (-.25,1) {$P_0$};
\draw[dashed] (-.6,0) rectangle (3,2);
\end{tikzpicture}
\qquad\qquad
f_j=d_1^{-1}
\begin{tikzpicture}[baseline=.9cm]
\filldraw[white] (0,0) rectangle (.6,2);
\filldraw[gray1] (.6,0) rectangle (.9,2);

\draw[red] (1.8,0) -- (1.8,2);
\draw[red] (2.4,1.2) -- (2.4,2);
\draw[red] (2.4,0) -- (2.4,.8);
\draw[red] (2.7,1.2) -- (2.7,2);
\draw[red] (2.7,0) -- (2.7,.8);
\node[red] at (2.1,1.7) {$\cdots$};
\node[red] at (2.1,.3) {$\cdots$};

\draw (1.5,0) -- (1.5,2);
\draw (.9,0) -- (.9,2);
\draw (.6,0) -- (.6,2);
\node at (1.2,1.7) {$\cdots$};
\node at (1.2,.3) {$\cdots$};

\draw[red] (2.4,1.2) arc (-180:0:.15);
\draw[red] (2.4,.8) arc (180:0:.15);

\draw[thick,blue] (.3,0) -- (.3,2);

\draw[dashed,thick] (.15,.8) -- (2.85,.8);
\draw[dashed,thick] (2.85,.8) -- (2.85,1.2);
\draw[dashed,thick] (.15,1.2) -- (2.85,1.2);
\draw[dashed,thick] (.15,.8) -- (.15,1.2);

\node at (.3,-.2) {\tiny{$\mathbb{C}^{|P_0|}$}};
\node at (.6,2.2) {\tiny{$1^\text{st}$}};
\node at (1.5,-.2) {\tiny{$i^\text{th}$}};
\node[red] at (1.8,-.2) {\tiny{$1^\text{st}$}};
\node[red] at (2.4,-.2) {\tiny{$j^\text{th}$}};
\node at (-.25,1) {$P_0$};
\draw[dashed] (-.6,0) rectangle (3,2);
\end{tikzpicture}
\]

\item It is clear that $M_j=(M_{i,j},E^{M,l}_{i,j},e_i)_{i\ge 0}$ are Markov towers with the same modulus $d_0$ and $e_i\in M_{i+1,j}$ for all $i$, $i,j=0,1,2,\cdots$; $M_i=(M_{i,j},E^{M,r}_{i,j},f_j)_{j\ge 0}$ are Markov towers with the same modulus $d_1$ and $f_j\in M_{i,j+1}$ for all $j$.
\end{compactenum}
\end{construction}

\begin{remark}
A gauge equivalence $\Phi\sim \Phi'$ will result in an isomorphism of the corresponding Markov lattices.
\end{remark}

\subsection{From Markov lattice to $\mathcal{C}(\Gamma,\omega;\Phi)$}

First, we are going to explore more properties of Markov lattice.
\begin{proposition}\label{Markov lattice prop 2}
\mbox{}
\begin{compactenum}[\rm (a)]
\item $X_{i+1,j+1}:=\langle e_i,f_j\rangle$ is a 2-sided ideal of $M_{i+1,j+1}$ and hence $M_{i+1,j+1}$ can split as a direct sum of von Neumann algebras $X_{i+1,j+1}\oplus Y_{i+1,j+1}$. We also define $Y_{0,0}=M_{0,0},Y_{1,0}=M_{1,0},Y_{0,1}=M_{0,1},Y_{1,1}=M_{1,1}$ so that $X_{0,0}=X_{1,0}=X_{0,1}=X_{1,1}=0$. $X_{i+1,j+1}$ is called the old stuff and $Y_{i+1,j+1}$ is called the new stuff.
\item If $y\in Y_{i+1,j+1}$ and $x\in X_{i+1,j}$ or $x\in X_{i,j+1}$, then $yx=0$ in $M_{i+1,j+1}$. Hence $E^r_{i+1,j+1}(Y_{i+1,j+1})\subset Y_{i+1,j}$ and $E^l_{i+1,j+1}(Y_{i+1,j+1})\subset Y_{i,j+1}$, which means the new stuff comes from the old new stuff.
\item If $Y_{i,j}=0$, then $Y_{k,l}=0$ for all $k\ge i,\ l\ge j$.
\end{compactenum}
\end{proposition}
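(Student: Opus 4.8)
The plan is to transport the argument of Proposition \ref{Markov Tower prop 2} from the one-dimensional (Markov tower) setting to the lattice, exploiting the two Markov tower structures $M_{-,j}$ and $M_{i,-}$ at once. For (a), I would first note that $e_i\in M_{i+1,j}\subseteq M_{i+1,j+1}$ and $f_j\in M_{i,j+1}\subseteq M_{i+1,j+1}$, so $X_{i+1,j+1}=\langle e_i,f_j\rangle$ is a genuine two-sided $*$-ideal of $M_{i+1,j+1}$ (the generators are self-adjoint, and $[e_i,f_j]=0$ by Proposition \ref{Markov lattice prop 1}(2) if one wants an explicit spanning description via the pull-down condition in both directions). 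Since $M_{i+1,j+1}$ is finite-dimensional, any such ideal equals $M_{i+1,j+1}z$ for a unique central projection $z$, and then $M_{i+1,j+1}=M_{i+1,j+1}z\oplus M_{i+1,j+1}(1-z)=X_{i+1,j+1}\oplus Y_{i+1,j+1}$ is the asserted decomposition into von Neumann algebras. No further structure is needed here.

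For (b) the crucial observation is that $X_{i+1,j}\subseteq X_{i+1,j+1}$ and $X_{i,j+1}\subseteq X_{i+1,j+1}$ inside $M_{i+1,j+1}$. Indeed $X_{i+1,j}=\langle e_i,f_{j-1}\rangle$; now $e_i\in X_{i+1,j+1}$ directly, while relation (TLJ3) for the Markov tower $M_{i+1,-}$ gives $f_{j-1}=d_1^2\,f_{j-1}f_jf_{j-1}$, so $f_{j-1}\in X_{i+1,j+1}$ because $f_j\in X_{i+1,j+1}$ and $X_{i+1,j+1}$ is an ideal; hence the ideal generated by $\{e_i,f_{j-1}\}$ lies in $X_{i+1,j+1}$. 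Symmetrically $X_{i,j+1}=\langle e_{i-1},f_j\rangle\subseteq X_{i+1,j+1}$, using $e_{i-1}=d_0^2\,e_{i-1}e_ie_{i-1}$ from (TLJ3) for $M_{-,j+1}$. Since $Y_{i+1,j+1}$ and $X_{i+1,j+1}$ are complementary von Neumann summands, $Y_{i+1,j+1}X_{i+1,j+1}=0$, which gives $yx=0$ whenever $x\in X_{i+1,j}$ or $x\in X_{i,j+1}$. For the conditional expectations: let $z$ denote the unit of the ideal $X_{i+1,j}\subseteq M_{i+1,j}$; then $z\in X_{i+1,j}\subseteq X_{i+1,j+1}$, so for $y\in Y_{i+1,j+1}$ the bimodule property yields $E^{M,r}_{i+1,j+1}(y)\,z=E^{M,r}_{i+1,j+1}(yz)=E^{M,r}_{i+1,j+1}(0)=0$, forcing $E^{M,r}_{i+1,j+1}(y)\in M_{i+1,j}(1-z)=Y_{i+1,j}$; the statement for $E^{M,l}_{i+1,j+1}$ is identical with rows and columns interchanged.

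For (c) I would argue by induction, reducing to the one-step claims $Y_{i,j}=0\Rightarrow Y_{i+1,j}=0$ and $Y_{i,j}=0\Rightarrow Y_{i,j+1}=0$. Assuming $Y_{i,j}=0$: $Y_{i+1,j}$ is the complementary summand of $M_{i+1,j}$, hence a von Neumann algebra, and (b) reindexed gives $E^{M,l}_{i+1,j}(Y_{i+1,j})\subseteq Y_{i,j}=0$; in particular $E^{M,l}_{i+1,j}(y^*y)=0$ for every $y\in Y_{i+1,j}$, so faithfulness of $E^{M,l}_{i+1,j}$ gives $y=0$ and $Y_{i+1,j}=0$. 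The same argument with $E^{M,r}_{i,j+1}$ gives $Y_{i,j+1}=0$, and iterating along rows and columns yields $Y_{k,l}=0$ for all $k\ge i$, $l\ge j$.

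The main obstacle, such as it is, is bookkeeping rather than mathematics: keeping straight which Temperley-Lieb-Jones tower ($M_{-,j}$ versus $M_{i,-}$, with modulus $d_0$ versus $d_1$) supplies each relation, and disposing of the low-index boundary cases ($i=0$, $j=0$, $j=1$), where the relevant ideals $X_{\bullet,\bullet}$ vanish by convention so that the inclusions and the annihilation statements hold trivially. Beyond that, everything is the finite-dimensional structure theory of two-sided ideals together with faithfulness of the conditional expectations, exactly paralleling the tower case of Proposition \ref{Markov Tower prop 2}.
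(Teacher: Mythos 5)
Your proposal is correct and follows essentially the same route as the paper, whose own proof simply defers to the Markov tower case (Proposition \ref{Markov Tower prop 2}): the key step there is likewise inserting the next Jones projection via the relation $e_{n-1}=d^{2}e_{n-1}e_{n}e_{n-1}$ to see that the old ideal at one level sits inside the old ideal at the next, and then using faithfulness of the conditional expectations for the vanishing statement. Your use of the support projection $z$ of $X_{i+1,j}$ in place of the paper's minimal central projections is a minor stylistic variant of the same argument.
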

\begin{proof} Similar to Proposition \ref{Markov Tower prop 2}.
\end{proof}

Now we are going to construct $\mathcal{C}(\Gamma,\omega;\Phi)$ from a given Markov lattice $M$. 

\begin{construction}\label{Markov lattice to (Gamma,omega)}
The square partite graph and the edge weighting $(\Gamma,\omega)$:

From Markov lattice $M$, since each row and column is a Markov tower, we can obtain a Bratteli diagram $\Delta$ as in \S\ref{MT to Cb} (which can be viewed as a `lattice-partite' graph). 
After taking only the new vertices in $\Delta\cap Y_{i,j}$ and the edges between them, we obtain the principal graph $\Gamma_0$ because of Proposition \ref{Markov lattice prop 2}(2). Here, $\Gamma_0$ is not necessary a square-partite graph, so we have to do some identification.

For the new vertices $p_1\in \Gamma_0\cap Y_{i,j}$ and $p_2\in \Gamma_0\cap Y_{i+2,j-2}$, as in \S\ref{MT to Cb}, let $p_1'$ be the new old vertex of $p_1$ in $M_{i+2,j}$ and $p_2'$ be the new old vertex of $p_2$ in $M_{i+2,j}$. 
We identify $p_1$ with $p_2$ if $p_2'\in M_{i+2,j}p_1'$ (or equivalently $p_1'\in M_{i+2,j}p_2'$). 

For the pairs of new vertices $p_1\in \Gamma_0\cap Y_{i,j}$ and $q_1\in \Gamma_0\cap Y_{i+1,j}$, and the pairs of new vertices $p_2\in \Gamma_0\cap Y_{i+2,j-2}$ and $q_2\in \Gamma_0\cap Y_{i+3,j-2}$, suppose $p_1$ and $p_2$ are identified in $M_{i+2,j}$, $q_1$ and $q_2$ are identified in $M_{i+3,j}$ on above sense, then the  numbers of edges between $p_1,q_1$ and $p_2,q_2$ are equal, since they both equal to $$(\dim_\mathbb{C}(p_1'q_1'M_{i+2,j}'p_1'q_1'\cap p_1'q_1'M_{i+3,j}p_1'q_1'))^{\frac{1}{2}},$$
see the discussion in \S\ref{MT to Cb}.
Then we can also identify the edges between $p_1,q_1$ and $p_2,q_2$. Similar statement for $p_1\in \Gamma_0\cap Y_{i,j}$ and $r_1\in \Gamma_0\cap Y_{i,j+1}$, and the pairs of new vertices $p_2\in \Gamma_0\cap Y_{i+2,j-2}$ and $r_2\in \Gamma_0\cap Y_{i+2,j-1}$. After above identification as well as the edges between those identified vertices (see following example), we obtain a graph $\Gamma$, which is a square-partite graph.

Then $V_{ij}\subset V(\Gamma)$ contains all the vertices in $V(\Gamma_0)\cap M_{i+2m,j+2n}$, $i,j=0,1$, $m,n\in\mathbb{Z}_{\ge 0}$.

The edge-weighting $\omega$ can be obtained the same way as in \S\ref{MT to Cb}.
\end{construction}

\begin{example}\label{Ex:square partite A3}
Here we provide an example to see the difference between the square-partite graph and the principal graph of a Markov lattice. 
In the diagram below, if $p_1$ is at depth zero, then $p_2$ is at depth $2$ of the principal graph. 
Therefore, as a new vertex, $p_2$ will appear in two places $M_{0,2}$ and $M_{2,0}$, but their reflections/new old vertices coincide in $M_{2,2}$.
\[
\begin{tikzpicture}[baseline = .6cm]
\draw[red,thick] (0,-.1) -- (.7,.7);
\draw[red,thick] (0,-.1) -- (-.7,.7);
\draw[red,thick] (0,.1) -- (.7,.7);
\draw[red,thick] (0,.1) -- (-.7,.7);
\draw[red,thick] (.7,.7) -- (0,1.3);
\draw[red,thick] (-.7,.7) -- (0,1.3);
\draw[red,thick] (.7,.7) -- (0,1.5);
\draw[red,thick] (-.7,.7) -- (0,1.5);

\node at (0,.1)[circle,fill,red,inner sep=1.5pt]{};
\node at (0,-.1)[circle,fill,red,inner sep=1.5pt]{};
\node at (.7,.7)[draw,red,diamond, thick,fill=white,inner sep=1.5pt]{};
\node at (-.7,.7)[draw,red,circle,fill=white,inner sep=1.5pt] {};
\node at (0,1.3)[rectangle,fill,red,inner sep=1.7pt]{};
\node at (0,1.5)[rectangle,fill,red,inner sep=1.7pt]{};

\node[red] at (0,.4) {$p_1$};
\node[red] at (0,-.4) {$p_2$};
\node[red] at (-.7,1) {$p_3$};
\node[red] at (0,1) {$p_4$};
\node[red] at (0,1.8) {$p_5$};
\node[red] at (.7,1) {$p_6$};

\node[red] at (0,-1.9) {square-partite graph};
\end{tikzpicture}
\qquad \Longrightarrow \qquad
\begin{tikzpicture}[baseline = 2cm]
\draw[red,thick] (0,0) -- (.7,.7);
\draw[red,thick] (0,0) -- (-.7,.7);
\draw[red,thick] (.7,.7) -- (0,1.3);
\draw[red,thick] (-.7,.7) -- (0,1.3);
\draw[red,thick] (.7,.7) -- (0,1.5);
\draw[red,thick] (-.7,.7) -- (0,1.5);

\draw[red,thick] (.7,.7) -- (1.4,1.3);
\draw (.7,.7) -- (1.4,1.5);
\draw[red,thick] (-.7,.7) -- (-1.4,1.3);
\draw (-.7,.7) -- (-1.4,1.5);

\draw (-1.4,1.3) -- (-.7,2.1);
\draw (-1.4,1.5) -- (-.7,2.1);
\draw (1.4,1.3) -- (.7,2.1);
\draw (1.4,1.5) -- (.7,2.1);

\draw (0,1.3) -- (.7,2.1);
\draw (0,1.5) -- (.7,2.1);
\draw (0,1.3) -- (-.7,2.1);
\draw (0,1.5) -- (-.7,2.1);

\draw (-.7,2.1) -- (0,2.7);
\draw (-.7,2.1) -- (0,2.9);
\draw (.7,2.1) -- (0,2.7);
\draw (.7,2.1) -- (0,2.9);

\draw (1.4,1.3) -- (2.1,2.1);
\draw (1.4,1.5) -- (2.1,2.1);
\draw (.7,2.1) -- (1.4,2.7);
\draw (.7,2.1) -- (1.4,2.9);
\draw (1.4,2.7) -- (2.1,2.1);
\draw (1.4,2.9) -- (2.1,2.1);

\draw (-1.4,1.3) -- (-2.1,2.1);
\draw (-1.4,1.5) -- (-2.1,2.1);
\draw (-.7,2.1) -- (-1.4,2.7);
\draw (-.7,2.1) -- (-1.4,2.9);
\draw (-1.4,2.7) -- (-2.1,2.1);
\draw (-1.4,2.9) -- (-2.1,2.1);

\draw (0,2.7) -- (.7,3.5);
\draw (0,2.9) -- (.7,3.5);
\draw (1.4,2.7) -- (.7,3.5);
\draw (1.4,2.9) -- (.7,3.5);

\draw (0,2.7) -- (-.7,3.5);
\draw (0,2.9) -- (-.7,3.5);
\draw (-1.4,2.7) -- (-.7,3.5);
\draw (-1.4,2.9) -- (-.7,3.5);

\draw (.7,3.5) -- (0,4.1);
\draw (.7,3.5) -- (0,4.3);
\draw (-.7,3.5) -- (0,4.1);
\draw (-.7,3.5) -- (0,4.3);

\node at (0,0)[circle,fill,red,inner sep=1.5pt]{};
\node at (.7,.7)[draw,red,diamond, thick,fill=white,inner sep=1.5pt]{};
\node at (-.7,.7)[draw,red,circle,fill=white,inner sep=1.5pt]{};
\node at (0,1.3)[rectangle,fill,red,inner sep=1.7pt]{};
\node at (0,1.5)[rectangle,fill,red,inner sep=1.7pt]{};
\node at (1.4,1.3)[circle,fill,red,inner sep=1.5pt]{};
\node at (-1.4,1.3)[circle,fill,red,inner sep=1.5pt]{};

\node at (1.4,1.5)[circle,fill,inner sep=1.5pt]{};
\node at (-1.4,1.5)[circle,fill,inner sep=1.5pt]{};
\node at (.7,2.1)[draw,circle,fill=white,inner sep=1.5pt]{};
\node at (-.7,2.1)[draw,diamond, thick,fill=white,inner sep=1.5pt]{};
\node at (0,2.9)[circle,fill,inner sep=1.5pt]{};
\node at (0,2.7)[circle,fill,inner sep=1.5pt]{};
\node at (.7,3.5)[draw,diamond, thick,fill=white,inner sep=1.5pt]{};
\node at (1.4,2.7)[rectangle,fill,inner sep=1.7pt]{};
\node at (1.4,2.9)[rectangle,fill,inner sep=1.7pt]{};
\node at (2.1,2.1)[draw,diamond, thick,fill=white,inner sep=1.5pt]{};
\node at (-.7,3.5)[draw,circle,fill=white,inner sep=1.5pt]{};
\node at (-1.4,2.7)[rectangle,fill,inner sep=1.7pt]{};
\node at (-1.4,2.9)[rectangle,fill,inner sep=1.7pt]{};
\node at (-2.1,2.1)[draw,circle,fill=white,inner sep=1.5pt]{};
\node at (0,4.1)[rectangle,fill,inner sep=1.7pt]{};
\node at (0,4.3)[rectangle,fill,inner sep=1.7pt]{};

\node[red] at (0,.3) {$p_1$};
\node[red] at (-1.4,1) {$p_2$};
\node[red] at (1.4,1) {$p_2$};
\node[red] at (-.7,1) {$p_3$};
\node[red] at (0,1) {$p_4$};
\node[red] at (0,1.8) {$p_5$};
\node[red] at (.7,1) {$p_6$};

\node at (-1.4,1.8) {$p_1$};
\node at (1.4,1.8) {$p_1$};
\node at (0,2.4) {$p_1$};
\node at (0,3.2) {$p_2$};
\node at (.7,2.4) {$p_3$};
\node at (-.7,2.4) {$p_6$};
\node at (2.1,2.4) {$p_6$};
\node at (-2.1,2.4) {$p_3$};
\node at (1.4,2.4) {$p_5$};
\node at (1.4,3.2) {$p_4$};
\node at (-1.4,2.4) {$p_5$};
\node at (-1.4,3.2) {$p_4$};
\node at (.7,3.8) {$p_6$};
\node at (-.7,3.8) {$p_3$};
\node at (0,3.8) {$p_4$};
\node at (0,4.6) {$p_5$};

\node[red] at (0,-.5) {principal graph with base point $p_1$};
\node at (0,.-1) {and Bratteli diagram};
\end{tikzpicture}
\]

\end{example}

\begin{remark} \label{Rmk:vertex first appear in SQ graph}
Suppose vertex $q\in V_{00}$ is at depth $2n$ of the principal graph, then $q$ will first appear in $M_{2i,2n-2i}$, $i=0,1,\cdots,n$; if $q\in V_{10}$ is at depth $2n+1$, then $q$ will first appear in $M_{2i+1,2n-2i}$, $i=0,1,\cdots,n$; if $q\in V_{01}$ is at depth $2n+1$, then $q$ will first appear in $M_{2i,2n+1-2i}$, $i=0,1,\cdots,n$; if $q\in V_{11}$ is at depth $2n+2$, then $q$ will first appear in $M_{2i+1,2n+1-2i}$, $i=0,1,\cdots,n$.
\end{remark}



Next, we compute the biunitary connection $\Phi$.
\begin{notation}\textbf{and Observation} We choose $p_0\in V_{00}$ as the base point, which is at depth 0. 
Similar to Observation \ref{Notation:Kn}, denote $\Lambda_{0,n}$ to be the subgraph of $\Lambda_0$ with vertices depth $\le n$, similar definition for $\Omega_{0,n},\ \Lambda_{1,n}$ and $\Omega_{1,n}$, see Definition \ref{Def:square partite graph}. 
The corresponding \textsf{Hilb}-enriched graphs are $K_{i,n}:=K_{\Lambda_{i,n}}$, $L_{i,n}:=L_{\Omega_{i,n}}$. 
From Construction \ref{C(Phi) to ML}, $N_{i,j}:= \End(\mathbb{C}^{|p_0|}\otimes K_0^{\alt\otimes i}\otimes L_?^{\alt\otimes j})$. WLOG, let $2\nmid i$. Observe that 
$$N_{i,j}=\End(K_{0,1}\otimes \overline{K}_{0,2}\otimes \cdots \overline{K}_{0,i}\otimes L_{1,i+1}\otimes \overline{L}_{1,i+2}\otimes\cdots\otimes L^?_{1,i+j}),$$
where $L^?_{1,j}=L_{1,j}$ if $2\nmid j$, $L^?_{1,j}=\overline{L}_{1,j}$ if $2\mid j$.
\end{notation}

\begin{example}
Following Example \ref{Ex:square partite A3}, 
\[
\begin{tikzpicture}
\draw[red] (0,0) -- (2.1,2.1);
\draw (0,0) -- (-2.1,2.1);
\draw[red] (-.7,.7) -- (1.4,2.8);
\draw (.7,.7) -- (-1.4,2.8);
\draw[red] (-1.4,1.4) -- (.7,3.5);
\draw (1.4,1.4) -- (-.7,3.5);
\draw (2.1,2.1) -- (0,4.2);
\draw[red] (-2.1,2.1) -- (0,4.2);

\node at (-.7,.3) {\tiny{$K_{0,1}$}};
\node at (-1.4,1.0) {\tiny{$\overline{K}_{0,2}$}};
\node at (-2.1,1.7) {\tiny{$K_{0,3}$}};
\node at (.7,1.7) {\tiny{$K_{0,3}$}};
\node at (0,2.4) {\tiny{$\overline{K}_{0,4}$}};
\node at (-.7,3.1) {\tiny{$K_{0,5}$}};

\node at (0,1) {\tiny{$K_{1,2}$}};
\node at (-.7,1.7) {\tiny{$\overline{K}_{1,3}$}};
\node at (-1.4,2.4) {\tiny{$K_{1,4}$}};
\node at (1.4,2.4) {\tiny{$K_{1,4}$}};
\node at (.7,3.1) {\tiny{$\overline{K}_{1,5}$}};
\node at (0,3.8) {\tiny{$K_{1,6}$}};

\node[red] at (0,.4) {\tiny{$L_{0,1}$}};
\node[red] at (.7,1.1) {\tiny{$\overline{L}_{0,2}$}};
\node[red] at (1.4,1.8) {\tiny{$L_{0,3}$}};
\node[red] at (-1.4,1.8) {\tiny{$L_{0,3}$}};
\node[red] at (-.7,2.5) {\tiny{$\overline{L}_{0,4}$}};
\node[red] at (0,3.2) {\tiny{$L_{0,5}$}};

\node[red] at (-.7,1.1) {\tiny{$L_{1,2}$}};
\node[red] at (0,1.8) {\tiny{$\overline{L}_{1,3}$}};
\node[red] at (.7,2.5) {\tiny{$L_{1,4}$}};
\node[red] at (-2.1,2.5) {\tiny{$L_{1,4}$}};
\node[red] at (-1.4,3.2) {\tiny{$\overline{L}_{1,5}$}};
\node[red] at (-.7,3.9) {\tiny{$L_{1,6}$}};

\node at (0,-.2) {\tiny{$N_{0,0}$}};
\node at (-2.5,2.1) {\tiny{$N_{3,0}$}};
\node at (-1.1,2.1) {\tiny{$N_{2,1}$}};
\end{tikzpicture}
\]
we have
$$K_{0,1}=\begin{bmatrix}0 & 0 & \mathbb{C} & 0 & 0 & 0 \\ 0 & 0 & 0 & 0 & 0 & 0 \\ 0 & 0 & 0 & 0 & 0 & 0 \\ 0 & 0 & 0 & 0 & 0 & 0 \\ 0 & 0 & 0 & 0 & 0 & 0 \\ 0 & 0 & 0 & 0 & 0 & 0 \end{bmatrix}\ 
\overline{K}_{0,2}=\begin{bmatrix}0 & 0 & 0 & 0 & 0 & 0 \\ 0 & 0 & 0 & 0 & 0 & 0 \\ \mathbb{C} & \mathbb{C} & 0 & 0 & 0 & 0 \\ 0 & 0 & 0 & 0 & 0 & 0 \\ 0 & 0 & 0 & 0 & 0 & 0 \\ 0 & 0 & 0 & 0 & 0 & 0 \end{bmatrix}\ 
K_{0,3}=\begin{bmatrix}0 & 0 & \mathbb{C} & 0 & 0 & 0 \\ 0 & 0 & \mathbb{C} & 0 & 0 & 0 \\ 0 & 0 & 0 & 0 & 0 & 0 \\ 0 & 0 & 0 & 0 & 0 & 0 \\ 0 & 0 & 0 & 0 & 0 & 0 \\ 0 & 0 & 0 & 0 & 0 & 0 \end{bmatrix}$$

$$K_{1,4}=\begin{bmatrix}0 & 0 & 0 & 0 & 0 & 0 \\ 0 & 0 & 0 & 0 & 0 & 0 \\ 0 & 0 & 0 & 0 & 0 & 0 \\ 0 & 0 & 0 & 0 & 0 & 0 \\ 0 & 0 & 0 & 0 & 0 & 0 \\ 0 & 0 & 0 & \mathbb{C} & \mathbb{C} & 0 \end{bmatrix}\ 
L_{0,1}=\begin{bmatrix}0 & 0 & 0 & 0 & 0 & \mathbb{C} \\ 0 & 0 & 0 & 0 & 0 & 0 \\ 0 & 0 & 0 & 0 & 0 & 0 \\ 0 & 0 & 0 & 0 & 0 & 0 \\ 0 & 0 & 0 & 0 & 0 & 0 \\ 0 & 0 & 0 & 0 & 0 & 0 \end{bmatrix}\ 
\overline{L}_{1,3}=\begin{bmatrix}0 & 0 & 0 & 0 & 0 & 0 \\ 0 & 0 & 0 & 0 & 0 & 0 \\ 0 & 0 & 0 & 0 & 0 & 0 \\ 0 & 0 & \mathbb{C} & 0 & 0 & 0 \\ 0 & 0 & \mathbb{C} & 0 & 0 & 0 \\ 0 & 0 & 0 & 0 & 0 & 0 \end{bmatrix}$$

$$K_{0,1}\otimes \overline{K}_{0,2}\otimes L_{0,3} = \begin{bmatrix}0 & 0 & 0 & 0 & 0 & \mathbb{C}^6 \\ 0 & 0 & 0 & 0 & 0 & 0 \\ 0 & 0 & 0 & 0 & 0 & 0 \\ 0 & 0 & 0 & 0 & 0 & 0 \\ 0 & 0 & 0 & 0 & 0 & 0 \\ 0 & 0 & 0 & 0 & 0 & 0 \end{bmatrix} \cong  K_{0,1}\otimes L_{1,2}\otimes \overline{K}_{1,3} \cong L_{0,1}\otimes K_{1,2}\otimes \overline{K}_{1,3}$$

Similar to Example \ref{Ex:A_5 Markov tower}, 
the entry $(i,j)$ in $N_{m,n}$ indicates number of paths from the vertex $p_i$ at depth $0$ to the vertex $p_j$ at depth $m+n$.
Note that the base point is a single vertex $p_1$, so only at entry $(1,j)$ can be nonzero.
\end{example}

\begin{remark}\label{Rmk:inner automorphism and unitary}
Any automorphism of $M_n(\mathbb{C})$ is inner. To be precise, if $\alpha\in \text{Aut}(M_n(\mathbb{C}))$, then there exists a unitary $u\in M_n(\mathbb{C})$, such that $\alpha(x)=uxu^*=\text{Ad}(u)(x)$, for any $x\in M_n(\mathbb{C})$. Moreover, this unitary $u$ is unique up to a unit scalar. 
Indeed, if $uxu^*=u_1xu_1^*$ for all $x\in M_n(\mathbb{C})$, then $x(u^*u_1)=(u^*u_1)x$, which implies that $u^*u_1$ is in the center of $M_n(\mathbb{C})$. Thus, $u^*u_1=a\in\mathbb{C}$ with $|a|=1$ and hence $u_1=au$.

As a corollary, for 1-morphisms $H,G$, if $\alpha:\End(H)\cong \End(G)$ is a $*$-isomorphism, then there exists a unitary 2-morphism $u:H\to G$ such that $\alpha=\text{Ad}(u)$. 

\noindent \textbf{Warning}: the unitary $u$ is obtained by taking a unitary $u_{i,j}$ in each entry.
Thus any two choices of implementing unitary $u=(u_{i,j})$ and $v=(v_{i,j})$ differ by a matrix of scalars $(a_{i,j})$ which may be distinct.
Hence the unitary $u$ is unique up to a matrix of scalars.
\end{remark}

\begin{construction}\label{Construction:biunitary connection in Markov lattice}
The biunitary connection $\Phi$: The construction (for the tracial case) has been written in \cite[\S5.5]{JS97} in the language of path algebras. 
For convenience, we will construct it here using our language.

From Construction \ref{Markov lattice to (Gamma,omega)} and Remark \ref{C(K0,K1,L0,L1;ev) and C(Gamma,omega)}, the 2-category $\mathcal{C}(\Gamma,\omega)$ can be constructed.

In order to obtain the biunitary connection $\Phi$,  we shall compute it componentwise, which is similar to the idea to compute the edge-weighting in \S\ref{MT to Cb}. The goal is to compute $\Phi_{pr}:(K_0\otimes L_1)_{pr}=\bigoplus_{q\in V_{10}}K_{0,pq}\otimes L_{1,qr}\to \bigoplus_{s\in V_{01}}L_{0,ps}\otimes K_{1,sr}=(L_0\otimes K_1)_{pr}$ for each pair $(p,r)\in V_{00}\times V_{11}$. 

Suppose $p$ is at depth $2n$ of the principal graph and $r$ is at depth $2n+2$. By Remark \ref{Rmk:vertex first appear in SQ graph}, $p$ first appear in $M_{0,2n}$ and $r$ first appears in $M_{1,2n+1}$.    

Consider two path models $M_{0,0}\subset M_{0,1}\subset \cdots \subset M_{0,2n}\subset M_{0,2n+1}\subset M_{1,2n+1}$ and $M_{0,0}\subset M_{0,1}\subset \cdots \subset M_{0,2n}\subset M_{1,2n}\subset M_{1,2n+1}$. 

Similar to Proposition \ref{Prop: Nn-1' cap Nn+1}, we have 
\begin{align*}
    N_{0,2n}'\cap N_{1,2n+1} &= \id_{K_{0,1}\otimes \overline{K}_{0,2}\otimes\cdots\otimes \overline{K}_{0,2n}}\otimes \End(K_{0,2n+1}\otimes L_{1,2n+1})\quad\text{ for the first model}\\
    N_{0,2n}'\cap N_{1,2n+1} &= \id_{K_{0,1}\otimes \overline{K}_{0,2}\otimes\cdots\otimes K_{0,2n-1}}\otimes \End(L_{0,2n}\otimes K_{1,2n+1})\quad\text{ for the second model}.
\end{align*}

Let $\psi:M_{1,2n+1}\to N_{1,2n+1}$ denote the $*$-isomorphism onto the first model and $\psi':M_{1,2n+1}\to N_{1,2n+1}$ denote the $*$-isomorphism onto the second model, then
\begin{align*}
    \psi&: M_{0,2n}'\cap M_{1,2n+1}\to N_{0,2n}'\cap N_{1,2n+1}\cong \End(K_{0,2n+1}\otimes L_{1,2n+1})\\
    \psi'&: M_{0,2n}'\cap M_{1,2n+1}\to N_{0,2n}'\cap N_{1,2n+1}\cong \End(L_{0,2n}\otimes K_{1,2n+1}).
\end{align*}
are $*$-isomorphisms. Then $\psi'\circ \psi^{-1}:\End(K_{0,2n+1}\otimes L_{1,2n+1})\to \End(L_{0,2n}\otimes K_{1,2n+1})$ is a $*$ isomorphism between two $1$-morphisms. By Remark \ref{Rmk:inner automorphism and unitary}, their exists a unique unitary $u$ up to a matrix of scalars such that $\psi'\circ \psi^{-1}=\text{Ad}(u)$. We define $\Phi_{pr}:=u_{pr}$. 

Similar to Remark \ref{Rmk:choice of ONB}, we secretly make a choice of ONB when we construct the generators $K_i,L_j$ from the square-partite graph $\Gamma$, $i,j=0,1$.
Different choice results in multiplying a unitary on each generator.  
Combining Definition \ref{Def:Gauge equivalence} of gauge equivalence and above discussion, the biunitary connection $\Phi$ we construct here is unique up to gauge equivalence.

\end{construction}

\subsection{$\mathcal{C}(\Phi)$ and $\End_0^\dagger(\mathcal{M},F,G)$} \label{C(Phi) and End(M,F,G)}

We have already seen the method to construct a Markov lattice from $\mathcal{C}(\Phi)$ above or from $\mathcal{M}$ in \S\ref{Cpt 3} with a simple base point, where $\mathcal{M}$ is an indecomposable semisimple ${\rm C}^*$ $\mathcal{A}-\mathcal{B}$ bimodule category. 
In this section, by using the similar technique as in \S\ref{C(K) End(M)}, we will show their relation to each other.

\begin{definition}
Suppose $\mathcal{M}$ is an indecomposable semisimple ${\rm C}^*$ $\mathcal{TLJ}(d_0)-\mathcal{TLJ}(d_1)$ bimodule category, where $X=1^+\otimes X\otimes 1^-,Y=1^+\otimes Y\otimes 1^-$ are the generators of $\mathcal{TLJ}(d_0)$ and $\mathcal{TLJ}(d_1)$ respectively. 
Define $F=X\rhd -$, $\overline{F}=\overline{X}\rhd -$, $G=-\lhd Y$, $\overline{G}=-\lhd \overline{Y}$, which are endofunctors on $\mathcal{M}$. 
Note that $(F,\overline{F})$ and $(G,\overline{G})$ are adjoint pairs, with unit $\ev_F,\ev_G$ induced by $\ev_X,\ev_Y$ and counit $\coev_F,\coev_G$ induced by $\coev_{\overline{X}},\coev_{\overline{Y}}$.

Define $\End_0^\dagger(\mathcal{M},F,G)$ to be the full subcategory of $\End^\dagger(\mathcal{M})$ Cauchy tensor generated by $F,\overline{F},G,\overline{G}$, so it is a rigid $\rm C^*$ tensor category.

We warn the reader that $\End^\dag_0(\mathcal{M},F,G)$ will only be multitensor ($\dim(\End(\id_{\mathcal{M}}))<\infty$) when $\mathcal{M}$ is finitely semisimple.
\end{definition}

\begin{definition}[Biunitary connection in $\End_0^\dagger(\mathcal{M},F,G)$]
Note that the bimodule associator 
$\alpha_{X,-,Y}:(X\rhd -)\lhd Y\to X\rhd (-\lhd Y)$ is a unitary, which induces a natural isomorphism $\Phi_{F,G}:F\otimes G\to G\otimes F$, where $F\otimes G:=G\circ F$. Then $\Phi_{G,\overline{F}}:G\otimes\overline{F}\to \overline{F}\otimes G$ is equal to the $90^\circ$ rotation $\Phi_{F,G}^r$ defined as follows:
$$\Phi_{F,G}^r: = (\id_{\overline{F}}\otimes \id_G\otimes \ev_F)\circ (\id_{\overline{F}}\otimes \Phi_{F,G}\otimes \id_{\overline{F}})\circ (\coev_F\otimes \id_G\otimes \id_{\overline{F}}).$$

It is easy to show that $\Phi_{F,G}$ is vertical and horizontal unitary and so is $\Phi_{G,\overline{F}}$.
\end{definition}

Similar to \S\ref{C(K) End(M)}, we will show that the tensor category $\End_0^\dagger(\mathcal{M},F,G)$ and 2-category $\mathcal{C}(\Phi)$ are unitarily equivalent.

\begin{construction}\label{Construction:End(M,F,G) to C(Phi)}
We construct $\mathcal{C}(\Phi)$ from $\End_0^\dagger(\mathcal{M},F,G)$ functorially.
\begin{compactenum}[(a)]
\item 
Let $V_{00}$ be a set of representatives of all simple objects $P\in\mathcal{M}$ such that $P=1^+\rhd P\lhd 1^+$; $V_{10}$ be the set of representatives of all simple objects $Q\in\mathcal{M}$ such that $Q=1^-\rhd Q\lhd 1^+$; $V_{11}$ be the set of representatives of all simple objects $R\in\mathcal{M}$ such that $R=1^-\rhd R\lhd 1^-$; $V_{01}$ be the set of representatives of all simple objects $S\in\mathcal{M}$ such that $S=1^+\rhd S\lhd 1^-$.
Then the objects are the sets $V_{i,j}$, $i,j=0,1$ and their union $V=V_{00}\sqcup V_{01}\sqcup V_{11}\sqcup V_{10}$.
\item 1-morphism: The 1-morphism of $\mathcal{C}(\Phi)$ is the object of $\End_0^\dagger(\mathcal{M},F,G)$. The way to construct the corresponding $V\times V$-bigraded Hilbert space from an endofunctor is the same as in Construction \ref{construction:CatFromEndM}. The same for the dual 1-morphism and tensor structure/composition.
\item 2-morphism: The 2-morphism of $\mathcal{C}(\Phi)$ is the morphism of $\End_0^\dagger(\mathcal{M},F,G)$.
\item 1-morphism generator: Define
\begin{align*}
    K_0:=H_{J^+}\otimes H_{F} & \qquad \overline{K_0}=H_{J^+}\otimes H_{\overline{F}} \qquad
    K_1:=H_{J^-}\otimes H_F  \qquad \overline{K_1}=H_{J^-}\otimes H_{\overline{F}}\\
    L_0:=H_{I^+}\otimes H_G & \qquad \overline{L_0}=H_{I^+}\otimes H_{\overline{G}} \qquad
    L_1:=H_{I^-}\otimes H_G  \qquad \overline{L_0}=H_{I^-}\otimes H_{\overline{G}}
\end{align*}
\item $\ev$ and $\coev$. The same as in Construction \ref{construction:CatFromEndM}(h).
\item Biunitary connection: $\Phi:K_0\otimes L_1\to L_0\otimes K_1$ is defined as $\Phi_{F,G}:F\otimes G\to G\otimes F$. The check that $\Phi$ is vertical and horizontal unitary is left to the reader.
\end{compactenum}
\end{construction}

\begin{construction}
For the convenience to the reader, we also provide the construction from $\mathcal{C}(\Phi)$ to $\End_0^\dagger(\mathcal{M},F,G)$:
\begin{compactenum}[(a)]
\item Object: The object are the 1-morphisms in $\mathcal{C}(\Phi)$. In particular, the generator $F=K_0\oplus K_1$, $\overline{F}=\overline{K_0}\oplus \overline{K_1}$, $G=L_0\oplus L_1$ and $\overline{G}=\overline{L_0}\oplus\overline{L_1}$; the unit $I^+=1^+\rhd - = \mathbb{C}^{|V_{00}\sqcup V_{01}|}$, $I^-=1^-\rhd - = \mathbb{C}^{|V_{10}\sqcup V_{11}|}$, $J^+=-\lhd 1^+ = \mathbb{C}^{|V_{00}\sqcup V_{10}|}$ and $J^-=-\lhd 1^-=\mathbb{C}^{|V_{01}\sqcup V_{11}|}$.

\item Morphism: The morphisms are the 2-morphisms in $\mathcal{C}(\Phi)$.

\item The associator: Note that $F\otimes G=(K_0\oplus K_1)\otimes (L_0\oplus L_1)=K_0\otimes L_1$ and $G\otimes F=(L_0\oplus L_1)\otimes (K_0\oplus K_1) =L_0\otimes K_1$, the associator $\Phi_{F,G}:F\otimes G\to G\otimes F$ is defined as the biunitary connection $\Phi:K_0\otimes L_1\to L_0\otimes K_1$. All the 8 cases of associators are defined as $\Phi^g$, where $g\in \langle r,\dagger\rangle$. 
\end{compactenum}
\end{construction}

\begin{theorem}
There is a bijective correspondence between equivalence classes of the following:
\[\left\{\, \parbox{5.3cm}{\rm Indecomposable semisimple $\rm C^*$
$\mathcal{TLJ}(d_0)-\mathcal{TLJ}(d_1)$ bimodule categories $\mathcal{M}$} \,\right\}\ \, \cong\ \, \left\{\, \parbox{7.4cm}{\rm $\rm W^*$ 2-subcategories $\mathcal{C}(\Gamma,\omega;\Phi)$ of \textsf{BigHilb}, where $\Gamma$ is a balanced $(d_0,d_1)$-fair square partite graph with edge-weighting $\omega$ and $\Phi$ a biunitary connection} \,\right\}\]
Equivalence on the left hand side is unitary equivalence; equivalence on the right hand side is isomorphism on the edge-weighted square-partite graph and gauge equivalence on biunitary connection.
\end{theorem}
\begin{proof}
We can prove this correspondence for the version with base point by using the Markov lattice. According to Construction \ref{Construction:End(M,F,G) to C(Phi)}, the correspondence holds without fixing the base point. 

As for the equivalence, combining  Remark \ref{C(K0,K1,L0,L1;ev) and C(Gamma,omega)},  Definition \ref{Def:Gauge equivalence} and the last paragraph in Construction \ref{Construction:biunitary connection in Markov lattice}, the isomorphism on the edge-weighted graph $(\Gamma,\omega)$ and gauge equivalence on $\Phi$ corresponds to the unitary equivalence on $\mathcal{C}(\Phi)$, which corresponds to the unitary equivalence on $\mathcal{TLJ}(d_0)-\mathcal{TLJ}(d_1)$ bimodule category $\mathcal{M}$ based on Construction \ref{Construction:biunitary connection in Markov lattice} and Remark \ref{Rmk:TLJ equivalence}.
\end{proof}

\section{The tracial case}
In this chapter, we finally discuss the tracial/pivotal case for (bi)module categories. As an application, we prove the module embedding theorem for (infinite depth) graph planar algebra.
\subsection{Tracial Markov towers and pivotal module categories}\label{Tracial Markov tower}

\begin{definition}\cite{Sc13} \label{pivotal module category}
Let $\mathcal{C}$ be a rigid C* (multi)tensor category with the canonical spherical unitary dual functor.
We call $\mathcal{M}$ a semisimple pivotal C* $\mathcal{C}-$module category, 
if there exists a pivotal trace $\tr^{\mathcal{M}}$ compatible with the spherical structure on $\mathcal{C}$, i.e.,
$$\tr^{\mathcal{M}}_{m\lhd c}(f)=\tr^{\mathcal{M}}_{m}((\id_m\lhd \coev^\dagger_c)\circ(f\lhd \id_{\overline{c}})\circ (\id_m\lhd \coev_c)),$$
for all $f\in \End(m\lhd c)$, where $m\in\mathcal{M},\ c\in\mathcal{C}$.
\end{definition}

\begin{remark}
If $f\in \End(c),\ c\in \mathcal{C}$ and $m\in\mathcal{M}$, 
\begin{align*}
   \tr^{\mathcal{M}}_{m\lhd c}(\id_m\lhd f)&= \tr^\mathcal{M}_{m}((\id_m\lhd \coev^\dagger_c)\circ((\id_m\lhd f)\lhd \id_{\overline{c}})\circ (\id_m\lhd \coev_c))\\
   &=\tr^\mathcal{M}_{m}(\id_m\lhd (\coev^\dagger_c\circ (f\lhd \id_{\overline{c}}\circ \coev_c)))\\
   &=\tr^\mathcal{M}_{m}(\id_m\lhd \tr^{\mathcal{A}}_c(f))\\
   &= \tr^\mathcal{M}_{m}(\id_m)\cdot\tr^{\mathcal{A}}_c(f).
\end{align*}
Here we call $\tr^\mathcal{M}_m(\id_m)$ the dimension of object $m$.
\end{remark}

\begin{remark}\cite[\S4.1]{Sc13}
If $\mathcal{C}$ is fusion and $\mathcal{M}$ is indecomposable, then the pivotal trace $\tr^\mathcal{M}$ is unique up to scalar.
\end{remark}

\begin{definition}[Tracial Markov tower]
We call $M$ a tracial Markov tower if $M$ a Markov tower equipped with a unital trace $\tr$ on $\bigcup_{n\ge 0}M_n$ and the conditional expectation $E_n$ are trace-preserving, i.e.,
$$\tr\circ E_n=\tr$$
on $M_n$, $n\ge 0$.
\end{definition}

\begin{definition}
We call $M$ a tracial standard $A-$module, where $A$ is a standard $\lambda$-lattice, if $\tr_M|_A=\tr_A$ and $M$ is a standard $A-$module, see Definition \ref{Def:Markov tower as standard module}. 
\end{definition}

Let $A$ be a standard $\lambda$-lattice. If we start with a tracial standard $A$-module $M$, combining the construction in \S\ref{MT to planar mod cat} and the proof in proposition \ref{A_0 is pivotal from A}, we are able to construct a pivotal planar $\mathcal{A}_0-$module category. Furthermore, from this pivotal planar $\mathcal{A}_0-$module category, we can construct an indecomposable semisimple pivotal C* $\mathcal{A}-$module category with a choice of simple base object. The following is the theorem.

\begin{theorem}\label{Thm:tracial MT and pivotal module category} There is a bijective correspondence between equivalence classes of the following:
\[\left\{\, \parbox{5.5cm}{\rm Tracial Markov towers $M=(M_i)_{i\ge 0}$ with $\dim(M_0)=1$ as standard right modules over a standard $\lambda$-lattice $A$} \,\right\}\ \, \cong\ \, \left\{\, \parbox{7.4cm}{\rm Pairs $(\mathcal{M}, Z)$ with $\mathcal{M}$ an indecomposable semisimple pivotal C* right $\mathcal{A}-$module category together with a choice of simple base object $Z= Z\lhd 1^+_\mathcal{A}$} \,\right\}\]
Equivalence on the left hand side is trace-preserving $*$-isomorphism on the tracial Markov tower as standard $A-$module; equivalence on the right hand side is the pivotal unitary $\mathcal{A}-$module equivalence on their Cauchy completions which maps simple base object to simple base object.
\end{theorem}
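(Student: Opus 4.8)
\textbf{Proof proposal for Theorem \ref{Thm:tracial MT and pivotal module category}.}

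The plan is to reduce this theorem to Theorem \ref{thmx:ModuleEquivalence} by tracking the trace through the constructions established earlier in the paper. The non-pivotal bijection already gives us a correspondence between traceless Markov towers as standard $A$-modules and pairs $(\mathcal{M},Z)$ with $\mathcal{M}$ an indecomposable semisimple ${\rm C}^*$ right $\mathcal{A}$-module category and $Z$ a simple base object. So the entire content of the pivotal statement is: (i) under this bijection, the extra data of a compatible unital trace on $\bigcup_n M_n$ with trace-preserving conditional expectations corresponds exactly to the extra data of a pivotal trace $\tr^{\mathcal{M}}$ compatible with the spherical structure on $\mathcal{A}$; and (ii) the refined notion of equivalence on each side (trace-preserving $*$-isomorphism; pivotal unitary module equivalence) matches. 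First I would fix notation: $A$ is a standard $\lambda$-lattice (which by the Folklore Theorem corresponds to $(\mathcal{A},X)$ with its canonical spherical unitary dual functor), and $\mathcal{A}_0$, $\mathcal{M}_0$ are the associated planar (module) categories from \S\ref{MT to planar mod cat} and \S\ref{M_0 to MT M}.

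The forward direction: given a tracial standard $A$-module $M$ with $\tr_M|_A = \tr_A$, I would build the planar module category $\mathcal{M}_0$ exactly as in Definition \ref{Def:planar module category} — nothing changes there — and then define a pivotal trace on $\mathcal{M}_0$ by setting $\tr^{\mathcal{M}_0}_{[n]}(f) := \tr_M(f)$ for $f \in M_n = \mathcal{M}_0([n]\to[n])$. The key computation is that this is compatible with the spherical structure of $\mathcal{A}_0$: one must verify
\[
\tr^{\mathcal{M}_0}_{[n]\lhd [1,?]}(f) = \tr^{\mathcal{M}_0}_{[n]}\bigl((\id_{[n]}\lhd \coev^\dagger_{[1,?]})\circ(f\lhd \id_{\overline{[1,?]}})\circ(\id_{[n]}\lhd \coev_{[1,?]})\bigr).
\]
Expanding the right side using the definition of $\lhd$ in Definition \ref{1 otimes x second times} and the explicit $\ev,\coev$ from the pivotal structure on $\mathcal{A}_0$ (as in the proof of Proposition \ref{A_0 is pivotal from A}), this should reduce to the identity $\tr_M = \tr_M \circ E^M_{n+1}$ composed with the relation $E_{n+1}(e_n) = d^{-2}\cdot 1$ — in other words, it is precisely the trace-preserving property of the conditional expectations, packaged diagrammatically. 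Passing to the Cauchy completion $\widehat{\mathcal{M}_0}$, a pivotal trace on the generating skeleton extends uniquely to a pivotal trace on the whole module category by additivity, giving the pivotal module structure on $\mathcal{M}$.

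The reverse direction runs the construction of \S\ref{M_0 to MT M} and \S\ref{A_0 to std A}: given $(\mathcal{M},Z)$ with a compatible pivotal trace $\tr^{\mathcal{M}}$, set $M_n = \End([n])$ and define $\tr_M := \tr^{\mathcal{M}}$ restricted to these endomorphism algebras (normalized so $\tr_M(1_{M_0})=1$, using $\dim(M_0)=1$). One checks $\tr_M$ is a faithful unital trace on $\bigcup_n M_n$, that the conditional expectations $E^M_n$ defined via $\ev,\coev$ are trace-preserving — again this is just the compatibility equation of Definition \ref{pivotal module category} read backwards — and that $\tr_M|_{A_{0,n}} = \tr_A$, the latter because the pivotal trace on $\mathcal{M}$ is required to be compatible with the spherical trace on $\mathcal{A}$, and the $\lambda$-lattice trace on $A$ is by construction that spherical trace (see the remark at the end of \S\ref{A_0 to std A}). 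That the two constructions are mutually inverse follows from the non-pivotal bijection plus uniqueness of the trace extension. Finally, for the equivalence relations: a trace-preserving $*$-isomorphism of Markov towers induces, under the non-pivotal correspondence, a unitary module equivalence, and the trace-preservation is exactly what forces this equivalence to intertwine the pivotal traces, i.e., to be a pivotal module equivalence; conversely a pivotal unitary $\mathcal{A}$-module equivalence sending base object to base object restricts to a trace-preserving isomorphism of the towers.

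The main obstacle I anticipate is the forward-direction diagram chase verifying spherical compatibility of $\tr^{\mathcal{M}_0}$ — in particular getting the bookkeeping of shadings, the alternating $X/\overline{X}$ pattern, and the normalization constants $d^{\pm 1}$ right so that the module pivotal-trace identity becomes literally the trace-preservation of $E^M_n$. A secondary subtlety is the uniqueness/extension step: one should confirm that a pivotal trace on the skeletal planar module category extends uniquely and faithfully to the Cauchy completion, which requires knowing the pivotal trace is nondegenerate on each endomorphism algebra — this follows from faithfulness of $\tr_M$, but it is worth stating explicitly. Everything else is a direct transcription of the already-established non-pivotal machinery with the trace carried along as extra structure.
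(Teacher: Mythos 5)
Your proposal is correct and follows essentially the same route as the paper, which gives no detailed argument for this theorem beyond the remark that one combines the non-pivotal module construction of \S\ref{MT to planar mod cat} with the trace computation in the proof of Proposition \ref{A_0 is pivotal from A}; your plan of carrying the trace through the non-pivotal bijection and identifying spherical compatibility with trace-preservation of the conditional expectations is exactly that. The one point to fix when writing it out is the normalization you already flagged: since $E^M_{n+1}$ carries a factor $d^{-1}$, the pivotal trace on $\mathcal{M}_0$ must be $\tr^{\mathcal{M}_0}_{[n]}(f)=d^{n}\tr_M(f)$ rather than $\tr_M(f)$ itself, so that $\tr^{\mathcal{M}_0}_{[n]}(\id)$ is the quantum dimension of $[n]$.
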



Let us look at the balanced $d$-fair bipartite graph $(\Lambda,\omega)$ from the tracial Markov tower $M$. Since the evaluation and coevaluation are compatible with the trace, the edge-weighting comes from a vertex-weighting, see \cite[Prop. 6.8]{JP19}. 
To be precise,
\begin{definition}[Vertex weighting]\label{Def:vertex  weighting}
Let $\Lambda$ be a bipartite graph. 
Let $\nu:V(\Lambda)\to (0,\infty)$ be a weighting on the vertices of $\Lambda$ 
which satisfies the Frobenius-Perron condition: for each $P\in V(\Lambda)$,
$$\sum_{\{Q\in V(\Lambda):P,Q\text{ adjacent}\}}\nu(Q)=d\cdot\nu(P).$$
In the sum on the left hand side, $\nu(Q)$ has number of edges between $P\to Q$ copies.

From an undirected bipartite graph, one can obtain a directed graph with involution\cite[Def.~2.20]{HP17}. 
Then for $e:P\to Q$, define $w(e):=\frac{\nu(P)}{\nu(Q)}$. 
The $d$-fairness and balance condition in Definition \ref{Def:d-fairness and balanced} follows automatically.
\end{definition}

\begin{remark}\label{Rmk:fusion graph}
Suppose $\mathcal{M}$ is an indecomposable semisimple $\rm C^*$ pivotal $\mathcal{A}-$module category with fusion/principal graph $\Lambda$ whose vertices are simple objects of $\mathcal{M}$. We can define the vertex weighting for simple object $P$ as $\nu(P):=\Tr_P(\id_P)$. 
\end{remark}  

\begin{remark}\label{Remark:C(Lambda,nu)}
Note that $\mathcal{M}$ being a pivotal $\mathcal{A}-$module is equivalent to the dagger tensor functor $\mathcal{A}\to \End^\dagger(\mathcal{M})$ being pivotal \cite[Thm.~3.70]{GMPPS18},
so that its essential image $\End^\dagger_0(\mathcal{M},F)$ has a unitary pivotal structure from the pivotal structure in $\mathcal{A}$, 
where $F=-\lhd X$ is the generator. 
We also denote the corresponding 2-subcategory of \textsf{BigHilb} as $\mathcal{C}(K,\phi)$ or $\mathcal{C}(\Lambda,\nu)$.
\end{remark}
 
\subsection{The module embedding theorem}
Jones' planar algebra, as a form of standard invariant, is a method to construct and classify  finite index type $\text{II}_1$ subfactors. 
The module embedding theorem has been known to Vaughan Jones since he first defined the graph planar algebra \cite{Jo00}. 
The proof for finite depth case appears in \cite{JP10,CHPS18,GMPPS18}. 
Many nontrivial examples of subfactors have been constructed inspired by this theorem, including
the Extended Haagerup subfactor and its relatives 
\cite{BPMS12,GMPPS18}.

In our setting, the bipartite graph $\Lambda$ can be infinite depth. We refer the reader to \cite{Bu10} for the definition of the infinite depth bipartite graph planar algebra.

\begin{theorem}
The planar algebra constructed from
$\End^\dag_0(\mathcal{M},F)$ with generator $F$
mentioned in Remark \ref{Remark:C(Lambda,nu)}
is isomorphic to the graph planar algebra of bipartite graph $\Lambda$, where $\mathcal{M}$ is an indecomposable semisimple pivotal $\rm C^*$ $\mathcal{A}-$module category,
$\mathcal{A}$ is a 2-shaded rigid $\rm C^*$ multitensor category with generator $X=1^+\otimes X\otimes 1^-$,
$\Lambda$ is the (possibly infinite) fusion graph for $\mathcal{M}$ with respect to the generator $X$,
where the vertex weighting $\nu$ on $\Lambda$ comes from the trace $\Tr^{\mathcal{M}}$ as in Remark \ref{Rmk:fusion graph}.
\end{theorem}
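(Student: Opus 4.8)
The plan is to identify the planar algebra of $\End^\dag_0(\mathcal{M},F)$ with the bipartite graph planar algebra $\mathrm{GPA}(\Lambda,\nu)$ by exhibiting the same underlying algebraic data on both sides and checking that the planar operations agree. First I would recall that a (2-shaded, subfactor-style) planar algebra is equivalent to the data of a rigid $\mathrm{C}^*$ tensor category together with a unitary pivotal structure and a choice of generating object together with a trivialization $1 = 1^+ \oplus 1^-$; concretely, the box spaces are $P_{n,\pm} = \End(X^{\alt \otimes n})$ where $X$ is the generator and the pivotal structure furnishes the rotation, cups, and caps. On the $\End^\dag_0(\mathcal{M},F)$ side, by Remark \ref{Remark:C(Lambda,nu)} the functor $\mathcal{A}\to\End^\dagger(\mathcal{M})$ is pivotal, so $\End^\dag_0(\mathcal{M},F)$ is a rigid $\mathrm{C}^*$ tensor category with unitary pivotal structure and generator $F = -\lhd X$; its associated planar algebra has box spaces $\End(F^{\alt\otimes n})$. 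On the graph side, by Construction \ref{construction:CatFromEndM} (and the discussion of \S\ref{C(K) End(M)}), $\End^\dag_0(\mathcal{M},F)$ is unitarily equivalent as a rigid $\mathrm{C}^*$ tensor category to the 2-subcategory $\mathcal{C}(K,\ev_K) = \mathcal{C}(\Lambda,\nu)$ of \textsf{BigHilb}, where $K = H_F$ is the \textsf{Hilb}-enriched graph of $\Lambda$; and in the tracial case \S\ref{Tracial Markov tower} the distinguished $\ev_K$ arises from the vertex weighting $\nu$ coming from $\Tr^\mathcal{M}$, so the pivotal structures match.

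The key steps, in order, would be: (1) Set up the dictionary ``pivotal rigid $\mathrm{C}^*$ tensor category with generator $\leftrightarrow$ planar algebra,'' being careful with the 2-shading and with which conventions for rotation/evaluation are used; since this is folklore (cited as Jones, and used already in this paper's Folklore Theorem), I would assume it. (2) Use Construction \ref{construction:CatFromEndM} to transport $\End^\dag_0(\mathcal{M},F)$ to $\mathcal{C}(\Lambda,\nu)$ as a unitary equivalence of rigid $\mathrm{C}^*$ tensor categories, noting that the generator $F$ maps to $K$ and the duality $\ev_F,\coev_F$ maps to $\ev_K,\coev_K$; invoke \S\ref{C(K) End(M)} and Remark \ref{Rmk:TLJ equivalence}. (3) Compute the box spaces of the planar algebra of $\mathcal{C}(\Lambda,\nu)$ explicitly: $\End\big(\mathbb{C}^{|V_0|}\otimes K^{\alt\otimes n}\big)$ (or the appropriate shaded variant) decomposes, via the identification $H_{G_1\circ G_2}\cong H_{G_1}\circ H_{G_2}$ from Construction \ref{construction:CatFromEndM}, into direct sums of operators indexed by length-$2n$ loops on $\Lambda$ based at the appropriate vertex set — this is exactly the loop/path model underlying Ocneanu's graph planar algebra. (4) Match the planar structure: the cup and cap are given by $\coev_K^{\mathrm{st}},\ev_K^{\mathrm{st}}$ twisted by the weighting (Construction \ref{construction: graph to BigHilb} and Definition \ref{C(K,w) def}), and one checks these are precisely the ``Jones–Wenzl'' / loop-concatenation maps defining $\mathrm{GPA}(\Lambda,\nu)$; the rotation comes from the pivotal structure, which on $\mathcal{C}(\Lambda,\nu)$ is the spherical one determined by $\nu$ via Definition \ref{Def:vertex weighting}. (5) Conclude the two planar algebras are isomorphic, the isomorphism being the composite of the transport in step (2) with the loop-model identification in step (3).

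I would organize step (3)–(4) around the same graphical calculus already developed in \S\ref{2-Hilb and edge weighting} and \S\ref{MT to Cb}: the endomorphism algebras $N_n = \End(K_1\otimes\overline K_2\otimes\cdots)$ of Notation \ref{Notation:Kn} are the loop algebras, the Jones projections transported from the Markov tower (as in \S\ref{MT to Cb}) are exactly the Temperley–Lieb elements of the graph planar algebra, and the trace on $\mathrm{GPA}(\Lambda,\nu)$ is the Markov trace whose compatibility with $\nu$ is the Frobenius–Perron condition. Since the paper has already built the correspondence (tracial Markov tower) $\leftrightarrow$ (pivotal $\mathcal{TLJ}$-module, equivalently pivotal $\mathcal{A}$-module) $\leftrightarrow$ $\mathcal{C}(\Lambda,\nu)$, and the bipartite graph planar algebra is, by definition, the planar algebra of the loop model, the bulk of the argument is ``unwinding definitions along the established equivalences.''

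The main obstacle I expect is step (4), the precise matching of the planar operations — specifically getting all the scalar normalizations right. The graph planar algebra is usually defined with the half-edge weighting $\omega(e)^{1/2}$ (equivalently $\nu$-ratios) distributed on cups and caps in a particular asymmetric way, while the \textsf{BigHilb} model of \S\ref{2-Hilb and edge weighting} distributes the $\omega(e)^{1/2}$ factors via $C_{\overline K,PQ}$ and $D_{K,PQ}$; reconciling these requires a careful gauge/normalization bookkeeping, and one must verify that the spherical pivotal structure on $\mathcal{C}(\Lambda,\nu)$ (coming from $\nu$, via the canonical spherical unitary dual functor) is the \emph{same} as the one induced from $\mathcal{A}$ under the equivalence of step (2) — this is where the hypothesis that $\nu$ comes from $\Tr^\mathcal{M}$ (Remark \ref{Rmk:fusion graph}) is essential. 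A secondary subtlety is handling the ``possibly infinite depth'' of $\Lambda$: one must ensure all the relevant box spaces are genuine (the uniform boundedness / local finiteness from Proposition \ref{uniform boundedness for graph}, Proposition \ref{Markov Tower prop 2}, and the discussion in \cite{Bu10} handle this), so that the loop model is well-defined and the isomorphism is an isomorphism of planar $*$-algebras, not merely of the finite truncations.
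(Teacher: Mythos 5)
Your proposal is correct and follows essentially the same route as the paper: pass from $\End^\dag_0(\mathcal{M},F)$ to the 2-subcategory $\mathcal{C}(\Lambda,\nu)$ of \textsf{BigHilb} via the construction of \S\ref{C(K) End(M)}, then identify the resulting planar algebra with the loop model defining the graph planar algebra. The only difference is one of emphasis — the paper's proof is itself a sketch that outsources your steps (3)–(4) (the loop-model identification and the matching of cups, caps, and normalizations) to the citation \cite[\S3.5.3]{GMPPS18}, whereas you propose to verify them directly; the subtleties you flag (scalar bookkeeping for $\omega(e)^{1/2}$, agreement of the pivotal structures via $\Tr^{\mathcal{M}}$, and the infinite-depth issue handled by local finiteness and \cite{Bu10}) are exactly the right ones.
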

\begin{proof}
Here we provide the sketch of the proof. 
From the unitary pivotal dagger functor $\mathcal{A}\to \End^\dagger(\mathcal{M})$, we obtain a rigid $\rm C^*$ tensor category $\End^\dagger_0(\mathcal{M},F)$ with pivotal structure with generator $F=-\lhd X$ in the sense of \S\ref{C(K) End(M)}.

According to \S\ref{C(K) End(M)} and \S\ref{2-Hilb and edge weighting},
from $\End^\dag_0(\mathcal{M},F)$, we can construct
the 2-category $\mathcal{C}(\Lambda,\nu)$ discussed in Remark \ref{Remark:C(Lambda,nu)}
with its generating $\mathsf{Hilb}$-enriched graph $\Lambda$,
which is equivalent information.
Similar to \cite[\S3.5.3]{GMPPS18}, the planar algebra of $\mathcal{C}(\Lambda, \nu)$ with generator $\Lambda$ is $*$-isomorphic to the graph planar algebra $\mathcal{G}_\bullet$ (in the sense of Burstein \cite{Bu10}) of the fusion graph $\Lambda$ with vertex weighting $\nu$, which corresponds to $F$ in the sense of Remark \ref{Rmk:fusion graph}.

Note that there is a well-know correspondence between \cite{Gh11,DGG14,Pe18}: 
\[\left\{\, \parbox{3cm}{\rm Subfactor planar algebras $\mathcal{P}_\bullet$} \,\right\}\ \, \cong\ \, \left\{\, \parbox{9cm}{\rm Pairs $(\mathcal{A}, X)$ with $\mathcal{A}$ a 2-shaded rigid ${\rm C}^*$ multitensor category with a generator $X$, i.e., $1_\mathcal{A}=1^+\oplus 1^-$, $1^+,1^-$ are simple and $X=1^+\otimes X\otimes 1^-$ } \,\right\}\]

Finally, the pivotal dagger tensor functor $\mathcal{A}\to \End^\dagger_0(\mathcal{M},F)$ gives a planar algebra embedding from the subfactor planar algebra $\mathcal{A}_\bullet$ to the graph planar algebra $\mathcal{G}_\bullet$ of its principal graph.
\end{proof}

If we choose $\mathcal{M}=\mathcal{A}_+=1^+\otimes \mathcal{A}\otimes 1^+$ to be the $\mathcal{A}-$module category, we obtain the module embedding theorem:

\begin{corollary}
Every subfactor planar algebra $\mathcal{P}_\bullet$ embeds into the graph planar algebra of its principal graph.
\end{corollary}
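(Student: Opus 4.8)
The plan is to derive the corollary as the specialization $\mathcal{M}=\mathcal{A}_+:=1^+\otimes\mathcal{A}\otimes 1^+$ of the preceding module embedding theorem, so the bulk of the work is checking that this particular module category is legitimate and that its fusion graph is the principal graph of $\mathcal{P}_\bullet$. First I would recall the well-known correspondence between subfactor planar algebras $\mathcal{P}_\bullet$ and pairs $(\mathcal{A},X)$ with $\mathcal{A}$ a 2-shaded rigid $\rm C^*$ multitensor category and $X=1^+\otimes X\otimes 1^-$ a generator (cited in the excerpt via \cite{Gh11,DGG14,Pe18}); this lets me pass freely between $\mathcal{P}_\bullet$ and $(\mathcal{A},X)$. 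Then I would observe that $\mathcal{A}_+=1^+\otimes\mathcal{A}\otimes 1^+$ is a right $\mathcal{A}$-module category under the obvious right tensor action, that it is semisimple (since $\mathcal{A}$ is), and that it is indecomposable precisely because $X$ generates $\mathcal{A}$ — every simple summand $P$ of $1^+\otimes\mathcal{A}\otimes 1^+$ occurs in some $X^{\alt\otimes n}$ or $\overline{X}^{\alt\otimes n}$, which is exactly the indecomposability condition from \S\ref{cat M to M_0}. I would also point out that $\mathcal{A}_+$ carries the pivotal trace inherited from the spherical structure on $\mathcal{A}$, so it is an indecomposable semisimple pivotal $\rm C^*$ $\mathcal{A}$-module category in the sense of Definition \ref{pivotal module category}.

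Next I would identify the fusion graph of $\mathcal{A}_+$ with respect to the generator $X$: its vertices are the simple objects of $\mathcal{A}_+$ (equivalently the simple summands of $1^+\otimes\mathcal{A}\otimes 1^+$, i.e.\ simple objects with $1^+$ on both sides), and the edges record multiplicities of fusion with $X$ (and $\overline X$). Unwinding definitions, this graph is exactly the principal graph of the subfactor planar algebra $\mathcal{P}_\bullet$ — in the finite-depth case this is the standard statement, and in the possibly-infinite-depth case the bipartite graph $\Lambda$ of \S\ref{Cpt 4} is by construction the principal graph of the associated Markov tower, which is the standard invariant's principal graph. The vertex weighting $\nu$ is then $\nu(P)=\Tr^{\mathcal{A}_+}_P(\id_P)$ as in Remark \ref{Rmk:fusion graph}, which reproduces the usual Frobenius-Perron weighting.

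Having set this up, I would invoke the module embedding theorem (the \texttt{thmx}-level statement just proved) applied to $\mathcal{M}=\mathcal{A}_+$: the planar algebra constructed from $\End^\dag_0(\mathcal{A}_+,F)$ with generator $F=-\lhd X$ is isomorphic to the graph planar algebra of $\Lambda$, and the pivotal dagger tensor functor $\mathcal{A}\to\End^\dag_0(\mathcal{A}_+,F)$ induces a planar algebra embedding $\mathcal{P}_\bullet\hookrightarrow\mathcal{G}_\bullet(\Lambda)$. Since $\Lambda$ is the principal graph, this is precisely the desired statement. The main obstacle I anticipate is not the module embedding theorem itself (already available) but the careful verification that $\End^\dag_0(\mathcal{A}_+,F)$ recovers the original $(\mathcal{A},X)$ — i.e.\ that the canonical dagger tensor functor $\mathcal{A}\to\End^\dag(\mathcal{A}_+)$ is fully faithful onto its Cauchy-generated image, so that the planar algebra of $\mathcal{A}$ really is the one being embedded, and that the fusion graph of $\mathcal{A}_+$ literally equals the principal graph in the infinite-depth setting where one must be careful about Cauchy completions and uniform boundedness. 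This is essentially a bookkeeping check using \cite{GMPPS18} and the identifications in \S\ref{C(K) End(M)}, but it is where the real content sits.
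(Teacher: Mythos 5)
Your proposal is correct and follows exactly the paper's route: the paper derives the corollary in a single sentence by specializing the module embedding theorem to $\mathcal{M}=\mathcal{A}_+=1^+\otimes\mathcal{A}\otimes 1^+$, and you simply spell out the verifications (indecomposability, pivotality, identification of the fusion graph with the principal graph) that the paper leaves implicit. No gap; your extra bookkeeping is a faithful expansion of what the paper takes for granted.
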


\subsection{Tracial Markov lattices and pivotal bimodule categories}\label{Tracial Markov lattice}

\begin{definition}
Let $\mathcal{C},\mathcal{D}$ be rigid ${\rm C}^*$ (multi)tensor categories with canonical unitary dual functors respectively. We call $M$ a semisimple pivotal ${\rm C}^*$ $\mathcal{C}-\mathcal{D}$ bimodule category, if there exists a pivotal trace $\tr^{\mathcal{M}}$ compatible with the spherical structures in $\mathcal{C}$ and $\mathcal{D}$, i.e.,
\begin{align*}
    \tr^{\mathcal{M}}_{a\rhd m}(f) &= \tr^{\mathcal{M}}_{m}(( \ev^\dagger_a\rhd \id_m)\circ(\id_{\overline{a}}\rhd f)\circ (\ev_a\rhd \id_m))\\
    \tr^{\mathcal{M}}_{m\lhd b}(f) &= \tr^{\mathcal{M}}_{m}((\id_m\lhd \coev^\dagger_b)\circ(f\lhd \id_{\overline{b}})\circ (\id_m\lhd \coev_b)),
\end{align*}
for $f\in \End(a\rhd m\lhd b)$, where $m\in\mathcal{M},\ a\in \mathcal{C},\  b\in\mathcal{D}$.
\end{definition}

\begin{definition}(Tracial Markov lattice)
We call $M$ a tracial Markov lattice if $M$ is a Markov lattice equipped with a unital trace $\tr$ on $\bigcup_{i,j\ge 0}M_{i,j}$ and the conditional expectation $E^{M,l}_{i,j},E^{M,r}_{i,j}$ are trace-preserving, i.e.,
$$\tr\circ E^{M,l}_{i,j}=\tr,\qquad\qquad \tr\circ E^{M,r}_{i,j}=\tr$$
on $M_{i,j}$, $i,j\ge 0$.
\end{definition}

\begin{definition}
We call $M$ a tracial standard $A-B$ bimodule, where $A,B$ are standard $\lambda$-lattices, if $\tr_M|_A=\tr_A,\ \tr_M|_B=\tr_B$ and $M$ is a standard $A-B$ bimodule, see Definition \ref{def ML std bimod}. 
\end{definition}

Similar to Theorem \ref{Thm:tracial MT and pivotal module category}, we have the following theorem:
\begin{theorem}There is a bijective correspondence between equivalence classes of the following:
\[\left\{\, \parbox{5.3cm}{\rm Tracial Markov lattice $M=(M_{i,j})_{i,j\ge 0}$ with $\dim(M_{0,0})=1$ as a standard $A-B$ bimodule over standard $\lambda$-lattices $A,B$} \,\right\}\ \, \cong\ \, \left\{\, \parbox{7.3cm}{\rm Pairs $(\mathcal{M}, Z)$ with $\mathcal{M}$ an indecomposable semisimple ${\rm C}^*$ pivotal $\mathcal{A}-\mathcal{B}$ bimodule category together with a choice of simple base object $Z= 1^+_\mathcal{A}\rhd Z\lhd 1^+_\mathcal{B}$} \,\right\}\]
Equivalence on the left hand side is the trace-preserving $*$-isomorphism on the tracial Markov lattice as standard $A-B$ bimodule; equivalence on the right hand side is the pivotal unitary $\mathcal{A}-\mathcal{B}$ bimodule equivalence between their Cauchy completions which maps the simple base object to simple base object.
\end{theorem}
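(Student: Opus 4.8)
The plan is to mirror, in the bimodule setting, the two-stage strategy already used for modules in Theorem~\ref{Thm:tracial MT and pivotal module category}: pass through the intermediate notion of a pivotal planar bimodule category, and then reduce the tracial/pivotal refinement to the non-pivotal bijection of Theorem~\ref{thmx:BimoduleEquivalence}. More precisely, I would first observe that the non-pivotal correspondence of \S\ref{Cpt 3} already sets up, for a fixed simple base object, mutually inverse constructions: from a traceless Markov lattice $M$ as a standard $A$-$B$ bimodule one builds a planar $\mathcal{A}_0$-$\mathcal{B}_0$ bimodule category $\mathcal{M}_0$ (Definition in \S\ref{Cpt 3}), whose Cauchy completion is an indecomposable semisimple $\mathrm{C}^*$ $\mathcal{A}$-$\mathcal{B}$ bimodule category with simple base object $Z=1^+_\mathcal{A}\rhd Z\lhd 1^+_\mathcal{B}$; conversely, given $(\mathcal{M},Z)$ one recovers $M_{i,j}=\End([i,j])$. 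So the only new content is checking that the extra tracial data on one side matches the extra pivotal data on the other, and that these enhanced structures are transported by the two constructions.

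First I would make precise what ``pivotal'' adds on the category side: by the bimodule analogue of Remark~\ref{Remark:C(Lambda,nu)} (i.e.\ the bimodule version of \cite[Thm.~3.70]{GMPPS18}), giving $\mathcal{M}$ a pivotal $\mathcal{A}$-$\mathcal{B}$ bimodule structure compatible with the spherical dual functors on $\mathcal{A}$ and $\mathcal{B}$ is equivalent to equipping the rigid $\mathrm{C}^*$ tensor category $\End^\dagger_0(\mathcal{M},F,G)$ with a compatible unitary pivotal structure, where $F=X\rhd-$ and $G=-\lhd Y$. On the lattice side, a tracial Markov lattice carries a single unital trace on $\bigcup_{i,j}M_{i,j}$ for which all $E^{M,l}_{i,j}$ and $E^{M,r}_{i,j}$ are trace-preserving. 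The key step is then the following: the trace on the $M_{i,j}$ is exactly the data of a pivotal trace $\tr^{\mathcal{M}}$ on $\mathcal{M}$ satisfying the two displayed compatibility identities, via the identification $M_{i,j}=\End([i,j])$ and the formulas for $\ev,\coev$ of the generators built from $\ev_X,\coev_X,\ev_Y,\coev_Y$. In one direction, given $\tr^{\mathcal{M}}$, define $\tr$ on $M_{i,j}$ by the pivotal trace of the corresponding endomorphism; the commuting-square conditional expectations $E^{M,l},E^{M,r}$ are, after the string-diagram dictionary of \S5.3, literally the partial-trace maps against $\coev_X,\coev_Y$ and their daggers, so trace-invariance is immediate from the two pivotal identities. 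In the other direction, I would invoke the uniqueness of the pivotal trace on an indecomposable semisimple pivotal bimodule category over fusion categories (bimodule analogue of \cite[\S4.1]{Sc13}) to see that the trace built from $M$ matches the intrinsic pivotal trace, i.e.\ that the two constructions are inverse also at the level of the enhanced structure.

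Next I would check compatibility with the notions of equivalence. On the left, a trace-preserving $*$-isomorphism of Markov lattices as standard $A$-$B$ bimodules induces, level by level, a $*$-isomorphism of the $\End([i,j])$'s intertwining the $\ev,\coev$ of generators, hence a unitary $\mathcal{A}_0$-$\mathcal{B}_0$ bimodule equivalence of planar bimodule categories respecting the pivotal traces; taking Cauchy completions and using the bimodule analogue of Remark~\ref{Rmk:TLJ equivalence} gives a pivotal unitary $\mathcal{A}$-$\mathcal{B}$ bimodule equivalence sending base object to base object. Conversely, a pivotal unitary bimodule equivalence of Cauchy completions restricts (after the usual MacLane strictification, as in \S\ref{Cpt 3}) to an equivalence of the skeletal planar bimodule subcategories, and reading off endomorphism algebras yields a trace-preserving $*$-isomorphism of the associated Markov lattices; the two passages are mutually inverse because the underlying non-pivotal passages already are (Theorem~\ref{thmx:BimoduleEquivalence}).

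I expect the main obstacle to be purely bookkeeping rather than conceptual: verifying that the left/right conditional expectations $E^{M,l}_{i,j},E^{M,r}_{i,j}$, which in \S\ref{Cpt 3} were \emph{defined} via commuting squares without reference to any trace, actually coincide with the partial-trace (``categorical trace'') maps coming from $\coev_X,\coev_Y$ once a pivotal trace is present. This requires carefully matching the string-diagram calculus of \S5.3 with the graphical calculus for the $\ev,\coev$ of the generators in $\End^\dagger_0(\mathcal{M},F,G)$, and keeping track of the modulus constants $(d_0,d_1)$ in the normalizations. A secondary subtlety is the bimodule analogue of \cite[Thm.~3.70]{GMPPS18} and of the uniqueness of the pivotal trace \cite[\S4.1]{Sc13}: I would either cite these in the bimodule form or indicate the short argument that the two one-sided pivotal compatibilities together pin down $\tr^{\mathcal{M}}$ up to a scalar, which the normalization $\dim(M_{0,0})=1$ then fixes. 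With those two points in hand, the theorem follows by composing the non-pivotal bijection with the matching of enhanced structures.
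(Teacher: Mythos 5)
Your proposal is correct and follows essentially the same route as the paper: the paper obtains this theorem by combining the non-pivotal bijection of Theorem~\ref{thmx:BimoduleEquivalence} with the matching of the trace on the Markov lattice to the pivotal trace on the bimodule category, exactly as in the tracial module case (Theorem~\ref{Thm:tracial MT and pivotal module category}), which in turn reduces to the computation in Proposition~\ref{A_0 is pivotal from A} identifying the conditional expectations with categorical partial traces. The only caveat is that your appeal to uniqueness of the pivotal trace via \cite{Sc13} formally requires a fusion (finite) hypothesis, a restriction the paper itself also leaves implicit in the infinite-depth setting.
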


Let us look at the balanced $(d_0,d_1)$-fair square-partite graph $(\Lambda,\omega)$ from the tracial Markov lattice $M$. Similar to the tracial Markov tower case, the edge-weighting comes from the vertex-weighting. To be precise,
\begin{align*}
    \text{For }P\in V_{00}\sqcup V_{01},\qquad\qquad \sum_{\{e:P\to Q:Q\in V_{10}\sqcup V_{11}\}}\nu(Q) &= d_0\cdot \nu(P)\\
    \text{For }P\in V_{00}\sqcup V_{01},\qquad\qquad \sum_{\{e:P\to Q:Q\in V_{01}\sqcup V_{11}\}}\nu(Q) &= d_1\cdot \nu(P).
\end{align*}


\begin{remark}
As for the biunitary connection, the computation does not change at all. In fact, the biunitary connection is independent of the pivotal structure, see Proposition \ref{prop of BC}(2) and \S\ref{C(Phi) and End(M,F,G)}. This now agrees with the usual definition of biunitary connection for the tracial/pivotal case discussed in \cite{JS97,EK98,MPPS12,MP14}.
\end{remark}

{\footnotesize{

}}

\end{document}